\newcommand{\cmark}{\ding{51}}
\newcommand{\xmark}{\ding{55}}
\numberwithin{equation}{section}
\theoremstyle{plain}
\newcommand{\emme}{{{\mbox{\fontfamily{aurical}\large\Fontlukas\slshape{m\,}}}}}
\newtheorem{theorem}{Theorem}[section]
\newtheorem{lem}[theorem]{Lemma}
\newtheorem{cor}[theorem]{Corollary}
\newtheorem{prop}[theorem]{Proposition}
\theoremstyle{definition}
\newcommand{\ve}{\varepsilon}
\newcommand{\R}{\mathbb{R}}
\newcommand{\N}{\mathbb{N}}
\newcommand{\Z}{\mathbb{Z}}
\newcommand{\Ns}{\mathscr{N}_{s}}
\newcommand{\Q}{\mathcal{Q}}
\newcommand{\E}{\mathscr{E}}
\renewcommand{\leq}{\leqslant}
\renewcommand{\le}{\leqslant}
\renewcommand{\geq}{\geqslant}
\renewcommand{\ge}{\geqslant}
\renewcommand{\epsilon}{\varepsilon}
\begin{document}

\title[(Non)local logistic equations with Neumann conditions]{(Non)local logistic equations with Neumann conditions}\thanks{
{\em Serena Dipierro}:
Department of Mathematics
and Statistics,
University of Western Australia,
35 Stirling Hwy, Crawley WA 6009, Australia.
{\tt serena.dipierro@uwa.edu.au}\\
{\em Edoardo Proietti Lippi}: 
Department of Mathematics and Computer Science,
University of Florence,
Viale Morgagni 67/A, 50134 Firenze, Italy.
{\tt edoardo.proiettilippi@unifi.it}
\\
{\em Enrico Valdinoci}:
Department of Mathematics
and Statistics,
University of Western Australia,
35 Stirling Hwy, Crawley WA 6009, Australia. {\tt enrico.valdinoci@uwa.edu.au}\\
The authors are members of INdAM.
The first and third authors
are members of AustMS and
are supported by the Australian Research Council
Discovery Project DP170104880 NEW ``Nonlocal Equations at Work''.
The first author is supported by
the Australian Research Council DECRA DE180100957
``PDEs, free boundaries and applications''. 
The third author is supported by
the Australian Laureate Fellowship
FL190100081
``Minimal surfaces, free boundaries and partial differential equations''.
Part of this
work was carried
out during a very pleasant and fruitful visit of the second author to the
University of Western Australia, which we thank for the warm hospitality.}

\author{Serena Dipierro}
\author{Edoardo Proietti Lippi}
\author{Enrico Valdinoci}

\keywords{Logistic equation, Fisher-KPP equation, 
long-range interactions, zero-flux condition.}
\subjclass[2010]{35Q92, 35R11, 60G22, 92B05}

\begin{abstract}
We consider here a problem of population dynamics
modeled on a logistic equation with both classical and nonlocal
diffusion, possibly in combination with a pollination term.

The environment considered is a niche with zero-flux,
according to a new type of Neumann condition.

We discuss the situations that are
more favorable for the survival of the species,
in terms of the first positive eigenvalue.

Quite surprisingly, the eigenvalue analysis for the one dimensional
case is structurally
different than the higher dimensional setting, and it
sensibly depends on the nonlocal character of the dispersal.

The mathematical framework of this problem
takes into consideration the equation
$$ -\alpha\Delta u +\beta(-\Delta)^su
=(m-\mu u)u+\tau\;J\star u \qquad{\mbox{in }}\; \Omega,$$
where $m$ can change sign.

This equation is endowed with a set of Neumann condition
that combines the classical normal derivative prescription
and the nonlocal condition introduced in
[S.~Dipierro, X.~Ros-Oton, E.~Valdinoci,
Rev. Mat. Iberoam. (2017)].

We will establish the existence of a minimal solution
for this problem and provide a throughout discussion
on whether it is possible to obtain non-trivial solutions
(corresponding to the survival of the population).

The investigation will rely
on a quantitative analysis of the first eigenvalue of
the associated problem and on precise asymptotics
for large lower and upper bounds of the resource.

In this, we also analyze the role played by the
optimization strategy in the distribution of the resources,
showing concrete examples
that are unfavorable for survival, in spite of the large resources
that are available in the environment.
\end{abstract}
\maketitle

\tableofcontents

\section{Introduction}

We consider here a biological population
with density~$u$ which is self-competing for
the resources in a given environment~$\Omega$.

These resources are described by a function~$m$, which
is allowed to change sign: the positive values of~$m$
correspond to areas of the environment favorable
for life and produce a positive birth rate, whereas
the negative values model
a hostile environment whose byproduct is a positive death rate 
of linear type.

The competition for the resource is encoded by a nonnegative
function~$\mu$. Resources and competitions are combined
into a standard logistic equation. 
In addition, the population is assumed to present
a combination of classical and nonlocal diffusion
(the cases of purely classical
and purely nonlocal diffusions are also included in our setting,
and the results obtained are new also for these cases).
The population is also endowed with an additional birth rate
possibly
provided by pollination\footnote{While we use
the name of pollination throughout this paper,
we observe that the pollination analysis performed is not
limited to vegetable species: indeed, for
animal species the convolution term that
we study can be
seen as a birth rate of nonlocal type produced, for instance,
by a mating call which attracts partners from surrounding
neighbors.}
and modeled by a convolution operator
(the case of no pollination is also included in our setting,
and the results obtained are new also for this case).

The environment~$\Omega$
describes an ecological niche and is endowed by
a zero-flux condition of Neumann type.
Given the possible presence of both classical and nonlocal
dispersal, this Neumann condition appears to be new
in the literature: when the diffusion is of purely classical
type this new prescription reduces
to the standard normal derivative condition along~$\partial\Omega$,
and when the diffusion is of purely nonlocal
type it coincides with the nonlocal Neumann condition
set in~$\R^n\setminus\overline\Omega$ that has been recently
introduced in~\cite{dirova} -- but in case
the population is subject to both the classical
and the nonlocal dispersion processes the Neumann
condition that we introduce here
takes into account the combination of both
the classical and the nonlocal prescriptions (interestingly,
without producing an overdetermined, or ill-posed, problem).
\medskip

The main question addressed in this paper is
whether or not the environmental niche is suited for
the survival of the population (notice that life
is not always promoted by the ambient resource,
since~$m$ can attain negative values). We will investigate
this question by using spectral analysis and providing
a detailed quantification of favorable and unfavorable
scenarios in terms of the first eigenvalue compared
with the
resource and pollination parameters.
\medskip

More precisely, the mathematical framework in which
we work goes as follows.
We consider a bounded open set~$\Omega\subset\R^n$
with boundary of class~$C^1$: that is, we suppose that
there exist~$R>0$ and~$p_1,\dots,p_K\in\partial\Omega$
such that~$\partial\Omega\subset B_R(p_1)\cup\dots\cup
B_R(p_K)$, and, for each~$i\in\{1,\dots,K\}$, 
\begin{equation}\label{PRECIS:C1}
{\mbox{the set~$\Omega\cap B_R(p_i)$
is~$C^1$-diffeomorphic to~$B_1^+:=\{
(x_1,\dots,x_n)\in B_1{\mbox{ s.t. }}x_n>0\}$.}}
\end{equation}
Given~$s\in(0,1)$,
$\alpha$, $\beta\in[0,+\infty)$,
with~$\alpha+\beta>0$, $m:\Omega\to\R$, $\mu:\Omega\to\left[\underline{\mu},+\infty\right)$,
with~$\underline{\mu}>0$,
$\tau\in
[0,+\infty)$ and~$J\in L^1(\R^n,[0,+\infty))$
with
\begin{equation}\label{Jeven}
J(x)=J(-x)\end{equation} and
\begin{equation}\label{JL1}
\int_{\R^n} J(x)\,dx=1,\end{equation}
we consider the mixed order logistic equation
\begin{equation}\label{problogis-PRE}
-\alpha\Delta u +\beta(-\Delta)^su
=(m-\mu u)u+\tau\;J\star u \qquad{\mbox{in }}\; \Omega,
\end{equation}
where
\[
J\star u(x):=\int_\Omega J(x-y)\,u(y)dy.
\]
When~$\beta=0$, we take the additional hypothesis that
\begin{equation}\label{KASM:LSKDD}{\mbox{$
\Omega$ is connected.}}\end{equation}
We observe that the operator in~\eqref{problogis-PRE}
is of mixed local and nonlocal type, and also of mixed
fractional and integer order type.
Interestingly, the nonlocal character of the operator
is encoded both in the fractional Laplacian
$$ (-\Delta)^s u(x):=\frac12\,\int_{\R^n}\frac{2u(x)
-u(x+\zeta)-u(x-\zeta)}{|\zeta|^{n+2s}}\,d\zeta$$
and in the convolution operator given by~$J$.

The use of the convolution operator
in biological models to
comprise the interaction of the population
with the resource at a certain range
has a very consolidated tradition,
see e.g.~\cite{MR2601079, MR3035974, MR3169773, MR3285831, MR3498523, MR3639140, cadiva}
and the references therein.

As for the nonlocal diffusive operator, for the sake of concreteness
we stick here to the prototypical case of the fractional Laplacian,
but the arguments that we develop are in fact usable in more general
contexts including various interaction kernels of singular type.

Given the presence of both the Laplacian and the fractional
Laplacian, the operator in~\eqref{problogis-PRE}
falls within the diffusive processes of mixed orders,
which have been widely addressed by several
methodologies and arose from a number of different motivations,
see for instance various viscosity solutions
approaches~\cite{MR2129093, MR2243708, MR2422079, MR2653895, MR2911421, MR2963799, MR3194684, 2017arXiv170605306D, 2019arXiv190702495A},
the Aubry-Mather theory for pseudo differential equations~\cite{MR2542727},
Cahn-Hilliard and Allen-Cahn-type equations~\cite{MR3051400, MR3485125},
probability and Harnack inequalities~\cite{MR2095633, MR2180302, MR2928344, MR2912450}, 
decay for
parabolic equations~\cite{MR3950697, MR3912710}, 
friction and dissipation effects~\cite{2018arXiv181107667D}, 
smooth approximation with suitable solutions~\cite{MR2667633},
Bernstein-type regularity
results~\cite{CABRE},
variational methods~\cite{biagvecc},
nonlinear operators~\cite{ABATANGELO}
and plasma physics~\cite{PhysRevE.87.063106}.\medskip

We endow the problem in~\eqref{problogis-PRE}
with a set of Neumann boundary
conditions that correspond to a ``zero-flux''
condition according to the stochastic process producing
the diffusive operator in~\eqref{problogis-PRE}.
This Neumann condition appears to be new in the literature
and depends on the different ranges of~$\alpha$ and~$\beta$
according to the following setting.
If~$\alpha=0$, we
consider the nonlocal Neumann condition introduced
in~\cite{dirova}, thus prescribing that
\begin{equation}\label{NEU-1}
\Ns u(x):=\int_\Omega \frac{u(x)-u(y)}{|x-y|^{n+2s}}\,dy
=0\qquad    {\mbox{for every }}\;x\in\R^n \setminus \overline\Omega.
\end{equation}
If instead~$\beta=0$, we prescribe the classical
Neumann condition
\begin{equation}\label{NEU-2}
\frac{\partial u}{\partial \nu}=0\qquad
{\mbox{on }}\;\partial\Omega.
\end{equation}
Finally, if~$\alpha\ne0$ and~$\beta\ne0$,
we prescribe both the classical and the nonlocal
Neumann conditions, by requiring that
\begin{equation}\label{NEU-3}
\begin{dcases}
\Ns u=0   & \qquad{\mbox{in }}\; \R^n \setminus \overline\Omega,
\\
\displaystyle\frac{\partial u}{\partial \nu}=0   &\qquad{\mbox{on }} \;\partial\Omega.\end{dcases}
\end{equation}
We remark that the prescription in~\eqref{NEU-3}
is {\em not} an ``overdetermined'' condition
(as it will be confirmed by the existence
result in Theorem~\ref{esistenza} below).

The set of boundary/external Neumann
conditions in~\eqref{NEU-1}, \eqref{NEU-2}
and~\eqref{NEU-3}, in dependence of the different
ranges of~$\alpha$ and~$\beta$,
will be denoted by ``$(\alpha,\beta)$-Neumann conditions'',
and, with this notation and~\eqref{problogis-PRE},
the main question studied in this paper focuses
on the problem
\begin{equation}\label{problogis}
\begin{dcases}
-\alpha\Delta u +\beta(-\Delta)^su
=(m-\mu u)u+\tau\;J\star u &\; {\mbox{ in }}\, \Omega,
\\
{\mbox{with $(\alpha,\beta)$-Neumann condition.}}\end{dcases}
\end{equation}
In this setting, the $(\alpha,\beta)$-Neumann conditions
provide an ``ecological niche''
for the population with density~$u$, making~$\Omega$
a natural environment in which a given species
can live and compete for a resource~$m$,
according to a competition function~$\mu$.
In this setting, the parameter~$\tau$,
as modulated by the interaction kernel~$J$,
describes an additional birth rate due to further intercommunication
than just with
the closest neighbors, as it happens, for instance,
in pollination.

As a matter of fact,
the role of the $(\alpha,\beta)$-Neumann conditions
is precisely to make the boundary and the exterior
of the niche~$\Omega$ ``reflective'':
namely when an individual exits the niche,
it is forced to immediately come back into the niche
itself,
following the same diffusive process,
see Section~2 in~\cite{dirova} (see also~\cite{VOND}
for a thoroughgoing probabilistic discussion
about this process).

As a technical remark, we also observe that
our $(\alpha,\beta)$-Neumann condition
is structurally different (even when~$\alpha=0$ and~$s=1/2$)
from the case of
bounded domains with reflecting barriers presented
in~\cite{MR3082317, MR3771424},
and the diffusive operator taken into account
in~\eqref{problogis} cannot be
obtained by the spectral decomposition of
the classical Laplacian in~$\Omega$
(except for the special case
of periodic environments, see e.g. Section~2.3
and Appendix~Q in
\cite{MR3967804}).
\medskip

The possible presence in~\eqref{problogis}
of two different diffusion
operators, one of classical and the other of fractional
flavor, has a clear biological interpretation,
namely the population with density~$u$
can possibly alternate both short and long-range
random walks, and this could be motivated,
for instance, by a superposition between
local exploration of the environment
and hunting strategies
(see e.g.~\cite{NATU, MR1636644, MR2332679, MR2411225, MR2897881, MR2924452, MR3026598, MR3590646, MR3590678}).
A detailed presentation of this superposition
of stochastic processes will be presented in Appendix~\ref{APPEB};
see also~\cite{NICE} for the detailed description
of the local/nonlocal reflecting barrier also in terms
of the population dynamics model.
\medskip

The notion of solution of~\eqref{problogis} is intended here
in the weak sense, as it will be discussed precisely
in formula~\eqref{WEAKSOL}.
See however~\cite{MR1911531, biagvecc} for a regularity theory for weak solutions
of the equations driven by the mixed order
operators as in~\eqref{problogis}.\medskip

Our first result in this setting is that the problem
in~\eqref{problogis} admits a minimal energy
solution (under very natural
and mild structural assumptions). To state it,
it is convenient to define
\begin{equation}\begin{split}\label{qbar}
\underline{q}:=\;&\begin{dcases}
\displaystyle\frac{2^*}{2^*-2} & {\mbox{ if $\beta=0$ and~$n>2$}},\\
\displaystyle\frac{2^*_s}{2^*_s-2} & {\mbox{ if $\beta\ne0$ and~$n>2s$}},\\
1 & {\mbox{ if $\beta=0$ and~$n\le2$, or
if $\beta\ne0$ and~$n\le2s$}},\end{dcases}\\=\;&
\begin{dcases}
\displaystyle\frac{n}{2} & {\mbox{ if $\beta=0$ and~$n>2$}},
\\
\displaystyle\frac{n}{2s} & {\mbox{ if $\beta\ne0$ and~$n>2s$}},\\
1 & {\mbox{ if $\beta=0$ and~$n\le2$, or
if $\beta\ne0$ and~$n\le2s$.}}
\end{dcases}
\end{split}\end{equation}
As customary, the exponent~$2^*_s$ denotes the fractional
Sobolev critical exponent for~$n>2s$
and it is equal to~$\frac{2n}{n-2s}$. Similarly,
the exponent~$2^*$ denotes the classical
Sobolev critical exponent for~$n>2$
and it is equal to~$\frac{2n}{n-2}$.\medskip

We remark that~$\underline{q}\ge n/2$, and we have:

\begin{theorem}\label{esistenza}
Assume that 
\begin{eqnarray*}&&
{\mbox{$m\in L^q(\Omega)$, for some $q\in \big(\underline{q},+\infty\big]$}}\\
&&{\mbox{and $(m+\tau)^3 \mu^{-2} \in L^1(\Omega)$.}}\end{eqnarray*}
Then, there
exists a nonnegative solution of \eqref{problogis}
which can be obtained as a minimum of an energy functional.
\end{theorem}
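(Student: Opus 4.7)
The plan is to obtain the solution as a minimizer of a suitable energy functional. On the natural Hilbert space~$X$ associated with the $(\alpha,\beta)$-Neumann problem (namely $H^1(\Omega)$ when $\beta=0$, the nonlocal Neumann space of~\cite{dirova} when $\alpha=0$, or their intersection in the mixed case), I would introduce
\begin{align*}
\E(u):=&\;\frac{\alpha}{2}\int_\Omega|\nabla u|^2\,dx+\frac{\beta}{4}\iint_{(\R^n\times\R^n)\setminus\big((\R^n\setminus\Omega)\times(\R^n\setminus\Omega)\big)}\frac{(u(x)-u(y))^2}{|x-y|^{n+2s}}\,dx\,dy\\
&\;-\frac{1}{2}\int_\Omega m\,u^2\,dx+\frac{1}{3}\int_\Omega\mu\,|u|^3\,dx-\frac{\tau}{2}\int_\Omega(J\star u)\,u\,dx.
\end{align*}
The use of $|u|^3$ in place of $u^3$ in the competition term is a standard truncation device which forces the minimizer to be nonnegative: indeed $\E(|u|)\le\E(u)$, since $|\nabla|u||=|\nabla u|$ a.e., the nonlocal Gagliardo seminorm decreases under $u\mapsto|u|$, the integrals $\int m u^2$ and $\int\mu|u|^3$ are invariant, and $\int(J\star|u|)|u|\ge\int(J\star u)u$ thanks to the positivity of~$J$. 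The Euler-Lagrange equation of this modified $\E$ then coincides with~\eqref{problogis} on the set~$\{u\ge 0\}$.

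The crucial analytic step is to show that~$\E$ is bounded below and coercive. Young's convolution inequality combined with~\eqref{JL1} gives $\|J\star u\|_{L^2(\Omega)}\le\|u\|_{L^2(\Omega)}$, so that
\[
\tfrac12\int_\Omega m\,u^2\,dx+\tfrac{\tau}{2}\int_\Omega(J\star u)\,u\,dx\le\tfrac12\int_\Omega(m^++\tau)\,u^2\,dx.
\]
Applying the weighted Young inequality $ab\le \tfrac{a^3}{3\varepsilon^3}+\tfrac{2\varepsilon}{3}b^{3/2}$ with $a=(m^++\tau)\mu^{-2/3}$ and $b=\mu^{2/3}u^2$ yields
\[
(m^++\tau)\,u^2\le\frac{(m^++\tau)^3}{3\varepsilon^3\mu^2}+\frac{2\varepsilon}{3}\,\mu\,|u|^3,
\]
and, choosing $\varepsilon$ small enough, the cubic contribution is absorbed into $\tfrac13\int\mu|u|^3$; the remaining integrand is dominated, up to multiplicative constants, by $|\Omega|\tau^3/\underline\mu^2+(m+\tau)^3\mu^{-2}$, which is in $L^1(\Omega)$ by the structural hypothesis. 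This proves $\E(u)\ge c_0\int_\Omega\mu|u|^3-C_0$. Coercivity in $X$ then follows by combining this $L^3$-control with a Poincaré-Wirtinger-type inequality adapted to the $(\alpha,\beta)$-Neumann setting (the classical one on the connected set~$\Omega$ when $\beta=0$, the nonlocal analogue from~\cite{dirova} when $\alpha=0$, or a combination of the two in the mixed case).

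With coercivity in place, the direct method applies. A minimizing sequence~$\{u_k\}\subset X$ is $X$-bounded and therefore admits a subsequence converging weakly in~$X$, strongly in~$L^r(\Omega)$ for every~$r$ below the relevant Sobolev critical exponent ($2^*$ if $\alpha>0$, $2^*_s$ otherwise), and a.e.\ in~$\Omega$. The Dirichlet and Gagliardo quadratic forms are weakly lower semicontinuous by convexity, and $\int\mu|u|^3$ is lower semicontinuous by Fatou. The indefinite quadratic term $\int m u^2$ is continuous along the sequence by Hölder's inequality combined with the compact embedding $X\hookrightarrow L^{2q/(q-1)}(\Omega)$, whose range of validity is precisely $q>\underline{q}$ in view of~\eqref{qbar}: this is exactly where the hypothesis $m\in L^q$ with $q>\underline{q}$ enters. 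The pollination term is continuous because $u\mapsto J\star u$ is bounded from $L^2(\Omega)$ to itself. The weak limit $u_\infty$ is therefore a minimizer, may be taken nonnegative by the symmetrization above, and a routine variational argument (using that no essential boundary datum has been imposed on $X$) shows that it satisfies~\eqref{problogis} in the weak sense~\eqref{WEAKSOL}, with the $(\alpha,\beta)$-Neumann conditions arising automatically from the integration by parts.

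The main obstacle is the coercivity step, where the tailored assumption $(m+\tau)^3\mu^{-2}\in L^1(\Omega)$ turns out to be indispensable, as the interplay of the sign-changing resource~$m$, the pollination term, and the cubic competition term forces exactly this integrability requirement through the Young inequality above. A subsidiary delicate issue, peculiar to the mixed regime, is verifying that both conditions in~\eqref{NEU-3} are generated consistently (and not in mutual contradiction) by the single variational principle, which relies on the nonlocal integration-by-parts formula from~\cite{dirova} combined with the classical Green identity.
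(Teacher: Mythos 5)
Your energy functional and Young inequality estimate match the paper's exactly, and the overall shape of the argument is the same. There are, however, two points worth flagging.

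First, the coercivity discussion is slightly off. After the Young inequality, what you actually obtain is
\[
\E(u)\ge \frac{\alpha}{2}\int_\Omega|\nabla u|^2\,dx+\frac{\beta}{4}\iint_\Q\frac{|u(x)-u(y)|^2}{|x-y|^{n+2s}}\,dx\,dy+c_0\int_\Omega\mu|u|^3\,dx-C_0,
\]
i.e.\ the seminorm terms survive and need not be dropped. This makes the invocation of a Poincar\'e--Wirtinger inequality superfluous, and in fact a Poincar\'e inequality runs in the ``wrong direction'' for what you need: it controls the $L^2$-norm of the oscillation by the seminorm, whereas here the $L^2$-control comes directly from the cubic term and H\"older (recall $\mu\ge\underline\mu>0$ and $\Omega$ is bounded). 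The paper simply observes $\|u_j\|_{L^2(\Omega)}^2\le C\big(\int_\Omega\mu|u_j|^3\big)^{2/3}$ and concludes boundedness of the minimizing sequence in $X_{\alpha,\beta}$ without any Poincar\'e tool.

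Second, and more substantively, you bypass what the paper regards as the main structural ingredient, namely Theorem~\ref{thcontiminimo}: among all functions with given trace on $\Omega$, the Gagliardo seminorm is minimized precisely by those satisfying the nonlocal Neumann condition $\Ns u=0$ in $\R^n\setminus\overline\Omega$. The paper uses this to pass, without loss of generality, to a minimizing sequence satisfying $\Ns u_j=0$, whose values outside $\Omega$ are then given by the explicit formula $u_j(x)=\int_\Omega u_j(y)|x-y|^{-n-2s}\,dy\big/\int_\Omega|x-y|^{-n-2s}\,dy$; the Dominated Convergence Theorem then upgrades the $L^p(\Omega)$ convergence to a.e.\ convergence on all of $\R^n$, and Fatou gives lower semicontinuity of the Gagliardo term. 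Your route instead extracts a weak limit in the Hilbert space $X_{\alpha,\beta}$ and uses convexity of the quadratic seminorms for weak lower semicontinuity; this is also correct, and arguably more standard, but it is genuinely different and it obscures the ``energetic role of the nonlocal reflection'' that the paper is at pains to highlight (it is this structural lemma that justifies the extra conclusion in Proposition~\ref{minimizer} that the infimum over $X_{\alpha,\beta}$ coincides with the infimum over the class with $\Ns u=0$). If you go your route, you should at least note that the minimizer's values outside $\Omega$ are a posteriori forced to satisfy $\Ns u=0$, either by the Euler--Lagrange equation tested against functions supported in $\R^n\setminus\overline\Omega$, or by applying Theorem~\ref{thcontiminimo} to the minimizer itself.
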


The precise definition of energy functional
used in Theorem~\ref{esistenza}
will be presented in~\eqref{ENE}: roughly speaking,
the energy associated to Theorem~\ref{esistenza}
will be the ``natural'' functional for the variational methods,
and its Euler-Lagrange equation will correspond to
the notion of weak solution.

While the functional analysis part of the proof of 
Theorem~\ref{esistenza}
relies on standard direct methods in the calculus of variations,
the more interesting part of the argument
makes use of a structural property of the nonlocal Neumann
condition that will be presented in 
Theorem~\ref{thcontiminimo}
(roughly speaking, the
nonlocal Neumann condition in~\eqref{NEU-1} will be instrumental
to minimize the Gagliardo seminorm, thus clarifying the energetic
role of the nonlocal reflection introduced in~\cite{dirova}).\medskip

Though the result in Theorem~\ref{esistenza}
has an obvious interest in pure mathematics,
our main analysis will focus on whether
problem~\eqref{problogis} does admit a {\em
nontrivial} solution
(notice indeed that~$u\equiv0$ is always a solution of~\eqref{problogis}).
In particular, in view of Theorem~\ref{esistenza},
a useful mathematical tool to detect nontrivial
solutions consists in proving that the minimal energy
configuration is not attained by the trivial solution
(hence, in this case, the solution produced by Theorem~\ref{esistenza}
is nontrivial).
The question of the existence of nontrivial
solutions has a central importance for the mathematical
model, since
it corresponds to the possibility of a population
to survive in the environmental condition
provided by the niche. Interestingly,
in our model, the survival of the population
can be enhanced by the possibility of exploiting
resources by long-range interactions. Indeed, we stress that
the nonlocal resource~$m$ in~\eqref{problogis-PRE}
is not necessarily positive (hence, the natural environment
can be ``hostile'' for the population):
in this configuration, we show that
the survival of the species is still possible
if the ``pollination'' birth rate~$\tau$ is sufficiently large.
The quantitative result that we have is the following:

\begin{theorem}\label{intm>0}
Assume that
\begin{eqnarray*}&&
{\mbox{$m\in L^q(\Omega)$, for some $q\in \big(\underline{q},+\infty\big]$}}\\
&&{\mbox{and $(m+\tau)^3 \mu^{-2} \in L^1(\Omega)$.}}\end{eqnarray*}
Then,
\begin{enumerate}
\item[(i)\;\;\;\;] if $m\equiv 0$ and~$\tau=0$,
then the 
only solution of \eqref{problogis} is the one identically zero;
\item[(ii)\;\;\;\;] if 
\begin{equation}\label{orerhgbfvasnm1}
\int_\Omega \big(m(x) +\tau\;J\star 1(x)\big)\,dx>0\end{equation}
and
\begin{equation}\label{orerhgbfvasnm2}
\mu\in L^1(\Omega),\end{equation}
then \eqref{problogis} admits a nonnegative
solution $u\not\equiv0$.
\end{enumerate}
\end{theorem}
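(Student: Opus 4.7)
The two parts are handled separately, but both hinge on the observation that constants are admissible in this problem: for any $c\in\R$, the constant function $u\equiv c$ satisfies $\partial_\nu c = 0$ on $\partial\Omega$ and $\Ns c\equiv 0$ on $\R^n\setminus\overline\Omega$ (the integrand in \eqref{NEU-1} being identically zero), and both differential operators $\Delta$ and $(-\Delta)^s$ annihilate $c$. Consequently, constants belong to the admissible test space of \eqref{WEAKSOL} and to the domain of $\E$ in \eqref{ENE}, and all diffusive pairings vanish when one of the two entries is a constant.

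For part~(i), the plan is to test the weak formulation of~\eqref{problogis} against $v\equiv 1$. Both diffusive bilinear pairings collapse, so the identity reduces to
$$ 0 = \int_\Omega (m-\mu u)\,u\,dx + \tau \int_\Omega\int_\Omega J(x-y)\,u(y)\,dy\,dx. $$
Under the assumptions $m\equiv 0$ and $\tau=0$ this simplifies to $\int_\Omega \mu u^2\,dx = 0$, and the lower bound $\mu\geq\underline{\mu}>0$ forces $u\equiv 0$, regardless of the sign of $u$.

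For part~(ii), the strategy is to exploit Theorem~\ref{esistenza}: a nonnegative minimizer of $\E$ exists, and if its energy is strictly negative then it cannot be the trivial function (whose energy vanishes). It therefore suffices to exhibit some competitor at which $\E$ is negative, and the natural candidate is a small positive constant $u\equiv\ve$. Since the kinetic parts of $\E$ vanish on constants, a direct computation (together with Fubini, and using $J\star\ve\equiv \ve\,J\star 1$) gives
$$ \E(\ve) = -\frac{\ve^2}{2}\int_\Omega\bigl(m(x) + \tau\,J\star 1(x)\bigr)\,dx + \frac{\ve^3}{3}\int_\Omega \mu\,dx. $$
Hypothesis~\eqref{orerhgbfvasnm1} makes the quadratic coefficient strictly negative, while~\eqref{orerhgbfvasnm2} ensures that the cubic coefficient is finite. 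Choosing $\ve>0$ sufficiently small, the quadratic term dominates the cubic one, so $\E(\ve)<0$, and Theorem~\ref{esistenza} then delivers the desired nontrivial nonnegative solution.

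Neither step poses a genuine analytic difficulty; the main care is bookkeeping, namely spelling out \eqref{WEAKSOL} and \eqref{ENE} precisely enough to justify that constants are admissible competitors and that the $\alpha$- and $\beta$-contributions vanish on them. The hypothesis $\mu\in L^1(\Omega)$ in~(ii) is used solely to keep the cubic correction finite, so that the sign of $\E(\ve)$ is governed, for small $\ve$, by the integrated balance between the resources plus pollination and the infinitesimal cost of self-competition.
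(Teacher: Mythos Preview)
Your proof is correct. Part~(ii) is identical to the paper's argument: both evaluate $\E$ on a small constant $\varepsilon$ and use~\eqref{orerhgbfvasnm1} to make the quadratic term dominate. For part~(i) there is a small but genuine difference: the paper tests the weak formulation with $v:=u$ (the solution itself), obtaining $0\le [u]^2_{X_{\alpha,\beta}} = -\int_\Omega \mu u^3\,dx<0$, which uses the nonnegativity of~$u$ to get the strict sign on the right. You instead test with $v\equiv 1$, which kills both diffusive pairings and yields $\int_\Omega \mu u^2\,dx=0$ directly; this is marginally cleaner since it works without any sign assumption on~$u$, so it proves the stronger statement that \emph{every} weak solution (not just every nonnegative one) must vanish.
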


A particular case of Theorem \ref{intm>0} is when the resource
$m$ is nonnegative. In this situation,
Theorem~\ref{intm>0}(i) gives that no survival is possible
without resources and pollination,
i.e. when both~$m$ and~$\tau$ vanish identically
(unless also~$\mu$ vanishes identically, then
reducing the problem to that of mixed operator harmonic functions),
whereas Theorem~\ref{intm>0}(ii)
guarantees survival
if at least one between the environmental resource
and the pollination is favorable to life.
Precisely, one can immediately deduce from
Theorem~\ref{intm>0} the following result:

\begin{cor}\label{COL:AMS}
Assume that 
\begin{eqnarray*}&&
{\mbox{$m\in L^q(\Omega)$, for some $q\in \big(\underline{q},+\infty\big]$,}}\\
&&{\mbox{$m$ is nonnegative,}}\\
&&{\mbox{and $(m+\tau)^3 \mu^{-2} \in L^1(\Omega)$.}}\end{eqnarray*}
Then,
\begin{enumerate}
\item[(i)\;\;\;\;] if $m\equiv 0$ and~$\tau=0$, then the 
only solution of \eqref{problogis} is the one identically zero;
\item[(ii)\;\;\;\;] if either~$m>0$ or $\tau(J\star 1)>0$
in a set of positive measure and
$\mu\in L^1(\Omega)$, then \eqref{problogis}
admits a nonnegative solution $u\not\equiv0$.
\end{enumerate}
\end{cor}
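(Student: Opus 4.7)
The plan is to deduce Corollary \ref{COL:AMS} directly from Theorem \ref{intm>0} by verifying that, under the additional sign hypothesis $m \geq 0$, the abstract conditions \eqref{orerhgbfvasnm1}--\eqref{orerhgbfvasnm2} reduce to the more transparent ones stated in the corollary. No new analytic machinery is required, so I would present the argument as a short specialization of the preceding theorem.

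For part (i), the assumptions $m \equiv 0$ and $\tau = 0$ coincide verbatim with those of Theorem \ref{intm>0}(i), so I would simply quote that result to conclude that the trivial function is the only solution of \eqref{problogis}.

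For part (ii), the key observation is that the integrand appearing in \eqref{orerhgbfvasnm1} is pointwise nonnegative on $\Omega$. Indeed, $m \geq 0$ by hypothesis, and $\tau \geq 0$ together with $J \geq 0$ (the latter inherited from $J \in L^1(\R^n,[0,+\infty))$) force $\tau\, J\star 1 \geq 0$ on $\Omega$. Hence, if $m > 0$ on a set of positive measure, then already $\int_\Omega m\, dx > 0$; symmetrically, if $\tau(J\star 1) > 0$ on a set of positive measure, then $\int_\Omega \tau\, J\star 1\, dx > 0$. In either case,
\[
\int_\Omega \bigl(m(x)+\tau\, J\star 1(x)\bigr)\, dx > 0,
\]
so \eqref{orerhgbfvasnm1} is verified. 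Since $\mu \in L^1(\Omega)$ is directly assumed, condition \eqref{orerhgbfvasnm2} also holds, and Theorem \ref{intm>0}(ii) then yields the desired nontrivial nonnegative solution.

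There is essentially no genuine obstacle here: the analytic content, namely the variational construction of the nontrivial solution, has already been packaged into Theorem \ref{intm>0}. The only subtlety worth highlighting is that the corollary replaces the abstract average-positivity condition \eqref{orerhgbfvasnm1} by the cleaner pointwise alternative ``$m > 0$ or $\tau(J\star 1) > 0$ on a set of positive measure'', which is legitimate precisely because nonnegativity of $m$, $\tau$ and $J$ prevents cancellation in the integral.
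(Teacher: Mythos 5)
Your proposal is correct and follows exactly the deduction the paper intends (the paper itself provides no separate proof, remarking only that the corollary "can immediately be deduced" from Theorem~\ref{intm>0}). Part~(i) is a verbatim restatement of Theorem~\ref{intm>0}(i), and your observation that nonnegativity of $m$, $\tau$ and $J$ upgrades the pointwise positivity hypothesis of part~(ii) to the integral condition~\eqref{orerhgbfvasnm1} is precisely the intended reduction.
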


Problems related to Corollary~\ref{COL:AMS}
have been studied in
\cite{cadiva} under Dirichelet (rather than Neumann)
boundary conditions.
\medskip

{F}rom the biological point of view, assumption~\eqref{orerhgbfvasnm1}
states that the environment is ``in average''
favorable for the survival of the species.
It is therefore a natural question to investigate
the situation in which the environment
is ``mostly hostile to life''.
To study this phenomenon,
when~$m\in
L^q$ with~$q>n/2$, with~$m^+\not \equiv 0$ and
$$\int_\Omega m(x)\,dx<0,$$
we denote\footnote{As customary, we freely
use in this paper the standard notation
$$ m^+(x):=\max\{ 0, \,m(x)\}\qquad{\mbox{and}}\qquad
m^-(x):=\max\{ 0, \,-m(x)\}.$$} by~$\lambda_1$ the first 
positive eigenvalue associated with
the diffusive operator in \eqref{problogis}.
More precisely, we
consider the weighted eigenvalue problem
\begin{equation}\label{probauto}
\begin{dcases}
-\alpha\Delta u +\beta(-\Delta)^su= \lambda mu   & \;{\mbox{ in }}\,\Omega,
\\
{\mbox{with $(\alpha,\beta)$-Neumann condition.}}\end{dcases}
\end{equation}
As it will be discussed in detail in Propo\-si\-tion~\ref{PROAUTOVA} here
and in~\cite{ANCI},
problem~\eqref{probauto}
admits the existence of two 
unbounded sequences of eigenvalues, 
one positive and one negative.
In this setting,
the smallest strictly positive eigengevalue
will be denoted by~$\lambda_1$.
When we want to emphasize the dependence of~$\lambda_1$
on the resource~$m$, we will write it as~$\lambda_1(m)$.

We also denote by~$e$ an
eigenfunction corresponding to~$\lambda_1$
normalized such that 
$$\int_\Omega m(x)\,e^2(x)\,dx=1.$$
The first eigenvalue will be an important
threshold for the survival of the species,
quantifying the role of the necessary pollination parameter~$\tau$
in order to overcome the presence of an hostile behavior in average.
The precise result that we obtain is the following one:

\begin{theorem}\label{intm<0}
Assume that 
$m\in L^q(\Omega)$, for some $q \in \big(\underline{q},+\infty\big]$,
and~$(m+\tau)^3\mu^{-2}\in L^1(\Omega)$.

Then,
\begin{enumerate}
\item[(i)\;\;\;\;] if $m\le -\tau$,
then the 
only solution of \eqref{problogis} is the one identically zero;
\item[(ii)\;\;\;\;] if $m^+\not \equiv 0$, $\mu\in L^1(\Omega)$,
\begin{equation}\label{s3t4ty48yu}
\int_\Omega m(x)\,dx<0,
\end{equation}
and
\begin{equation}\label{LOWBO}
\lambda_1-1<\tau \int_\Omega(J\star e(x))e(x)\,dx,
\end{equation}
then \eqref{problogis} admits a nonnegative
solution $u\not\equiv0$.\end{enumerate}
\end{theorem}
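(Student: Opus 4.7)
The proof splits naturally along the two cases (i) and (ii) in the statement.

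For part (i), I would test the weak formulation of \eqref{problogis} against $u$ itself. Writing $\E(u,u)$ for the nonnegative Dirichlet-type bilinear form associated with $-\alpha\Delta+\beta(-\Delta)^s$ under the $(\alpha,\beta)$-Neumann condition, one obtains
\[
\E(u,u)=\int_\Omega m\,u^2-\int_\Omega \mu\,u^3+\tau\int_\Omega (J\star u)\,u.
\]
The key ingredient is the bound $\int_\Omega (J\star u)\,u\le\int_\Omega u^2$, which follows from rewriting the left-hand side as the double integral $\int_\Omega\int_\Omega J(x-y)\,u(x)\,u(y)\,dx\,dy$, applying $2ab\le a^2+b^2$, and then using \eqref{Jeven} to symmetrize the two resulting pieces, together with \eqref{JL1} to estimate the inner integral of $J(x-y)$ by $1$. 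Assuming $u\ge 0$ (consistent with the biological setting and with the nonnegative solution class of Theorem \ref{esistenza}), the hypothesis $m\le -\tau$ then gives
\[
\E(u,u)+\int_\Omega \mu\, u^3\le\int_\Omega (m+\tau)\,u^2\le 0,
\]
forcing both nonnegative summands on the left to vanish. The vanishing of $\E(u,u)$ makes $u$ constant (using \eqref{KASM:LSKDD} when $\beta=0$, or the analogous rigidity of the nonlocal energy under \eqref{NEU-1}), and then $\int_\Omega \mu\,u^3=0$ combined with $\mu\ge\underline\mu>0$ forces $u\equiv 0$.

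For part (ii), I would exploit the variational characterization of Theorem \ref{esistenza}, which under the present hypotheses delivers a nonnegative solution $u_*$ as a minimizer of the energy functional $\mathcal F$ introduced in \eqref{ENE}, of the schematic form
\[
\mathcal F(u)=\tfrac12\,\E(u,u)-\tfrac12\int_\Omega m\,u^2+\tfrac13\int_\Omega \mu\,u^3-\tfrac{\tau}{2}\int_\Omega (J\star u)\,u.
\]
To guarantee $u_*\not\equiv 0$ it suffices to show that $\inf \mathcal F<0$. I would test at $te$, where $e$ is the first positive eigenfunction of \eqref{probauto}, normalized so that $\int_\Omega m\, e^2=1$ (this normalization is admissible because \eqref{s3t4ty48yu} prevents $e$ from being constant, hence forces $\E(e,e)>0$ and in turn $\int_\Omega m\, e^2>0$). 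Using the eigenvalue relation $\E(e,e)=\lambda_1\int_\Omega m\,e^2=\lambda_1$, a direct computation yields
\[
\mathcal F(te)=\frac{t^2}{2}\Bigl(\lambda_1-1-\tau\int_\Omega (J\star e)\,e\Bigr)+\frac{t^3}{3}\int_\Omega \mu\,e^3.
\]
By \eqref{LOWBO} the coefficient of $t^2$ is strictly negative, so for sufficiently small $t>0$ the quadratic term dominates the cubic one and $\mathcal F(te)<0$, ruling out $u_*\equiv 0$.

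The main technical hurdle is of a bookkeeping nature: one must invoke the spectral theory of \eqref{probauto} (Proposition \ref{PROAUTOVA}) to know that $\lambda_1$ and $e$ exist with the right sign properties and sufficient integrability for $\mu\, e^3$ to make sense as a term in $\mathcal F$; the constant-exclusion step above then makes \eqref{s3t4ty48yu} a hard (rather than decorative) hypothesis. A secondary loose end in part (i) is the treatment of possibly sign-changing weak solutions, handled by testing the equation separately against $u^+$ and $u^-$ and repeating the same chain of inequalities, since Young's bound on $J\star u$ and the pointwise inequality $m+\tau\le 0$ are both sign-neutral.
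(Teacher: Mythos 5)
Your argument matches the paper's proof in both parts. For (i), the paper likewise tests the weak formulation against a nonnegative solution $u$, derives $\int_\Omega(J\star u)u\,dx\le\int_\Omega u^2\,dx$ (via Cauchy--Schwarz plus Young for convolutions, rather than your direct symmetrization, but the bound is the same) and concludes from $m\le-\tau$, $u\ge0$, $\mu\ge\underline\mu>0$. Your detour via constancy of $u$ is unnecessary: once $\int_\Omega\mu u^3\,dx=0$ with $u\ge0$ and $\mu\ge\underline\mu>0$, already $u\equiv0$. For (ii), evaluating the energy at $te$ with $e$ the normalized first eigenfunction and sending $t\searrow0$ is exactly the paper's argument; the paper supplies the boundedness of $e$ (needed so that $\int_\Omega\mu e^3\,dx$ is finite when $\mu\in L^1(\Omega)$) from a regularity result in the companion article~\cite{ANCI}, a point you correctly flag as a loose end.

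Your closing remark about part (i) is, however, incorrect: testing separately against $u^+$ and $u^-$ does not ``repeat the same chain of inequalities''. The obstruction is the cubic competition term, which is missing from your list of sign-neutral quantities and which is not sign-neutral. Testing against $-u^-$ turns $(m-\mu u)u\cdot(-u^-)$ into $m(u^-)^2+\mu(u^-)^3$, and the contribution $\int_\Omega\mu(u^-)^3\,dx\ge0$ has the wrong sign; the resulting chain only bounds the nonnegative quadratic form of $u^-$ from above by $\int_\Omega\mu(u^-)^3\,dx$, which gives no contradiction. The paper does not attempt to exclude sign-changing solutions; it implicitly restricts, as your main argument does, to the nonnegative class delivered by Theorem~\ref{esistenza}. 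So the loose end you spot is a fair observation about the literal wording of the statement, but the patch you propose does not close it.
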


Once again, in
Theorem~\ref{intm<0}, the case described in~(i)
is the one less favorable to life, since
the combination of both the resources and the pollination
is in average negative, while the case in~(ii)
gives a lower bound of the pollination parameter~$\tau$ which is
needed for the survival of the species, as quantified by~\eqref{LOWBO}.\medskip

We recall that the link
between the survival ability of a biological population
and the analysis
of the eigenvalues of a linearized problem
is a classical topic in mathematical biology,
see e.g.~\cite{Skellam, MR1478921, Berestycki-Hamel-Roques-1, Berestycki-Hamel-Roques-2, Kawasaki-Shigesada-2, Kawasaki-Shigesada-1, MR3082317, Mazari, MazariTH}
(yet, we believe that this is the first place in which
a detailed analysis of this type is carried over to the
case of mixed operators with our new
type of Neumann conditions).\medskip

In light of~\eqref{LOWBO}, a natural question
consists in quantifying the size of the first eigenvalue.
Roughly speaking, from~\eqref{LOWBO},
the smaller~$\lambda_1$, the smaller is the threshold
for the pollination guaranteeing survival, hence
configurations with
small first eigenvalues correspond to the ones of better chances
of life.

To address this problem,
since the eigenvalue~$\lambda_1=\lambda_1(m)$ depends on the resource~$m$,
it is convenient to consider an optimization problem
for $\lambda_1$ in terms of three structural parameters of
the resource~$m$, namely its minimum, its maximum and its average,
in order to detect under which conditions
on these parameters the first eigenvalue can be made
conveniently small.
More precisely, given~$\overline{m}$,
$\underline{m}\in (0,+\infty)$ and~$m_0\in (-\underline{m},0)$ 
we consider the class of resources
\begin{equation}\label{defemme}\begin{split}
\mathscr{M}=\mathscr{M}(\overline{m},\underline{m},m_0)
:=\;&\Big\lbrace m\in L^\infty(\Omega)\; {\mbox{ s.t. }}\;
\inf_\Omega m\ge -\underline{m},\quad
\sup_\Omega m\leq \overline{m},\\
&\qquad\qquad
\int_\Omega m(x)\,dx=m_0|\Omega|\quad{\mbox{ and }}\quad
m^+\not\equiv 0
\Big\rbrace.
\end{split}\end{equation}
We will also consider the smallest possible first eigenvalue
among all the resources in~$\mathscr{M}$, namely we set
\begin{equation}\label{LAMSPOTTP}
\underline{\lambda}:=\inf_{m\in\mathscr{M}}\lambda_1(m).
\end{equation}
When we want to emphasize the dependence
of~$\underline{\lambda}$ on the structural quantities~$\overline{m}$,
$\underline{m}$ and~$m_0$
that characterize~$\mathscr{M}$, we will adopt
the explicit notation~$\underline{\lambda}
(\overline{m},\underline{m},m_0)$.

Our main objective will be to detect whether or
not~$\underline{\lambda}$
can be made arbitrarily small in a number of different regimes:
we stress that the smallness of~$\underline{\lambda}$
corresponds to a choice of an optimal distribution of resources
that is particularly favorable for survival.\medskip

The first result that we
present in this direction is a general estimate
controlling~$\underline{\lambda}$ with~$O\left(  \frac{1}{\overline{m}}
\right)$, provided that the maximal hostility
of the environment
does not prevail with respect to the maximal
and average
resources. In terms of survival of the species, this is a rather
encouraging outcome, since it allows
the existence of nontrivial solutions
provided that the maximal resource is sufficiently large.
The precise result that we have is the following:

\begin{theorem}\label{dzero}
Let 
\begin{equation}\label{GARAPRE}
\frac{\underline{m}+m_0}{\underline{m}+\overline{m}}\geq d_0
\end{equation}
for some $d_0>0$.
Then,
\[
\underline{\lambda}(\overline{m},\underline{m},m_0)\leq \frac{C}{\overline{m}}
\]
for some $C=C(\Omega,d_0)>0$.
\end{theorem}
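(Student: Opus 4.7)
The plan is to prove the upper bound by exhibiting a specific admissible resource $m\in\mathscr{M}$ and a specific test function, and then invoking the Rayleigh-quotient characterization of the first positive eigenvalue $\lambda_1(m)$ recalled in Proposition~\ref{PROAUTOVA}. The heuristic is straightforward: if we place the positive mass of~$m$ on a set~$\omega$ of definite positive measure (thanks to hypothesis~\eqref{GARAPRE}) and use as competitor a smooth bump concentrated on~$\omega$, the quotient $\E(u)/\int_\Omega m\,u^2\,dx$ naturally decays like $1/\overline{m}$.

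Concretely, I would set
$$m:=\overline{m}\,\chi_\omega-\underline{m}\,\chi_{\Omega\setminus\omega},\qquad |\omega|:=\frac{\underline{m}+m_0}{\underline{m}+\overline{m}}\,|\Omega|.$$
A direct check shows $m\in\mathscr{M}$: we have $-\underline{m}\le m\le\overline{m}$, $m^+\not\equiv 0$, and $\int_\Omega m\,dx=\overline{m}|\omega|-\underline{m}(|\Omega|-|\omega|)=m_0|\Omega|$. Assumption~\eqref{GARAPRE} yields $|\omega|\ge d_0|\Omega|$, and since $\Omega$ is a $C^1$ bounded open set, I can arrange $\omega$ to contain a closed ball $\overline{B}\Subset\Omega$ whose radius and distance to $\partial\Omega$ are bounded below by a positive constant depending only on $\Omega$ and~$d_0$.

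As test function I would then fix a nonnegative, nontrivial $\phi\in C^\infty_c(B)$, extended by zero outside~$\Omega$. Because $B$ is compactly contained in $\Omega$ with fixed geometry, the energy $\E(\phi)$ associated to $-\alpha\Delta+\beta(-\Delta)^s$ with the $(\alpha,\beta)$-Neumann condition is bounded above by a constant $C(\Omega,d_0)$: the local Dirichlet term $\alpha\int_\Omega|\nabla\phi|^2$ is trivially bounded, and the Gagliardo-type part---including the contribution pairing $B$ with $\R^n\setminus\Omega$---is controlled by $\mathrm{dist}(B,\partial\Omega)^{-2s}\|\phi\|_{L^2}^2$ plus a bounded intrinsic term. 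Moreover $\int_\Omega m\,\phi^2\,dx=\overline{m}\int_B\phi^2\,dx>0$, so $\phi$ is admissible in the variational characterization of the first positive eigenvalue.

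Putting these ingredients together, I would conclude
$$\underline{\lambda}(\overline{m},\underline{m},m_0)\le\lambda_1(m)\le\frac{\E(\phi)}{\overline{m}\,\int_B\phi^2\,dx}\le\frac{C(\Omega,d_0)}{\overline{m}}.$$
The only delicate point in this plan is making sure that the Rayleigh-quotient characterization employed in the last inequality is really available in our mixed local/nonlocal Neumann setting with an indefinite weight for which $\int_\Omega m<0$; this is precisely what Proposition~\ref{PROAUTOVA} is designed to furnish, so it can be invoked directly. Beyond that, the geometric step of choosing $\omega$ to contain a ball of definite size, and the elementary energy estimate on the fixed bump $\phi$, are completely routine.
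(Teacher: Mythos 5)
Your proof is correct and follows essentially the same route as the paper: both choose a bang--bang resource whose positive set $D$ has the prescribed measure from \eqref{misuraD} and contains a fixed ball $B\Subset\Omega$ (with $|B|$ small enough relative to $d_0|\Omega|$ so that this is compatible), then test the Rayleigh quotient of Proposition~\ref{PROAUTOVA} against a fixed bump in $C^\infty_0(B)$. The only difference is that the paper supplies an explicit construction of $D$ (as $B$ adjoined to a half-space slab, with a measure-continuity argument to pin down the exact volume) where you assert its existence; also note that the numerator of your Rayleigh quotient should be the seminorm $[\phi]^2_{X_{\alpha,\beta}}$ rather than the full logistic functional $\E(\phi)$, though your intent is clear.
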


A direct consequence of Theorem~\ref{dzero}
gives that when the upper and lower bounds of
the resource are the same and get arbitrarily large,
then~$\underline{\lambda}$ gets arbitrarily small
(hence, in view of~\eqref{LOWBO},
there exists a resource distribution which is favorable to survival).
More\footnote{To avoid notational
confusion,
we reserved the name of~$m$ to
the resource in~\eqref{problogis-PRE}
and we denoted by~$\emme$ a ``free
variable''
dimensionally related to the resource.}
precisely:

\begin{cor}\label{mmtoinfty}
We have 
\[
\lim_{\emme\nearrow+ \infty} \underline{\lambda}(\emme,\emme,m_0)=0.
\]
\end{cor}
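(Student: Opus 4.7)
My proof of Corollary~\ref{mmtoinfty} will be a direct application of Theorem~\ref{dzero}. The crucial observation is that when one specializes $\overline{m}=\underline{m}=\emme$, the ratio appearing in hypothesis~\eqref{GARAPRE} becomes
\[
\frac{\underline{m}+m_0}{\underline{m}+\overline{m}} \;=\; \frac{\emme+m_0}{2\emme} \;=\; \frac12+\frac{m_0}{2\emme}.
\]
Since $m_0$ is a fixed constant (negative, but of fixed magnitude), this quantity tends to $1/2$ as $\emme\nearrow+\infty$. In particular, one can fix a threshold $\emme_\star=\emme_\star(m_0)>0$ such that for every $\emme\ge \emme_\star$ the above ratio is bounded below by, say, $1/4$; moreover, for such $\emme$ we also have $\emme>|m_0|$, so that the class $\mathscr{M}(\emme,\emme,m_0)$ defined in~\eqref{defemme} is nonempty (for instance, a step function equal to $\emme$ on a small subset of $\Omega$ and equal to a suitable negative constant on its complement meets all the constraints), and hence $\underline{\lambda}(\emme,\emme,m_0)$ is well-defined by~\eqref{LAMSPOTTP}.

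With this preparation, I will invoke Theorem~\ref{dzero} with the fixed choice $d_0:=1/4$, which produces a constant $C=C(\Omega,1/4)>0$ \emph{independent of $\emme$} such that
\[
\underline{\lambda}(\emme,\emme,m_0) \;\le\; \frac{C}{\emme}\qquad\text{for every } \emme\ge\emme_\star.
\]
Since $\underline{\lambda}$ is an infimum of strictly positive eigenvalues and therefore nonnegative, a sandwich argument gives
\[
0 \;\le\; \underline{\lambda}(\emme,\emme,m_0) \;\le\; \frac{C}{\emme} \;\longrightarrow\; 0\qquad\text{as } \emme\nearrow+\infty,
\]
which is precisely the desired conclusion.

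Strictly speaking there is no real obstacle here: Theorem~\ref{dzero} has been designed to yield exactly this kind of asymptotic behaviour, and the role of Corollary~\ref{mmtoinfty} is simply to extract the most transparent biological consequence from it, namely that if upper and lower bounds on the resource are taken to be the same and grow without bound (while the average $m_0$ is kept fixed), then the optimal first eigenvalue collapses to zero and survival becomes possible in view of the threshold condition~\eqref{LOWBO}. The only mildly subtle point, already addressed above, is to ensure that the admissible class $\mathscr{M}(\emme,\emme,m_0)$ is nonempty and that the ratio~\eqref{GARAPRE} stays uniformly bounded away from zero along the sequence $\emme\to+\infty$, so that a single constant $C$ serves throughout.
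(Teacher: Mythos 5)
Your proposal is correct and follows essentially the same argument as the paper: observe that $\frac{\emme+m_0}{2\emme}\to\frac12$, so \eqref{GARAPRE} holds with $d_0=\frac14$ once $\emme$ is large enough, then apply Theorem~\ref{dzero} to obtain $\underline{\lambda}(\emme,\emme,m_0)\le C/\emme$ and conclude. The additional remarks you make about nonemptiness of $\mathscr{M}(\emme,\emme,m_0)$ and nonnegativity of $\underline{\lambda}$ are harmless reassurances but are not part of the paper's proof, which takes them as understood.
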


We now investigate
the behavior of~$\underline{\lambda}$
for large upper and lower bounds 
on the resource (maintaining constant the other parameters).
Interestingly, this behavior sensibly depends
on the dimension~$n$. In this setting,
we first consider the asymptotics in dimension~$n\ge2$:
we show that large upper and lower bounds are both
favorable for life for a given~$m_0<0$, 
according to the following two results:

\begin{theorem}\label{msopratoinfty}
Let $n\geq 2$. Then, 
\[
\lim_{\emme\nearrow +\infty} \underline{\lambda}(\emme,\underline{m},m_0)=0.
\]
\end{theorem}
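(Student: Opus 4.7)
The plan is to exhibit, for each sufficiently large $\emme$, an admissible resource $m_\emme\in\mathscr{M}(\emme,\underline{m},m_0)$ together with a test function $u_\emme$ whose Rayleigh quotient for the weighted eigenvalue problem~\eqref{probauto} tends to zero. Granting the variational characterization
\[
\lambda_1(m_\emme)=\inf\left\{\frac{\alpha\|\nabla u\|_{L^2(\Omega)}^2+\beta[u]_s^2}{\int_\Omega m_\emme\,u^2}\,:\,\int_\Omega m_\emme\,u^2>0\right\},
\]
which will be established in Proposition~\ref{PROAUTOVA}, any such pair yields the upper bound $\underline{\lambda}(\emme,\underline{m},m_0)\le\lambda_1(m_\emme)$ that we need.

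For the resource, the idea is to concentrate all the favorable mass into a shrinking ball. I fix $\eta:=(\underline{m}+m_0)|\Omega|/2>0$ (positive because $m_0\in(-\underline{m},0)$), choose once and for all a ball $B_{R_0}(x_0)\Subset\Omega$ of sufficiently small radius, and pick a measurable set $A\Subset\Omega\setminus\overline{B_{R_0}(x_0)}$ with $|A|=(\eta-m_0|\Omega|)/\underline{m}$; these are all feasible provided $R_0$ is small, independently of $\emme$. For each $\emme$, let $\rho_\emme>0$ satisfy $|B_{\rho_\emme}(x_0)|=\eta/\emme$, so that $\rho_\emme\searrow 0$ and $B_{\rho_\emme}(x_0)\Subset B_{R_0}(x_0)$ for large $\emme$. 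Define
\[
m_\emme:=\emme\,\chi_{B_{\rho_\emme}(x_0)}-\underline{m}\,\chi_A,
\]
for which $\int_\Omega m_\emme=\eta-\underline{m}|A|=m_0|\Omega|$ and $m_\emme\in\mathscr{M}(\emme,\underline{m},m_0)$. As test function I take $u_\emme:=1+t\phi_\emme$, where $t$ is a large positive constant (to be fixed independently of $\emme$) and $\phi_\emme$ is a capacitary profile with $\phi_\emme\equiv 1$ on $B_{\rho_\emme}(x_0)$, $\phi_\emme\equiv 0$ off $B_{R_0}(x_0)$, and values in $[0,1]$.

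The key computation is then direct. Because $A$ and the support of $\phi_\emme$ are disjoint and $m_\emme$ vanishes elsewhere,
\[
\int_\Omega m_\emme\,u_\emme^2=m_0|\Omega|+2t\eta+t^2\eta=\eta(1+t)^2-\eta+m_0|\Omega|,
\]
which is bounded below by a positive constant $c_0$ as soon as $(1+t)^2>1-m_0|\Omega|/\eta$ (and $-m_0|\Omega|/\eta>0$, so this is achievable). On the other hand, the numerator of the Rayleigh quotient equals $t^2\bigl(\alpha\|\nabla\phi_\emme\|_{L^2(\Omega)}^2+\beta[\phi_\emme]_s^2\bigr)$, so the proof reduces to showing that this capacitary energy tends to zero as $\rho_\emme\searrow0$. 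For the classical term this is the standard fact that a point has vanishing $H^1$-capacity in dimension $n\ge 2$: taking $\phi_\emme$ to be the harmonic capacitary profile of $B_{\rho_\emme}$ inside $B_{R_0}$ gives $\|\nabla\phi_\emme\|_{L^2}^2=O(\rho_\emme^{n-2})$ for $n\ge 3$ and $O(1/\log(R_0/\rho_\emme))$ for $n=2$. For the fractional seminorm, I would rely on the interpolation estimate
\[
[\phi]_s^2\le C\,\|\nabla\phi\|_{L^2(\R^n)}^{2s}\,\|\phi\|_{L^2(\R^n)}^{2(1-s)}
\]
together with the uniform $L^2$-bound $\|\phi_\emme\|_{L^2}^2\le|B_{R_0}|$, which forces $[\phi_\emme]_s^2\to 0$ as well.

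The hard (and in fact the only genuinely delicate) part is precisely this capacitary step, and it is what confines the statement to $n\ge 2$: in one dimension every function equal to $1$ on a small interval and to $0$ outside a larger interval has Dirichlet energy bounded uniformly below, so concentrating the resource cannot drive the Rayleigh quotient to zero. A structurally different strategy will therefore be required when $n=1$, consistent with the dimensional dichotomy already advertised in the introduction.
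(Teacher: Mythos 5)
Your proof is correct, and it follows the same broad strategy as the paper's (concentrate the favorable resource on a shrinking ball, then test the Rayleigh quotient of $\lambda_1$ with a function that is a constant plus a capacitary bump on that ball), but it diverges at the key technical step. The paper first invokes a bang-bang reduction (Proposition~\ref{prop:bang}), then writes down an explicit profile $\varphi$ equal to $c_\star+1$ on $B_\rho$, a power $|x|^{-\gamma}$ (or $\log|x|$ when $n=2$) in an annulus, and $c_\star$ outside, and then verifies $[\varphi]_{X_{\alpha,\beta}}\to 0$ by a somewhat lengthy hands-on estimate of the Gagliardo double integral via a dyadic decomposition of the annulus (Lemma~\ref{gradsphito0}), with a separate logarithmic version of everything for $n=2$ in Appendix~\ref{KASN:ALSL}. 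You bypass both of these: you don't need the bang-bang principle (you only need one admissible $m_\emme\in\mathscr{M}$ to get an upper bound on $\underline{\lambda}$, and your three-valued $m_\emme$ does the job), and you kill the fractional seminorm cleanly via the Gagliardo--Nirenberg interpolation $[\phi]_s^2\le C\|\nabla\phi\|_{L^2}^{2s}\|\phi\|_{L^2}^{2(1-s)}$ (valid on $\R^{2n}\supset\Q$ for compactly supported $H^1$ functions) together with the standard fact that the $H^1$-capacity of a small ball relative to a fixed ball tends to $0$ when $n\ge2$. This interpolation route is shorter, reveals exactly why $n\ge2$ matters (it is the only dimensional restriction), and unifies $n=2$ with $n\ge 3$, at the cost of producing no explicit formula for the optimizer. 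Two small points worth spelling out if you write this up: you should record that $m_\emme\in L^\infty$, $m_\emme^\pm\not\equiv0$, and $\int_\Omega m_\emme=m_0|\Omega|<0$, so that Proposition~\ref{PROAUTOVA} and its Rayleigh characterization actually apply; and you should note that $u_\emme$ is extended to all of $\R^n$ by setting $\phi_\emme\equiv0$ outside $B_{R_0}(x_0)$, so that $u_\emme\in X_{\alpha,\beta}$ and the constant cancels in the $\Q$-seminorm.
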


\begin{theorem}\label{msottotoinfty}
Let $n\geq 2$. Then, 
\[
\lim_{\emme\nearrow +\infty} \underline{\lambda}(\overline{m},\emme,m_0)=0.
\]
\end{theorem}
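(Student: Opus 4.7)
The plan is to bound $\underline\lambda(\overline m, \emme, m_0)$ from above by exhibiting, for each large $\emme$, a specific resource $m_\emme \in \mathscr{M}(\overline m, \emme, m_0)$ together with a test function $u_\emme$ for which the Rayleigh quotient defining $\lambda_1(m_\emme)$ tends to zero.

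Fix $x_0 \in \Omega$ and a small radius $R_0 > 0$ with $B_{R_0}(x_0) \Subset \Omega$. For every sufficiently large $\emme$, I would choose $r = r_\emme$ so that $|B_r(x_0)| = \frac{\overline m - m_0}{\emme + \overline m} |\Omega|$, which forces $r_\emme \to 0$ like $\emme^{-1/n}$, and set
\[
m_\emme := -\emme\, \chi_{B_r(x_0)} + \overline m\, \chi_{\Omega \setminus B_r(x_0)}.
\]
A direct check shows $m_\emme \in \mathscr{M}(\overline m, \emme, m_0)$: the average condition $\int_\Omega m_\emme = m_0 |\Omega|$ is enforced by the choice of $r_\emme$, the pointwise bounds hold by construction, and $m_\emme^+ \not\equiv 0$ because $\overline m > 0$. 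Thus the negative mass of $m_\emme$ concentrates on a ball shrinking to~$x_0$ as $\emme \to \infty$.

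Next I would pick a test function $u_\emme$ (suitably extended so as to lie in the natural energy space for the $(\alpha,\beta)$-Neumann condition) that vanishes on $B_r(x_0)$ and equals $1$ outside $B_{R_0}(x_0)$. For $n\ge 3$, the linear cut-off $u_\emme(x)=\min\{1,\max\{0,(|x-x_0|-r)/r\}\}$ satisfies
\[
\int_\Omega |\nabla u_\emme|^2\,dx\le C\,r^{n-2}\to 0
\quad\text{and}\quad
\iint \frac{(u_\emme(x)-u_\emme(y))^2}{|x-y|^{n+2s}}\,dx\,dy\le C\,r^{n-2s}\to 0,
\]
the second estimate following from the scaling identity $[u_\emme]_s^2=r^{n-2s}[v]_s^2$ with $v(y)=u_\emme(ry)$. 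In the critical dimension $n=2$ the linear cut-off fails on the classical part, so I would use instead the logarithmic cut-off
\[
u_\emme(x)=\min\!\left\{1,\ \max\!\left\{0,\ \tfrac{\log(|x-x_0|/r)}{\log(R_0/r)}\right\}\right\}.
\]
A direct computation gives $\int_\Omega|\nabla u_\emme|^2\,dx=2\pi/\log(R_0/r)\to 0$. For the fractional seminorm, a dyadic decomposition of the annulus $B_{R_0}\setminus B_r$ into layers $\{2^{k-1}r \le |x-x_0|\le 2^k r\}$, combined again with the $r^{n-2s}$ scaling, yields $[u_\emme]_s^2\le C\,R_0^{2-2s}/\log^2(R_0/r)\to 0$. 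This is a capacity-type estimate reflecting the fact that points have zero $H^s$-capacity whenever $n>2s$, which holds here since $n=2$ and $s<1$.

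Finally, since $u_\emme\equiv 0$ on $B_r$,
\[
\int_\Omega m_\emme\, u_\emme^2\,dx = \overline m\int_{\Omega\setminus B_r}u_\emme^2\,dx\longrightarrow \overline m\,|\Omega|>0 \quad\text{as } \emme\to+\infty,
\]
by dominated convergence, while both the Dirichlet and Gagliardo contributions to the energy $\mathscr{E}(u_\emme)$ vanish. The variational characterization of the first positive eigenvalue from Proposition~\ref{PROAUTOVA} then yields
\[
\underline\lambda(\overline m, \emme, m_0)\le \lambda_1(m_\emme)\le \frac{\mathscr{E}(u_\emme)}{\int_\Omega m_\emme\, u_\emme^2\,dx}\longrightarrow 0,
\]
as required. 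The main technical hurdle is the fractional seminorm estimate for the logarithmic cut-off in dimension two; the rest of the argument parallels the proof of Theorem~\ref{msopratoinfty}.
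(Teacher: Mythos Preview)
Your proposal is correct and follows the same overall strategy as the paper: concentrate the hostile part of the resource on a shrinking ball $B_r$, and use a radial test function that is constant outside a neighborhood of that ball so that the energy seminorm vanishes in the limit while the weighted $L^2$-integral stays bounded away from zero.

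The one genuine difference is your choice of test function. The paper uses $\psi$ equal to a carefully chosen \emph{positive} constant $c_\sharp-1$ on $B_\rho$ (with $c_\sharp=(m_0-\overline m)/m_0$) and $c_\sharp$ outside, interpolating by a power law (for $n\ge3$) or a logarithm (for $n=2$) on a fixed annulus $B_1\setminus B_\rho$. Because $\psi$ does not vanish on the bad set, the paper must check separately (Lemmata~\ref{denompsi} and~\ref{n2denompsi}) that the two competing terms in $\overline m\int_D\psi^2-\underline m\int_{\Omega\setminus D}\psi^2$ balance to a strictly positive limit; this is exactly what forces the particular value of $c_\sharp$. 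You instead take $u_\emme\equiv0$ on $B_r$, which kills the negative term outright and makes the denominator analysis immediate via dominated convergence. For $n\ge3$ you also use a shrinking transition annulus $B_{2r}\setminus B_r$ with a linear profile, so the seminorm estimates reduce to a one-line scaling argument, whereas the paper's fixed annulus requires the more elaborate computations of Lemmata~\ref{gradphito0}--\ref{gradsphito0}. Your route is therefore a bit more economical; the paper's choice, on the other hand, keeps the construction parallel to the one used for Theorem~\ref{msopratoinfty}, where the positive constant $c_\star$ plays a symmetric role. The $n=2$ fractional estimate you sketch is precisely what the paper carries out in detail in Lemma~\ref{n2gradspsito0}.

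One small remark: what you denote $\mathscr E(u_\emme)$ in the final display should be the seminorm $[u_\emme]^2_{X_{\alpha,\beta}}$ of~\eqref{seminorm}, not the full nonlinear energy~\eqref{ENE}.
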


While Theorem~\ref{msopratoinfty} is somehow intuitive
(large resources are favorable to survival),
at a first glance Theorem~\ref{msottotoinfty}
may look unintuitive,
since it seems to suggest that a
largely hostile environment is also favorable to survival:
but we remark that in Theorem~\ref{msottotoinfty},
being~$m_0$ given, an optimal strategy for~$m$
may well correspond to a very harmful environment confined
in a small portion of the domain, with a positive resource allowing
for the survival of the species.
\medskip

Quite surprisingly, the structural
analysis developed in
Theorems~\ref{msopratoinfty}  and~\ref{msottotoinfty}
is significantly different in dimension~$1$.
Indeed,
for $n=1$, we have that~$\underline{\lambda}$
does not become infinitesimal for large
upper and lower bounds on the resource,
unless the diffusion is purely nonlocal
with strongly nonlocal fractional parameter.
Namely,
we have the following two results.

\begin{theorem}\label{alpha>0}
Let $n=1$, $\alpha>0$ and $\beta \geq 0$. Then, for 
any~$\underline{m}>0$ and~$m_0\in(-\underline{m},0)$,
\begin{equation}\label{viene1}
\underline{\lambda}(\emme,\underline{m},m_0)\geq C
\end{equation}
for every $\emme>0$,
for some $C=C(\underline{m},m_0,\alpha, \beta,\Omega)>0$, and
\begin{equation}\label{viene2}
\lim_{\emme\searrow 0}\underline{\lambda}(\emme,\underline{m},m_0)=+\infty.
\end{equation}
Moreover, for any~$\overline{m}>0$ and~$m_0<0$,
\begin{equation}\label{viene3}
\underline{\lambda}(\overline{m},\emme,m_0)\geq C
\end{equation}
for every $\emme>-m_0$, for some $C=C(\overline{m},m_0,\alpha,\beta,\Omega)>0$.
\end{theorem}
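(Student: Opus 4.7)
The plan is to exploit the variational characterization
$$\lambda_1(m) \;=\; \inf\left\{\frac{\alpha\,\|u'\|_{L^2(\Omega)}^2 + \beta\,[u]_s^2}{\int_\Omega m\,u^2} \;:\; \int_\Omega m\,u^2 > 0\right\},$$
combined with the one-dimensional embedding $H^1(\Omega)\hookrightarrow L^\infty(\Omega)$: every mean-zero $v\in H^1(\Omega)$ satisfies
$$\|v\|_{L^\infty(\Omega)} \;\leq\; C_1(\Omega)\,\|v'\|_{L^2(\Omega)}.$$
This is the one-dimensional feature that drives the whole theorem. Since $\alpha > 0$, an estimate of the form $\int_\Omega m\,u^2 \leq C\,\|u'\|_{L^2}^2$ with $C$ independent of the diverging parameter $\emme$ will immediately produce $\lambda_1(m)\geq\alpha/C$, which is the uniform lower bound in \eqref{viene1} and \eqref{viene3}.

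To establish such an estimate, the plan is to decompose $u = \bar u + v$ with $\bar u = |\Omega|^{-1}\int_\Omega u$, so that
$$\int_\Omega m\,u^2 \;=\; m_0\,|\Omega|\,\bar u^2 + 2\,\bar u \int_\Omega m\,v + \int_\Omega m\,v^2,$$
and to use Young's inequality on the cross term, exploiting that the coefficient $m_0\,|\Omega|$ in front of $\bar u^2$ is \emph{negative}. The crux is to observe that, although one of the pointwise bounds on $m$ is allowed to diverge, the average constraint $\int_\Omega m = m_0\,|\Omega|$ combined with the \emph{opposite} pointwise bound gives uniform-in-$\emme$ control on $\int_\Omega m^+$ and on $\|m\|_{L^1}$: for \eqref{viene1}, from $m\geq -\underline m$ one gets $\int_\Omega m^+\leq (m_0+\underline m)|\Omega|$ and $\|m\|_{L^1}\leq (m_0+2\underline m)|\Omega|$, whereas for \eqref{viene3}, from $m\leq\overline m$ one gets $\int_\Omega m^+\leq\overline m\,|\Omega|$ and $\|m\|_{L^1}\leq (2\overline m+|m_0|)|\Omega|$. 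Combined with $\|v\|_{L^\infty}\leq C_1\|u'\|_{L^2}$, these yield $\int_\Omega m\,v^2 \leq \int_\Omega m^+\,v^2 \leq C\,\|u'\|_{L^2}^2$ and $\bigl|\int_\Omega m\,v\bigr|\leq \|m\|_{L^1}\|v\|_{L^\infty}\leq C\,\|u'\|_{L^2}$, both with $C$ independent of $\emme$, and Young's inequality closes the estimate.

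The limit \eqref{viene2} I would prove by contradiction: assume $\emme_k\searrow 0$, $m_k\in\mathscr{M}(\emme_k,\underline m,m_0)$ and first eigenfunctions $u_k$ normalized so that $\int_\Omega m_k\,u_k^2 = 1$ and $\alpha\|u_k'\|_{L^2}^2+\beta[u_k]_s^2\leq\Lambda$ for some $\Lambda>0$. Then $\|u_k'\|_{L^2}$ is bounded, hence $\|v_k\|_{L^\infty}$ is bounded, and the identity $1 = m_0|\Omega|\bar u_k^2 + 2\bar u_k\int m_k v_k + \int m_k v_k^2$, together with the uniform bounds $\|m_k\|_{L^1}\leq (\emme_k+\underline m)|\Omega|$ and $\|m_k\|_{L^\infty}\leq\underline m$ (for $\emme_k\leq \underline m$), forces $|\bar u_k|$ to remain bounded, so $u_k$ is bounded in $C(\overline\Omega)$. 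But $\int_\Omega m_k^+\,u_k^2\leq\emme_k\,\|u_k\|_{L^\infty}^2\,|\Omega|\to 0$ and $\int_\Omega m_k^-\,u_k^2\geq 0$ together imply $\limsup\int_\Omega m_k\,u_k^2\leq 0$, contradicting the normalization. The main obstacle throughout is the careful selection of bounds on $m$ that remain uniform in the diverging parameter: the naive pointwise bound on the varying side is useless, and one must pair the average constraint with the pointwise bound on the \emph{opposite} side. This, combined with the $L^\infty$-control available only in dimension one, is what produces the sharp dichotomy with the higher-dimensional results in Theorems~\ref{msopratoinfty} and~\ref{msottotoinfty}.
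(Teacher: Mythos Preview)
Your approach is correct (for connected~$\Omega$) and genuinely different from the paper's. The paper first reduces to bang-bang resources via a bathtub argument, then works with the normalized first eigenfunction~$e$ itself: it sets~$a:=\inf e$, $\sup e=1$, proves~$(1-a)^4\le C\big(\|e'\|_{L^2}^2+\beta[e]_s^2\big)$ via a case analysis, uses the orthogonality~$\int_\Omega m\,e=0$ to bound~$a$ from above in terms of~$\overline m,\underline m,m_0$, and finally obtains an \emph{explicit} lower bound for~$\underline\lambda$ as a rational function of the three parameters, from which~\eqref{viene1}--\eqref{viene3} follow by elementary calculus. Your route is more streamlined: the mean/oscillation decomposition, the one-dimensional Poincar\'e--Sobolev bound~$\|v\|_{L^\infty}\le C_1\|u'\|_{L^2}$, and Young's inequality against the negative term~$m_0|\Omega|\bar u^2$ give the result without the bang-bang reduction and without the explicit rational formula. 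What you gain is simplicity; what the paper gains is an explicit bound that can be tracked quantitatively.

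One point to flag: your Poincar\'e--Sobolev inequality for mean-zero~$v$ requires~$\Omega$ to be connected. The paper only assumes this when~$\beta=0$ (see~\eqref{KASM:LSKDD}); when~$\beta>0$, $\Omega$ may be a finite union of intervals, and then a piecewise-constant~$v$ with zero mean defeats your inequality. The paper's case analysis~\eqref{A:CA:01}--\eqref{A:CA:02} in Lemma~\ref{n1lemma} is there precisely to handle this: when the oscillation of~$e$ is realized across different components, the nonlocal term~$\beta[e]_s^2$ (rather than~$\|e'\|_{L^2}^2$) supplies the needed lower bound. Your argument can be patched in the same spirit---decompose~$v$ into per-component means plus per-component oscillations, control the latter by~$\|u'\|_{L^2}$ and the former by~$[u]_s$---but as written it covers only the connected case.
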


\begin{theorem}\label{alpha=0}
Let $n=1$, $\alpha=0$ and $\beta>0$.

If $s\in (1/2,1)$, then, for 
any~$\underline{m}>0$ and~$m_0\in(-\underline{m},0)$
\begin{equation}\label{viene4}
\underline{\lambda}(\emme,\underline{m},m_0)\geq C
\end{equation}
for every $\emme>0$, for some
$C=C(\underline{m},m_0,\alpha, \beta,\Omega)>0$,
and
\begin{equation}\label{viene4bis}
\lim_{\emme\searrow0}
\underline{\lambda}(\emme,\underline{m},m_0)=+\infty.\end{equation}
Moreover, for any~$\overline{m}>0$ and~$m_0<0$
\begin{equation}\label{viene5}
\underline{\lambda}(\overline{m},\emme,m_0)\geq C
\end{equation}
for every $\emme>-m_0$,
for some~$C=C(\overline{m},m_0,\alpha, \beta,\Omega)>0$.

If $s\in (0,1/2]$, then,
\begin{equation}\label{LK:LK:01}
\lim_{\emme\nearrow +\infty}\underline{\lambda}(\emme,\underline{m},m_0)=0,
\end{equation}
and
\begin{equation}\label{LK:LK:02}
\lim_{\emme\nearrow +\infty}\underline{\lambda}(\overline{m},\emme,m_0)=0.
\end{equation}
\end{theorem}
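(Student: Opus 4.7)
The proof naturally splits according to whether $s>1/2$ or $s\le 1/2$, since this is precisely the threshold at which the one-dimensional Morrey embedding $H^s(\Omega)\hookrightarrow C^0(\overline\Omega)$ begins to hold.

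\textbf{The regime $s\in(1/2,1)$.} The plan is to argue by contradiction, combining the Morrey embedding with uniform $L^1$-bounds on the admissible resources in $\mathscr{M}$. To prove~\eqref{viene4}, assume by contradiction that there exist $m_k\in\mathscr{M}(\emme_k,\underline m,m_0)$ with $\lambda_1(m_k)\to 0$, and let $u_k>0$ be a corresponding positive first eigenfunction (available from Proposition~\ref{PROAUTOVA}), normalized so that $\|u_k\|_{L^\infty(\Omega)}=1$. The key observation is the uniform bound
\begin{equation*}
\|m_k\|_{L^1(\Omega)}\le (2\underline m+m_0)|\Omega|,
\end{equation*}
which follows from $m_k^-\le \underline m$ combined with the identity $\int m_k^+=\int m_k^-+m_0|\Omega|$. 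Consequently, $\beta\,[u_k]_s^2 = \lambda_1(m_k)\int m_k u_k^2\le \lambda_1(m_k)\|m_k\|_{L^1}\to 0$, so the Morrey embedding gives $u_k-\tfrac{1}{|\Omega|}\int_\Omega u_k\to 0$ uniformly on $\Omega$. Since $0\le u_k\le 1$ with $\|u_k\|_\infty=1$, up to a subsequence $u_k\to 1$ uniformly, whence
\begin{equation*}
0<\int m_k u_k^2\longrightarrow m_0|\Omega|<0,
\end{equation*}
a contradiction. The proof of~\eqref{viene5} is entirely symmetric, using the $L^1$-bound $\|m_k\|_{L^1}\le (2\overline m+|m_0|)|\Omega|$ deduced from $m_k^+\le\overline m$. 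For~\eqref{viene4bis}, where $\emme_k=\overline m_k\to 0$, the resources $m_k$ are uniformly bounded in $L^\infty(\Omega)$ by $\underline m$, so up to a subsequence $m_k\rightharpoonup^* m_*$ in $L^\infty$ with $m_*\le 0$ a.e.\ and $\int m_*=m_0|\Omega|<0$. Normalizing the eigenfunctions by $\int m_k u_k^2=1$, a standard fractional Poincar\'e-type argument shows that if $\lambda_1(m_k)$ stayed bounded then $u_k$ would be bounded in $H^s(\Omega)$; by the compact embedding $H^s\hookrightarrow L^2$ one extracts $u_k\to u_*$ strongly in $L^2$, and using weak-$*$ convergence of $m_k$ tested against $u_*^2\in L^1$ one passes to the limit to obtain $1=\int m_k u_k^2\to\int m_* u_*^2\le 0$, a contradiction.

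\textbf{The regime $s\in(0,1/2]$.} Now the Morrey embedding fails and the plan is to construct explicit concentrating resources $m_\emme$ and test functions $u_\emme$ realizing $\underline\lambda\to 0$ via the Rayleigh-type upper bound. To prove~\eqref{LK:LK:01}, fix an interval $I\Subset\Omega$ of length $a$ and set
\begin{equation*}
m_\emme:=\emme\,\chi_I + c_\emme\,\chi_{\Omega\setminus I},\qquad c_\emme:=\frac{m_0|\Omega|-\emme\,a}{|\Omega|-a},\qquad a:=\frac{\theta}{\emme},
\end{equation*}
with a fixed $\theta\in(0,(\underline m+m_0)|\Omega|)$; a routine check shows $m_\emme\in\mathscr{M}(\emme,\underline m,m_0)$ for $\emme$ large enough. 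For the test function, take $u_\emme:=\chi_I$ extended by zero outside $\Omega$ when $s<1/2$, obtaining $[u_\emme]_{H^s(\R)}^2\le C\,a^{1-2s}$ by direct computation. When $s=1/2$, the characteristic function is not in $H^{1/2}(\R)$ and we employ instead the classical logarithmic capacitary cutoff, equal to $1$ on $I$ and supported in a fixed neighborhood of $I$ contained in $\Omega$, which satisfies $[u_\emme]_{H^{1/2}(\R)}^2\le C/\log(1/a)$. In both cases $[u_\emme]_{H^s(\R)}^2\to 0$ as $a=\theta/\emme\to 0$; since $\int_\Omega m_\emme u_\emme^2\ge\emme\,a=\theta$ and the Neumann energy attached to~\eqref{NEU-1} is bounded above by the $H^s(\R)$-energy of any extension of $u_\emme$ (by the energy-minimizing character of the Neumann extension established in~\cite{dirova}), the Rayleigh characterization gives $\lambda_1(m_\emme)\le C[u_\emme]_{H^s(\R)}^2/\theta\to 0$. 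The proof of~\eqref{LK:LK:02} is symmetric: put $m_\emme=-\emme$ on $I$ and $m_\emme=c_\emme=(m_0|\Omega|+\emme\,a)/(|\Omega|-a)$ on $\Omega\setminus I$, with $a=\theta/\emme$ and $\theta\in(|m_0||\Omega|,(\overline m+|m_0|)|\Omega|)$ so that $0<c_\emme\le\overline m$; the test function is now $1-\chi_I$ (for $s<1/2$) or the analogous logarithmic cutoff vanishing on $I$ (for $s=1/2$), with the same vanishing of the seminorm.

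\textbf{Main obstacle.} The most delicate step is the critical case $s=1/2$: here a naive scaling construction fails because the Gagliardo seminorm is scale-invariant at the critical exponent $2s=n=1$, and one must exploit the vanishing of the $H^{1/2}$-capacity of a point in $\R$, which decays only logarithmically. A secondary technical point is that the object under study is the Neumann energy of~\eqref{NEU-1} rather than the full $H^s(\R)$-seminorm; for the upper-bound constructions in the $s\le 1/2$ case this is handled by the energy-minimizing character of the Neumann extension, so any concrete extension of $u_\emme$ controls it from above.
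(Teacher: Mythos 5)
Your argument is correct, but it takes a genuinely different route from the paper in both regimes, so a comparison is worthwhile.

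For $s\in(1/2,1)$, both you and the paper exploit the one-dimensional Morrey embedding, but you run a soft compactness/contradiction argument, whereas the paper is quantitative: the paper's proof normalizes $\sup e=1$, derives a \emph{lower} bound on the Gagliardo seminorm of the first eigenfunction of the form $c\,(1-a)^{\frac{4s+2}{2s-1}}$ by combining the H\"older bound from Theorem~8.2 of \cite{MR2944369} with the lower bound $1-a\ge -\frac{m_0(\underline m+\overline m)}{\underline m(\overline m-m_0)}$ obtained from testing against constants, and then reads off an explicit function of $\emme$ that stays bounded away from zero and blows up as $\emme\searrow 0$. Your version instead normalizes $\|u_k\|_\infty=1$, uses the $\emme$-independent bound $\|m_k\|_{L^1}\le(2\underline m+m_0)|\Omega|$ to force $[u_k]_s\to 0$, and derives the contradiction $0<\int m_ku_k^2\to m_0|\Omega|<0$. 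For~\eqref{viene4bis} you switch to the compact embedding $H^s\hookrightarrow L^2$ and weak-$*$ convergence of $m_k$, which is a clean alternative to the paper's direct analysis of the explicit lower-bound function $\bar g_{\underline m,m_0}$. The paper's explicit constants are lost, but the contradiction argument is shorter and the two proofs have the same conceptual content, so both are valid. One small remark: in the definition of $\lambda_1$ the seminorm is $\iint_\Q$ rather than $\iint_{\Omega\times\Omega}$, and the Morrey/Poincar\'e step uses only the latter (which is $\le\iint_\Q$), so there is no gap; also, when $\Omega$ is disconnected (allowed here since $\beta>0$) the cross-component contributions of $\iint_{\Omega\times\Omega}$ are what force your limit constant to be global, and your argument implicitly relies on this, which is fine to leave unsaid but worth being aware of.

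For $s\in(0,1/2]$, the paper always uses the logarithmic profile (see~\eqref{VARPHI44}) and a genuinely bang-bang resource supported on $D=B_\rho$ scaled so that~\eqref{misuraD} holds, and proves the decay of the Gagliardo seminorm in Lemma~\ref{gradlogto0} by a fairly intricate dyadic decomposition. You instead take $u_\emme=\chi_I$ for $s<1/2$, which is a cleaner subcritical choice since $\chi_I\in H^s(\R)$ in this range, and fall back on the logarithmic cutoff only at $s=1/2$, where the paper's construction and yours effectively coincide. Your resource is not bang-bang (you use the slightly more generous $c_\emme$ instead of $-\underline m$ off $I$), but since $\underline\lambda$ is an infimum over all of $\mathscr M$ this is fine and avoids the exact measure constraint~\eqref{misuraD}. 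Your appeal to Theorem~\ref{thcontiminimo} is actually unnecessary: any extension of a trace on $\Omega$ is already an admissible competitor in~\eqref{lopouygbv}, so $\iint_\Q\le\iint_{\R\times\R}$ is all that is needed to turn the $H^s(\R)$ seminorm computation into an upper bound on $\lambda_1$. In summary, the proof is correct, and trades the paper's explicit quantitative estimates for a shorter compactness argument in the supercritical regime and a simpler test function in the subcritical one.
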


An interesting feature of Corollary~\ref{mmtoinfty},
Theorems~\ref{msopratoinfty}
and~\ref{msottotoinfty},
\eqref{LK:LK:01}
and~\eqref{LK:LK:02}
in terms of real-world applications
is that their proofs are based on the explicit
constructions of suitable resources:
though perhaps not optimal,
these resources
are sufficiently well located to
ensure the maximal chances of survival for the population, 
and their explicit representation allows one to use
them concretely and to build on this specific knowledge.\medskip 

We also
think that the phenomenon detected
in Theorems~\ref{alpha>0}
and~\ref{alpha=0}
reveals an important role played by the nonlocal dispersal of
the species in dimension~$1$: indeed,
in this situation, the only configurations
favorable to survival are the ones in~\eqref{LK:LK:01}
and~\eqref{LK:LK:02}, that are induced by
purely nonlocal diffusion (that is~$\alpha=0$)
with a strongly nonlocal diffusion exponent (that is~$s\le1/2$,
corresponding to very
long flies in the underlying stochastic process).\medskip

To better visualize the results
in
Theorems~\ref{msopratoinfty}, \ref{msottotoinfty},
\ref{alpha>0} and~\ref{alpha=0}, we summarize them in Table~\ref{TABLE}.
For typographical convenience,
in Table~\ref{TABLE} we used the ``check-symbol''~\cmark~to denote the cases in which~$\underline\lambda$
gets as small as we wish (cases favorable to life)
and the ``x-symbol''~\xmark~to mark the situations in which~$\underline\lambda$
remains bounded away from zero
(cases unfavorable to life which require stronger
pollination for survival).
\medskip

\begin{table}[h!]
  \begin{center}
    
    \label{tab:table1}
    \begin{tabular}{l|c|c} 
 & \textbf{Large $\overline{m}$} & \textbf{Large $\underline{m}$}\\
      \hline
      $n\ge2$ & \cmark & \cmark\\
    $n=1$ and~$\alpha>0$ & \xmark & \xmark\\
      $n=1$, $\alpha=0$ and~$s>1/2$ & \xmark & \xmark\\
      $n=1$, $\alpha=0$ and~$s\le1/2$ & \cmark & \cmark\\
    \end{tabular}
\vskip0.3cm
\caption{\em Summarizing the results
in Theorems~\ref{msopratoinfty}, \ref{msottotoinfty},
\ref{alpha>0} and~\ref{alpha=0}.}\label{TABLE}
  \end{center}
\end{table}

We stress that the optimization of the resources plays a crucial role
in the survival results provided by Corollary~\ref{mmtoinfty},
Theorems~\ref{msopratoinfty} and~\ref{msottotoinfty}, 
and formulas~\eqref{LK:LK:01}
and~\eqref{LK:LK:02}:
that is, given~$m_0<0$, very large but {\em badly displayed} resources
may lead to non-negligible first eigenvalues
(differently from the case of optimal distribution of resources
discussed in Corollary~\ref{mmtoinfty},
Theorems~\ref{msopratoinfty}
and~\ref{msottotoinfty}, 
and formulas~\eqref{LK:LK:01}
and~\eqref{LK:LK:02}).

To state precisely this phenomenon, given~$m_0<0$ and~$\Lambda>-4m_0$, we
let
\begin{equation}\label{TB-x0} {\mathscr{M}}^\sharp_{\Lambda,m_0}:=\left\{
m\in {\mathscr{M}}(2\Lambda,2\Lambda,m_0){\mbox{ s.t. }}
\inf_\Omega m\le-\frac{\Lambda}2 {\mbox{ and }}
\sup_\Omega m\ge\frac{\Lambda}2
\right\}.\end{equation}
Roughly speaking, the resources~$m$ in ${\mathscr{M}}^\sharp_{\Lambda,m_0}$
have a prescribed average equal to~$m_0$ and attain
maximal positive and negative values
comparable with a large parameter~$\Lambda$,
and a natural question in this case is whether
large~$\Lambda$'s provide sufficient
conditions for the survival of the species.
The next result shows that this is not the case,
namely the abundance
of the resource without an optimal distribution strategy
is not sufficient for prosperity:

\begin{theorem}\label{oink8495-0243905i8uhjssjjcjnj7}
Given~$m_0<0$ and~$\Lambda>-4m_0$, we
have that
$$ \sup_{m\in{\mathscr{M}}^\sharp_{\Lambda,m_0}}\lambda_1(m)=+\infty.$$
\end{theorem}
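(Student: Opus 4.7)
The plan is to construct, for each small scale $h>0$, a resource $m_h\in{\mathscr{M}}^\sharp_{\Lambda,m_0}$ whose first positive eigenvalue $\lambda_1(m_h)$ diverges as $h\searrow 0$; since $\Lambda$ is held fixed, this will yield $\sup_{m\in{\mathscr{M}}^\sharp_{\Lambda,m_0}}\lambda_1(m)=+\infty$. The guiding idea is to make $m_h$ oscillate rapidly between its extremal values~$\pm 2\Lambda$ on a grid of scale~$h$, arranged so that the \emph{cell-wise} average of $m_h$ equals~$m_0$. Any candidate~$u$ with $\int_\Omega m_h\,u^2>0$ is then forced to ``resolve'' the oscillation, and this forces a mixed Dirichlet energy of order $h^{-2s_*}$, where $s_*:=1$ if~$\alpha>0$ and $s_*:=s$ if~$\alpha=0$.

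Concretely, I would fix an interior set $\Omega'\Subset\Omega$ exhausting~$\Omega$ as $h\to 0$, tile it by axis-parallel cubes $\{Q_i\}$ of side~$h$, and in each cube pick a measurable subset $A_i\subset Q_i$ of Lebesgue measure $|A_i|=\frac12\bigl(1+\frac{m_0}{2\Lambda}\bigr)|Q_i|$ (which lies strictly between~$0$ and $|Q_i|$ since the assumption $\Lambda>-4m_0$ implies $|m_0|<\Lambda/4$). I then set $m_h:=2\Lambda$ on~$A_i$, $m_h:=-2\Lambda$ on $Q_i\setminus A_i$, and $m_h:=m_0$ on the leftover region $\Omega\setminus\Omega'$. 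A direct check gives $m_h\in{\mathscr{M}}^\sharp_{\Lambda,m_0}$ together with the key identity $\int_{Q_i}m_h=m_0|Q_i|$ for every~$i$.

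The core quantitative step is to prove that any admissible~$u$ satisfies $\mathcal{E}(u)/\|u\|_{L^2(\Omega)}^2\geq c\,h^{-2s_*}$, where $\mathcal{E}$ is the mixed Dirichlet form associated to the $(\alpha,\beta)$-Neumann problem. On each cube I would decompose $u=\langle u\rangle_i+\tilde u_i$ with $\langle u\rangle_i$ the mean; the classical Poincaré inequality (when~$\alpha>0$) and the fractional Poincaré inequality on cubes (when~$\beta>0$) combine to yield $\sum_i\|\tilde u_i\|_{L^2(Q_i)}^2\leq C\,h^{2s_*}\mathcal{E}(u)$, using that the disjoint union $\bigcup_i Q_i\times Q_i\subset\Omega\times\Omega$ sits inside the integration domain of the nonlocal energy. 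Since $\int_{Q_i}m_h=m_0|Q_i|$, expanding $u^2=\langle u\rangle_i^2+2\langle u\rangle_i\tilde u_i+\tilde u_i^2$ on each cube and applying $|m_h|\leq 2\Lambda$ together with Cauchy--Schwarz produces
\[
\int_\Omega m_h\,u^2\leq m_0\|u\|_{L^2(\Omega)}^2+C\Lambda\,h^{2s_*}\mathcal{E}(u)+C\Lambda\,h^{s_*}\|u\|_{L^2(\Omega)}\,\mathcal{E}(u)^{1/2}.
\]
Since $m_0<0$, imposing positivity of the left-hand side forces $\mathcal{E}(u)^{1/2}/\|u\|_{L^2(\Omega)}\geq c_0\,h^{-s_*}$. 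Combining this with the trivial upper bound $\int_\Omega m_h\,u^2\leq 2\Lambda\|u\|_{L^2(\Omega)}^2$ and the Rayleigh characterization
\[
\lambda_1(m_h)=\inf\left\{\,\frac{\mathcal{E}(u)}{\int_\Omega m_h\,u^2}\ :\ \int_\Omega m_h\,u^2>0\,\right\}
\]
gives $\lambda_1(m_h)\geq c\,h^{-2s_*}\to+\infty$ as $h\searrow 0$.

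The main technical hurdles I anticipate are threefold. First, verifying that the global energy $\mathcal{E}(u)$ really dominates both $\sum_i\int_{Q_i}|\nabla u|^2$ and $\sum_i[u]^2_{s,Q_i}$: this follows from the disjointness of the cubes, but requires care in the purely nonlocal case $\alpha=0$, where only the fractional Poincaré is available, producing the worse exponent $s_*=s$. Second, handling the leftover region $\Omega\setminus\Omega'$: this is painless, since $m_h=m_0<0$ there and the corresponding term in $\int_\Omega m_h\,u^2$ is non-positive, so cannot spoil the bound. Third, confirming the membership $m_h\in{\mathscr{M}}^\sharp_{\Lambda,m_0}$: the extremal values~$\pm2\Lambda$ are indeed attained on sets of positive measure, the bound $|m_h|\leq 2\Lambda$ is automatic, the mean constraint holds cube-by-cube, and $m_h^+\not\equiv 0$ is clear.
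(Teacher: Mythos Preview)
Your proposal is correct and proceeds along a genuinely different route from the paper. The paper constructs a smooth oscillatory resource
\[
m(x)=m_\omega+\Lambda\,\eta(x)\sin(\omega x_1),
\]
with $\eta\in C^\infty_0$ a cutoff and $\omega$ large, and then treats the local and nonlocal contributions by two separate mechanisms: for the gradient part it integrates by parts once (passing the $x_1$-derivative from $\sin(\omega x_1)$ onto $\varphi^2$, which gains a factor $1/\omega$ and forces $\int|\nabla\varphi|^2\gtrsim\omega$), while for the Gagliardo part it uses a trigonometric identity together with a translation $x\mapsto x+E/\omega$ to produce finite differences of $\varphi$, which after integration in $E$ are controlled by the $H^s$-seminorm and yield $[ \varphi ]_{H^s}^2\gtrsim\omega^{2s}$.

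Your argument replaces this Fourier-flavoured analysis by a purely real-variable one: a bang-bang resource $m_h\in\{\pm 2\Lambda\}$ on a grid of $h$-cubes, calibrated so that each cube carries mean $m_0$, and then a single tool---the (classical or fractional) Poincar\'e inequality on cubes with the correct scaling $h^{2s_*}$---does all the work. The decomposition $u=\langle u\rangle_i+\tilde u_i$ together with $\int_{Q_i}m_h=m_0|Q_i|$ makes the leading term $m_0\|u\|_{L^2}^2<0$, so positivity of $\int m_h u^2$ forces the fluctuation $\sum_i\|\tilde u_i\|^2$ to be large, hence $\mathcal{E}(u)\gtrsim h^{-2s_*}\|u\|^2$. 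This is more elementary and more unified than the paper's proof (one argument covers $\alpha>0$ and $\alpha=0$ simultaneously), and the bang-bang construction dovetails nicely with the bathtub principle used elsewhere in the paper. The paper's approach, on the other hand, produces an explicit smooth family of resources and makes the Riemann--Lebesgue mechanism (oscillation averages to zero against slowly varying densities) transparent; it also yields the two separate rates $\omega$ and $\omega^{2s}$ for the local and nonlocal energies, matching your $h^{-2}$ and $h^{-2s}$.

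Two small points to tighten in your write-up: the exhaustion $\Omega'\nearrow\Omega$ is not needed---any fixed tiled $\Omega'\Subset\Omega$ suffices, since on $\Omega\setminus\Omega'$ you have $m_h=m_0<0$; and when summing the fractional Poincar\'e inequalities you should note explicitly that $\sum_i[u]^2_{H^s(Q_i)}\le[u]^2_{H^s(\Omega)}$ because the $Q_i\times Q_i$ are pairwise disjoint subsets of $\Omega\times\Omega\subset\mathcal{Q}$.
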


Interestingly, the proof of Theorem~\ref{oink8495-0243905i8uhjssjjcjnj7}
will be ``constructive'', namely we will provide
an explicit example of a sequence of
badly displayed resources which make the first eigenvalue diverge:
a telling feature of this sequence is that it is highly oscillatory,
thus suggesting that a hectic and erratic alternation
of highly positive resources with very harmful surroundings
is potentially lethal for the development of the species.
\medskip

We recall that the investigation
of the roles of fragmentation and
concentration for
resources is a classical topic in mathematical biology,
and, in this sense, our result in Theorem~\ref{oink8495-0243905i8uhjssjjcjnj7}
confirms the main paradigm according to which
concentrated resources favor survival
(see e.g.~\cite{Berestycki-Hamel-Roques-1, Berestycki-Hamel-Roques-2, Lamboley-Laurain-Nadin-Privat}) --
however, there are several circumstances
in which this general paradigm is violated
and fragmentation is better than concentration,
see e.g. the
small diffusivity regime analyzed in~\cite{Lam-Liu-Lou, Mazari-Ruiz-Balet, Lou-Nagahara-Yanagida}.
In any case, the analysis
of fragmentation and
concentration for mixed operators with our Neumann condition
is, to the best of our knowledge, completely new.
\medskip

We also remark that the results presented here
are new even in the simpler cases in which
no classical diffusion and no pollination term is present
in~\eqref{problogis-PRE}, as well as in the cases
in which the death rate and the pollination
functions are constant.
\medskip

The rest of this paper is organized as follows.
In Section~\ref{SEC1} we will introduce the functional
framework in which we work and the notion of
weak solutions, also providing a new result
showing that the nonlocal Neumann condition naturally
produces functions with minimal Gagliardo seminorm
(this is a nonlocal phenomenon, which has
no counterpart in the classical setting,
and will play a pivotal role in the minimization process).

Then, in Section~\ref{EX:SECT} we prove
the existence results in Theorems~\ref{esistenza}
and~\ref{intm>0}.
In Section~\ref{sec:nuova} we study the eigenvalue problemin~\eqref{probauto},
and we give the proof of Theorem~\ref{intm<0}. Not to
overburden this paper, some technical proofs
related to the spectral theory of the problem are deferred to
the article~\cite{ANCI}.

In Section~\ref{OPT:M}, we deal with
the proofs of
Theorem~\ref{dzero},
Theorems~\ref{msopratoinfty}
and~\ref{msottotoinfty} when~$n\ge3$, and
Theorems~\ref{alpha>0} and~\ref{alpha=0}.

When~$n=2$, the proofs of Theorems~\ref{msopratoinfty}
and~\ref{msottotoinfty}
require some technical modification of logarithmic type,
hence their proofs is deferred to Appendix~\ref{KASN:ALSL}.

The proof of Theorem~\ref{oink8495-0243905i8uhjssjjcjnj7}
is contained in Section~\ref{oink8495-0243905i8uhjssjjcjnj7SS}.

Finally, Section~\ref{APPEB}
contains some
probabilistic motivations related to the diffusive
operators of mixed integer and fractional order.

\section{Functional analysis setting}\label{SEC1}

In this section we define the functional space in which we work.
First, we recall the space~$H^s_\Omega$ introduced in~\cite{dirova}
and defined as
\begin{equation}\label{DE-Hs}
H^s_\Omega:=
\left\lbrace  
u:\R^n\to\R \;{\mbox{ s.t. }}\;
u\in L^2(\Omega) \;{\mbox{ and }}\;
\iint_\Q\frac{|u(x)-u(y)|^2}{|x-y|^{n+2s}}\,dx\,dy <+\infty
\right\rbrace,
\end{equation}
where
\begin{equation*}
\Q:=\R^{2n}\setminus(\R^n\setminus\Omega)^2.\end{equation*}
As customary, by~$u\in L^2(\Omega)$
in~\eqref{DE-Hs} we mean that the restriction of the function~$u$
to~$\Omega$ belongs to~$L^2(\Omega)$ (we stress
that functions in~$H^s_\Omega$ are defined in the whole of~$\R^n$).
Also, all functions considered will be implicitly assumed
to be measurable.

Furthermore, we define 
\begin{equation}\label{Xdefab}
X_{\alpha,\beta}=X_{\alpha,\beta}(\Omega):=
\begin{dcases}
H^1(\Omega)     & {\mbox{ if }}\; \beta=0,
\\
H^s_\Omega		& {\mbox{ if }}\; \alpha=0,
\\
H^1(\Omega)\cap H^s_\Omega & {\mbox{ if }}\; \alpha \beta\neq 0.
\end{dcases}
\end{equation}
In light of this definition, $X_{\alpha,\beta}$ is a Hilbert 
space with respect to the scalar product
\begin{equation}\label{scalar}\begin{split}
(u,v)_{X_{\alpha,\beta}}&\;:=\int_\Omega u(x)v(x)\,dx+
\alpha \int_\Omega \nabla u(x) \cdot\nabla v(x)\,dx
 \\
&\qquad+\frac{\beta}{2}
\iint_\Q\frac{(u(x)-u(y))(v(x)-v(y))}{|x-y|^{n+2s}}\,dx\,dy,
\end{split}\end{equation}
for every $u,v\in X_{\alpha,\beta}$.

We also define the seminorm
\begin{equation}\label{seminorm}
[u]^2_{X_{\alpha,\beta}}:=\frac\alpha2 
\int_\Omega |\nabla u(x)|^2\,dx
+\frac{\beta}{4}
\iint_\Q\frac{|u(x)-u(y)|^2}{|x-y|^{n+2s}}\,dx\,dy.
\end{equation}

{F}rom the compact embeddings 
of the spaces $H^1(\Omega)$ and $H^s_\Omega$ (see e.g.
Corollary~7.2 in~\cite{MR2944369} when~$\alpha=0$),
we deduce the compact 
embedding of~$X_{\alpha,\beta}$ into~$L^p(\Omega)$, for every~$
p \in [1,2^*)$ if~$\alpha\neq 0$, and for every~$p \in [1,2^*_s)$ if~$\alpha =0$.
\medskip

We
say that~$u\in X_{\alpha,\beta}$
is a solution of~\eqref{problogis} if
\begin{equation}\label{WEAKSOL}
\begin{split}&
\alpha \int_\Omega \nabla u(x) \cdot\nabla v(x)\,dx 
+\frac{\beta}{2}
\iint_\Q\frac{(u(x)-u(y))(v(x)-v(y))}{|x-y|^{n+2s}}\,dx\,dy
\\&\qquad=
\int_\Omega \Big(\big(m(x)-\mu(x) u(x)\big)u(x)+\tau(x)\;J\star u(x)\Big)v(x)\,dx
\end{split}
\end{equation}
for all functions~$v\in X_{\alpha,\beta}$.
\medskip

Now we show that {\em
among all the functions in~$H^s_\Omega$,
the ones minimizing the Gagliardo seminorm are
those satisfying the nonlocal Neumann condition in~\eqref{NEU-1}}.
This is a useful result
in itself, which also clarifies the structural role
of the Neumann condition introduced in~\cite{dirova}:

\begin{theorem}\label{thcontiminimo}
Let $u:\R^n \to \R$ with~$u\in L^1(\Omega)$,
and set, for all~$x\in\R^n\setminus\overline\Omega$,
\[ E_u(x):=\int_\Omega \frac{u(z)}{|x-z|^{n+2s}}\,dz.
\]
Then, if we define 
\begin{equation}\label{EUE1}
\tilde{u}(x):=
\begin{dcases}
u(x)  &  {\mbox{ if }}x \in \Omega,
\\
\\
\displaystyle\frac{E_u(x)}{E_1(x)}  & {\mbox{ if }}x\in \R^n\setminus \overline\Omega,
\end{dcases}
\end{equation}
we have
\begin{equation}\label{CLaj9}
\iint_\Q\frac{|\tilde{u}(x)-\tilde{u}(y)|^2}{|x-y|^{n+2s}}\,dx\,dy
\leq \iint_\Q\frac{|u(x)-u(y)|^2}{|x-y|^{n+2s}}\,dx\,dy. 
\end{equation}
Also, the equality in~\eqref{CLaj9}
holds if and only if~$u$ satisfies~\eqref{NEU-1}.
\end{theorem}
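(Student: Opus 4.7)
The plan is to reduce the global inequality to a pointwise (in $y$) one-variable quadratic minimization, whose closed-form solution is exactly the ``harmonic extension'' $E_u/E_1$.

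First I would split the domain of integration. Since $\mathcal{Q}=(\Omega\times\Omega)\cup(\Omega\times(\R^n\setminus\Omega))\cup((\R^n\setminus\Omega)\times\Omega)$ and the integrand is symmetric in $(x,y)$, the Gagliardo seminorm breaks as
\begin{equation*}
\iint_\Q\frac{|u(x)-u(y)|^2}{|x-y|^{n+2s}}\,dx\,dy
=\iint_{\Omega\times\Omega}\frac{|u(x)-u(y)|^2}{|x-y|^{n+2s}}\,dx\,dy
+2\int_{\R^n\setminus\Omega}\!\int_\Omega\frac{|u(x)-u(y)|^2}{|x-y|^{n+2s}}\,dx\,dy.
\end{equation*}
Since $\tilde u\equiv u$ on $\Omega$, the inner integral over $\Omega\times\Omega$ is identical for $u$ and $\tilde u$, so the entire problem reduces to comparing the cross integrals. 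Note that for $y\in\R^n\setminus\overline\Omega$, both $E_u(y)$ and $E_1(y)$ are finite (the singularity $|x-y|^{-(n+2s)}$ is integrable away from $\overline\Omega$ because $\Omega$ is bounded and $u\in L^1(\Omega)$), so $\tilde u(y)$ is well defined for a.e.\ such $y$.

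Next I would perform the pointwise minimization. Fix $y\in\R^n\setminus\overline\Omega$ and regard the value of $u$ at $y$ as a free parameter $v\in\R$. Set
\begin{equation*}
\Phi_y(v):=\int_\Omega\frac{|u(x)-v|^2}{|x-y|^{n+2s}}\,dx
= \int_\Omega\frac{u(x)^2}{|x-y|^{n+2s}}\,dx-2v\,E_u(y)+v^2\,E_1(y).
\end{equation*}
This is a convex quadratic in $v$ with $E_1(y)>0$, hence its unique minimizer is
\begin{equation*}
v_\ast(y)=\frac{E_u(y)}{E_1(y)}=\tilde u(y).
\end{equation*}
Consequently $\Phi_y(\tilde u(y))\le\Phi_y(u(y))$ for a.e.\ $y\in\R^n\setminus\overline\Omega$, and integrating this pointwise inequality in $y$ produces exactly the desired bound \eqref{CLaj9}.

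For the equality case, the strict convexity of $\Phi_y$ gives that $\Phi_y(\tilde u(y))=\Phi_y(u(y))$ if and only if $u(y)=\tilde u(y)$, i.e.\ $u(y)E_1(y)=E_u(y)$. Rewriting this identity as
\begin{equation*}
\int_\Omega\frac{u(y)-u(x)}{|x-y|^{n+2s}}\,dx=0,
\end{equation*}
one recognizes precisely the nonlocal Neumann condition $\Ns u(y)=0$ from \eqref{NEU-1}. Hence equality in \eqref{CLaj9} is equivalent to $\Ns u\equiv 0$ in $\R^n\setminus\overline\Omega$, which is what is claimed.

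The argument is structurally straightforward; the only subtlety I foresee is bookkeeping for the integrability of $E_u$ and of $\Phi_y(\tilde u(y))$ as $y$ approaches $\partial\Omega$, where $E_1(y)$ may blow up and the denominator in $\tilde u(y)$ becomes large. However, since we only need the \emph{inequality} $\Phi_y(\tilde u(y))\le\Phi_y(u(y))$ pointwise, and the right-hand side is what we are assuming to be integrable (otherwise \eqref{CLaj9} is trivial), no delicate estimate is actually required---Tonelli's theorem handles the integration in $y$ automatically.
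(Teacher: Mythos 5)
Your proposal is correct and is essentially the same argument as the paper's: the paper sets $\varphi(y)=u(y)-\tilde u(y)$, expands $|u(x)-u(y)|^2=|u(x)-\tilde u(y)|^2-2\varphi(y)(u(x)-\tilde u(y))+\varphi(y)^2$, and observes that the cross term integrates to zero against the kernel, which is precisely the completing-the-square computation underlying your one-variable quadratic minimization of $\Phi_y$. The reduction to the cross integrals, the identification of $E_u/E_1$ as the pointwise optimizer, and the characterization of equality as $\Ns u=0$ all match the paper's proof.
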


\begin{proof} We remark that the notation~$E_1$
in~\eqref{EUE1} stands for~$E_u$ when~$u\equiv1$.
Moreover, without loss of generality, we can suppose that
\[
\iint_\Q\frac{|u(x)-u(y)|^2}{|x-y|^{n+2s}}\,dx\,dy <+\infty,
\]
otherwise the claim in~\eqref{CLaj9}
is obviously true.

In addition,
\begin{equation}\label{p8932AK}
\int_\Omega\int_\Omega \frac{|\tilde{u}(x)-\tilde{u}(y)|^2}{|x-y|^{n+2s}}\,dx\,dy 
=\int_\Omega\int_\Omega\frac{|u(x)-u(y)|^2}{|x-y|^{n+2s}}\,dx\,dy,
\end{equation}
so we only need to consider the integral on 
$(\R^n\setminus \Omega) \times\Omega$
(being the integral on
$\Omega\times(\R^n\setminus \Omega)$ the same).

Setting $\varphi(x):=u(x)-\tilde{u}(x)$, for every 
$y\in \R^n\setminus \overline\Omega$ we have
\begin{align}\label{contiminimo}
\int_\Omega &\frac{|u(x)-u(y)|^2}{|x-y|^{n+2s}}\,dx
=\int_\Omega \frac{|u(x)-\tilde{u}(y)-\varphi(y)|^2}{|x-y|^{n+2s}}\,dx \\
&=\int_\Omega \frac{|u(x)-\tilde{u}(y)|^2-2\varphi(y)(u(x)-\tilde{u}(y))+|\varphi(y)|^2}{|x-y|^{n+2s}}\,dx. \notag
\end{align}
Now, we observe that, for every~$y\in \R^n\setminus \overline\Omega$,
\[
\int_\Omega \frac{u(x)-\tilde{u}(y)}{|x-y|^{n+2s}}\,dx
=E_u(y)-\frac{E_u(y)}{E_1(y)}\,E_1(y)=0.
\]
Accordingly, \eqref{contiminimo} becomes
\[ 
\int_\Omega \frac{|u(x)-u(y)|^2}{|x-y|^{n+2s}}\,dx=
\int_\Omega \frac{|\tilde{u}(x)-\tilde{u}(y)|^2+|\varphi(y)|^2}{|x-y|^{n+2s}}\,dx 
\geq \int_\Omega \frac{|\tilde{u}(x)-\tilde{u}(y)|^2}{|x-y|^{n+2s}}\,dx,
\]
for every $y\in \R^n\setminus\overline \Omega$, and the equality holds if
and only if $\varphi(y)=0$.
Integrating over~$\R^n\setminus \Omega$
(or, equivalently, on~$\R^n\setminus\overline\Omega$), we get
\[
\int_{\R^n\setminus \Omega}\int_{\Omega}\frac{|u(x)-u(y)|^2}{|x-y|^{n+2s}}\,dx\,dy
\geq \int_{\R^n\setminus \Omega}\int_{\Omega}\frac{|\tilde{u}(x)-\tilde{u}(y)|^2}{|x-y|^{n+2s}}\,dx\,dy,
\]
and the equality holds if and only if $\varphi \equiv 0$ in 
$\R^n\setminus \Omega$.
{F}rom this observation and~\eqref{p8932AK}
we obtain~\eqref{CLaj9}, as desired.
\end{proof}

\section{Existence results and proofs of
Theorems~\ref{esistenza} and
\ref{intm>0}}\label{EX:SECT}

The proof of Theorem \ref{esistenza} is based on a minimization 
argument. More precisely, given the functional setting introduced
in Section~\ref{SEC1}
(recall in particular~\eqref{Xdefab}),
in order to deal with problem \eqref{problogis},
we consider the energy functional $\E:X_{\alpha,\beta}\to \R$
defined as
\begin{equation}\label{ENE}
\begin{split}
\E(u):=&\frac{\alpha}{2}\int_\Omega |\nabla u|^2\,dx+
\frac{\beta}{4}\iint_\Q
\frac{|u(x)-u(y)|^2}{|x-y|^{n+2s}}\,dx\,dy  \\
&+\int_\Omega\left(\frac{\mu|u|^3}{3}-\frac{m u^2}{2}
-\frac{\tau u(J\star u)}{2}\right)\,dx.
\end{split}\end{equation}

As a technical remark,
we observe that our objective here
is to distinguish between trivial
and nontrivial solutions, to detect appropriate
conditions for the survival of the solutions
and we do not indulge in the distinction
nonnegative and nontrivial versus strictly positive
solutions. For the reader interested
in this point, we mention however that, under
appropriate conditions, one could develop
a regularity theory (see e.g. Theorems~3.1.11
and~3.1.12 in~\cite{MR1911531})
that allows the use of a strong maximum principle
for smooth solutions (see e.g. Theorem~3.1.4
in~\cite{MR1911531}).\medskip

Now, we prove that the functional in~\eqref{ENE} is the one associated
with~\eqref{problogis}: 

\begin{lem}\label{soluzione}
The Euler-Lagrange equation associated to the energy functional $\E$
introduced in~\eqref{ENE}
at a non-negative function $u$ is \eqref{problogis}.
\end{lem}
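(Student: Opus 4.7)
The plan is to compute the first variation of $\E$ at $u$ in an arbitrary direction $v\in X_{\alpha,\beta}$ and verify it coincides with the weak formulation \eqref{WEAKSOL} of problem \eqref{problogis}. Concretely, I would fix $v\in X_{\alpha,\beta}$, consider the real function $t\mapsto \E(u+tv)$, and show that its derivative at $t=0$ equals
\[
\alpha \int_\Omega \nabla u \cdot \nabla v\,dx+\frac{\beta}{2}\iint_\Q\frac{(u(x)-u(y))(v(x)-v(y))}{|x-y|^{n+2s}}\,dx\,dy
-\int_\Omega\Big((m-\mu u)u+\tau\, J\star u\Big)v\,dx.
\]
Once this identity is established, setting the derivative equal to zero for every $v\in X_{\alpha,\beta}$ gives exactly \eqref{WEAKSOL}, hence \eqref{problogis} in the weak sense.

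For the quadratic pieces, I would differentiate under the integral sign: the Dirichlet term contributes $\alpha\int_\Omega\nabla u\cdot\nabla v\,dx$, and expanding $|(u+tv)(x)-(u+tv)(y)|^2$ and differentiating at $t=0$ yields the nonlocal bilinear form with the prefactor $\beta/2$ (the factor $1/4$ in the energy becoming $1/2$ after differentiation, thanks to the symmetric contribution from swapping $x$ and $y$). The term $-\int_\Omega (m/2)u^2\,dx$ contributes $-\int_\Omega m\,u\,v\,dx$ by direct differentiation.

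The two terms that require a comment are the cubic one and the convolution one. For $\int_\Omega(\mu/3)|u+tv|^3\,dx$, I would note that $\tfrac{d}{dt}|u+tv|^3/3\big|_{t=0}=|u|\,u\,v$, and here we use in a crucial way the assumption that $u\ge 0$: this replaces $|u|u$ by $u^2$, yielding the contribution $\int_\Omega\mu u^2 v\,dx$ (which, once moved to the right-hand side, produces the logistic term $-\mu u^2=(m-\mu u)u-mu$ correctly). For the convolution term $-\tfrac{\tau}{2}\int_\Omega (u+tv)(J\star(u+tv))\,dx$, differentiation at $t=0$ gives $-\tfrac{\tau}{2}\int_\Omega \big(v(J\star u)+u(J\star v)\big)\,dx$; the symmetry assumption $J(x)=J(-x)$ in \eqref{Jeven}, via Fubini, gives
\[
\int_\Omega u\,(J\star v)\,dx=\iint_{\Omega\times\Omega}u(x)J(x-y)v(y)\,dx\,dy=\int_\Omega v\,(J\star u)\,dx,
\]
so the two halves combine into $-\tau\int_\Omega v\,(J\star u)\,dx$.

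The only delicate point is really the cubic term: without the sign assumption on $u$, the Euler--Lagrange derivative would feature $|u|u$ rather than $u^2$, and would not reproduce \eqref{problogis} exactly. Apart from this, the argument is an application of dominated convergence to justify differentiation under the integral, which is routine given that $u,v\in X_{\alpha,\beta}$, $m\in L^q(\Omega)$, $\mu^{-2}(m+\tau)^3\in L^1(\Omega)$, $J\in L^1(\R^n)$ and the compact embeddings of $X_{\alpha,\beta}$ into $L^p(\Omega)$ recalled after \eqref{seminorm}. Collecting all contributions yields \eqref{WEAKSOL}, which is the weak form of \eqref{problogis}, completing the proof.
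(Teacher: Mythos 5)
Your proof is correct and follows essentially the same route as the paper: compute the first variation of $\E$ at $u$ in an arbitrary direction $v\in X_{\alpha,\beta}$ and identify it with the weak form~\eqref{WEAKSOL}, using the evenness of $J$ exactly as the paper does for the convolution term. The paper only carries out the convolution computation explicitly and calls the remaining terms standard, so your remark that the cubic term differentiates to $\int_\Omega\mu|u|u\,v\,dx$ and that the hypothesis $u\ge0$ is what turns $|u|u$ into $u^2$ usefully makes explicit why the lemma is stated at a non-negative function; the one small slip is the parenthetical about $\beta/4\to\beta/2$, which comes purely from $\tfrac{d}{dt}\,|w+tz|^2\big|_{t=0}=2wz$ rather than from any symmetrization in $x$ and $y$, but this does not affect the conclusion.
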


\begin{proof}
We compute the first variation of $\E$, and we focus on the 
convolution term in~\eqref{ENE}
(being the computation for the other terms standard, see in particular
Proposition~3.7 in~\cite{dirova} to deal with the term
involving the Gagliardo seminorm, which is the one producing
the nonlocal Neumann condition).

For this, we set
\[
\mathcal{J}(u):=\frac\tau2\int_\Omega u(x) (J\star u(x))\,dx.
\]
For any $\phi\in X_{\alpha,\beta}$ and $\varepsilon\in (-1,1)$, we have
\begin{align*}
&\mathcal{J}(u+\varepsilon\phi) \\
&=\frac{\tau}{2}\int_\Omega (u+\varepsilon\phi)(x)
(J\star (u+\varepsilon\phi))(x)\,dx \\
&=\frac{\tau}{2}\int_\Omega u(x)(J\star u)(x)
+\varepsilon\big[u(x)(J\star \phi)(x)+\phi(x)(J\star u)(x) \big]
+\varepsilon^2\phi(x)(J\star \phi)(x)\,dx.
\end{align*}
Accordingly,
\begin{equation}\label{depsilon}
\frac{d\mathcal{J}}{d\varepsilon}(u+\varepsilon\phi)\Big|_{\ve=0}
=\frac{\tau}{2}\int_\Omega u(x)
(J\star \phi)(x)+\phi(x)(J\star u)(x)\,dx.
\end{equation}
Now, since $J$ is even (recall~\eqref{Jeven}), we see that
\begin{align*}
\int_\Omega u&(x)(J\star \phi)(x)\,dx=
\int_\Omega u(x)\left(\int_\Omega J(x-y)\phi(y)\,dy \right)\,dx \\
&=\int_\Omega \phi(y)\left(\int_\Omega J(y-x)u(x)\,dx \right)\,dy
=\int_\Omega \phi(x)(J\star u)(x)\,dx.
\end{align*}
Using this in \eqref{depsilon} we obtain that
\[
\frac{d\mathcal{J}}{d\varepsilon}(u+\varepsilon\phi)\Big|_{\ve=0}
=\tau \int_\Omega \phi(x)(J\star u)(x)\,dx,
\]
which concludes the proof.
\end{proof}

As a consequence of Lemma \ref{soluzione}, to find solutions 
of~\eqref{problogis},
we will consider the minimizing problem for the
functional~$\E$ in~\eqref{ENE}.
First, we show the following useful inequality:

\begin{lem}
Let $v,w\in L^2(\Omega)$. Then
\begin{equation}\label{disugconv}
\int_\Omega |v(x)|\,\big|(J\star w)(x)\big|\,dx\leq \|v\|_{L^2(\Omega)}
\|w\|_{L^2(\Omega)}.
\end{equation}
\end{lem}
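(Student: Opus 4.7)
The plan is to reduce the estimate to two applications of the Cauchy--Schwarz inequality, once inside the convolution to bound $|J\star w(x)|$ pointwise, and once outside to handle the product $|v|\,|J\star w|$. This is really the $L^2$ case of Young's convolution inequality, simplified by the normalization $\|J\|_{L^1(\mathbb R^n)}=1$ granted by \eqref{JL1}; the only point to watch is that the convolution in the paper is taken over $\Omega$ rather than over $\mathbb R^n$, but since $J\ge 0$ this only makes the integrals smaller.

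First I would write $J(x-y)=\sqrt{J(x-y)}\cdot\sqrt{J(x-y)}$ and apply Cauchy--Schwarz in the $y$-variable to get, for every $x\in\Omega$,
\[
|J\star w(x)|^2\le \left(\int_\Omega J(x-y)\,dy\right)\left(\int_\Omega J(x-y)\,w(y)^2\,dy\right)\le \int_\Omega J(x-y)\,w(y)^2\,dy,
\]
where the last inequality uses $J\ge 0$ together with \eqref{JL1} to bound $\int_\Omega J(x-y)\,dy\le \int_{\R^n} J(z)\,dz=1$.

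Next I would integrate in $x$ over $\Omega$ and swap the order of integration by Tonelli's theorem, then again use \eqref{JL1}:
\[
\int_\Omega |J\star w(x)|^2\,dx\le \int_\Omega w(y)^2\left(\int_\Omega J(x-y)\,dx\right)dy\le \int_\Omega w(y)^2\,dy=\|w\|_{L^2(\Omega)}^2.
\]
Hence $\|J\star w\|_{L^2(\Omega)}\le \|w\|_{L^2(\Omega)}$, so a final Cauchy--Schwarz gives
\[
\int_\Omega |v(x)|\,|J\star w(x)|\,dx\le \|v\|_{L^2(\Omega)}\,\|J\star w\|_{L^2(\Omega)}\le \|v\|_{L^2(\Omega)}\,\|w\|_{L^2(\Omega)},
\]
which is \eqref{disugconv}.

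There is no genuine obstacle here: the argument is entirely standard and uses only the nonnegativity of $J$, the $L^1$-normalization \eqref{JL1}, Cauchy--Schwarz, and Tonelli. The evenness hypothesis \eqref{Jeven} is not needed for this inequality (it was used in Lemma~\ref{soluzione} to symmetrize the variation, but plays no role in the bound above).
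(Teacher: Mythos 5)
Your proof is correct. Both you and the paper open with the same outer Cauchy--Schwarz reduction, $\int_\Omega |v|\,|J\star w|\,dx\le\|v\|_{L^2(\Omega)}\|J\star w\|_{L^2(\Omega)}$; the difference lies in how the bound $\|J\star w\|_{L^2(\Omega)}\le\|w\|_{L^2(\Omega)}$ is obtained. The paper rewrites $J\star w$ as the full-space convolution $J*(w\chi_\Omega)$ and cites Young's convolution inequality (with exponents $1$ and $2$) together with \eqref{JL1}. You instead re-derive this special case of Young's inequality from scratch: splitting $J=\sqrt{J}\cdot\sqrt{J}$, applying Cauchy--Schwarz pointwise in $y$, and then swapping the order of integration by Tonelli. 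Your version is more self-contained and makes transparent exactly which structural facts are used ($J\ge 0$, $\|J\|_{L^1}=1$, and nothing else), at the cost of a few extra lines; the paper's version is shorter because it delegates the convolution estimate to a textbook reference. Your closing remark that the evenness \eqref{Jeven} plays no role here is correct and a useful clarification.
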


\begin{proof}
By the Cauchy-Schwarz Inequality, we have
\begin{equation}\label{9tbugerghfgr}
\int_\Omega |v(x)|\,\big|(J\star w)(x)\big|\,dx\leq \|v\|_{L^2(\Omega)}
\|J\star w\|_{L^2(\Omega)}.
\end{equation}
Now, using the Young Inequality for convolutions with exponents 1
and 2 (see e.g. Theorem~9.1 in~\cite{MR3381284}), we obtain
\[
\|J\star w\|_{L^2(\Omega)}=\|J* (w\chi_\Omega)\|_{L^2(\R^n)}
\leq \|J\|_{L^1(\R^n)}\|w\chi_\Omega\|_{L^2(\R^n)}=
\|w\|_{L^2(\Omega)},
\]
where~\eqref{JL1} has been also used.
This and~\eqref{9tbugerghfgr} give~\eqref{disugconv}, as desired.
\end{proof}

We are now able to provide a minimization argument for
the functional in~\eqref{ENE}:

\begin{prop}\label{minimizer}
Assume that $m\in L^q(\Omega)$, for some
$q\in \big(\underline{q},+\infty\big]$, where~$\underline{q}$
has been introduced in~\eqref{qbar},
and that 
\begin{equation}\label{condmu}
(m+\tau)^3 \mu^{-2} \in L^1(\Omega).\end{equation}
Let also
\[
p:=\frac{2q}{q-1}.
\]
Then, the functional~$\E$ in~\eqref{ENE}
attains its minimum in~$X_{\alpha,\beta}$.
The minimal value is the same as the one occurring
among the functions $u\in L^p(\Omega)$
for which
\[
\iint_\Q
\frac{|u(x)-u(y)|^2}{|x-y|^{n+2s}}\,dx\,dy<+\infty
\]
and such that $\Ns u=0$ a.e. outside $\Omega$. 

Moreover, there exists a nonnegative minimizer~$u$, 
and it is a solution of \eqref{problogis}.
\end{prop}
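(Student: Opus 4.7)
The plan is to apply the direct method of the calculus of variations to $\E$ on the Hilbert space $X_{\alpha,\beta}$, and then to use Theorem~\ref{thcontiminimo} to pass to the broader admissible class.

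First I would establish coercivity. The only sign-indefinite contributions to $\E$ are $-\tfrac12\int m\,u^2$ and $-\tfrac12\int\tau u\,(J\star u)$; by~\eqref{disugconv} the latter satisfies $\bigl|\int u\,(J\star u)\bigr|\le\|u\|_{L^2(\Omega)}^2$, so together they are bounded from below by $-\tfrac12\int_\Omega(|m|+\tau)\,u^2\,dx$. Writing $(|m|+\tau)u^2=(\mu^{2/3}u^2)\cdot((|m|+\tau)\mu^{-2/3})$ and applying Young's inequality with conjugate exponents $3/2$ and $3$, we get, for every $\lambda>0$,
\[
(|m|+\tau)\,u^2\le \tfrac{2\lambda^{3/2}}{3}\,\mu\,|u|^3+\tfrac{1}{3\lambda^3}\,(|m|+\tau)^3\mu^{-2}.
\]
Choosing $\lambda$ small, the hypothesis $(m+\tau)^3\mu^{-2}\in L^1(\Omega)$ turns this into
\[
\E(u)\ge [u]^2_{X_{\alpha,\beta}}+c\int_\Omega \mu\,|u|^3\,dx - C.
\]
Since $\mu\ge\underline{\mu}>0$, H\"older's inequality $\|u\|_{L^2(\Omega)}^3\le|\Omega|^{1/2}\|u\|_{L^3(\Omega)}^3$ then controls $\|u\|_{L^2(\Omega)}$, and hence the full $X_{\alpha,\beta}$-norm.

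Next I would carry out the compactness and lower semicontinuity step. A minimizing sequence $\{u_k\}$ is bounded in $X_{\alpha,\beta}$, so along a subsequence $u_k\rightharpoonup u_\infty$ weakly in $X_{\alpha,\beta}$; the compact embedding $X_{\alpha,\beta}\hookrightarrow L^p(\Omega)$ (with $p=2q/(q-1)$ strictly below the relevant Sobolev exponent, since $q>\underline{q}$) yields strong $L^p(\Omega)$ convergence and, up to a further subsequence, pointwise a.e.\ convergence. The quadratic form $[\,\cdot\,]^2_{X_{\alpha,\beta}}$ is weakly lower semicontinuous; the term $\int m\,u^2$ is continuous by H\"older in the dual pair $(q,\,q/(q-1))$, since $2q/(q-1)=p$; the convolution term is continuous because $u\mapsto J\star u$ is bounded $L^2(\Omega)\to L^2(\Omega)$ by Young's convolution inequality and~\eqref{JL1}; and $\int \mu\,|u|^3$ is lower semicontinuous by Fatou's lemma via the a.e.\ convergence. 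Therefore $u_\infty$ attains the infimum.

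To produce a nonnegative minimizer I would check $\E(|u|)\le \E(u)$: the gradient and Gagliardo seminorms do not grow under $u\mapsto|u|$ (via the pointwise inequality $\bigl||u(x)|-|u(y)|\bigr|\le|u(x)-u(y)|$ and the analogous property for $\nabla|u|$), the terms $\int \mu\,|u|^3$ and $\int m\,u^2$ are invariant, and $|J\star u|\le J\star|u|$ pointwise implies $\int u(J\star u)\le\int|u|(J\star|u|)$, so the convolution contribution also does not grow. Then Lemma~\ref{soluzione} identifies $|u_\infty|$ as a nonnegative weak solution of~\eqref{problogis}. To compare with the broader admissible class, I would observe that any such $u$ with $\E(u)<+\infty$ must satisfy $\nabla u\in L^2(\Omega)$ when $\alpha\neq 0$ and $u|_\Omega\in L^2(\Omega)\cap L^p(\Omega)$, hence $u\in X_{\alpha,\beta}$; conversely, for $u\in X_{\alpha,\beta}$ the Neumann extension $\tilde u$ from~\eqref{EUE1} agrees with $u$ on $\Omega$ (so all local integrals in $\E$ are preserved) and has Gagliardo seminorm no larger than that of $u$ by Theorem~\ref{thcontiminimo}. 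This shows that the two infima coincide.

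The most delicate point is the coercivity bound: the Young-type splitting must be calibrated exactly so that the destabilizing $m$- and $\tau$-terms can be absorbed into the cubic $\mu$-term and the remainder is integrable precisely under the hypothesis $(m+\tau)^3\mu^{-2}\in L^1(\Omega)$. After that, the direct method is routine, with Theorem~\ref{thcontiminimo} intervening only to identify the enlarged admissible class.
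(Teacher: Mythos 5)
Your proposal is correct and follows the same overall skeleton as the paper's proof: coercivity by Young-type absorption of the sign-indefinite terms into the cubic $\mu$-term, compactness to $L^p$ with $p=2q/(q-1)$, lower semicontinuity piece by piece, $\E(|u|)\le\E(u)$ for nonnegativity, then Lemma~\ref{soluzione} and Theorem~\ref{thcontiminimo} for the comparison with the Neumann-satisfying class. The one genuine departure is in the lower semicontinuity of the Gagliardo contribution. You extract a weakly convergent subsequence in $X_{\alpha,\beta}$ and invoke weak lower semicontinuity of the Hilbert seminorm, which requires only a.e.\ convergence on $\Omega$; the paper instead first replaces each element of the minimizing sequence by its Neumann extension (licensed by Theorem~\ref{thcontiminimo}), reconstructs each $u_j$ outside $\Omega$ from its restriction to $\Omega$ via the explicit formula, obtains a.e.\ convergence on all of $\R^n$ by dominated convergence, and then applies Fatou to the Gagliardo integral over $\Q$. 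Your route is more abstract and shorter; the paper's is more hands-on and has the side benefit of producing a minimizing sequence already in the Neumann class, which feeds directly into the clause ``the minimal value is the same.'' Both yield the result. One detail worth making explicit: you lower-bound the indefinite terms by $-\tfrac12\int(|m|+\tau)u^2$ rather than $-\tfrac12\int(m+\tau)u^2$, so your Young splitting leaves a remainder controlled by $\int(|m|+\tau)^3\mu^{-2}$ rather than the hypothesized $\int|m+\tau|^3\mu^{-2}$; this is still finite because $(|m|+\tau)^3\le 4|m+\tau|^3+32\tau^3$ and $\tau^3\mu^{-2}\le\tau^3\underline{\mu}^{-2}\in L^1(\Omega)$ since $\Omega$ is bounded, but the observation should be stated. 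The paper avoids it by applying Young directly to $\tfrac12(m+\tau)u^2$, which is trivially bounded above when $m+\tau\le0$.
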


\begin{proof}
First, we notice that $p\in \left[2, \frac{2\underline{q}}{\underline{q}-1}\right)$ and 
\begin{equation}\label{2pm}
\frac{2}{p}+\frac{1}{q}=1.
\end{equation}
By \eqref{disugconv} we have that
\begin{equation}\label{disuconv2}
\int_\Omega \frac{\tau u(J\star u)}{2}\,dx\leq 
\frac{\tau}{2}\|u\|_{L^2(\Omega)}\|u\|_{L^2(\Omega)}=\frac\tau2\,
\int_\Omega |u(x)|^2\,dx.
\end{equation}
Moreover, we use the Young Inequality with exponents $3/2$ and $3$ to 
obtain that
\[
\frac{(m+\tau)u^2}{2}
=\frac{\mu^{\frac{2}{3}}u^2}{2^\frac{4}{3}}\cdot
\frac{m+\tau}{2^{-\frac{1}{3}} \mu^\frac{2}{3}}
\leq \frac{\mu |u|^3}{6}+\frac{2}{3}\frac{|m+\tau|^3}{\mu^2}.
\]
{F}rom this and \eqref{disuconv2} we have that
\begin{equation}\begin{split}\label{o48bt49hg}&
\int_\Omega \frac{\mu |u|^3}{6}-\frac{m u^2}{2}-
\frac{\tau u(J\star u)}{2}\,dx\ge
\int_\Omega \frac{\mu |u|^3}{6}-\frac{m u^2}{2}-
\frac{\tau u^2}2\,dx\\&\qquad
\geq -\frac{2}{3} \int_\Omega \frac{|m+\tau|^3}{\mu^2}\,dx=:-\kappa.
\end{split}\end{equation}
We point out that the quantity~$\kappa$ is finite,
thanks to~\eqref{condmu}, and it does not depend on~$u$.

Recalling~\eqref{ENE}, formula~\eqref{o48bt49hg}
implies that 
\begin{equation}\label{98vbhgjgbkjfhb}
\E(u)\geq \frac{\alpha}{2}\int_\Omega |\nabla u|^2\,dx 
 +\frac{\beta}{4}\iint_\Q
\frac{|u(x)-u(y)|^2}{|x-y|^{n+2s}}\,dx\,dy
+\int_\Omega \frac{\mu |u|^3}{6}\,dx -\kappa.
\end{equation}

Now, we take a minimizing sequence $u_j$, and we observe that, in light of Theorem~\ref{thcontiminimo}, we can assume that
\begin{equation}\label{4hfdjvDRC435Y}
{\mbox{$\Ns u_j=0$ in~$\in \R^n\setminus\overline\Omega$, for every~$j\in\N$.}}\end{equation}
We can also suppose that
\begin{align*}
0&=\E(0)\geq \E(u_j) \\
&\geq \frac{\alpha}{2}\int_\Omega |\nabla u_j|^2\,dx
+\frac{\beta}{4}\iint_\Q
\frac{|u_j(x)-u_j(y)|^2}{|x-y|^{n+2s}}\,dx\,dy
+\int_\Omega \frac{\mu |u_j|^3}{6}\,dx -\kappa,
\end{align*}
where~\eqref{98vbhgjgbkjfhb} has been also exploited.
This implies that
\[ 
\frac{\alpha}{2}\int_\Omega |\nabla u_j|^2\,dx
+\frac{\beta}{4}\iint_\Q
\frac{|u_j(x)-u_j(y)|^2}{|x-y|^{n+2s}}\,dx\,dy
+\int_\Omega \frac{\mu |u_j|^3}{6}\,dx \leq \kappa.
\]
As a consequence,
\begin{equation}\label{9847bvbjbvdsjvdsao9t8496}
\frac{\alpha}{2}\int_\Omega |\nabla u_j|^2\,dx
+\frac{\beta}{4}\iint_\Q
\frac{|u_j(x)-u_j(y)|^2}{|x-y|^{n+2s}}\,dx\,dy\le\kappa.
\end{equation}
Moreover, by the
H\"older Inequality with exponents~$3/2$ and~$3$,
\begin{eqnarray*}
&& \|u_j\|_{L^2(\Omega)}^2\le \left(\int_\Omega|u_j|^3\,dx\right)^{2/3}
|\Omega|^{1/3}\le  \left(\int_\Omega \frac{\underline{\mu}|u_j|^3}6\,dx\right)^{2/3} \frac{6^{2/3}
|\Omega|^{1/3}}{\underline{\mu}^{2/3}}\\&&\qquad
\le  \left(\int_\Omega \frac{\mu|u_j|^3}6\,dx\right)^{2/3} \frac{6^{2/3}
|\Omega|^{1/3}}{\underline{\mu}^{2/3}}\le 
\frac{6^{2/3}
|\Omega|^{1/3}\kappa}{\underline{\mu}^{2/3}}.
\end{eqnarray*}
{F}rom this and~\eqref{9847bvbjbvdsjvdsao9t8496}, and
using compactness  arguments, we can assume, up to a subsequence, that $u_j$
converges to some $u\in L^p(\Omega)$ (for every~$p\in[1,2^*_s)$ if~$\alpha=0$, and for every~$p\in[1,2^*)$ if~$\alpha\neq0$, see e.g. 
Corollary~7.2 in~\cite{MR2944369})
and a.e. in $\Omega$, and also~$|u_j|\le h$ for some~$h\in L^p(\Omega)$ for every~$j\in\N$ (see e.g. Theorem~IV.9
in~\cite{MR697382}).

Hence, if~$x\in \R^n\setminus \overline\Omega$,
by the Dominated Convergence Theorem,
$$\int_\Omega \frac{u_j(y)}{|x-y|^{n+2s}}\,dy
\longrightarrow
\int_\Omega \frac{u(y)}{|x-y|^{n+2s}}\,dy,
$$
as~$j\nearrow+\infty$. Accordingly,
in light of~\eqref{4hfdjvDRC435Y},
when $x\in \R^n\setminus \overline\Omega$, we have
\begin{equation}\label{090o906} 
u_j(x)=\frac{\displaystyle\int_\Omega \frac{u_j(y)}{|x-y|^{n+2s}}\,dy}{\displaystyle\int_\Omega \frac{dy}{|x-y|^{n+2s}}}
\longrightarrow 
\frac{\displaystyle\int_\Omega \frac{u(y)}{|x-y|^{n+2s}}\,dy}{
\displaystyle\int_\Omega \frac{dy}{|x-y|^{n+2s}}}=:u(x),
\end{equation}
as~$j\nearrow+\infty$ (we stress that till now~$u$ was only defined in~$\Omega$,
hence the last step in~\eqref{090o906} is instrumental to define~$u$
also outside~$\Omega$). As a consequence, we obtain that~$u_j$
converges a.e. in $\R^n$.

Now, recalling \eqref{2pm}, we have that
\begin{align*}
\limsup_{j\nearrow +\infty}&\left|\int_\Omega m(u_j^2-u^2)\,dx\right|\le
\limsup_{j\nearrow +\infty}\int_\Omega |m(u_j^2-u^2)|\,dx\\&=
\limsup_{j\nearrow +\infty}\int_\Omega |m(u_j-u)(u_j+u)|\,dx \\
&\leq \limsup_{j\nearrow +\infty} \|m\|_{L^q(\Omega)}
\|u_j-u\|_{L^p(\Omega)}\|u_j+u\|_{L^p(\Omega)}=0,
\end{align*}
so that
\[
\lim_{j\nearrow +\infty}\int_\Omega m(u_j^2-u^2)\,dx=0.
\]
Also,
\begin{equation}\label{Jstar1}
\int_\Omega \big(u_j(J\star u_j)-u(J\star u)\big)\,dx=
\int_\Omega (u_j-u)(J\star u_j)\,dx+ 
\int_\Omega (J\star u_j-J\star u)u\,dx.
\end{equation}
Using \eqref{disugconv} with $v:=u_j-u$ and $w:=u$, we obtain
\begin{equation}\label{Jstar2}
\limsup_{j\nearrow +\infty}\int_\Omega |u_j-u|\,\big|J\star u_j\big|\,dx
\leq \limsup_{j\nearrow +\infty}\|u_j-u\|_{L^2(\Omega)}
\|u_j\|_{L^2(\Omega)}=0.
\end{equation}
Similarly, exploiting \eqref{disugconv} with $v:=u$ and 
$w:=u_j-u$, we have
\begin{align}\label{Jstar3}
\limsup_{j\nearrow +\infty}&\int_\Omega \big|J\star u_j-J\star u\big|\,|u|\,dx
=\limsup_{j\nearrow +\infty}\int_\Omega \big|J\star (u_j-u)\big|\,|u|\,dx \\
&\leq \limsup_{j\nearrow +\infty}\|u_j-u\|_{L^2(\Omega)}
\|u\|_{L^2(\Omega)}=0. \notag
\end{align}
{F}rom \eqref{Jstar1}, \eqref{Jstar2} and \eqref{Jstar3} we
conclude that
\begin{align*}
\lim_{j\nearrow +\infty}\int_\Omega (u_j(J\star u_j)-u(J\star u))\,dx
=0.
\end{align*}
We also have, by the Fatou Lemma and the lower semicontinuity
of the~$L^2$-norm,
\[
\liminf_{j\nearrow +\infty}\iint_\Q
\frac{|u_j(x)-u_j(y)|^2}{|x-y|^{n+2s}}\,dx\,dy
\geq \iint_\Q
\frac{|u(x)-u(y)|^2}{|x-y|^{n+2s}}\,dx\,dy,
\]
\[
\liminf_{j\nearrow +\infty}\int_\Omega |\nabla u_j|^2\,dx
\geq \int_\Omega |\nabla u
|^2\,dx
\]
and
\[
\liminf_{j\nearrow +\infty}\int_\Omega\frac{\mu |u_j|^3}{3}\,dx
\geq \int_\Omega\frac{\mu |u|^3}{3}\,dx.
\]
Gathering together these observations, we conclude that
\[
\liminf_{j\nearrow +\infty}\E(u_j)\geq \E(u),
\]
and therefore $u$ is the desired minimum.

Also, since $\E(|u|)\leq \E(u)$, we can suppose that $u$ is 
nonnegative. Finally, $u$ is a solution of~\eqref{problogis} 
thanks to Lemma~\ref{soluzione}.
\end{proof}

The claim of Theorem \ref{esistenza} follows from Propo\-si\-tion~\ref{minimizer}.
\medskip

Now, we provide the proof
of Theorem \ref{intm>0}, relying also on the existence result
in Theorem~\ref{esistenza}:

\begin{proof}[Proof of Theorem \ref{intm>0}]
Thanks to Theorem~\ref{esistenza}, we know that there exists
a nonnegative solution to~\eqref{problogis}.

We now prove the claim in~(i). For this, 
we assume that~$m\equiv 0$ and~$\tau=0$, and we argue
towards a
contradiction, supposing that there exists a nontrivial solution~$u$
of~\eqref{problogis}. 

We notice that, since~$u\ge0$ and~$\mu\ge\underline{\mu}>0$ in~$\Omega$,
\[
\int_\Omega \mu u^3\,dx >0.
\]
As a consequence, taking~$v:=u$ in~\eqref{WEAKSOL} we obtain that
\[
0\leq \alpha\int_\Omega |\nabla u|^2\,dx+ \frac{\beta}{2}\iint_\Q
\frac{|u(x)-u(y)|^2}{|x-y|^{n+2s}}\,dx\,dy
=-\int_\Omega \mu u^3\,dx<0,
\]
which is a contradiction, and therefore the claim in~(i) is proved.

Now we deal with the claim in~(ii). 
{F}rom Theorem~\ref{esistenza} we know that there exists a nonnegative
solution~$u$
to~\eqref{problogis} which is obtained by the minimization of
the functional~$\E$ in~\eqref{ENE} (recall Propo\-si\-tion~\ref{minimizer}).
We claim that
\begin{equation}\label{fiertyerugs7856PRE}
{\mbox{$u$ does not vanish identically.}}\end{equation}
To prove this, we show that
\begin{equation}\label{fiertyerugs7856}
{\mbox{$0$ is not a minimizer for~$\E$.}}\end{equation}
For this, we consider the constant function~$v\equiv1$ and a small parameter~$\varepsilon>0$.
Then
\begin{align*}
\E(\varepsilon v)&=-
\frac{\varepsilon^2}{2}
\left[ 
\int_\Omega m 
+\tau (J\star 1)\,dx \right]
+\frac{\varepsilon^3}{3} \int_\Omega \mu\,dx \\
&\leq -c_1\varepsilon^2+ c_2 \varepsilon^3,
\end{align*}
where
\[
c_1:=\frac{1}{2}\int_\Omega m +\tau\;(J\star 1)\,dx
\qquad{\mbox{ and }}\qquad
c_2:=\frac{1}{3}\|\mu\|_{L^1(\Omega)}.
\]
We remark that~$c_1>0$, thanks to~\eqref{orerhgbfvasnm1},
and~$c_2\in(0,+\infty)$,
in light of~\eqref{orerhgbfvasnm2}.
Then, for small $\varepsilon$ we have $\E(\varepsilon v)<0=\E(0)$.
This implies~\eqref{fiertyerugs7856}, which in turn
proves~\eqref{fiertyerugs7856PRE}.
\end{proof}

\section{Analysis of the eigenvalue problem in~\eqref{probauto} and proof of Theorem~\ref{intm<0}}
\label{sec:nuova}

In this section we focus on the proof of Theorem~\ref{intm<0}. For this, we need to
exploit the analysis of
the eigenvalue problem in~\eqref{probauto}
(some technical details are deferred to the article~\cite{ANCI}
for the reader's convenience).

The first result towards the proof of Theorem~\ref{intm<0}
concerns the existence of two unbounded sequences of eigenvalues,
one positive and one negative:

\begin{prop}\label{PROAUTOVA}
Let
\begin{equation}\label{feuwtywvv123445}
m\in L^q(\Omega),\quad {\mbox{for some $q \in \big(\underline{q},+\infty\big]$,}}
\end{equation}
where~$\underline{q}$ is given in~\eqref{qbar}.
Suppose that~$m^+$, $m^-\not\equiv 0$ and that
\begin{equation}\label{yt5645867600000}
\int_\Omega m(x)\,dx\neq 0.\end{equation}
Then, problem \eqref{probauto} admits two unbounded sequences of 
eigenvalues:
\[
\cdots\le\lambda_{-2}\leq \lambda_{-1}<\lambda_0=0
<\lambda_1\leq 
\lambda_2 \le\cdots\;\;.
\]
In particular, if 
$$\int_\Omega m(x)\,dx<0,$$ then
\begin{equation}\label{lopouygbv}
\lambda_1=\min_{u\in X_{\alpha,\beta}}
\left\lbrace [u]^2_{X_{\alpha,\beta}}\, {\mbox{ s.t. }}
\int_\Omega mu^2\,dx=1 \right\rbrace
\end{equation}
where we use the notation in~\eqref{seminorm}.
If instead
$$\int_\Omega m(x)\,dx>0,$$ then
\[
\lambda_{-1}=-\min_{u\in X_{\alpha,\beta}}
\left\lbrace [u]^2_{X_{\alpha,\beta}} \,{\mbox{ s.t. }}
\int_\Omega mu^2\,dx=-1 \right\rbrace.
\]
\end{prop}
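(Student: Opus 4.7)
The plan is to recast \eqref{probauto} as a generalized eigenvalue problem on $X_{\alpha,\beta}$ for a pair of symmetric bilinear forms. Writing $A(u,v)$ for the left-hand side of the weak formulation in \eqref{WEAKSOL} (so that $A$ is comparable with $[\cdot]^2_{X_{\alpha,\beta}}$ on the diagonal) and $B(u,v):=\int_\Omega m\,uv\,dx$, a weak eigenpair $(\lambda,u)$ of \eqref{probauto} is a solution of $A(u,v)=\lambda B(u,v)$ for all $v\in X_{\alpha,\beta}$. Testing this relation against $v\equiv 1$ yields $\lambda\int_\Omega m\,u\,dx=0$, since $A(u,1)=0$, so every eigenfunction with $\lambda\ne 0$ lies in the closed hyperplane
$$V:=\Bigl\{u\in X_{\alpha,\beta}\;:\;\int_\Omega m\,u\,dx=0\Bigr\},$$
which, thanks to assumption \eqref{yt5645867600000}, does not contain the constants. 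The constants themselves belong to the kernel of $A$ and will account for the eigenvalue $\lambda_0=0$.

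The first key step is a Poincar\'e-type inequality on $V$: that is, the existence of $C>0$ with $\|u\|^2_{L^2(\Omega)}\le C\,[u]^2_{X_{\alpha,\beta}}$ for every $u\in V$. I would prove this by contradiction: a sequence $u_k\in V$ with $\|u_k\|_{L^2(\Omega)}=1$ and $[u_k]_{X_{\alpha,\beta}}\to 0$ must, by the mean-zero Poincar\'e inequality (classical when $\alpha\ne 0$ and the $H^s_\Omega$-analogue from~\cite{dirova} when $\alpha=0$) combined with the compact embeddings recalled in Section~\ref{SEC1}, converge in $L^2(\Omega)$ to a constant $c$ with $|\Omega|c^2=1$; but then passing to the limit in the constraint $\int_\Omega m\,u_k=0$ gives $c\int_\Omega m=0$, contradicting \eqref{yt5645867600000}. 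Once this coercivity is available, Riesz's representation theorem lets one define a solution operator $K\colon V\to V$ by the identity $A(Ku,v)=B(u,v)$ for every $v\in V$. The continuity of $B$ on $X_{\alpha,\beta}$ is ensured by the assumption $m\in L^q$ with $q>\underline q$ together with the appropriate Sobolev embedding, and the compactness of $X_{\alpha,\beta}\hookrightarrow L^p(\Omega)$ makes $K$ compact. The symmetry of $B$ makes $K$ self-adjoint with respect to the inner product induced by $A$ on $V$, and the indefiniteness of $B$ forced by $m^+\not\equiv 0\not\equiv m^-$ guarantees that $K$ admits eigenvalues of both signs.

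The spectral theorem for compact self-adjoint operators then produces two sequences $\{\mu_k^\pm\}$ of nonzero eigenvalues of $K$ accumulating only at $0$, and the correspondence $\lambda=1/\mu$ converts them into the two unbounded eigenvalue sequences of \eqref{probauto}, one above and one below $\lambda_0=0$. The variational identity in \eqref{lopouygbv} then follows from Courant--Fischer applied to $K$: its largest positive eigenvalue is $\max\{B(u,u)/A(u,u):u\in V,\,B(u,u)>0\}$, which after inversion and the normalization $\int_\Omega m\,u^2=1$ gives $\lambda_1$ as the minimum of $[u]^2_{X_{\alpha,\beta}}$ over $u\in V$ with $\int_\Omega m\,u^2=1$. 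To remove the linear constraint $u\in V$ from this minimization in the case $\int_\Omega m<0$, I would decompose any admissible $u\in X_{\alpha,\beta}$ as $u=s+w$ with $s\in\R$ and $w\in V$, use $[u]^2_{X_{\alpha,\beta}}=[w]^2_{X_{\alpha,\beta}}$ together with $\int_\Omega m\,u^2=s^2\int_\Omega m+\int_\Omega m\,w^2$, and observe that since $s^2\int_\Omega m\le 0$ the constraint $\int_\Omega m\,u^2=1$ forces $\int_\Omega m\,w^2\ge 1$, whence the scale invariance of the Rayleigh quotient reduces the infimum to the one over $V$ with $\int_\Omega m\,w^2=1$; the variational characterization of $\lambda_{-1}$ when $\int_\Omega m>0$ is symmetric. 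The main obstacle is the Poincar\'e inequality on $V$, which is the coercivity estimate responsible for every subsequent step; once that is established, Riesz, the spectral theorem, and Courant--Fischer apply in a standard fashion.
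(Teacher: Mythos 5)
The paper itself does not prove Proposition~\ref{PROAUTOVA}: immediately after the statement it defers the argument to the companion article~\cite{ANCI}, which is not reproduced here, so a line-by-line comparison against the paper's proof is not possible. Judged on its own merits, your proposal follows the standard route for indefinite-weight linearized eigenvalue problems: pass to the codimension-one hyperplane $V=\{u:\int_\Omega mu=0\}$, establish a Poincar\'e inequality there by contradiction and compactness, build the compact self-adjoint solution operator $K$ via Riesz, invoke the spectral theorem, and recover the variational characterization \eqref{lopouygbv} over all of $X_{\alpha,\beta}$ by the splitting $X_{\alpha,\beta}=\R\oplus V$. The decomposition argument for \eqref{lopouygbv} is correct (it uses $[u]_{X_{\alpha,\beta}}=[w]_{X_{\alpha,\beta}}$, the sign of $s^2\int_\Omega m$, and the scale invariance of the Rayleigh quotient), and the compactness argument for the Poincar\'e inequality on $V$ is sound under the paper's standing hypothesis that $\Omega$ is connected when $\beta=0$, while for $\beta\neq0$ the Gagliardo form on $\Q$ couples the connected components of $\Omega$, so its kernel is still exactly the globally constant functions.

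Two points are glossed over and should be spelled out. First, the indefiniteness you invoke is not automatic: what is needed is that $B(u,u)=\int_\Omega m u^2$ changes sign \emph{on $V$}, not merely on $X_{\alpha,\beta}$. This requires a short argument, for instance taking two nonnegative bumps with disjoint supports inside $\{m>0\}$ and forming the linear combination killing the functional $u\mapsto\int_\Omega mu$; this lies in $V$ and has $B>0$, and the analogous construction over $\{m<0\}$ gives $B<0$. Second, and more substantially, the proposition asserts \emph{two unbounded sequences}, which translates into $K$ having infinitely many eigenvalues of each sign. Indefiniteness of $B$ on $V$ combined with the spectral theorem only yields at least one eigenvalue of each sign; to get infinitely many you need a max-min (Courant--Fischer) argument over arbitrarily high-dimensional subspaces of $V$ on which $B$ is sign-definite. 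The disjoint-bump construction again supplies such subspaces: $N$ pairwise disjoint bumps in $\{m>0\}$ span an $N$-dimensional space on which $B>0$, and its intersection with $V$ has dimension at least $N-1$. With these two additions your outline becomes a complete proof; as written, the step from ``eigenvalues of both signs'' to ``two unbounded sequences'' is a genuine gap.
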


The proof of Proposition~\ref{PROAUTOVA} is contained
in~\cite{ANCI}.

The first positive eigenvalue $\lambda_1$, as given by
Proposition~\ref{PROAUTOVA}, has the following 
properties:

\begin{prop}\label{prop:lambda}
Let~$m\in L^q(\Omega)$, for some $q \in \big(\underline{q},+\infty\big]$,
where~$\underline{q}$ is given in~\eqref{qbar}.
Suppose that~$m^+\not\equiv 0$ and
$$\int_{\Omega} m\,dx<0.$$
Then,
the first positive eigenvalue $\lambda_1$ of \eqref{probauto} is 
simple, and the first eigenfunction $e$ can be taken such that~$e\ge0$.

A similar statement holds if $m^-\not\equiv 0$ and
$$ \int_{\Omega} m\,dx>0.$$
\end{prop}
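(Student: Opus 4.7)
The plan is to exploit the variational characterization of $\lambda_1$ recorded in \eqref{lopouygbv} of Proposition~\ref{PROAUTOVA}. In the order I would proceed: first produce a nonnegative first eigenfunction by replacing a minimizer by its absolute value; next upgrade nonnegativity to strict positivity in $\Omega$ via a strong maximum principle for the mixed operator; finally deduce simplicity by the classical trick of taking a suitable linear combination of two nonnegative eigenfunctions that vanishes at an interior point.

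For the nonnegativity of $e$, I would take any minimizer $u\in X_{\alpha,\beta}$ in \eqref{lopouygbv}. Since $|\nabla |u|(x)|=|\nabla u(x)|$ for a.e.\ $x\in\Omega$ (Stampacchia's theorem) and $\bigl||u(x)|-|u(y)|\bigr|\le |u(x)-u(y)|$ pointwise, the seminorm \eqref{seminorm} satisfies $[|u|]^2_{X_{\alpha,\beta}}\le[u]^2_{X_{\alpha,\beta}}$; meanwhile the constraint is preserved, $\int_\Omega m\,|u|^2\,dx=\int_\Omega m\,u^2\,dx=1$. Hence $|u|$ is still admissible and a minimizer, so $e:=|u|\ge 0$ is a nonnegative first eigenfunction.

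For the simplicity, I would argue by contradiction: suppose $e_1,e_2$ are two linearly independent $\lambda_1$-eigenfunctions. By the previous step both can be taken nonnegative. Regularity theory for the mixed operator, as referenced in the excerpt via \cite{MR1911531,biagvecc}, ensures that $e_1,e_2$ are continuous on $\overline\Omega$, and a strong maximum principle of the type in Theorem~3.1.4 of \cite{MR1911531}, applied to the equation $-\alpha\Delta w+\beta(-\Delta)^s w-\lambda_1 m w=0$ under the $(\alpha,\beta)$-Neumann condition, forces each $e_i$ to be strictly positive in $\Omega$. Then $c:=\min_{\overline\Omega}(e_1/e_2)>0$ is attained at some $x_0\in\overline\Omega$, and the function $w:=e_1-c\,e_2$ is a nonnegative eigenfunction for $\lambda_1$ that vanishes at $x_0$; a further application of the strong maximum principle yields $w\equiv 0$, contradicting linear independence. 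The symmetric case $\int_\Omega m\,dx>0$ with $m^-\not\equiv 0$ is handled by the same scheme applied to the variational characterization of $\lambda_{-1}$ (equivalently, by replacing $m$ with $-m$).

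The step I expect to be the main obstacle is the strong maximum principle, because the zero-order term $-\lambda_1 m$ has no definite sign and because the operator combines the Laplacian, the fractional Laplacian, and the $(\alpha,\beta)$-Neumann prescription \eqref{NEU-3}. I would recast the equation for a nonnegative solution as $(-\alpha\Delta+\beta(-\Delta)^s+\lambda_1 m^-)w=\lambda_1 m^+ w\ge 0$, so that the added zero-order coefficient is nonnegative, and then invoke a strong maximum principle in this mixed local/nonlocal form. A secondary, more technical, obstacle is bootstrapping the regularity of eigenfunctions when $m$ is only in $L^q(\Omega)$ rather than bounded; as the excerpt anticipates, the fine points of both issues are developed in the companion paper \cite{ANCI}, whereas the two clean ingredients above (absolute-value argument plus strong maximum principle plus linear combination) suffice for the statement of Proposition~\ref{prop:lambda}.
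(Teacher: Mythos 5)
The paper does not prove Proposition~\ref{prop:lambda} in the displayed source: immediately after the statement it simply says ``See \cite{ANCI} for the proof of Proposition~\ref{prop:lambda}.'' So there is no in-paper argument to compare against, and your proposal can only be assessed against the standard template such a proposition is expected to follow.

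Your outline -- replace a minimizer of the Rayleigh quotient from \eqref{lopouygbv} by its absolute value (noting that $\bigl|\,|u(x)|-|u(y)|\,\bigr|\le|u(x)-u(y)|$ and $|\nabla|u||=|\nabla u|$ a.e.\ leave the seminorm no larger while preserving the constraint $\int_\Omega m u^2\,dx=1$), then upgrade to strict positivity via a strong maximum principle, then run the $e_1-c\,e_2$ comparison -- is indeed the canonical route, and you correctly locate where the real work lies: an SMP for $-\alpha\Delta+\beta(-\Delta)^s$ under the $(\alpha,\beta)$-Neumann condition with an indefinite zero-order term, together with enough regularity (only $m\in L^q$) to make $e_1/e_2$ continuous on $\overline\Omega$ and to deal with the exterior values of the eigenfunctions (which, for the nonlocal Neumann condition, are determined on $\R^n\setminus\overline\Omega$ by the formula in \eqref{EUE1} and inherit positivity from the interior). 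Those are precisely the points the authors push off to \cite{ANCI}.

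One phrasing in your sketch is imprecise and worth fixing: ``\emph{By the previous step both can be taken nonnegative.}'' The absolute-value argument shows that \emph{some} first eigenfunction is nonnegative; it does not, by itself, let you replace each of two linearly independent eigenfunctions $e_1,e_2$ by their absolute values while retaining linear independence. The clean way to close this is to first argue that \emph{every} normalized $\lambda_1$-eigenfunction has constant sign: either by noting that equality in the nonlocal seminorm comparison forces $u(x)u(y)\ge0$ a.e.\ on $\Q$ (when $\beta\ne0$), or, when $\beta=0$, by using the SMP on $|u|$ together with the connectedness assumption \eqref{KASM:LSKDD} to conclude $u$ never vanishes and hence has one sign. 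Alternatively, compare $e_1$ directly with $|e_2|$ (both positive minimizers) instead of with $e_2$ itself. Either repair is elementary, but as written the step is a genuine gap in the logic, separate from the acknowledged SMP/regularity obstacles.
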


See \cite{ANCI} for the proof of
Proposition~\ref{prop:lambda}.

With this, we are now ready to give the proof of Theorem \ref{intm<0}:

\begin{proof}[Proof of Theorem \ref{intm<0}]
Thanks to Theorem~\ref{esistenza}, we know that there exists
a nonnegative solution to~\eqref{problogis}.

We first prove the claim in~(i). 
For this, we assume that $m\le -\tau$, and we suppose by 
contradiction that there exists a nontrivial solution~$u$ of~\eqref{problogis}. 

We observe that, applying~\eqref{disugconv} with~$v:=u$
and~$w:=u$, 
\begin{equation}\label{frietjbjcvjds}
 \tau \int_\Omega u\,\big(J\star u\big)\,dx
\leq \tau\|u\|_{L^2(\Omega)}^2=\tau\int_\Omega u^2\, dx.
\end{equation}
Hence, taking~$u$ as a test function in~\eqref{WEAKSOL},
using \eqref{frietjbjcvjds} and recalling that~$u\ge0$
and~$\mu\ge\underline\mu$, we get
\begin{align*}
0
&\leq \alpha\int_\Omega|\nabla u|^2\,dx+ \frac{\beta}{2}\iint_\Q
\frac{|u(x)-u(y)|^2}{|x-y|^{n+2s}}\,dx\,dy \\
&=\int_\Omega (m-\mu u)u^2\,dx
+\tau\int_\Omega (J\star u)u\,dx \\
&\le -\tau\int_\Omega u^2\,dx -\underline{\mu}\int_\Omega
u^3\,dx
+\tau\int_\Omega u^2\, dx\\
&<0.
\end{align*}
This is a contradiction, whence the first claim is proved.

Now we show the claim in~(ii). 
{F}rom Theorem~\ref{esistenza} we know that there exists a nonnegative
solution~$u$ to~\eqref{problogis} which is obtained by the minimization of
the functional~$\E$ in~\eqref{ENE} (recall Propo\-si\-tion~\ref{minimizer}).
We claim that
\begin{equation}\label{fiertyerugs7856PREbis}
{\mbox{$u$ does not vanish identically.}}\end{equation}
To prove this, we show that
\begin{equation}\label{fiertyerugs7856bis}
{\mbox{$0$ is not a minimizer for~$\E$.}}\end{equation}
For this, we take an
eigenfunction~$e$ associated to the first positive eigenvalue~$\lambda_1$,
as given by Proposition~\ref{prop:lambda}.
Namely, we take~$e\in X_{\alpha,\beta}$ such that
\begin{equation}\label{fiertyerugs7856bis33}
\alpha \int_\Omega \nabla e\cdot\nabla v\,dx
+\frac\beta2 \iint_\Q\frac{(e(x)-e(y))(v(x)-v(y))}{|x-y|^{n+2s}}\,dx\,dy 
=\lambda_1\int_\Omega mev\,dx,
\end{equation}
for every~$v\in X_{\alpha,\beta}$.

By taking~$v:=e$ in~\eqref{fiertyerugs7856bis33}, we obtain that
\begin{equation}\label{o4t349yhjfbdfew} \alpha \int_\Omega |\nabla e|^2\,dx
+\frac\beta2 \iint_\Q\frac{|e(x)-e(y)|^2}{|x-y|^{n+2s}}\,dx\,dy 
=\lambda_1\int_\Omega me^2\,dx.
\end{equation}

We also remark that, thanks to~\eqref{s3t4ty48yu}, we can use the characterization
of~$\lambda_1$ given in formula~\eqref{lopouygbv} of Proposition~\ref{PROAUTOVA},
and hence we can normalize~$e$ in such a way that
\begin{equation}\label{orauso}
\int_\Omega me^2\,dx=1.\end{equation}

By Corollary~1.4
in~\cite{ANCI}, we know that
\begin{equation}\label{jiehgvsdjb}
{\mbox{$e$ is bounded.}}\end{equation}

We also take $\varepsilon>0$. Then, by~\eqref{o4t349yhjfbdfew} and~\eqref{orauso},
\begin{align*}
\E(\varepsilon e) =\;&
\frac{\varepsilon^2}{2}\Bigg[\alpha\int_\Omega|\nabla e|^2\,dx
+\frac{\beta}{2} \iint_\Q
\frac{|e(x)-e(y)|^2}{|x-y|^{n+2s}}\,dx\,dy\\
&\qquad
-\int_\Omega m e^2\,dx -\int_\Omega \tau (J\star e)e\,dx
\Bigg]
+\frac{\varepsilon^3}{3}\int_\Omega \mu e^3\,dx \\
=\;&
\frac{\varepsilon^2}{2}\left[
(\lambda_1-1)\int_\Omega m e^2\,dx -\int_\Omega
\tau (J\star e)e\,dx\right]
+\frac{\varepsilon^3}{3}\int_\Omega \mu e^3\,dx \\
= \;&\frac{\varepsilon^2}{2}\left[
(\lambda_1-1) -\int_\Omega
\tau (J\star e)e\,dx\right]
+\frac{\varepsilon^3}{3}\int_\Omega \mu e^3\,dx 
\\
=& -\frac{c_1}2 \varepsilon^2+c_2 \frac{\varepsilon^3}3,
\end{align*}
where 
\[
c_1:=1-\lambda_1
+\tau\int_\Omega(J\star e)e\,dx
\qquad{\mbox{ and }}\qquad
c_2:=\int_\Omega \mu e^3\,dx .
\]
We notice that~$c_1>0$, thanks to~\eqref{LOWBO}, and~$c_2\in\R$,
in light of~\eqref{jiehgvsdjb}.
As a consequence, 
for small $\varepsilon$ we have that
$\E(\varepsilon e)<0=\E(0)$, which proves~\eqref{fiertyerugs7856bis}.
In turn, this implies~\eqref{fiertyerugs7856PREbis}, thus completing the
proof of~(ii).
\end{proof}

\section{Optimization on $m$ and
proofs of Theorems~\ref{dzero},
\ref{msopratoinfty}, \ref{msottotoinfty},
\ref{alpha>0} and~\ref{alpha=0}}\label{OPT:M}

This section is devoted to the understanding of the optimal configuration
of the resource~$m$, which is based on the analysis of the minimal
eigenvalue problem given in~\eqref{LAMSPOTTP}.

First of all, we will see that the optimal resource distribution
attaining the minimal eigenvalue in~\eqref{LAMSPOTTP}
is of bang-bang type, namely
concentrated on its minimal and maximal values~$\underline{m}$
and~$\overline{m}$.
This property is
based on the so called ``bathtub principle'',
see
Lemma~3.3 in~\cite{derl}
(or~\cite{lilo, Lou-Yanagida}), that we recall here for the convenience of the reader:

\begin{lem}\label{bathtub}
Let $f\in L^1(\Omega)$ and~${\mathscr{M}}$ be as in~\eqref{defemme}.
Then, the maximization problem 
\[
\sup_{m\in \mathscr{M}} \int_\Omega fm\,dx
\]
is attained by a suitable~$m\in\mathscr{M}$ given by
\[
m:=\overline{m}\chi_D-\underline{m}\chi_{\Omega\setminus D},
\]
for some subset $D\subset \Omega$
such that
\begin{equation}\label{misuraD}
|D|=\frac{\underline{m}+m_0}{\underline{m}+\overline{m}}|\Omega|.
\end{equation}
\end{lem}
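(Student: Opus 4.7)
The plan is to use the classical ``bathtub'' rearrangement idea: construct the optimal $m$ as a bang-bang function placing the maximal admissible value $\overline{m}$ on the set where $f$ is largest and the minimal admissible value $-\underline{m}$ elsewhere, and then verify optimality through a pointwise comparison argument.

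Concretely, I would set $t:=\frac{\underline{m}+m_0}{\underline{m}+\overline{m}}$; since $m_0\in(-\underline{m},0)$ and $\overline{m},\underline{m}>0$, we have $t\in(0,1)$, so $t|\Omega|\in(0,|\Omega|)$. Define $t^*:=\inf\{c\in\R : |\{f>c\}|\leq t|\Omega|\}$ and choose $D\subset\Omega$ to consist of $\{f>t^*\}$ together with a measurable subset of the level set $\{f=t^*\}$ of suitable measure, so that $|D|=t|\Omega|$. Such a selection exists by the non-atomicity of Lebesgue measure (one can exhaust $\{f=t^*\}\cap B_R$ as $R\nearrow +\infty$ and use continuity of measure). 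Setting $m^*:=\overline{m}\chi_D-\underline{m}\chi_{\Omega\setminus D}$, a direct computation gives $\int_\Omega m^*\,dx=\overline{m}|D|-\underline{m}(|\Omega|-|D|)=m_0|\Omega|$, thereby verifying \eqref{misuraD}, and $m^*\in\mathscr{M}$; in particular $(m^*)^+\not\equiv 0$ because $t>0$ forces $|D|>0$.

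The heart of the argument is then the inequality $\int_\Omega f(m^*-m)\,dx\geq 0$ for every competitor $m\in\mathscr{M}$. Splitting the integral,
\[
\int_\Omega f(m^*-m)\,dx=\int_D f(\overline{m}-m)\,dx+\int_{\Omega\setminus D} f(-\underline{m}-m)\,dx.
\]
On $D$ one has $f\geq t^*$ a.e.\ while $\overline{m}-m\geq 0$; on $\Omega\setminus D$ one has $f\leq t^*$ a.e.\ while $-\underline{m}-m\leq 0$. In both regions replacing $f$ by the constant $t^*$ only decreases the integrand, so
\[
\int_\Omega f(m^*-m)\,dx\geq t^*\int_\Omega(m^*-m)\,dx=0,
\]
because both $m^*$ and $m$ integrate to $m_0|\Omega|$ by the prescribed average constraint. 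This shows $\int_\Omega fm^*\,dx=\sup_{m\in\mathscr{M}}\int_\Omega fm\,dx$.

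The only delicate point is the measurable selection when $\{f=t^*\}$ has positive measure, handled by the non-atomicity argument sketched above; everything else is a transparent chain of inequalities exploiting the geometry of the level sets of $f$ and the bang-bang structure of $m^*$.
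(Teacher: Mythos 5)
The paper does not actually prove Lemma~\ref{bathtub}: it states it as the classical ``bathtub principle'' and refers to Lemma~3.3 in~\cite{derl} (and to~\cite{lilo, Lou-Yanagida}) for a proof. So there is no in-paper argument to compare against. Your proof is a correct, self-contained derivation of exactly this result, by the standard route: introduce the threshold $t^*$ associated to the prescribed measure $t|\Omega|$, pick a super-level set $D$ with the right measure, and conclude optimality by replacing $f$ with the constant $t^*$ on each of $D$ and $\Omega\setminus D$ and using that every competitor has the same integral $m_0|\Omega|$.

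Two small polish points, neither a gap. First, you should record why $|\{f>t^*\}|\le t|\Omega|\le|\{f\ge t^*\}|$ — this is what guarantees the measurable selection for $D$ exists; it follows from the right-continuity and the monotone limit from the left of the distribution function $c\mapsto|\{f>c\}|$. Second, your remark about exhausting $\{f=t^*\}\cap B_R$ as $R\nearrow+\infty$ is unnecessary here because $\Omega$ is a bounded domain; the non-atomicity argument applies directly. You could also note explicitly that $|D|=t|\Omega|\in(0,|\Omega|)$, which simultaneously ensures $(m^*)^+\not\equiv0$ and that $m^*$ genuinely attains the prescribed $\sup$ and $\inf$, confirming $m^*\in\mathscr{M}$.
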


We now show that, in the light of Lemma \ref{bathtub},
to optimize the eigenvalue~$\lambda_1$ in~\eqref{LAMSPOTTP}, we have to
consider $m\in\mathscr{M}$ of bang-bang type. More precisely, we define
\begin{equation}\label{tildem}\begin{split}
\tilde{\mathscr{M}}=
\tilde{\mathscr{M}}(\overline{m},\underline{m},m_0):=\;&
\Bigg\lbrace m\in\mathscr{M} {\mbox{ s.t. }}\,
m:=\overline{m}\chi_D-\underline{m}\chi_{\Omega\setminus D},\\&\qquad
{\mbox{for some subset $D\subset \Omega$ with}}\quad
|D|=\frac{\underline{m}+m_0}{\underline{m}+\overline{m}}|\Omega|
\Bigg\rbrace,\end{split}
\end{equation}
and we have the following result:

\begin{prop}\label{prop:bang}
We have that
\begin{equation*}
\underline{\lambda}=\inf_{m\in\tilde{\mathscr{M}}}\lambda_1(m).
\end{equation*}
\end{prop}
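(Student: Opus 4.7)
The plan is to establish a two-sided inequality. The easy direction, $\inf_{\tilde{\mathscr{M}}}\lambda_1\ge\underline{\lambda}$, is immediate from the set inclusion $\tilde{\mathscr{M}}\subseteq\mathscr{M}$ baked into the definition~\eqref{tildem}. So the real content lies in showing that for every $m\in\mathscr{M}$ there exists a bang-bang counterpart $\tilde{m}\in\tilde{\mathscr{M}}$ with $\lambda_1(\tilde{m})\le\lambda_1(m)$; taking the infimum over $m\in\mathscr{M}$ then closes the argument.

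To produce this $\tilde{m}$, I would first observe that the constraints built into~\eqref{defemme} together with $m_0\in(-\underline{m},0)$ force $m^+\not\equiv 0$, $m^-\not\equiv 0$ and $\int_\Omega m=m_0|\Omega|<0$, so Proposition~\ref{PROAUTOVA} applies and the Rayleigh-type characterization~\eqref{lopouygbv} is available. Let $e\in X_{\alpha,\beta}$ be a first eigenfunction attached to $\lambda_1(m)$, normalized by $\int_\Omega m\,e^2\,dx=1$, so that $[e]^2_{X_{\alpha,\beta}}=\lambda_1(m)$. Since $e\in L^2(\Omega)$, the function $f:=e^2$ lies in $L^1(\Omega)$, and I would then feed $f$ into the bathtub principle (Lemma~\ref{bathtub}) to produce $\tilde{m}\in\tilde{\mathscr{M}}$ with
$$\int_\Omega \tilde{m}\,e^2\,dx\;\geq\;\int_\Omega m\,e^2\,dx\;=\;1.$$

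The final step is to feed $e$ back as a test function in the variational formula for $\tilde{m}$. Since $\tilde{m}\in\tilde{\mathscr{M}}\subseteq\mathscr{M}$, it satisfies the hypotheses of Proposition~\ref{PROAUTOVA} (in particular $\tilde{m}^\pm\not\equiv 0$ because the bathtub set $D$ in~\eqref{misuraD} is both of positive measure and a proper subset of $\Omega$, as a consequence of $m_0\in(-\underline{m},0)$). After rescaling $e$ so that $\int_\Omega \tilde{m}\,e^2\,dx=1$, the minimum property~\eqref{lopouygbv} then gives
$$\lambda_1(\tilde{m})\;\leq\;\frac{[e]^2_{X_{\alpha,\beta}}}{\int_\Omega \tilde{m}\,e^2\,dx}\;\leq\;[e]^2_{X_{\alpha,\beta}}\;=\;\lambda_1(m),$$
which is the desired inequality.

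I do not anticipate a genuine obstacle in carrying this out: the argument amounts to a single application of the bathtub rearrangement combined with the Rayleigh characterization of the first positive eigenvalue. The only detail requiring care is confirming that the bang-bang output of Lemma~\ref{bathtub} is still admissible for the weighted eigenvalue problem (i.e.\ that both $\tilde{m}^+$ and $\tilde{m}^-$ are nontrivial and $\int_\Omega\tilde{m}<0$), but this is automatic from the strict inequalities in the structural constraints defining $\mathscr{M}$.
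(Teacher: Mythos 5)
Your proposal is correct and follows essentially the same route as the paper: apply the bathtub principle (Lemma~\ref{bathtub}) to the squared eigenfunction $e^2$ to produce a bang-bang competitor $\tilde{m}$ with a larger denominator in the Rayleigh quotient, then invoke the variational characterization~\eqref{lopouygbv} to conclude $\lambda_1(\tilde{m})\le\lambda_1(m)$. The only cosmetic difference is that you argue pointwise over every $m\in\mathscr{M}$ and then take infima, whereas the paper passes through $\varepsilon$-approximate minimizers $m_\varepsilon$; the two phrasings are equivalent.
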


\begin{proof}
We define 
\[
\tilde{\lambda}:=\inf_{m\in\tilde{\mathscr{M}}}\lambda_1(m).
\]
and we claim that 
\begin{equation}\label{do3dnverit44y3GGGG}
\underline{\lambda}=\tilde{\lambda}.\end{equation}
To this end, we observe that, since~$\tilde{\mathscr{M}}\subset 
\mathscr{M}$, we have that
\begin{equation}\label{do3dnverit44y3GGGG1}
\underline{\lambda}\leq\tilde{\lambda}.\end{equation}
Moreover, by the definition of~$\underline{\lambda}$ in~\eqref{LAMSPOTTP},
we have that
for every~$\varepsilon>0$ there exists~$m_\varepsilon \in\mathscr{M}$
such that $\underline{\lambda}+\varepsilon\geq 
\lambda_1(m_\varepsilon)$. Then, we denote by $e_\varepsilon$ the
nonnegative eigenfunction associated to~$\lambda_1(m_\varepsilon)$,
and we conclude that
\begin{equation}\label{IUTD45}
\underline{\lambda}+\varepsilon \geq
\lambda_1(m_\varepsilon)=
\frac{[e_\varepsilon]^2_{X_{\alpha,\beta}}}{
\displaystyle\int_\Omega m_\epsilon e_\varepsilon ^2\,dx}.
\end{equation}
We also observe that, in light of Lemma~\ref{bathtub},
$$\int_\Omega m_\epsilon e_\varepsilon ^2\,dx\le 
\int_\Omega\big(\overline{m}
\chi_{D_\varepsilon}-\underline{m}\chi_{\Omega\setminus {D_\varepsilon}}\big)e_\varepsilon ^2\,dx,$$
for a suitable~${D_\varepsilon}\subset\Omega$ satisfying~\eqref{misuraD}.
Plugging this information into~\eqref{IUTD45}, and letting~$m^\star_\epsilon:=
\overline{m}
\chi_{D_\varepsilon}-\underline{m}\chi_{\Omega\setminus {D_\varepsilon}}$,
we obtain that
$$ \underline{\lambda}+\varepsilon \ge 
\frac{[e_\varepsilon]^2_{X_{\alpha,\beta}}}{
\displaystyle\int_\Omega(\overline{m}
\chi_{D_\varepsilon}-\underline{m}\chi_{\Omega\setminus {D_\varepsilon}})e_\varepsilon ^2\,dx}
\geq \lambda_1 (m^\star_\epsilon )\ge \tilde{\lambda}.
$$
Hence, taking the limit as $\varepsilon$ goes to~$0$,
we get that~$\underline{\lambda}\geq\tilde{\lambda}$.
This, combined with~\eqref{do3dnverit44y3GGGG1},
establishes~\eqref{do3dnverit44y3GGGG}, as desired.
\end{proof}

We recall that many biological models
describe optimal resourcesof bang-bang type, see
e.g.~\cite{Cantrell-Cosner-1, Cantrell-Cosner-2, Lou-Yanagida, Lamboley-Laurain-Nadin-Privat, Nagahara-Yanagida, Mazari-Nadin-Privat}.\medskip

In light of Proposition~\ref{prop:bang},
from now on, when optimizing the eigenvalue~$\lambda_1(m)$
as in~\eqref{LAMSPOTTP}, 
we will
suppose that~$m$ belongs to the set~$\tilde{\mathscr{M}}$
introduced in~\eqref{tildem}.

Now we provide the proof of Theorem \ref{dzero}.

\begin{proof}[Proof of Theorem \ref{dzero}]
We take a ball $B\subset \Omega$ such that 
\begin{equation}\label{GARA}
|B|\leq \frac{d_0}{2}|\Omega|.
\end{equation}
We can assume, up to a translation, that 
$\Omega\subset \lbrace x_n>0 \rbrace$, and, for every $\xi\ge0$,
we define the set
\[
\Omega_\xi:=B \cup(\lbrace x_n<\xi \rbrace\cap \Omega).
\]
We observe that $|\Omega_\xi|$ is nondecreasing with respect to
$\xi$, and we define
\[
\xi^*:=\sup \left\lbrace \xi\ge0:\, 
|\Omega_\xi|<\frac{\underline{m}+m_0}{\underline{m}+\overline{m}}|\Omega| \right\rbrace.
\]
We claim that, for every~$\underline{\xi}>0$,
\begin{equation}\label{CONTIN}
\lim_{\xi\to \underline{\xi}}|\Omega_{\xi}|=
|\Omega_{\underline{\xi}}|.\end{equation}
To this end, we first show that 
\begin{equation}\label{limpuntuale}
\lim_{\xi\to \underline{\xi}}\chi_{\Omega_{\xi}}(x)=
\chi_{\Omega_{\underline{\xi}}}(x)\qquad {\mbox{ for a.e. }}x\in\Omega.
\end{equation}
For this, we consider several cases.
If $x=(x',x_n)\in \Omega_{\underline{\xi}}$, then either $x\in B$ or 
$x_n<\underline{\xi}$. If~$x\in B$, then~$x\in \Omega_{\xi}$ for each~$\xi>0$,
and accordingly $\chi_{\Omega_{\xi}}(x)=1=\chi_{\Omega_{\underline{\xi}}}(x)$, which implies~\eqref{limpuntuale}.
If instead~$x_n<\underline{\xi}$, then there exists 
$\tilde{\xi}\in (x_n, \underline{\xi})$ such that, for every
$\xi \in (\tilde{\xi},\underline{\xi})$, we have
that~$\chi_{\Omega_{\xi}}(x)=1=\chi_{\Omega_{\underline{\xi}}}
(x)$, which proves~\eqref{limpuntuale} also in this case.

On the other hand, if $x\not \in \Omega_{\underline{\xi}}$,
then~$x\not \in B$ and~$x_n\geq \underline{\xi}$.
We notice that the set~$\lbrace x_n=\underline{\xi} \rbrace$ 
has zero Lebesgue measure, and therefore, in order to prove~\eqref{limpuntuale},
we can assume that~$x_n> \underline{\xi}$. Then, there exists
$\tilde{\xi}\in (\underline{\xi},x_n)$ such that, for every
$\xi \in (\underline{\xi},\tilde{\xi})$, we have
that~$x\not\in\Omega_\xi$, and so~$\chi_{\Omega_{\xi}}(x)=0=\chi_{\Omega_{\underline{\xi}}}(x)$. This completes the proof of~\eqref{limpuntuale}.

By~\eqref{limpuntuale} and the Dominated Convergence 
Theorem we obtain \eqref{CONTIN}, as desired.

We also
notice that if~$\xi=0$, then~$\Omega_\xi=B$, and therefore,
by~\eqref{GARAPRE} and~\eqref{GARA},
$$ |\Omega_\xi|=|B|\leq \frac{d_0}{2}|\Omega|\le
\frac{\underline{m}+m_0}{2\big(\underline{m}+\overline{m}\big)}
|\Omega|.
$$
This and the continuity statement in~\eqref{CONTIN} guarantee
that~$\xi^*>0$.

Moreover, the continuity in~\eqref{CONTIN} implies that
\begin{equation}\label{POI875i76}
|\Omega_{\xi^*}|=\frac{\underline{m}+m_0}{
\underline{m}+\overline{m}}|\Omega|.
\end{equation}

Now, we set~$D:=\Omega_{\xi^*}$, and we observe that~$D$
satisfies~\eqref{misuraD}, thanks to~\eqref{POI875i76}. Also,
we take~$v\in C^\infty_0(B)$, with~$v\not\equiv0$. Then, recalling that~$B\subset D$,
\[
\underline{\lambda}\leq 
\frac{[v]^2_{X_{\alpha,\beta}}}{\displaystyle
\int_\Omega\big(\overline{m}\chi_D-\underline{m}\chi_{\Omega\setminus D}\big)v^2\,dx}
=\frac{[v]^2_{X_{\alpha,\beta}}}{\overline{m}\displaystyle\int_B v^2\,dx}
\le\frac{C}{\overline{m}},
\]
for some positive constant~$C$ depending on~$\Omega$ and~$d_0$.
This completes the proof of Theorem~\ref{dzero}.
\end{proof}

With the aid of Theorem~\ref{dzero} we now prove
Corollary~\ref{mmtoinfty}, by arguing as follows:

\begin{proof}[Proof of Corollary \ref{mmtoinfty}]
We notice that
$$ \lim_{\emme\nearrow +\infty}
\frac{\emme+m_0}{2\emme}=\frac12.$$
By taking~$\emme=\underline{m}=\overline{m}$, this implies that
$$ \frac{\underline{m}+m_0}{\underline{m}+\overline{m}}
= \frac{\emme+m_0}{2\emme}\ge\frac14,
$$
as long as~$\emme$ is large enough. This says that the
assumption~\eqref{GARAPRE} in Theorem~\ref{dzero}
is satisfied with~$d_0:=\frac14$, and therefore, Theorem~\ref{dzero}
gives that
$$ \underline{\lambda}\big(\emme, \emme, m_0\big)\le \frac{C}{\emme},$$
for some~$C>0$ depending only on~$\Omega$.
As a consequence,
\[
\lim_{\emme\nearrow +\infty}\underline{\lambda}\big(\emme, \emme, m_0\big)
=0, 
\]
as desired.
\end{proof}

The next goal of this section is to prove Theorem~\ref{msopratoinfty}.
For concreteness, we give here the proof for~$n\geq 3$, and we defer
the case~$n=2$ to Appendix~\ref{KASN:ALSL}.

Without loss of generality, we suppose that
\begin{equation}\label{B2cont}
B_2\subset\Omega.\end{equation}
For~$n\ge3$ and for any~$\rho\in(0,1)$,
we define the function $\varphi:\R^n\to \R$ as
\begin{equation}\label{VARPHI}
\varphi(x):=
\begin{dcases}
c_\star+1 & {\mbox{ if }}x\in B_\rho ,
\\
c_\star+ \displaystyle\frac{\rho^\gamma}{1-\rho^\gamma}
\left(\frac{1}{|x|^\gamma}-1 \right) 
& {\mbox{ if }}x\in B_1 \setminus B_\rho,
\\
c_\star &{\mbox{ if }} x\in \R^n\setminus B_1,
\end{dcases}
\end{equation}
where $\gamma>0$ and
\begin{equation}\label{VECC7}
c_\star:=-\frac{\underline{m}+m_0}{m_0}.
\end{equation}
We observe that, since $m_0\in (-\underline{m},0)$, we have that~$c_\star>0$.

Also, we set
\begin{equation}\label{setD}
D:=B_\rho.\end{equation}

The idea to prove Theorem~\ref{msopratoinfty}
is to use the function~$\varphi$ in~\eqref{VARPHI}
and the resource~$m=\overline{m}\chi_D -\underline{m}\chi_{\Omega\setminus D}$,
with~$D$ as in~\eqref{setD},
as competitors for the minimization of~$\underline{\lambda}$
in~\eqref{LAMSPOTTP}.

In this setting, we notice that, since~$m\in\tilde{\mathscr{M}}$, 
recalling~\eqref{misuraD},
$$ \rho^n|B_1|=|B_\rho|=|D|=\frac{\underline{m}+m_0}{\underline{m}
+\overline{m}}|\Omega|.
$$ 
This says that
\begin{equation}\label{EQUIVARHO}
{\mbox{sending~$\overline{m}\nearrow+\infty$ is equivalent to
sending~$\rho\searrow0$,}}\end{equation} being~$\underline{m}$, $m_0$
and~$|\Omega|$ fixed quantities in this argument.

In light of these observations, the next lemmata will be devoted to
estimate in terms of~$\rho$
the quantities involving~$\varphi$
that appear in the minimization of~$\underline{\lambda}$.

We point out that, in dimension~$n=2$, the argument to prove
Theorem~\ref{msopratoinfty} will be similar, but we will need to introduce
a logarithmic-type function as in~\eqref{vARPHI2} instead of a 
polynomial-type function as in~\eqref{VARPHI}
(as it often happens when passing from dimension~$2$ to higher
dimensions).

The first result that we have in this setting
deals with the~$H^1$-seminorm of~$\varphi$:

\begin{lem}\label{gradphito0}
Let~$n\ge3$ and~$\varphi$ be as in~\eqref{VARPHI}. Then,
\[
\lim_{\rho\searrow0}\int_\Omega |\nabla \varphi|^2\,dx= 0. 
\]
\end{lem}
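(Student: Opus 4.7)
The plan is to notice that $\varphi$, as defined in \eqref{VARPHI}, is constant on $B_\rho$ (equal to $c_\star+1$) and on $\R^n\setminus B_1$ (equal to $c_\star$), so by \eqref{B2cont} the gradient $\nabla\varphi$ is supported in the annulus $A_\rho:=B_1\setminus\overline{B_\rho}\subset\Omega$. On $A_\rho$ the function $\varphi$ is radial with explicit profile
\[
\varphi(x)=c_\star+\frac{\rho^\gamma}{1-\rho^\gamma}\bigl(|x|^{-\gamma}-1\bigr),
\]
so the problem collapses to a one–dimensional integral in the radial variable.

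Concretely, I would first compute the radial derivative and obtain
\[
|\nabla\varphi(x)|^2=\frac{\gamma^2\,\rho^{2\gamma}}{(1-\rho^\gamma)^2}\,|x|^{-2\gamma-2}\qquad\textrm{on }A_\rho,
\]
and then pass to polar coordinates to get
\[
\int_\Omega|\nabla\varphi|^2\,dx=\frac{\gamma^2\,\rho^{2\gamma}\,\omega_{n-1}}{(1-\rho^\gamma)^2}\int_\rho^1 r^{n-3-2\gamma}\,dr,
\]
where $\omega_{n-1}$ denotes the $(n-1)$-dimensional surface measure of $\partial B_1$. Since $(1-\rho^\gamma)^2\to 1$ as $\rho\searrow0$, it suffices to check that the product $\rho^{2\gamma}\int_\rho^1 r^{n-3-2\gamma}\,dr$ tends to zero.

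To conclude I would split according to the sign of $n-2-2\gamma$. For $2\gamma\neq n-2$ one has
\[
\rho^{2\gamma}\int_\rho^1 r^{n-3-2\gamma}\,dr=\frac{\rho^{2\gamma}-\rho^{n-2}}{n-2-2\gamma},
\]
which vanishes as $\rho\searrow0$ because both exponents $2\gamma$ and $n-2$ are strictly positive (here the assumption $n\ge 3$ enters). In the borderline case $2\gamma=n-2$, the radial integral equals $\log(1/\rho)$, so the product is $O(\rho^{n-2}\log(1/\rho))$, which again tends to $0$ for $n\ge 3$.

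No genuine obstacle appears in this argument: the whole verification is a routine radial computation, the only mild subtlety being the case distinction on $n-2-2\gamma$. The hypothesis $n\ge 3$ is used solely to guarantee $n-2>0$, so that $\rho^{n-2}\to 0$; this is the precise reason why the companion argument in dimension $n=2$ requires the logarithmic cutoff of \eqref{vARPHI2} instead of the polynomial one in \eqref{VARPHI}.
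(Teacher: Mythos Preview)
Your proposal is correct and follows essentially the same approach as the paper: reduce to the annulus where $\nabla\varphi$ is supported, compute the radial integral in polar coordinates, and split cases according to the sign of $n-2-2\gamma$. Your version is slightly more explicit in evaluating the radial integral exactly (rather than just bounding it) and in isolating where the hypothesis $n\ge3$ enters, but the argument is the same.
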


\begin{proof}
By the definition of~$\varphi$ in~\eqref{VARPHI},
we have that~$\nabla \varphi\neq 0$ only if 
$x\in B_1 \setminus B_\rho$. Accordingly, using polar coordinates,
\begin{equation}\begin{split}\label{s3505u56utyk}
\int_\Omega |\nabla \varphi|^2\,dx&
=\gamma^2\left( \frac{\rho^\gamma}{1-\rho^\gamma}\right)^2
\int_{B_1\setminus B_\rho}\frac{1}{|x|^{2\gamma+2}}\,dx \\
&=\gamma^2\left( \frac{\rho^\gamma}{1-\rho^\gamma}\right)^2
\int_\rho^1 r^{n-2\gamma-3}\,dr.
\end{split}\end{equation}
Now, we point out that
\begin{equation}\label{deot4t84364hdrg}
\int_\rho^1 r^{n-2\gamma-3}\,dr\le
\begin{dcases}\displaystyle
\frac{1}{n-2\gamma-2} &{\mbox{ if }}
\displaystyle\gamma<\frac{n-2}{2},
\\ 
\displaystyle
-\log \rho &{\mbox{ if }}\displaystyle\gamma=\frac{n-2}{2},
\\
\displaystyle -\frac{\rho^{n-2\gamma-2}}{n-2\gamma-2}
&{\mbox{ if }}\displaystyle\gamma>\frac{n-2}{2}.
\end{dcases}
\end{equation}
This and~\eqref{s3505u56utyk} entail that, for every $\gamma>0$, 
\[
\lim_{\rho\searrow0}\int_\Omega |\nabla \varphi|^2\,dx=0,
\]
which conludes the proof.
\end{proof}

Now, we deal with the Gagliardo seminorm of~$\varphi$.
For this, we point out the following useful
inequality:

\begin{lem}
Let $x$, $y\in \R^n\setminus\{0\}$ and $\gamma>0$. Then, there exists $C_\gamma>0$ 
such that
\begin{equation}\label{disuggamma}
\left|\frac{1}{|x|\gamma}-\frac{1}{|y|^\gamma}\right|
\leq C_\gamma\;\frac{||x|-|y||}{\min\lbrace|x|^{\gamma+1},|y|^{\gamma+1}\rbrace}.
\end{equation}
\end{lem}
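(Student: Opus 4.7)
The plan is to reduce the inequality to a one-dimensional statement about the scalar function $f(r) := r^{-\gamma}$ on $(0,+\infty)$, and then invoke the Mean Value Theorem.

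First, I would observe that the left-hand side of \eqref{disuggamma} depends on $x$ and $y$ only through $|x|$ and $|y|$, so it suffices to prove that for every pair of positive reals $a,b>0$,
\[
\left|\frac{1}{a^\gamma}-\frac{1}{b^\gamma}\right|\leq C_\gamma\,\frac{|a-b|}{\min\{a^{\gamma+1},b^{\gamma+1}\}}.
\]
By symmetry, we may assume without loss of generality that $a\le b$, so that $\min\{a^{\gamma+1},b^{\gamma+1}\}=a^{\gamma+1}$.

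Next, I would apply the Mean Value Theorem to $f(r)=r^{-\gamma}$ on the interval $[a,b]$. Since $f'(r)=-\gamma\,r^{-\gamma-1}$, there exists $\xi\in [a,b]$ such that
\[
\frac{1}{a^\gamma}-\frac{1}{b^\gamma}=f(a)-f(b)=-f'(\xi)(b-a)=\gamma\,\xi^{-\gamma-1}(b-a).
\]
Because $\xi\ge a>0$, we have $\xi^{-\gamma-1}\le a^{-\gamma-1}$, and therefore
\[
\left|\frac{1}{a^\gamma}-\frac{1}{b^\gamma}\right|=\gamma\,\xi^{-\gamma-1}(b-a)\le \gamma\,\frac{|a-b|}{a^{\gamma+1}}=\gamma\,\frac{|a-b|}{\min\{a^{\gamma+1},b^{\gamma+1}\}}.
\]
Translating this back via $a=|x|$, $b=|y|$ (and using the reverse triangle inequality $||x|-|y||=|a-b|$) yields \eqref{disuggamma} with the explicit constant $C_\gamma=\gamma$.

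There is no real obstacle here: the statement is a routine consequence of the Mean Value Theorem, and the only thing to be mildly careful about is the symmetric role of $x$ and $y$, which is handled by the assumption $a\le b$ made without loss of generality and the appearance of the minimum on the right-hand side. No use of the hypothesis that $x,y$ are vectors in $\R^n$ (as opposed to positive reals) is needed beyond passing to their norms.
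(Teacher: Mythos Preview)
Your proof is correct. The paper also reduces to a one-variable inequality via the substitution $t=|x|/|y|$ (with $|x|\ge|y|$), but instead of invoking the Mean Value Theorem it proves directly that $1-t^{-\gamma}\le C_\gamma(t-1)$ for $t\ge1$ by checking that the auxiliary function $f(t)=C_\gamma t+t^{-\gamma}-(C_\gamma+1)$ satisfies $f(1)=0$ and $f'(t)>0$ once $C_\gamma:=\gamma+1$. Your Mean Value Theorem argument is more streamlined and in fact yields the slightly sharper constant $C_\gamma=\gamma$; the paper's bare-hands monotonicity check achieves the same end but with one extra unit in the constant. Both routes are entirely elementary, and nothing in the subsequent use of the lemma depends on the precise value of $C_\gamma$.
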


\begin{proof}
We can assume that~$|x|\geq|y|$, being the other case analogous.
In this way, formula~\eqref{disuggamma} boils down to
\begin{equation}\label{INERM}
\frac{1}{|y|\gamma}-\frac{1}{|x|^\gamma}
\leq C_\gamma \frac{|x|-|y|}{|y|^{\gamma+1}}.
\end{equation}
To prove~\eqref{INERM}, we first claim that, for every~$t\ge1$,
\begin{equation}\label{INERM2}
1-\frac{1}{t^\gamma}\leq C_\gamma(t-1),
\end{equation}
for a suitable~$C_\gamma>0$. Indeed, we set
\[
f(t):=C t +\frac{1}{t^\gamma}-(C+1),
\]
for some positive constant~$C$ (to be chosen in what follows),
and we observe that
\begin{equation}\label{feohwegdjs}
f(1)=0,\end{equation} and
\[
f'(t)=C-\frac{\gamma}{t^{\gamma+1}}\ge C-\gamma,
\] 
for any~$t\ge1$. As a result, taking~$C:=\gamma+1$,
we obtain that~$f'(t)>0$. This and~\eqref{feohwegdjs}
give that~$f(t)\ge0$ for every~$t\ge1$, which implies~\eqref{INERM2}.

Taking~$t:=\frac{|x|}{|y|}$ in~\eqref{INERM2}, we obtain that
$$1-\frac{|y|^\gamma}{|x|^\gamma}\leq C_\gamma\left(\frac{|x|}{|y|}
-1\right).
$$
Multiplying this inequality by~$\frac1{|y|^\gamma}$
we deduce~\eqref{INERM}, as desired.
\end{proof}

With this, we now estimate the Gagliardo seminorm of~$\varphi$
as follows:

\begin{lem}\label{gradsphito0}
Let~$n\ge3$ and~$\varphi$ be as in~\eqref{VARPHI}. Then,
\[
\lim_{\rho\searrow0}
\iint_\Q \frac{|\varphi(x)-\varphi(y)|^2}{|x-y|^{n+2s}}\,dx\,dy= 0.
\]
\end{lem}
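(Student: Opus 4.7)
The plan is to partition $\R^n$ into three regions $A := B_\rho$, $B := B_1\setminus B_\rho$, and $C := \R^n\setminus B_1$, and to estimate the Gagliardo integral on each pair of these regions. Since $\varphi\equiv c_\star+1$ on $A$ and $\varphi\equiv c_\star$ on $C$, the integrand vanishes on $A\times A$ and on $C\times C$; moreover, \eqref{B2cont} yields $\R^n\setminus\Omega\subset C$, so $(\R^n\setminus\Omega)^2\subset C\times C$ and the integral over $\Q$ coincides with the integral over $\R^{2n}$. Fixing $\gamma\in(0,(n-2)/2)$ (possible since $n\ge 3$), by symmetry it suffices to bound the four pieces $B\times B$, $A\times C$, $B\times C$, and $A\times B$.

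For $B\times B$, applying \eqref{disuggamma} together with $||x|-|y||\le|x-y|$ gives $|\varphi(x)-\varphi(y)|^2\le C_\gamma^2\,\rho^{2\gamma}(1-\rho^\gamma)^{-2}\,|x-y|^2\min(|x|,|y|)^{-2\gamma-2}$; symmetrizing in $x,y$, integrating $|x-y|^{2-n-2s}$ against the outer variable (finite because $s<1$), and computing the remaining radial integral $\int_\rho^1 r^{n-3-2\gamma}\,dr$ (finite for $\gamma<(n-2)/2$), this region contributes $O(\rho^{2\gamma}/(1-\rho^\gamma)^2)$. For $A\times C$, the bound $|\varphi(x)-\varphi(y)|=1$ combined with the uniform estimate $\int_C|x-y|^{-n-2s}\,dy\le C(1-\rho)^{-2s}$ for $x\in A$ yields a contribution of order $|A|\sim\rho^n$. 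For $B\times C$, I sharpen via the mean-value inequality $|x|^{-\gamma}-1\le\gamma(1-|x|)|x|^{-\gamma-1}$ and combine it with $\int_C|x-y|^{-n-2s}\,dy\le C(1-|x|)^{-2s}$ to obtain the convergent integral $\int_\rho^1 r^{n-3-2\gamma}(1-r)^{2-2s}\,dr$, again of order $\rho^{2\gamma}$.

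The delicate piece is $A\times B$, where $\varphi$ jumps from the constant $c_\star+1$ to a varying function across $\partial B_\rho$. Rewriting $\varphi(x)-\varphi(y)=\rho^\gamma(1-\rho^\gamma)^{-1}(\rho^{-\gamma}-|y|^{-\gamma})$ and applying \eqref{disuggamma} to the pair $(\rho,|y|)$ produces the Lipschitz-type bound $|\varphi(x)-\varphi(y)|\le C_\gamma|x-y|/(\rho(1-\rho^\gamma))$, where I use $|y|-\rho\le|y-x|$ (since $|x|\le\rho$). Together with the trivial bound $|\varphi(x)-\varphi(y)|\le 1$, this gives $|\varphi(x)-\varphi(y)|^2\le\min(1,L^2|x-y|^2)$ with $L=O(1/\rho)$, and I split the integral at the natural scale $|x-y|=\rho$: on $\{|x-y|\le\rho\}$ the Lipschitz bound produces $L^2\int|x-y|^{2-n-2s}\,dy\sim \rho^{-2}\cdot\rho^{2-2s}$, while on $\{|x-y|>\rho\}$ the trivial bound produces $\int|x-y|^{-n-2s}\,dy\sim\rho^{-2s}$. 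After integrating over $x\in A$ (of measure $c\rho^n$), the total contribution is $O(\rho^{n-2s})$, which vanishes since $n\ge 3>2s$. The main obstacle is precisely this balancing on $A\times B$: neither the crude bound $|\varphi|\le 1$ nor the Lipschitz bound alone gives an integrable contribution, and only the cut at scale $\rho$ delivers the required decay.
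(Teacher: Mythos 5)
Your proof is correct, but with one restriction worth flagging and one inaccurate meta-comment. You fix $\gamma\in(0,(n-2)/2)$, whereas the lemma applies to the function $\varphi$ in \eqref{VARPHI} for every $\gamma>0$. This is harmless here since $\gamma$ is a free parameter of the construction, but it is exactly what lets you sidestep the paper's dyadic annulus decomposition on the $B\times B$ piece: with $\gamma<(n-2)/2$ the radial integral $\int_\rho^1 r^{\,n-3-2\gamma}\,dr$ is bounded uniformly in $\rho$ and the prefactor $\rho^{2\gamma}$ supplies the decay directly, whereas for $\gamma\ge(n-2)/2$ the paper has to count dyadic shells and absorb a logarithmic or compensating power. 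Your merging of the two exterior subcases (the paper treats $\Omega\setminus B_1$ and $\R^n\setminus\Omega$ separately) into a single $B\times C$ estimate via $\int_C|x-y|^{-n-2s}\,dy\le C(1-|x|)^{-2s}$ is also a clean simplification, as is the observation that the integral over $\Q$ coincides with the one over $\R^{2n}$ because $\varphi$ is constant on $\R^n\setminus B_1\supset\R^n\setminus\Omega$.

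Your closing remark about the $A\times B$ piece, however, is mistaken: the Lipschitz-type bound alone does close the estimate, and this is in fact what the paper does. From $|\varphi(x)-\varphi(y)|^2\le L^2|x-y|^2$ with $L\sim\rho^{-1}$ one gets, for every $x\in A$,
\[
\int_B\frac{|\varphi(x)-\varphi(y)|^2}{|x-y|^{n+2s}}\,dy\le L^2\int_{B_2}|z|^{2-n-2s}\,dz = C L^2,
\]
uniformly in $x$ (the exponent satisfies $2-n-2s>-n$ since $s<1$ and the domain is bounded); integrating over $x\in A$ then yields $CL^2|A|\le C\rho^{n-2}\to0$ for $n\ge3$. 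Your balancing at scale $\rho$ sharpens the rate from $\rho^{n-2}$ to $\rho^{n-2s}$, which is fine, but it is not required: only the crude bound $|\varphi|\le1$ fails on its own (its $y$-integral diverges for $x$ near $\partial B_\rho$), not the Lipschitz one.
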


\begin{proof}
In what follows, we will assume that~$\rho\le1/4$.
By the definition of~$\varphi$ in~\eqref{VARPHI}, it plainly follows that
\begin{equation}\label{PRIMO}
\iint_{B_\rho\times B_\rho}
\frac{|\varphi(x)-\varphi(y)|^2}{|x-y|^{n+2s}}\,dx\,dy=0
\end{equation}
and
\begin{equation}\label{PRIMO2}
\iint_{(\R^n\setminus B_1)\times (\R^n\setminus B_1)}
\frac{|\varphi(x)-\varphi(y)|^2}{|x-y|^{n+2s}}\,dx\,dy=0.
\end{equation}
Moreover, by the change of variable~$z:=y-x$,
\begin{eqnarray*}
&&\iint_{B_\rho\times (\R^n\setminus B_1)}
\frac{|\varphi(x)-\varphi(y)|^2}{|x-y|^{n+2s}}\,dx\,dy
=\iint_{B_\rho\times (\R^n\setminus B_1)}\frac{1}{|x-y|^{n+2s}}\,dx\,dy
\\&&\qquad\leq \int_{B_\rho}dx \int_{\R^n\setminus B_{\frac{1}{2}}} \frac{1}{|z|^{n+2s}}\,dz
\leq C \int_{B_\rho}dx=C\rho^n,
\end{eqnarray*}
for some~$C>0$.
As a consequence,
\begin{equation}\label{PRIMO3}
\lim_{\rho\searrow0}\iint_{B_\rho\times (\R^n\setminus B_1)}
\frac{|\varphi(x)-\varphi(y)|^2}{|x-y|^{n+2s}}\,dx\,dy=0.
\end{equation}

Now, if~$x\in B_1 \setminus B_\rho$ and~$y\in B_\rho$, from~\eqref{VARPHI}
we have that
\[
|\varphi(x)-\varphi(y)|^2=
\left( \frac{\rho^\gamma}{1-\rho^\gamma}\right)^2
\left(\frac{1}{|x|^\gamma}-\frac{1}{\rho^\gamma}
\right)^2.
\]
Hence, utilizing also~\eqref{disuggamma} (applied here with~$|y|:=\rho$),
\begin{equation}\begin{split}\label{deit458y4gvjksb}&
\iint_{(B_1 \setminus B_\rho)\times B_\rho}
\frac{|\varphi(x)-\varphi(y)|^2}{|x-y|^{n+2s}}\,dx\,dy\\=\;&
\left( \frac{\rho^\gamma}{1-\rho^\gamma}\right)^2 
\iint_{(B_1 \setminus B_\rho)\times B_\rho}
\left(\frac{1}{|x|^\gamma}-\frac{1}{\rho^\gamma}\right)^2
\frac{1}{|x-y|^{n+2s}}\,dx\,dy \\ \le\;&
\frac{C}{\rho^{2\gamma+2}}\,
\left( \frac{\rho^\gamma}{1-\rho^\gamma}\right)^2
\iint_{(B_1 \setminus B_\rho)\times B_\rho}
\frac{(|x|-\rho)^2}{|x-y|^{n+2s}}\,dx\,dy
\end{split}\end{equation}
We observe that, since~$x\in B_1\setminus B_\rho$ and~$y\in B_\rho$,
$$ |x|-\rho\le |x|-|y|\le |x-y|,
$$
and therefore, plugging this information into~\eqref{deit458y4gvjksb},
\begin{eqnarray*}&&
\iint_{(B_1 \setminus B_\rho)\times B_\rho}
\frac{|\varphi(x)-\varphi(y)|^2}{|x-y|^{n+2s}}\,dx\,dy\\
&\leq&
\frac{C}{\rho^{2\gamma+2}}\,
\left( \frac{\rho^\gamma}{1-\rho^\gamma}\right)^2
\iint_{(B_1 \setminus B_\rho)\times B_\rho}
|x-y|^{2-n-2s}\,dx\,dy \\
&\leq&\frac{C}{\rho^{2\gamma+2}}\,\left( \frac{\rho^\gamma}{1-\rho^\gamma}\right)^2
\int_{B_\rho}dy\int_{B_2}|z|^{2-2s-n}\,dz \\
&\leq& \frac{C}{\rho^{2\gamma+2}}\,
\left( \frac{\rho^\gamma}{1-\rho^\gamma}\right)^2
\int_{B_\rho}dy
\\&\leq&C\,\rho^{n-2},
\end{eqnarray*}
up to renaming~$C>0$ from line to line.
As a result,
\begin{equation}\label{PRIMO4}
\lim_{\rho\searrow0}
\iint_{(B_1 \setminus B_\rho)\times B_\rho}
\frac{|\varphi(x)-\varphi(y)|^2}{|x-y|^{n+2s}}\,dx\,dy=0.
\end{equation}

In addition, since~$\R^n\setminus\Omega\subset \R^n
\setminus B_2$ (recall~\eqref{B2cont}),
changing variable~$z:=y-x$ and using polar coordinates,
\begin{equation}\label{frity58676ghdjfv}\begin{split}&
\iint_{(B_1 \setminus B_\rho)\times(\R^n\setminus \Omega)}
\frac{|\varphi(x)-\varphi(y)|^2}{|x-y|^{n+2s}}\,dx\,dy\\=\;&
\left( \frac{\rho^\gamma}{1-\rho^\gamma}\right)^2 
\iint_{(B_1 \setminus B_\rho)\times (\R^n\setminus\Omega)}
\left(\frac{1}{|x|^\gamma}-1\right)^2
\frac{1}{|x-y|^{n+2s}}\,dx\,dy \\
\leq\;& \left( \frac{\rho^\gamma}{1-\rho^\gamma}\right)^2
\int_{B_1 \setminus B_\rho}\left(\frac{1}{|x|^\gamma}-1\right)^2\,dx
\int_{\R^n\setminus B_1}\frac{1}{|z|^{n+2s}}\,dz \\
\leq\; &C\left( \frac{\rho^\gamma}{1-\rho^\gamma}\right)^2
\int_{B_1 \setminus B_\rho}\left(\frac{1}{|x|^\gamma}-1\right)^2\,dx
\\
=\;&C\left( \frac{\rho^\gamma}{1-\rho^\gamma}\right)^2
\int_{B_1 \setminus B_\rho}\left(\frac{1}{|x|^{2\gamma}}-\frac{2}{|x|^\gamma}
+1\right)\,dx
\\
\leq\; &C\left( \frac{\rho^\gamma}{1-\rho^\gamma}\right)^2
\int_\rho^1\big(r^{n-2\gamma-1} +r^{n-1}\big)\,dr\\
\le\;&C\left( \frac{\rho^\gamma}{1-\rho^\gamma}\right)^2
\left[1+
\int_\rho^1 r^{n-2\gamma-1}\,dr\right]
,
\end{split}\end{equation}
possibly changing~$C>0$ from line to line.
We also remark that
\begin{eqnarray*}
&& \int_\rho^1r^{n-2\gamma-1}\,dr\le
\begin{dcases}\displaystyle
\frac{1}{n-2\gamma}
&{\mbox{ if }}\;\displaystyle\gamma<\frac{n}{2},
\\ 
\displaystyle
-\log\rho
&{\mbox{ if }}\;\displaystyle\gamma=\frac{n}{2},
\\ \displaystyle
-\frac{\rho^{n-2\gamma}}{n-2\gamma}
&{\mbox{ if }}\;\displaystyle\gamma>\frac{n}2.
\end{dcases}
\end{eqnarray*}
This and~\eqref{frity58676ghdjfv} imply that
\begin{equation}\label{PRIMO7}
\lim_{\rho\searrow0}
\iint_{(B_1 \setminus B_\rho)\times(\R^n\setminus \Omega)}
\frac{|\varphi(x)-\varphi(y)|^2}{|x-y|^{n+2s}}\,dx\,dy=0.
\end{equation}

Furthermore, recalling~\eqref{VARPHI} and
making use of~\eqref{disuggamma}, we have that
\begin{eqnarray*}
&&\iint_{(B_1\setminus B_\rho)\times(\Omega\setminus B_1)}
\frac{|\varphi(x)-\varphi(y)|^2}{|x-y|^{n+2s}}\,dx\,dy \\&=&
\left( \frac{\rho^\gamma}{1-\rho^\gamma}\right)^2 
\iint_{(B_1\setminus B_\rho)\times(\Omega\setminus B_1)}
\left(\frac{1}{|x|^\gamma}-1\right)^2
\frac{1}{|x-y|^{n+2s}}\,dx\,dy  \\
& \leq& C\left( \frac{\rho^\gamma}{1-\rho^\gamma}\right)^2 
\iint_{(B_1\setminus B_\rho)\times(\Omega\setminus B_1)}
\frac{(1-|x|)^2}{|x|^{2\gamma+2}|x-y|^{n+2s}}\,dx\,dy.\end{eqnarray*}
Hence, noticing that, for every~$x\in B_1\setminus B_\rho$
and every~$y\in\Omega\setminus B_1$,
$$ 1-|x|\le |y|-|x|\le |x-y|,
$$ we conclude that
\begin{eqnarray*}
&&\iint_{(B_1\setminus B_\rho)\times(\Omega\setminus B_1)}
\frac{|\varphi(x)-\varphi(y)|^2}{|x-y|^{n+2s}}\,dx\,dy \\
&\le&C\left( \frac{\rho^\gamma}{1-\rho^\gamma}\right)^2 
\iint_{(B_1\setminus B_\rho)\times(\Omega\setminus B_1)}
\frac{1}{|x|^{2\gamma+2}|x-y|^{n+2s-2}}\,dx\,dy
\\
& \leq & C\left( \frac{\rho^\gamma}{1-\rho^\gamma}\right)^2 
\int_{B_1\setminus B_\rho}\frac{1}{|x|^{2\gamma+2}}\,dx\\
&\le & C\left( \frac{\rho^\gamma}{1-\rho^\gamma}\right)^2 
\int_{\rho}^1r^{n-2\gamma-3}\,dr.
\end{eqnarray*}
Accordingly, recalling~\eqref{deot4t84364hdrg},
we conclude that
\begin{equation}\label{PRIMO8}
\lim_{\rho\searrow0}\iint_{(B_1\setminus B_\rho)\times(\Omega\setminus B_1)}
\frac{|\varphi(x)-\varphi(y)|^2}{|x-y|^{n+2s}}\,dx\,dy=0.
\end{equation}

We now claim that
\begin{equation}\label{PRIMO10}
\lim_{\rho\searrow0}
\iint_{(B_1\setminus B_\rho)\times(B_1\setminus B_\rho)}
\frac{|\varphi(x)-\varphi(y)|^2}{|x-y|^{n+2s}}\,dx\,dy 
=0.
\end{equation}
For this, we observe that by~\eqref{VARPHI}
\begin{eqnarray*}
&&\iint_{(B_1\setminus B_\rho)\times(B_1\setminus B_\rho)}
\frac{|\varphi(x)-\varphi(y)|^2}{|x-y|^{n+2s}}\,dx\,dy
\\&&\qquad=\left(\frac{\rho^\gamma}{1-\rho^\gamma}\right)^2
\iint_{(B_1\setminus B_\rho)\times(B_1\setminus B_\rho)}\left(
\frac1{|x|^\gamma}-\frac1{|y|^\gamma}\right)^2
\frac{\,dx\,dy}{|x-y|^{n+2s}}\\&&\qquad =2
\left(\frac{\rho^\gamma}{1-\rho^\gamma}\right)^2
\iint_{{(B_1\setminus B_\rho)\times(B_1\setminus B_\rho)}\atop{\{|x|\le|y|\}
}}\left(
\frac1{|x|^\gamma}-\frac1{|y|^\gamma}\right)^2
\frac{\,dx\,dy}{|x-y|^{n+2s}}.
\end{eqnarray*}
Hence, from \eqref{disuggamma} we get
\[
\iint_{(B_1\setminus B_\rho)\times(B_1\setminus B_\rho)}
\frac{|\varphi(x)-\varphi(y)|^2}{|x-y|^{n+2s}}\,dx\,dy
\leq C\left(\frac{\rho^\gamma}{1-\rho^\gamma}\right)^2\iint_{(B_1\setminus B_\rho)\times(B_1\setminus B_\rho)
\atop{\{|x|\leq|y|\}}}
\frac{|x-y|^{2-n-2s}}{|x|^{2\gamma+2}}\,dx\,dy,
\]
up to renaming~$C>0$.

Since $\rho\in (0,1)$, we can take an integer $k$ such that 
\begin{equation}\label{s95834v9hgfj}
\frac{1}{2^{k+1}}<\rho\leq \frac{1}{2^k}.
\end{equation}
In this way, we have that
\begin{equation}\label{depgerhojbkldfnbrrxasf}\begin{split}
&\iint_{(B_1\setminus B_\rho)\times(B_1\setminus B_\rho)}
\frac{|\varphi(x)-\varphi(y)|^2}{|x-y|^{n+2s}}\,dx\,dy\\
&\qquad\leq C\,\left(\frac{\rho^\gamma}{1-\rho^\gamma}\right)^2\iint_{ {\big(B_1\setminus B_{1/2^{k+1}}\big)\times\big(
B_1\setminus B_{1/2^{k+1}}\big)}
\atop{\{|x|\leq|y|\}} }
\frac{|x-y|^{2-n-2s}}{|x|^{2\gamma+2}}\,dx\,dy \\
&\qquad\leq
C\,\rho^{2\gamma}\, \sum_{i,j=0}^{k}
\iint_{{\big(B_{1/2^i}\setminus B_{1/2^{i+1}}\big)
\times\big(B_{1/2^j}\setminus B_{1/2^{j+1}}\big)}\atop{\{|x|\le|y|\}}}
\frac{|x-y|^{2-n-2s}}{|x|^{2\gamma+2}}\,dx\,dy.
\end{split}\end{equation}
We also observe that when~$x\not\in B_{1/2^{i+1}}$,
$y\in B_{1/2^j}$
and~$|x|\leq|y|$,
we have that
\[
\frac1{2^{i+1}}\leq |x|\leq|y|\leq \frac1{2^j},
\]
and accordingly $j\le i+1$. This implies that
\begin{align*}&
\iint_{(B_1\setminus B_\rho)\times(B_1\setminus B_\rho)
\atop{\{|x|\leq|y|\}}}
\frac{|x-y|^{2-n-2s}}{|x|^{2\gamma+2}}\,dx\,dy\\
&\qquad\leq \sum_{i=0}^{k}\sum_{j=0}^{i+1} 
\iint_{{\big(B_{1/2^i}\setminus B_{1/2^{i+1}}\big)
\times\big(B_{1/2^j}\setminus B_{1/2^{j+1}}\big)}\atop{\{|x|\ge|y|\}}}
\frac{|x-y|^{2-n-2s}}{|x|^{2\gamma+2}}\,dx\,dy \\
&\qquad\leq \sum_{i=0}^{k}\sum_{j=0}^{i+1} 
\iint_{\big(B_{1/2^i}\setminus B_{1/2^{i+1}}\big)
\times\big(B_{1/2^j}\setminus B_{1/2^{j+1}}\big)}2^{(2\gamma+2)(i+1)}
\,|x-y|^{2-n-2s}\,dx\,dy \\
&\qquad\leq \sum_{i=0}^{k}\sum_{j=0}^{i+1} 
\int_{B_{1/2^i}\setminus B_{1/2^{i+1}}}2^{(2\gamma+2)(i+1)}\,dx
\int_{B_{\frac1{2^i}+\frac1{2^j}}}|z|^{2-n-2s}\,dz \\
&\qquad\leq C\,\sum_{i=0}^{k}\sum_{j=0}^{i+1}
2^{-ni+(2\gamma+2)i+(2s-2)j} \\
&\qquad\leq C\,\sum_{i=0}^{k} 2^{(2\gamma+2-n)i}\\
&\qquad \le 
\begin{dcases}\displaystyle
C
&{\mbox{ if }} \displaystyle\gamma<\frac{n-2}{2},
\\ \displaystyle
C\,k
&{\mbox{ if }}\displaystyle \gamma=\frac{n-2}{2},
\\ \displaystyle
C\,2^{(2\gamma+2-n)k}
& {\mbox{ if }}\displaystyle\gamma>\frac{n-2}{2},
\end{dcases}\\
&\qquad \le 
\begin{dcases}\displaystyle
C
&{\mbox{ if }} \displaystyle\gamma<\frac{n-2}{2},
\\ \displaystyle
C\,|\log\rho|
&{\mbox{ if }}\displaystyle \gamma=\frac{n-2}{2},
\\ \displaystyle
C\,\rho^{n-2\gamma-2}
& {\mbox{ if }}\displaystyle\gamma>\frac{n-2}{2},
\end{dcases}
\end{align*}
up to renaming~$C>0$, where we used~\eqref{s95834v9hgfj}.

Plugging this information into~\eqref{depgerhojbkldfnbrrxasf},
we obtain~\eqref{PRIMO10}.

Putting together~\eqref{PRIMO}, \eqref{PRIMO2}, \eqref{PRIMO3},
\eqref{PRIMO4}, \eqref{PRIMO7}, \eqref{PRIMO8} and~\eqref{PRIMO10},
we obtain the desired result.
\end{proof}

We now estimate the weighted~$L^2$-norm of the auxiliary function~$\varphi$:

\begin{lem}\label{denomphi}
Let $n\geq 3$ and~$\varphi$ be as in~\eqref{VARPHI}. Then,
\[
\lim_{\rho\searrow0}
\overline{m}\int_D \varphi^2\,dx-\underline{m}
\int_{\Omega\setminus D} \varphi^2\,dx=
-\frac{\underline{m}\;(\underline{m}+m_0)}{m_0}\,
|\Omega|>0.
\]
\end{lem}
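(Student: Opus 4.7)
The plan is to compute the two integrals separately, exploiting that $\varphi$ is constant on $B_\rho$ and on $\R^n\setminus B_1$, and that the ``bang-bang'' relation in \eqref{misuraD} gives
\[
\overline{m}\,|B_\rho| \;=\; (\underline{m}+m_0)|\Omega| - \underline{m}\,|B_\rho|,
\]
which converts the (divergent) factor $\overline{m}$ into something finite in the limit. For the first term, since $\varphi\equiv c_\star+1$ on $D=B_\rho$, this identity immediately yields
\[
\overline{m}\int_D \varphi^2\,dx \;=\; (c_\star+1)^2\Bigl[(\underline{m}+m_0)|\Omega| - \underline{m}\,|B_\rho|\Bigr]\;\xrightarrow[\rho\searrow 0]{}\; (c_\star+1)^2\,(\underline{m}+m_0)|\Omega|.
\]
A direct computation from \eqref{VECC7} gives $c_\star+1 = -\underline{m}/m_0$, so the above limit equals $\dfrac{\underline{m}^2(\underline{m}+m_0)}{m_0^2}\,|\Omega|$.

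For the second term, I split $\Omega\setminus D = (B_1\setminus B_\rho)\cup(\Omega\setminus B_1)$. On $\Omega\setminus B_1$ we have $\varphi\equiv c_\star$ (independently of $\rho$), so this piece contributes $c_\star^2(|\Omega|-|B_1|)$. On $B_1\setminus B_\rho$, the key uniform bound is that for $x\in B_1\setminus B_\rho$ one has $1\leq |x|^{-\gamma}\leq \rho^{-\gamma}$, hence
\[
0 \;\leq\; \frac{\rho^\gamma}{1-\rho^\gamma}\left(\frac{1}{|x|^\gamma}-1\right) \;\leq\; 1,
\]
so $c_\star\leq \varphi(x)\leq c_\star+1$ on $B_1\setminus B_\rho$ uniformly in $\rho\in(0,1)$ and for every $\gamma>0$. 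Moreover, at each fixed $x\in B_1\setminus\{0\}$, $\varphi(x)\to c_\star$ as $\rho\searrow 0$. The Dominated Convergence Theorem then yields $\int_{B_1\setminus B_\rho}\varphi^2\,dx\to c_\star^2|B_1|$, and combining the two pieces and using $c_\star=-(\underline{m}+m_0)/m_0$ from \eqref{VECC7} gives
\[
\underline{m}\int_{\Omega\setminus D}\varphi^2\,dx \;\xrightarrow[\rho\searrow 0]{}\; \underline{m}\,c_\star^2\,|\Omega| \;=\; \frac{\underline{m}(\underline{m}+m_0)^2}{m_0^2}\,|\Omega|.
\]

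Subtracting and factoring out the common term $\dfrac{\underline{m}(\underline{m}+m_0)|\Omega|}{m_0^2}$ produces
\[
\frac{\underline{m}(\underline{m}+m_0)|\Omega|}{m_0^2}\Bigl[\underline{m}-(\underline{m}+m_0)\Bigr] \;=\; -\frac{\underline{m}(\underline{m}+m_0)}{m_0}\,|\Omega|,
\]
which is the stated value. Positivity is automatic: $\underline{m}>0$, $\underline{m}+m_0>0$ because $m_0\in(-\underline{m},0)$, and $m_0<0$, so the overall sign is positive. I do not anticipate a serious obstacle here; the only point that requires a moment's care is the $\rho$-uniform pointwise bound on $\varphi$ in $B_1\setminus B_\rho$, but this follows at once from the monotonicity of $r\mapsto r^{-\gamma}$ on $[\rho,1]$, and crucially frees us from any restriction on $\gamma>0$.
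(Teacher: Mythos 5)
Your proof is correct and follows essentially the same route as the paper: compute the contribution from $D=B_\rho$ using \eqref{misuraD}, and apply the Dominated Convergence Theorem on $B_1\setminus B_\rho$ using a uniform bound on the middle branch of $\varphi$. The only cosmetic differences are that you rearrange \eqref{misuraD} into $\overline{m}|B_\rho|=(\underline m+m_0)|\Omega|-\underline m|B_\rho|$ and take $\rho\searrow 0$ directly rather than substituting $|D|$ and sending $\overline m\nearrow+\infty$ as in \eqref{EQUIVARHO}, and your dominating bound $0\le\frac{\rho^\gamma}{1-\rho^\gamma}\bigl(|x|^{-\gamma}-1\bigr)\le 1$ is marginally sharper than the paper's \eqref{quella} and holds for all $\rho\in(0,1)$ rather than just small $\rho$; neither change affects the substance.
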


\begin{proof}
Recalling~\eqref{VECC7}, \eqref{setD} and~\eqref{misuraD}, we see that
\begin{equation}\begin{split}\label{swrperdr46y54i}&
\overline{m}\int_D \varphi^2\,dx=\overline{m}\,(c_\star+1)^2\,|D|
= \overline{m}\,\left(-\frac{\underline{m}+m_0}{m_0}+1\right)^2\,
\frac{\underline{m}+m_0}{\underline{m}+\overline{m}}\,|\Omega|\\
&\qquad = 
\frac{\overline{m}\;\underline{m}^2\;(\underline{m}+m_0)}{
m_0^2\;(\underline{m}+\overline{m})}\,|\Omega|.
\end{split}\end{equation}
Moreover, we observe that
\begin{equation}\label{quella}
 \frac{\rho^\gamma}{1-\rho^\gamma}
\frac{1}{|x|^\gamma}\,\chi_{B_1\setminus B_\rho}\le \frac{1}{1-\rho^\gamma}
\le 2,\end{equation}
as long as~$\rho$ is small enough.
As a consequence,
recalling~\eqref{VARPHI} and~\eqref{setD}, and using the Dominated
Convergence Theorem, we find that
\begin{eqnarray*}&&\lim_{\rho\searrow0}
\underline{m}\,\int_{\Omega\setminus D}\varphi^2\,dx=\lim_{\rho\searrow0}
\underline{m}\,\int_{\Omega\setminus B_1}c_\star^2\,dx+
\underline{m}\,\int_{B_1\setminus B_\rho}
\left[c_\star+\frac{\rho^\gamma}{1-\rho^\gamma}
\left(\frac{1}{|x|^\gamma}-1  \right) \right]^2\,dx \\
&&\qquad
=\underline{m}\,c_\star^2\,|\Omega\setminus B_1|
+ \underline{m}\,c_\star^2\,|B_1|=
\underline{m}\,c_\star^2\,|\Omega|
= \underline{m}\,\left(\frac{\underline{m}+m_0}{m_0}\right)^2\,|\Omega|.
\end{eqnarray*}
{F}rom this and~\eqref{swrperdr46y54i}, and recalling~\eqref{EQUIVARHO},
we conclude that
\begin{align*}&
\lim_{\rho\searrow0}
\overline{m}\,\int_D \varphi^2\,dx
-\underline{m}\,\int_{\Omega\setminus D} \varphi^2\,dx=
\lim_{\overline{m}\nearrow+\infty}
\frac{\overline{m}\;\underline{m}^2\;(\underline{m}+m_0)}{
m_0^2\;(\underline{m}+\overline{m})}\,|\Omega|
-\underline{m}\,\left(\frac{\underline{m}+m_0}{m_0}\right)^2\,|\Omega|\\
&\qquad=
\frac{\underline{m}^2\;(\underline{m}+m_0)}{
m_0^2}\,|\Omega|
-\underline{m}\,\left(\frac{\underline{m}+m_0}{m_0}\right)^2\,|\Omega|
=\frac{\underline{m}\;(\underline{m}+m_0)}{m_0^2}\left[
\underline{m}-(\underline{m}+m_0)
\right]
|\Omega|\\
&\qquad=-\frac{\underline{m}\;(\underline{m}+m_0)}{m_0}\,
|\Omega|,
\end{align*}
which is positive,
since $m_0\in (-\underline{m},0)$, as desired.
\end{proof}

We are now in the position to give the proof of Theorem~\ref{msopratoinfty}
for~$n\geq 3$.

\begin{proof}[Proof of Theorem \ref{msopratoinfty} when~$n\ge3$]
The strategy of the proof is to use the auxiliary function~$\varphi$ as defined
in~\eqref{VARPHI} and the resource~$m:=\overline{m}\chi_{D}-
\underline{m}\chi_{\Omega\setminus D}$, with~$D$ as in~\eqref{setD},
as a competitor in the minimization problem~\eqref{LAMSPOTTP}.
Indeed, in this way we find that
\begin{equation*}
\underline{\lambda}(\overline{m},\underline{m},m_0)
\leq \frac{\displaystyle\alpha\int_\Omega |\nabla \varphi|^2\,dx+
\beta\iint_\Q \frac{|\varphi(x)-\varphi(y)|^2}{|x-y|^{n+2s}}\,dx\,dy}
{\displaystyle
\overline{m}\int_D |\varphi|^2 \,dx-\underline{m}
\int_{\Omega\setminus D}|\varphi|^2\,dx}.
\end{equation*}
Moreover, Lemmata~\ref{gradphito0} and~\ref{gradsphito0}
give that
\begin{equation*}
\lim_{\rho\searrow0}\alpha\int_\Omega |\nabla \varphi|^2\,dx+
\beta\iint_\Q \frac{|\varphi(x)-\varphi(y)|^2}{|x-y|^{n+2s}}\,dx\,dy=0.
\end{equation*}
This, combined with~\eqref{EQUIVARHO}
and Lemma~\ref{denomphi}, gives the desired result.
\end{proof}

Now we deal with the proof of Theorem \ref{msottotoinfty}. The main strategy
is similar to that of the proof of Theorem \ref{msopratoinfty}, but in this setting
we introduce a different auxiliary function (and this of course
impacts the technical computations needed to
obtain the desired results). Namely, we define
\begin{equation}\label{AUXPSI}
\psi(x):=
\begin{dcases}
c_\sharp-1 & {\mbox{ if }}x\in B_\rho ,
\\ \displaystyle
c_\sharp- \frac{\rho^\gamma}{1-\rho^\gamma}
\left(\frac{1}{|x|^\gamma}-1 \right) 
& {\mbox{ if }}x\in B_1 \setminus B_\rho,
\\
c_\sharp & {\mbox{ if }}x\in\R^n\setminus B_1,
\end{dcases}
\end{equation}
where
\begin{equation}\label{sharp}
c_\sharp:=\frac{m_0-\overline{m}}{m_0}.
\end{equation}
We point out that~$c_\sharp>0$, since~$m_0<0<\overline{m}$.
We also set
\begin{equation}\label{facile}
D:=\Omega\setminus B_\rho.\end{equation}

We remark that, in this setting, since~$m\in\tilde{\mathscr{M}}$,
recalling~\eqref{misuraD},
$$ |\Omega|-| B_\rho|=|\Omega\setminus B_\rho|
=|D|=\frac{\underline{m}+m_0}{\underline{m}
+\overline{m}}|\Omega|.
$$ 
This says that
\begin{equation}\label{CHIRO}
{\mbox{sending~$\underline{m}\nearrow+\infty$ is equivalent to
sending~$\rho\searrow0$,}}\end{equation} being~$\overline{m}$, $m_0$
and~$|\Omega|$ fixed quantities in this argument.
The reader may compare
the setting in~\eqref{setD}
and~\eqref{EQUIVARHO}
with the one in~\eqref{facile} and~\eqref{CHIRO}
to appreciate the structural difference between the two frameworks.\medskip

Now, we list some useful properties of the auxiliary function $\psi$.
Noticing that the function~$\psi$ in~\eqref{AUXPSI} differs by a constant
from the function~$-\varphi$ in~\eqref{VARPHI}, we obtain the following
two results directly from Lemmata~\ref{gradphito0}
and~\ref{gradsphito0}:

\begin{lem}\label{gradpsito0}
Let $n\geq 3$ and~$\psi$ be as in~\eqref{AUXPSI}. Then,
\[ \lim_{\rho\searrow0}
\int_\Omega |\nabla \psi|^2\,dx =0.
\]
\end{lem}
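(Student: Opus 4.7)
The plan is to reduce Lemma~\ref{gradpsito0} directly to Lemma~\ref{gradphito0} by observing that $\psi$ and $\varphi$ differ only by a constant and a global sign. Indeed, comparing the piecewise definitions in \eqref{VARPHI} and \eqref{AUXPSI}, one sees at once that
\[
\psi(x) - c_\sharp = -\bigl(\varphi(x) - c_\star\bigr) \qquad \text{for every } x \in \R^n,
\]
since on $B_\rho$ the right-hand side equals $-1$ and the left equals $-1$, on $B_1 \setminus B_\rho$ both sides equal $-\frac{\rho^\gamma}{1-\rho^\gamma}(|x|^{-\gamma}-1)$, and on $\R^n \setminus B_1$ both sides vanish. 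As this identity is a shift-and-reflection, it gives $\nabla \psi(x) = -\nabla \varphi(x)$ pointwise almost everywhere, and therefore $|\nabla \psi(x)|^2 = |\nabla \varphi(x)|^2$.

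Integrating over $\Omega$ and applying Lemma~\ref{gradphito0} yields
\[
\lim_{\rho \searrow 0} \int_\Omega |\nabla \psi|^2\,dx = \lim_{\rho \searrow 0} \int_\Omega |\nabla \varphi|^2\,dx = 0,
\]
which is precisely the claim. There is essentially no obstacle here: the entire content of the lemma is the invariance of the Dirichlet energy under the addition of a constant and sign change, together with the already-established decay result for $\varphi$. The explicit computation using polar coordinates carried out in the proof of Lemma~\ref{gradphito0} (see \eqref{s3505u56utyk} and \eqref{deot4t84364hdrg}) applies verbatim, since the only terms contributing to $\nabla \psi$ come from the annular region $B_1 \setminus B_\rho$, where $\psi$ has exactly the same radial profile as $-\varphi$ up to the constant $c_\sharp + c_\star$.
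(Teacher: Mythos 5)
Your argument is correct and is exactly the paper's own reasoning: immediately before stating Lemmata~\ref{gradpsito0} and~\ref{gradspsito0}, the authors note that $\psi$ in~\eqref{AUXPSI} differs from $-\varphi$ in~\eqref{VARPHI} by the constant $c_\sharp + c_\star$, so $\nabla\psi = -\nabla\varphi$ a.e.\ and both lemmata follow from Lemmata~\ref{gradphito0} and~\ref{gradsphito0}. Your spelled-out verification of the identity $\psi - c_\sharp = -(\varphi - c_\star)$ on the three pieces is precisely the content the paper leaves implicit.
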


\begin{lem}\label{gradspsito0}
Let $n\geq 3$ and~$\psi$ be as in~\eqref{AUXPSI}. Then
\[
\lim_{\rho\searrow0}
\iint_\Q \frac{|\psi(x)-\psi(y)|^2}{|x-y|^{n+2s}}\,dx\,dy= 0.
\]
\end{lem}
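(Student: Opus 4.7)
The plan is to deduce Lemma~\ref{gradspsito0} as an immediate corollary of Lemma~\ref{gradsphito0}, following the hint given by the authors just before the statement. The first step is to observe that $\psi$ and $-\varphi$ differ only by a global additive constant: a direct inspection of the definitions in \eqref{AUXPSI} and \eqref{VARPHI} on each of the three pieces $B_\rho$, $B_1\setminus B_\rho$ and $\R^n\setminus B_1$ shows that
\[
\psi(x)+\varphi(x)=c_\sharp+c_\star \qquad \text{for every } x\in\R^n.
\]
Indeed on $B_\rho$ one gets $(c_\star+1)+(c_\sharp-1)=c_\star+c_\sharp$, on the annulus the $\rho$-dependent radial terms cancel with opposite signs, and on the exterior the two functions are already constant.

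Given this identity, the plan is to exploit the obvious fact that the Gagliardo seminorm on $\Q$ is invariant under both additive constants and sign changes, since the integrand depends only on $u(x)-u(y)$ through its square. Applied to the relation above, this yields the exact equality
\[
\iint_\Q \frac{|\psi(x)-\psi(y)|^2}{|x-y|^{n+2s}}\,dx\,dy = \iint_\Q \frac{|\varphi(x)-\varphi(y)|^2}{|x-y|^{n+2s}}\,dx\,dy,
\]
after which letting $\rho\searrow 0$ and invoking Lemma~\ref{gradsphito0} finishes the proof.

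There is essentially no obstacle here. The constants $c_\star$ and $c_\sharp$ depend differently on the parameters $\underline{m}$, $\overline{m}$, $m_0$, and in the setting of Theorem~\ref{msottotoinfty} the relevant divergent parameter is $\underline{m}$ rather than $\overline{m}$ (cf.\ \eqref{CHIRO} versus \eqref{EQUIVARHO}); but this mismatch is entirely invisible to the Gagliardo seminorm, so no care at all is needed when translating the $\rho\searrow 0$ limit from Lemma~\ref{gradsphito0} to the present statement. The only piece of genuine content in the argument is the short case-by-case verification of the pointwise identity $\psi+\varphi\equiv c_\sharp+c_\star$, which is routine.
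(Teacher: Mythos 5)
Your proposal is correct and matches the paper's own reasoning exactly: the paper explicitly notes, just before Lemmata~\ref{gradpsito0} and~\ref{gradspsito0}, that $\psi$ differs from $-\varphi$ by a constant and deduces both lemmata directly from Lemmata~\ref{gradphito0} and~\ref{gradsphito0}. Your case-by-case verification of $\psi+\varphi\equiv c_\sharp+c_\star$ and the invariance of the Gagliardo seminorm under additive constants and sign changes is precisely the intended argument.
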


We now deal with the weighted~$L^2$-norm of the auxiliary function~$\psi$:

\begin{lem}\label{denompsi}
Let $n\geq 3$ and~$\psi$ be as in~\eqref{AUXPSI}. Then,
\[
\lim_{\rho\searrow0}\overline{m}
\int_D \psi^2\,dx-\underline{m}\int_{\Omega\setminus D} \psi^2\,dx
=-\frac{\overline{m}\,(\overline{m}-m_0)}{m_0}
\,|\Omega|>0.
\]
\end{lem}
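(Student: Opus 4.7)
The overall plan is to mimic the proof of Lemma~\ref{denomphi}, with the roles of $\overline m$ and $\underline m$ (and of $D$ and $\Omega\setminus D$) effectively swapped, and with the key substitution~\eqref{CHIRO} replacing~\eqref{EQUIVARHO}. Concretely, since $D=\Omega\setminus B_\rho$ and $\Omega\setminus D=B_\rho$, I split the first integral as
\[
\int_D \psi^2\,dx=\int_{\Omega\setminus B_1}\psi^2\,dx+\int_{B_1\setminus B_\rho}\psi^2\,dx,
\]
and compute the second integral explicitly as $(c_\sharp-1)^2|B_\rho|$, using that $\psi\equiv c_\sharp-1$ on $B_\rho$.

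For $\int_D \psi^2\,dx$ the first piece equals $c_\sharp^2|\Omega\setminus B_1|$ by definition. In the second piece, the pointwise bound
\[
\frac{\rho^\gamma}{1-\rho^\gamma}\cdot\frac{1}{|x|^\gamma}\chi_{B_1\setminus B_\rho}(x)\le 2
\]
(as in~\eqref{quella}) allows an application of the Dominated Convergence Theorem, since the integrand tends pointwise to $c_\sharp^2$ almost everywhere as $\rho\searrow0$; this gives $\int_{B_1\setminus B_\rho}\psi^2\,dx\to c_\sharp^2|B_1|$. Altogether $\lim_{\rho\searrow 0}\int_D \psi^2\,dx=c_\sharp^2|\Omega|$, hence
\[
\lim_{\rho\searrow0}\overline m\int_D\psi^2\,dx=\overline m\,c_\sharp^2|\Omega|=\frac{\overline m\,(\overline m-m_0)^2}{m_0^2}|\Omega|.
\]

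For the remaining term, the constraint~\eqref{misuraD} gives $|B_\rho|=|\Omega|-|D|=\dfrac{\overline m-m_0}{\underline m+\overline m}|\Omega|$, so
\[
\underline m\int_{\Omega\setminus D}\psi^2\,dx=\underline m\,(c_\sharp-1)^2|B_\rho|=\frac{\underline m}{\underline m+\overline m}\,(c_\sharp-1)^2(\overline m-m_0)|\Omega|.
\]
By~\eqref{CHIRO} the limit $\rho\searrow 0$ corresponds to $\underline m\nearrow+\infty$, which sends $\dfrac{\underline m}{\underline m+\overline m}\to 1$. Using $c_\sharp-1=-\overline m/m_0$, we arrive at
\[
\lim_{\rho\searrow0}\underline m\int_{\Omega\setminus D}\psi^2\,dx=\frac{\overline m^{\,2}(\overline m-m_0)}{m_0^2}|\Omega|.
\]
Subtracting the two limits, a routine factorization of $\overline m-m_0$ collapses the expression to $-\dfrac{\overline m(\overline m-m_0)}{m_0}|\Omega|$, which is strictly positive since $m_0<0<\overline m$ force $\overline m-m_0>0$ and the leading sign to be positive.

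The only genuine subtlety is making sure that the substitution $\underline m\nearrow+\infty\leftrightarrow\rho\searrow 0$ is handled cleanly, because, unlike in Lemma~\ref{denomphi}, here the large parameter $\underline m$ multiplies the integral over $B_\rho$, whose measure vanishes; the finite (and positive) limit emerges precisely from the balance in~\eqref{misuraD} between $\underline m$ and $|B_\rho|^{-1}$. All the remaining steps (dominated convergence on $B_1\setminus B_\rho$ and the final algebraic simplification) follow the same template as in Lemma~\ref{denomphi}.
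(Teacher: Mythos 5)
Your argument is correct and follows the paper's proof essentially verbatim: split $\int_D\psi^2$ into the contributions from $\Omega\setminus B_1$ and $B_1\setminus B_\rho$, use the uniform bound~\eqref{quella} to apply Dominated Convergence on the latter, compute $\underline m\int_{\Omega\setminus D}\psi^2\,dx$ explicitly via~\eqref{misuraD} and~\eqref{sharp}, and pass to the limit using~\eqref{CHIRO}. The only cosmetic difference is that you isolate the factor $\underline m/(\underline m+\overline m)\to 1$ before taking the limit, while the paper keeps the $\underline m$-dependent fraction intact and sends $\underline m\nearrow+\infty$ directly; the algebra is the same.
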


\begin{proof}
Recalling~\eqref{AUXPSI} and~\eqref{facile}, we find that
\[
\overline{m}\,
\int_D \psi^2\,dx=\overline{m}\,\int_{\Omega\setminus B_1}
c_\sharp^2\,dx + \overline{m}\,\int_{B_1\setminus B_\rho}\left[
c_\sharp- \frac{\rho^\gamma}{1-\rho^\gamma}
\left(\frac{1}{|x|^\gamma}-1 \right)\right]^2\,dx.
\]
Hence, recalling~\eqref{quella} and using the Dominated Convergence
Theorem and~\eqref{sharp}, we deduce that
\begin{equation}\label{questa}
\lim_{\rho\searrow0}\overline{m}\,
\int_D \psi^2\,dx=\overline{m}\,c_\sharp^2\,|\Omega\setminus B_1|
+ \overline{m}\,c_\sharp^2\,| B_1|=
\overline{m}\,c_\sharp^2\,| \Omega|
=\overline{m}\,\left(\frac{m_0-\overline{m}}{m_0} \right)^2\,| \Omega|
.
\end{equation}
Moreover, recalling~\eqref{facile}, \eqref{misuraD} and~\eqref{sharp},
\[
\underline{m}\,\int_{\Omega\setminus D} \psi^2\,dx=
\underline{m}\,
(c_\sharp-1)^2|\Omega\setminus D|= 
\underline{m}\,
\left(\frac{m_0-\overline{m}}{m_0}-1\right)^2\,\frac{\overline{m}-m_0}{
\underline{m}+\overline{m}}
\,|\Omega|=\frac{\underline{m}\,
\overline{m}^2\,(
\overline{m}-m_0)}{m_0^2
(\underline{m}+\overline{m})}
\,|\Omega|.
\]
As a consequence of this and~\eqref{questa},
and recalling~\eqref{CHIRO}, we have that
\begin{eqnarray*}&&\lim_{\rho\searrow0}\overline{m}\,
\int_D \psi^2\,dx-\underline{m}\int_{\Omega\setminus D} \psi^2\,dx
= \overline{m}\,\left(\frac{m_0-\overline{m}}{m_0} \right)^2\,| \Omega|-
\lim_{\underline{m}\nearrow+\infty}\frac{\underline{m}\,
\overline{m}^2\,(
\overline{m}-m_0)}{m_0^2
(\underline{m}+\overline{m})}
\,|\Omega|\\&&\qquad=
\overline{m}\,\left(\frac{m_0-\overline{m}}{m_0} \right)^2\,| \Omega|-
\frac{\overline{m}^2\,(
\overline{m}-m_0)}{m_0^2}
\,|\Omega|=
\frac{\overline{m}\,(\overline{m}-m_0)}{m_0^2} \left[(
\overline{m}-m_0)-\overline{m}
\right]
\,|\Omega|\\&&\qquad=-\frac{\overline{m}\,(\overline{m}-m_0)}{m_0}
\,|\Omega|,
\end{eqnarray*}
which is positive, since~$m_0<0<\overline{m}$.
\end{proof}

Now we are ready to give the proof of Theorem \ref{msottotoinfty}
for $n\geq 3$.

\begin{proof}[Proof of Theorem \ref{msottotoinfty}]
The strategy of the proof is to use the auxiliary function~$\psi$ as defined
in~\eqref{AUXPSI} and the resource~$m:=\overline{m}\chi_{D}-
\underline{m}\chi_{\Omega\setminus D}$, with~$D$ as in~\eqref{facile},
as a competitor in the minimization problem~\eqref{LAMSPOTTP}.
Indeed, in this way we find that
\begin{equation*}
\underline{\lambda}(\overline{m},\underline{m},m_0)
\leq \frac{\displaystyle\alpha\int_\Omega |\nabla \psi|^2\,dx+
\beta\iint_\Q \frac{|\psi(x)-\psi(y)|^2}{|x-y|^{n+2s}}\,dx\,dy}
{\displaystyle
\overline{m}\int_D |\psi|^2 \,dx-\underline{m}
\int_{\Omega\setminus D}|\psi|^2\,dx}.
\end{equation*}
Moreover, from Lemmata~\ref{gradpsito0} and~\ref{gradspsito0}
we have that
\begin{equation*}
\lim_{\rho\searrow0}\alpha\int_\Omega |\nabla \psi|^2\,dx+
\beta\iint_\Q \frac{|\psi(x)-\psi(y)|^2}{|x-y|^{n+2s}}\,dx\,dy=0.
\end{equation*}
This, combined with~\eqref{CHIRO}
and Lemma~\ref{denompsi} implies the desired result.
\end{proof}

Having completed the cases~$n\ge3$ and deferred the case~$n=2$
to Appendix~\ref{KASN:ALSL}, we now focus on the case~$n=1$,
by providing the proofs of Theorems~\ref{alpha>0} and~\ref{alpha=0}.

For this, when $n=1$ we first establish the following
lower bound for~$\underline{\lambda}$ (as defined in~\eqref{LAMSPOTTP}):

\begin{lem}\label{n1lemma}
Let $n=1$ and $\alpha>0$. Then
\begin{equation}\label{Dec9irsi88586red9944}
\underline{\lambda}(\overline{m},\underline{m},m_0)
\geq -\,
\frac{C\,m_0^3 (\underline{m}+\overline{m})^4}{\overline{m}\underline{m}^3
(\overline{m}-m_0)^2\big(
\overline{m}(
2\underline{m}+m_0)-
\underline{m}m_0\big)},
\end{equation}
for some $C=C(\alpha, \beta, \Omega)>0$.
\end{lem}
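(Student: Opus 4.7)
The plan is to bound $\lambda_1(m)$ from below uniformly for bang-bang $m$ using a Rayleigh-quotient estimate together with careful one-dimensional Poincar\'e-type analysis. By Proposition~\ref{prop:bang} I may restrict attention to $m = \overline{m}\chi_D - \underline{m}\chi_{\Omega\setminus D}$ with $|D|/|\Omega| = (\underline{m}+m_0)/(\underline{m}+\overline{m})$. Since $\alpha>0$, the variational characterization in Proposition~\ref{PROAUTOVA} combined with $[u]^2_{X_{\alpha,\beta}}\ge (\alpha/2)\|u'\|_{L^2(\Omega)}^2$ reduces the statement to proving
\[
\int_\Omega m\,u^2 \;\le\; K(\overline{m},\underline{m},m_0,\Omega,\alpha,\beta)\,\|u'\|_{L^2(\Omega)}^2
\]
for every $u\in H^1(\Omega)$, with $K$ equal to the reciprocal of the claimed lower bound up to the factor $\alpha/2$, uniformly in the admissible set $D$.

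I would first decompose $u = c + w$ with $c$ the mean of $u$ on $\Omega$ and $\int_\Omega w = 0$, so that $\|u'\|_{L^2} = \|w'\|_{L^2}$. Using $\int_\Omega m = m_0|\Omega|<0$ and Young's inequality to absorb the cross term $2c\int_\Omega m\,w$ against the negative quadratic $c^2 m_0|\Omega|$ yields
\[
\int_\Omega m\,u^2 \;\le\; \frac{(\int_\Omega m\,w)^2}{|m_0|\,|\Omega|} + \int_\Omega m\,w^2.
\]
Next I would bound each summand by a suitable multiple of $\|w'\|_{L^2}^2$. Since $m-m_0$ has zero mean, there is an antiderivative $G$ vanishing at both endpoints of $\Omega$; integration by parts and Cauchy--Schwarz then give $|\int m\,w|\le \|G\|_{L^2}\|w'\|_{L^2}$, and an explicit calculation (which I would show is maximized when $D$ is an interval adjacent to $\partial\Omega$) yields
\[
\|G\|_{L^2}^2 \;\le\; C(\Omega)\,\frac{(\overline{m}-m_0)^2(\underline{m}+m_0)^2}{(\underline{m}+\overline{m})^2},
\]
which is the source of the factor $(\overline{m}-m_0)^2$ in the claimed denominator. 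For the second summand, I would combine the one-dimensional embedding $\|w\|_{L^\infty}^2\le |\Omega|\,\|w'\|_{L^2}^2$ (valid since $\int w = 0$) with the retention of the negative contribution $-\underline{m}\int_{\Omega\setminus D}w^2$; this latter step uses the pointwise identity $u^2(x)-u^2(y) = 2\int_y^x uu'$ (so that $|u^2(x)-u^2(y)|\le 2\|u\|_{L^2}\|u'\|_{L^2}$) integrated in $y\in\Omega\setminus D$ and $x\in D$ to transfer mass from $D$ to $\Omega\setminus D$, and is essential in reproducing the behavior $\underline\lambda\to+\infty$ as $\overline{m}\searrow 0$ (cf.\ \eqref{viene2}).

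Assembling these bounds, substituting the explicit formulas $|D|/|\Omega|=(\underline{m}+m_0)/(\underline{m}+\overline{m})$ and $|\Omega\setminus D|/|\Omega|=(\overline{m}-m_0)/(\underline{m}+\overline{m})$, and simplifying yields the claimed inequality; in particular the factor $\overline{m}(2\underline{m}+m_0)-\underline{m}m_0$ emerges from combining the antiderivative-based estimate for $\int m\,w$ with the refined $L^\infty$/continuity estimate on $\int m\,w^2$. The main technical obstacle will be to ensure that the bound is uniform over admissible $D$, since $\|G\|_{L^2}$ depends on the geometry of $D$: this requires either a symmetrization/rearrangement argument reducing to the boundary-interval case, or a direct case-by-case verification showing that no other configuration of $D$ produces a larger $\|G\|_{L^2}$ for given $|D|$.
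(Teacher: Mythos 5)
Your route is genuinely different from the paper's: you aim at a direct Poincar\'e-type bound $\int_\Omega mu^2\le K\|u'\|_{L^2}^2$ for all $u\in H^1$, using the mean-zero decomposition $u=c+w$, while the paper works directly with the first eigenfunction $e$ (normalised so $\sup e=1$, $\inf e=a$) and leans on the structural identity $\int_\Omega me\,dx=0$ — obtained by testing the eigenvalue equation against constants — to derive the explicit bound $a\le\frac{\overline m}{\underline m}\cdot\frac{\underline m+m_0}{\overline m-m_0}$, whence the claimed formula. That identity is what really drives the paper's proof, and it is not a consequence of requiring $\int_\Omega w=0$; your decomposition uses the Lebesgue mean, whereas the paper exploits the $m$-weighted orthogonality to constants, which is a different (and eigenfunction-specific) piece of information. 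Since the Rayleigh quotient in \eqref{lopouygbv} is minimised by $e$, one could legitimately impose the side constraint $\int_\Omega mu\,dx=0$ on the competitor class, but your scheme does not do so.

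This leads to a genuine gap: your bound, if carried out, would be strictly weaker than what the lemma claims and would not reproduce the divergence in \eqref{viene2}. Concretely, after the mass-transfer step one gets $\int_\Omega mw^2\le 2\overline m|D|\,|\Omega|\,\|w'\|_{L^2}^2$, which does vanish as $\overline m\searrow0$. But the first term in your decomposition is controlled by $\frac{\|G\|_{L^2}^2}{|m_0|\,|\Omega|}\|w'\|_{L^2}^2$, and $\|G\|_{L^2}^2$ does \emph{not} vanish as $\overline m\searrow0$ (indeed $\sup G=(\overline m-m_0)|D|\to|m_0|\cdot\tfrac{\underline m+m_0}{\underline m}|\Omega|>0$). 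Hence your $K$ stays bounded below by a positive quantity independent of $\overline m$, so your argument gives only $\underline\lambda\gtrsim 1$ in the regime $\overline m\searrow0$, whereas the lemma asserts (and Theorem~\ref{alpha>0}/\eqref{viene2} requires) $\underline\lambda\to+\infty$ there. Separately, you would also need to handle disconnected $\Omega$ (which can occur when $\beta>0$), where the $L^\infty$-Poincar\'e step on $w$ fails and the nonlocal seminorm must be brought in, as the paper does through cases \eqref{A:CA:01}--\eqref{A:CA:02}; and the rearrangement claim that $\|G\|_{L^2}$ is maximised by a boundary-adjacent interval, which you flag but do not resolve, also needs to be supplied.
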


\begin{proof}  
Without loss of generality, we can set $\alpha=2$.
We take an arbitrary resource~$m$
in the set~$\tilde{\mathscr{M}}$ defined in~\eqref{tildem}.
Moreover,
we denote by~$e$ an eigenfunction associated to the
first eigengenvalue of problem~\eqref{probauto}, that is
\begin{equation}\label{n1lambda1}
\lambda_1(m)=\displaystyle\frac{\displaystyle\int_\Omega |e'|^2\,dx+
\frac\beta4\iint_\Q \frac{|e(x)-e(y)|^2}{|x-y|^{n+2s}}\,dx\,dy}
{\displaystyle\overline{m}\int_D e^2\,dx - 
\underline{m}\int_{\Omega\setminus D}e^2\,dx}.
\end{equation}
In light of Proposition~\ref{prop:lambda} here
and Corollary~1.4
in~\cite{ANCI}, up to a sign change,
we know that $e$ is nonnegative and bounded,
and therefore we set
$$ a:=\inf_{\Omega}e\qquad{\mbox{and}}\qquad
b:=\sup_{\Omega}e.$$
By construction, we have that~$a\in[0,b]$,
and we can also
normalize~$e$ such that~$b=1$; in this way
\begin{equation}\label{SUSU}
e(x)\le1\qquad{\mbox{ for each }}x\in\Omega.
\end{equation}
We also take~$x_k$, $y_k\in\Omega$ such that
$$ e(x_k)\to a\qquad{\mbox{and}}\qquad e(y_k)\to 1$$
as~$k\nearrow+\infty$. 

We observe that
\begin{equation}\label{MS:SDKK023987233j3j44}
\begin{split}
&{\mbox{if there exist~$\bar{x}$ and~$\bar{y}$
such that~$|e(\bar{x})-e(\bar{y})|\ge\displaystyle\frac{1-a}{10}$ which
belong}}\\&{\mbox{to the same connected component of~$\Omega$, then}}\\
& {(1-a)^2}
\leq  C\,\int_\Omega|e'|^2\,dx,\qquad{\mbox{ for some $C>0$.}}
\end{split}
\end{equation}
Indeed, for~$\bar{x}$ and~$\bar{y}$ as in the assumption
of~\eqref{MS:SDKK023987233j3j44}
we have that
$$ \big( e(\bar{y})-e(\bar{x})\big)^2
=\left(\int_{\bar{x}}^{\bar{y}}e'(t)\,dt\right)^2
\le C\,\int_\Omega|e'(t)|^2\,dt,$$
for some positive~$C$.
Accordingly, we obtain the desired result in~\eqref{MS:SDKK023987233j3j44}.

Now we claim that
\begin{equation}\label{n1numeratore}
(1-a)^4
\le C\,\left(\int_\Omega |e'|^2\,dx+
\beta\iint_\Q \frac{|e(x)-e(y)|^2}{|x-y|^{n+2s}}\,dx\,dy\right),\end{equation}
for a suitable~$C>1$.

To prove this claim, we need to consider different
possibilities according to the possible lack of
connectedness of~$\Omega$. For this, we first remark that, with no loss
of generality, we can suppose that
\begin{equation}\label{544}a<1,\end{equation}
otherwise~\eqref{n1numeratore}
is obviously satisfied. 

Furthermore,
being~$\Omega$
a bounded set with~$C^1$ boundary, necessarily
it can have at most a finite number of connected components
(otherwise, there would be accumulating components,
violating the assumption in~\eqref{PRECIS:C1}).
Hence, if~$\Omega$ is not connected,
we can define~$d_0$ to be the smallest distance
between the different connected components
of~$\Omega$. We also let~$d_1$ to be the diameter of~$\Omega$
and~$d_2$ the smallest diameter of all the connected components
of~$\Omega$
(of course, $d_0$, $d_1$ and~$d_2$ are structural constants,
and the other constants are allowed to depend on them, but
we will write~$d_0$, $d_1$ and~$d_2$
explicitly in the forthcoming
computations
whenever needed to emphasize their roles).
To prove~\eqref{n1numeratore}, we distinguish two cases:
the first case is when
\begin{equation}
\begin{split}
\label{A:CA:01}& {\mbox{$\Omega$ has one connected component,}}\\&{\mbox{or it has more than one connected component, with }}\\&
\sup_{{x,y\in\Omega}\atop{|e(x)-e(y)|\ge \frac{1-a}{10}}}
\frac{|e(x)-e(y)|}{|x-y|}\ge\frac{4}{d_0},\end{split}\end{equation}
and the second case is when
\begin{equation}
\begin{split}&
\label{A:CA:02} {\mbox{$\Omega$ has more than one connected component, with }}
\\ &\sup_{{x,y\in\Omega}\atop{|e(x)-e(y)|\ge \frac{1-a}{10}}}
\frac{|e(x)-e(y)|}{|x-y|}<\frac{4}{d_0}.
\end{split}\end{equation}
Let us first discuss case~\eqref{A:CA:01}.
If~$\Omega$ has one connected component,
then we can exploit~\eqref{MS:SDKK023987233j3j44}
with~$\bar{x}:=x_k$ and~$\bar{y}:=y_k$ with~$k$ sufficiently large,
and the claim in~\eqref{n1numeratore} plainly follows.
Thus, to complete the study of~\eqref{A:CA:01}, we suppose that~$
\Omega$ is not connected
and, in the setting of~\eqref{A:CA:01}, we find~$\bar x$, $\bar y\in\Omega$ with
\begin{equation} \label{SPeispekcif}|e(\bar x)-e(\bar y)|\geq
\frac{1-a}{10}\qquad{\mbox{and}}\qquad
\frac{|e(\bar x)-e(\bar y)|}{|\bar x-\bar y|}\ge\frac{3}{d_0}.\end{equation}
In this framework, we have that
\begin{equation}\label{SPeispekcif2}
{\mbox{$\bar x$ and~$\bar y$ belong to the same connected component.}}
\end{equation}
Indeed, if not, we would have that~$|\bar x-\bar y|\ge d_0$, and thus, by~\eqref{SUSU},
$$ \frac{|e(\bar x)-e(\bar y)|}{|\bar x-\bar y|}\le
\frac{|e(\bar x)|+|e(\bar y)|}{d_0}\le\frac2{d_0},$$
which is in contradiction with~\eqref{SPeispekcif}, thus proving~\eqref{SPeispekcif2}.

Then, by~\eqref{SPeispekcif2}, we can 
exploit~\eqref{MS:SDKK023987233j3j44},
from which one deduces~\eqref{n1numeratore} in this case.

Having completed the analysis of case~\eqref{A:CA:01},
we now focus on the setting provided by case~\eqref{A:CA:02}
and we define
\begin{equation}\label{deferrre}
r:=\frac{1}{100\left(1+\displaystyle\frac{4}{d_0}\right)}\,\min\big\{ 1-a,\;d_2\big\}.
\end{equation}
We observe that, $r>0$, due to~\eqref{544},
and, if~$\vartheta\in\Omega$ with~$|\vartheta
-{x}_k|\le r$, then 
\begin{equation}\label{KSN-o204-9350-01093-1}
|e(\vartheta)-e( x_k)|\le \frac{1-a}{10}.
\end{equation}
Indeed, suppose not. Then,
the assumption in~\eqref{A:CA:02}
guarantees that
$$ \frac{4}{d_0}\ge\frac{|e(\vartheta)-e(x_k)|}{|\vartheta
-{x}_k|}\ge
\frac{|e(\vartheta)-e( x_k)|}{r},
$$
and therefore
$$ |e(\vartheta)-e(x_k)|\le
\frac{4r}{d_0}\le\frac{\displaystyle\frac{4}{d_0}}{100\left(1+\displaystyle\frac{4}{d_0}\right)}\,(1-a)\le\frac{1-a}{100},$$
against the contradiction assumption.

This proves~\eqref{KSN-o204-9350-01093-1} and similarly one can show that
if~$\tau\in\Omega$ with~$|\tau
-{y}_k|\le r$, then
\begin{equation}\label{KSN-o204-9350-01093-2}
|e(\vartheta)-e(y_k)|\le \frac{1-a}{10}.
\end{equation}
Combining~\eqref{KSN-o204-9350-01093-1} and~\eqref{KSN-o204-9350-01093-2},
we find that, for all~$\vartheta$, $\tau\in\Omega$ with~$|\vartheta
-{x}_k|\le r$ and~$|\tau
-{y}_k|\le r$,
$$ |e(\vartheta)-e(\tau)|\ge|e( x_k)-e( y_k)|-|e(\vartheta)-e(x_k)|-
|e(\tau)-e(y_k)|
\ge \frac{1-a}2-\frac{1-a}{5}\ge\frac{1-a}{4}.$$
For this reason, letting~${\mathcal{S}}_1:=B_r({x}_k)\cap\Omega$
and~${\mathcal{S}}_2:=B_r({y}_k)\cap\Omega$, we have that
\begin{equation}\label{KSMD8399f8848294}
\begin{split}&
\iint_\Q \frac{|e(x)-e(y)|^2}{|x-y|^{n+2s}}\,dx\,dy\ge
\iint_{\Omega\times\Omega} \frac{|e(x)-e(y)|^2}{|x-y|^{n+2s}}\,dx\,dy\ge
\iint_{{\mathcal{S}}_1\times {\mathcal{S}}_2} \frac{|e(\vartheta)-e(\tau)|^2}{|\vartheta-\tau|^{n+2s}}\,d\vartheta\,d\tau\\&\qquad\quad
\ge \iint_{{\mathcal{S}}_1\times {\mathcal{S}}_2} \frac{(1-a)^2}{
16\,d_1^{n+2s}}\,d\vartheta\,d\tau=
\frac{(1-a)^2\;|{\mathcal{S}}_1|\;|{\mathcal{S}}_2|}{
16\,d_1^{n+2s}}
\ge\frac{(1-a)^2\;r^2}{
64\,d_1^{n+2s}}.
\end{split}
\end{equation}
We also recall that in case~\eqref{A:CA:02} we have that~$\Omega$ is not connected
and consequently~$\beta\ne0$,
due to~\eqref{KASM:LSKDD}. {F}rom this and~\eqref{KSMD8399f8848294},
up to renaming constants, we deduce that
\begin{eqnarray*}
\int_\Omega |e'|^2\,dx+
\beta\iint_\Q \frac{|e(x)-e(y)|^2}{|x-y|^{n+2s}}\,dx\,dy\ge
\frac{(1-a)^2\;r^2}{C},
\end{eqnarray*}
which, together with~\eqref{deferrre}, proves~\eqref{n1numeratore}, as desired.

We also remark that, testing the weak formulation of~\eqref{probauto}
against a constant function, one sees that
$$\int_\Omega m e\,dx=0,$$ and therefore, recalling~\eqref{SUSU},
\begin{equation}\label{n1disug>}
\overline{m}|D|\geq \overline{m}\int_D e\,dx
=\underline{m}\int_{\Omega\setminus D} e\,dx \geq a\,\underline{m}\,|\Omega\setminus D|.
\end{equation}
Recalling~\eqref{misuraD}, we see that
\begin{equation}\label{-554}
|\Omega\setminus D|=\frac{\overline{m}-m_0}{
\underline{m}+\overline{m}}|\Omega|,
\end{equation}
and therefore
\begin{equation}\label{Posityc3456}\begin{split}
\overline{m}&\int_D e^2\,dx - \underline{m}\int_{\Omega\setminus D}e^2\,dx
\leq \overline{m}|D|-a^2\underline{m}|\Omega\setminus D| \\
&=\overline{m}\,\frac{\underline{m}+m_0}{\underline{m}+\overline{m}}\,|\Omega|
-a^2\underline{m}\frac{\overline{m}-m_0}{\underline{m}+\overline{m}}\,|\Omega|
=\frac{\overline{m}\underline{m}(1-a^2)
+m_0(\overline{m}+a^2\underline{m})}{\underline{m}+\overline{m}}\,
|\Omega| \\
&<\frac{\overline{m}\underline{m}}{\underline{m}+\overline{m}}(1-a^2)\,
|\Omega|,
\end{split}\end{equation}
since $m_0<0$ and~$\overline{m}$, $\underline{m}>0$.
Using this inequality and~\eqref{n1numeratore} in
\eqref{n1lambda1}, we conclude that
\begin{equation}\label{Perfhrsa}
C\,\lambda_1(m)\geq \frac{\underline{m}+\overline{m}}{\overline{m}
\underline{m}}\cdot
\frac{(1-a)^4}{1-a^2}
=\frac{\underline{m}+\overline{m}}{\overline{m}\underline{m}}
\cdot\frac{(1-a)^3}{1+a},
\end{equation}
up to renaming~$C>0$.

Furthermore, from \eqref{n1disug>} we know that
\begin{equation*}
a\leq \frac{\overline{m}}{\underline{m}}\cdot
\frac{\underline{m}+m_0}{\overline{m}-m_0}
.\end{equation*}
Consequently, since the map~$[0,1]\ni t:=\frac{(1-t)^3}{1+t}$
is decreasing, we discover that
$$ \frac{(1-a)^3}{1+a}\ge
\frac{\left(1-\displaystyle\frac{\overline{m}}{\underline{m}}\cdot
\frac{\underline{m}+m_0}{\overline{m}-m_0}\right)^3}{1+\displaystyle\frac{\overline{m}}{\underline{m}}\cdot
\frac{\underline{m}+m_0}{\overline{m}-m_0}}
=-\frac{m_0^3 (\underline{m}+\overline{m})^3}{\underline{m}^2
(\overline{m}-m_0)^2}\cdot\frac1{2\overline{m}\underline{m}+m_0
(\overline{m}-\underline{m})}
.$$
Combining this information and~\eqref{Perfhrsa},
we deduce that
\begin{equation*}
\begin{split}C\,
\lambda_1(m)\,&\geq -\frac{\underline{m}+\overline{m}}
{\overline{m}\underline{m}}\cdot
\frac{m_0^3 (\underline{m}+\overline{m})^3}{\underline{m}^2
(\overline{m}-m_0)^2}\cdot\frac1{2\overline{m}\underline{m}+m_0
(\overline{m}-\underline{m})}\\&=
-\,
\frac{m_0^3 (\underline{m}+\overline{m})^4}{\overline{m}\underline{m}^3
(\overline{m}-m_0)^2\big(2\overline{m}\underline{m}+m_0
(\overline{m}-\underline{m})\big)}
.\end{split}
\end{equation*}
Taking the infimum of this expression, we find the desired result.
\end{proof}

With this, we are in the position to give the proof of Theorem \ref{alpha>0}.

\begin{proof}[Proof of Theorem \ref{alpha>0}]
For any $\underline{m}>0$ and any~$m_0\in(-\underline{m},0)$, we define the function
$g_{\underline{m},m_0}:(0,+\infty) \to (0,+\infty)$ as
\[
g_{\underline{m},m_0}(\emme):=-\,
\frac{m_0^3 (\underline{m}+\emme)^4}{\emme \underline{m}^3
(\emme -m_0)^2\big(\emme(2\underline{m}+m_0)-
\underline{m}m_0\big)}.
\]
We observe that
\[
\lim_{\emme\nearrow +\infty}g_{\underline{m},m_0}(\emme)=
-\,
\frac{m_0^3}{\underline{m}^3(2\underline{m}+m_0)}>0,
\]
and
\begin{equation}\label{a2t4t4y5}
\lim_{\emme\searrow 0}g_{\underline{m},m_0}(\emme)=+\infty.
\end{equation}
In particular,
\begin{equation}\label{chiamala2}
\inf_{\emme\in(0,+\infty)} g_{\underline{m},m_0}(\emme)>0.\end{equation}

Now, by Lemma \ref{n1lemma}, we know that
\begin{equation}\label{chiamala}
\underline{\lambda}(\emme,\underline{m},m_0)\geq
C\,g_{\underline{m},m_0}(\emme).
\end{equation}
As a result, \eqref{viene1} follows from~\eqref{chiamala2}
and~\eqref{chiamala}. Moreover, from~\eqref{a2t4t4y5} and~\eqref{chiamala}
we obtain~\eqref{viene2}.

To prove~\eqref{viene3}, for any~$\overline{m}>0$ and~$m_0<0$,
we define the
function~$\tilde g_{\overline{m},m_0}:(-m_0,+\infty)\to (0,+\infty)$ as
\[
\tilde g_{\overline{m},m_0}(\emme):=
-\,
\frac{m_0^3 (\emme+\overline{m})^4}{\overline{m}\emme^3
(\overline{m}-m_0)^2\big(
\overline{m}(
2\emme+m_0)-
\emme m_0\big)}.
\]
We notice that
\begin{equation*}
\lim_{\emme\searrow -m_0}\tilde g_{\overline{m},m_0}(\emme)=-\,
\frac{\overline{m}-m_0}{\overline{m}m_0}>0
\end{equation*}
and that
\begin{equation*}
\lim_{\emme\nearrow +\infty}\tilde g_{\overline{m},m_0}(\emme)=
-\,
\frac{m_0^3}{\overline{m}
(\overline{m}-m_0)^2(
2\overline{m}- m_0)}>0.
\end{equation*}
Accordingly,
$$ \inf_{\emme\in(-m_0,+\infty)}
\tilde g_{\overline{m},m_0}(\emme)>0.$$
Using this and the fact that,
by Lemma~\ref{n1lemma}, 
\begin{equation*}
\underline{\lambda}(\overline{m},\emme,m_0)\geq
C\,\tilde g_{\overline{m},m_0}(\emme),
\end{equation*}
we obtain the desired result in~\eqref{viene3}.
\end{proof}

Having established Theorem \ref{alpha>0}, we now deal with the case
in which~$\alpha=0$, namely when only the nonlocal dispersal
is active. This case is considered in Theorem~\ref{alpha=0},
according to two different ranges of the fractional parameter~$s$.
For this, we divide the proof of Theorem \ref{alpha=0} into two parts.

\begin{proof}[Proof of Theorem \ref{alpha=0} when $s\in(1/2,1)$]
We denote by $e$ the eigenfunction associated to the
first eigengenvalue of problem \eqref{probauto}, normalized such that
\begin{equation}\label{9siupinf045} a:=\inf_\Omega e\ge0\qquad{\mbox{and}}\qquad
\sup_\Omega e=1.\end{equation}
We recall~\eqref{n1disug>}, \eqref{-554}
and~\eqref{Posityc3456} to write that
\begin{equation}\label{alew013}
a\leq \frac{\overline{m}}{\underline{m}}\cdot
\frac{\underline{m}+m_0}{\overline{m}-m_0}
\end{equation}
and
\begin{equation}\label{alpha0denom}
\overline{m}\int_D e^2\,dx - \underline{m}\int_{\Omega\setminus D}e^2\,dx
\leq\frac{\overline{m}\underline{m}}{\underline{m}+\overline{m}}(1-a^2)|\Omega|.
\end{equation}
We stress that, in view of~\eqref{alew013},
\begin{equation}\label{nonin9203ioejwoeifjg8y87} \delta_0:=1-a\ge1-
\frac{\overline{m}}{\underline{m}}\cdot
\frac{\underline{m}+m_0}{\overline{m}-m_0}=
-\frac{m_0(\underline{m}+\overline{m})}{
\underline{m}(\overline{m}-m_0)}>0.
\end{equation}
In particular, by~\eqref{9siupinf045}, we can find~$\bar x$ and~$\bar y$
in~$\Omega$ such that
\begin{equation}\label{RiC7jforngng0945}
e(\bar x)\le a+\frac{\delta_0}{100}\qquad{\mbox{and}}\qquad
e(\bar y)\ge 1-\frac{\delta_0}{100}
.\end{equation}
Now we claim that
\begin{equation}\label{att163}
\iint_\Q\frac{(e(x)-e(y))^2}{|x-y|^{1+2s}}\,dx\,dy\ge c\,
(1-a)^{\frac{4s+2}{2s-1}},
\end{equation}
for some~$c\in(0,1)$ depending only on~$s$ and~$\Omega$
(in particular, this~$c$ is independent of~$m$).
Indeed, if the left hand side of~\eqref{att163} is larger that~$1$,
we are done, therefore we can suppose, without loss of generality, that
$$ \iint_\Q\frac{(e(x)-e(y))^2}{|x-y|^{1+2s}}\,dx\,dy\le1.$$
As a result,
$$ \iint_{\Omega\times\Omega}\frac{(e(x)-e(y))^2}{|x-y|^{1+2s}}\,dx\,dy+
\|e\|_{L^2(\Omega)}^2\le 1+|\Omega|,$$
and consequently we can exploit Theorem~8.2
in~\cite{MR2944369} and conclude that~$\|e\|_{C^{\frac{2s-1}{2}}
(\Omega)}\le C_0$,
for some~$C_0>0$ depending only on~$s$ and~$\Omega$.

We let~$d_1$ be the diameter of~$\Omega$
and~$d_2$ be the smallest diameter of all the connected
components of~$\Omega$. We also define
\begin{equation}\label{rodefnusuperia95} r_0:=\min\left\{
\left( \frac{\delta_0}{100\,C_0}\right)^{\frac2{2s-1}},\;\frac{d_2}{100}
\right\},\end{equation}
and we observe that, for each~$x\in \Omega\cap B_{r_0}(\bar x)$,
$$ e(x)\le e(\bar x)+|e(x)-e(\bar x)|\le
e(\bar x)+C_0 |x-\bar x|^{\frac{2s-1}{2}}\le
a+\frac{\delta_0}{100}+C_0\,r_0^{\frac{2s-1}{2}}\le
a+\frac{\delta_0}{50},$$
thanks to~\eqref{RiC7jforngng0945}.

Similarly, for each~$y\in \Omega\cap B_{r_0}(\bar y)$,
$$ e(y)\ge 1-\frac{\delta_0}{50},$$
and consequently
\begin{eqnarray*}&& \iint_{(\Omega\cap B_{r_0}(\bar x))\times(
\Omega\cap B_{r_0}(\bar y))}\frac{(e(x)-e(y))^2}{|x-y|^{1+2s}}\,dx\,dy
\\&&\qquad\ge
\left(1-a-\frac{\delta_0}{25}\right)^2
\iint_{(\Omega\cap B_{r_0}(\bar x))\times(
\Omega\cap B_{r_0}(\bar y))}\frac{dx\,dy}{|x-y|^{1+2s}}\\&&\qquad\geq
\left(1-a-\frac{\delta_0}{25}\right)^2
\frac{|\Omega\cap B_{r_0}(\bar x)|\;|
\Omega\cap B_{r_0}(\bar y)|}{d_1^{1+2s}}\\&&\qquad\ge
\frac{(1-a)^2}{ 4\,d_1^{2+2s}}\;|\Omega\cap B_{r_0}(\bar x)|\;|
\Omega\cap B_{r_0}(\bar y)|\\&&\qquad\ge
\frac{(1-a)^2\,r_0^2}{ 4\,d_1^{2+2s}}.
\end{eqnarray*}
{F}rom this and~\eqref{rodefnusuperia95}, noticing that~$\frac{4s+2}{2s-1}>2$,
we obtain~\eqref{att163}, as desired.

Gathering~\eqref{att163}
and~\eqref{alpha0denom} we find that
\begin{equation}\label{4546jhfkerjr3}
\lambda_1(m)=\frac{\displaystyle\frac\beta4
\iint_\Q \frac{|e(x)-e(y)|^2}{|x-y|^{n+2s}}\,dx\,dy
}{\displaystyle\overline{m}\int_D e^2\,dx - \underline{m}\int_{\Omega\setminus D}e^2\,dx}
\geq \frac{\underline{m}+\overline{m}}{\overline{m}
\underline{m}}\cdot
\frac{C\,(1-a)^{\frac{4s+2}{2s-1}}}{1-a^2},
\end{equation}
for some~$C>0$.

We also notice that, in view of~\eqref{nonin9203ioejwoeifjg8y87},
\begin{eqnarray*} \frac{(1-a)^{\frac{4s+2}{2s-1}}}{1-a^2}=
\frac{(1-a)^{\frac{4s+2}{2s-1}}}{(1-a)(1+a)}=
\frac{(1-a)^{\frac{2s+3}{2s-1}}}{1+a}\ge\frac12\,
(1-a)^{\frac{2s+3}{2s-1}}\ge\frac12\,
\left(-\frac{m_0(\underline{m}+\overline{m})}{
\underline{m}(\overline{m}-m_0)}\right)^{\frac{2s+3}{2s-1}}.
\end{eqnarray*}
By inserting this inequality into~\eqref{4546jhfkerjr3},
we conclude that
\begin{equation}\label{4546jhfkerjr3-lANsd}
\lambda_1(m)\geq \frac{C\,(\underline{m}+\overline{m})}{\overline{m}
\underline{m}}\cdot
\left(-\frac{m_0(\underline{m}+\overline{m})}{
\underline{m}(\overline{m}-m_0)}\right)^{\frac{2s+3}{2s-1}},
\end{equation}
up to renaming~$C$.

Now, for any~$m_0<0$ and any~$\underline{m}>-m_0$, we define
the function~$\bar{g}_{\underline{m},m_0}:(0,+\infty)\to (0,+\infty)$
given by
\[
\bar{g}_{\underline{m},m_0}(\emme):=
\frac{\underline{m}+\emme}{\emme
\underline{m}}\cdot
\left(-\frac{m_0(\underline{m}+\emme)}{
\underline{m}(\emme-m_0)}\right)^{\frac{2s+3}{2s-1}}
.
\]
We remark that
\[
\lim_{\emme\searrow0}\bar{g}_{\underline{m},m_0}(\emme)=+\infty
\qquad{\mbox{and}}\qquad
\lim_{\emme\nearrow+\infty}\bar{g}_{\underline{m},m_0}(\emme)=
\frac{1}{\underline{m}}\cdot
\left(-\frac{m_0}{
\underline{m}}\right)^{\frac{2s+3}{2s-1}}>0,
\]
and consequently
\begin{equation}\label{Pstreposinf}
\inf_{\emme\in(0,+\infty)} \bar{g}_{\underline{m},m_0}(\emme)>0.
\end{equation}
Also, by~\eqref{4546jhfkerjr3-lANsd},
and making use of~\eqref{tildem} and Proposition~\ref{prop:bang},
we find that
\[
\underline{\lambda}(\emme,\underline{m},m_0)=
\inf_{m\in\tilde{\mathscr{M}}(\emme,\underline{m},m_0)} \lambda_1(m)\ge C\,
\bar{g}_{\underline{m},m_0}(\emme)
.\]
In particular,
\[
\underline{\lambda}(\emme,\underline{m},m_0)\ge C\,
\inf_{\emme\in(0,+\infty)} \bar{g}_{\underline{m},m_0}(\emme)
,\]
which, combined with~\eqref{Pstreposinf}, proves \eqref{viene4}.

Similarly, we see that
\[\lim_{\emme\searrow0}
\underline{\lambda}(\emme,\underline{m},m_0)\ge C\,\lim_{\emme\searrow0}
\bar{g}_{\underline{m},m_0}(\emme)=+\infty,\]
which establishes~\eqref{viene4bis}.

In addition, given~$m_0<0$ and~$\overline{m}>0$,
if we consider the auxiliary function~$g_{\overline{m},m_0}^\star:(-m_0,+\infty)\to(0,+\infty)$
defined by
\[
g^\star_{\overline{m},m_0}(\emme):=
\frac{\emme+\overline{m}}{\overline{m}
\emme}\cdot
\left(-\frac{m_0(\emme+\overline{m})}{
\emme(\overline{m}-m_0)}\right)^{\frac{2s+3}{2s-1}}
 ,
\]
we see that
\begin{eqnarray*}&&
\lim_{\emme\searrow-m_0}
g^\star_{\overline{m},m_0}(\emme)=
-\frac{-m_0+\overline{m}}{\overline{m}m_0}>0\\
{\mbox{and}}\qquad&&
\lim_{\emme\nearrow+\infty}
g^\star_{\overline{m},m_0}(\emme)=
\frac{1}{\overline{m}}\cdot
\left(-\frac{m_0}{\overline{m}}\right)^{\frac{2s+3}{2s-1}}>0,
\end{eqnarray*}
and these observations allow us to conclude that
\begin{equation}\label{scrinonchoi23t49rjk}
\inf_{\emme\in(-m_0,+\infty)}
g^\star_{\overline{m},m_0}(\emme)>0.
\end{equation}
Moreover,
we deduce from Proposition~\ref{prop:bang},
\eqref{tildem} and~\eqref{4546jhfkerjr3-lANsd} that
\[
\underline{\lambda}(\overline{m},\emme,m_0)=
\inf_{m\in\tilde{\mathscr{M}}(\overline{m},\emme,m_0)} \lambda_1(m)
\ge C\,
g^\star_{\overline{m},m_0}(\emme)
.\]
Therefore
$$ \underline{\lambda}(\overline{m},\emme,m_0)\ge
C\,\inf_{\emme\in(-m_0,+\infty)}
g^\star_{\overline{m},m_0}(\emme),$$
which, together with~\eqref{scrinonchoi23t49rjk},
proves~\eqref{viene5}.
\end{proof}

Now we prove Theorem~\ref{alpha=0} in the case~$s\in (0,1/2]$.
This case is somehow conceptually
related to the case~$n\ge2$, since the problem
boils down to a subcritical situation.

We suppose without loss of generality that
\begin{equation}\label{B22}
(-2,2)=B_2\subset\Omega,\end{equation}
and we define the function
\begin{equation}\label{VARPHI44}
\varphi(x):=
\begin{dcases}
c_\star+1  & {\mbox{ if }}x\in B_\rho,
\\ \displaystyle
c_\star+\frac{\log|x|}{\log\rho}  & {\mbox{ if }}x\in B_1\setminus B_\rho,
\\
c_\star  &{\mbox{ if }}x\in \R\setminus B_1.
\end{dcases}
\end{equation}
Here, $c_\star>0$ is the constant introduced in~\eqref{VECC7},
and we set
\begin{equation}\label{VARPHI45}
D:=B_\rho.\end{equation}

For our purposes, we recall the following basic inequality:

\begin{lem}
For every~$x$, $y\in\R^n\setminus \{0\}$, we have that
\begin{equation}\label{disuglog}
\big|\log|x|-\log|y|\big|\leq \frac{\big|\,|x|-|y|\,\big|}{\min\{|x|,|y|\}}.
\end{equation}
\end{lem}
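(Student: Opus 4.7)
The plan is to prove the one-dimensional scalar inequality
$$|\log a - \log b| \leq \frac{|a-b|}{\min\{a,b\}}\qquad\text{for all } a,b>0,$$
and then apply it with $a:=|x|$ and $b:=|y|$, which immediately yields~\eqref{disuglog} since the right-hand side depends only on the moduli.

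By the symmetry of the statement under interchange of $a$ and $b$, I would first assume without loss of generality that $a\geq b>0$, so that $\min\{a,b\}=b$ and $\log a-\log b\geq 0$. The inequality to establish then reduces to
$$\log a-\log b\leq \frac{a-b}{b}.$$
The cleanest route is to apply the mean value theorem to $\log$ on the interval $[b,a]$: there exists $\xi\in[b,a]$ such that
$$\log a-\log b=\frac{a-b}{\xi}.$$
Since $\xi\geq b>0$, one has $\frac{1}{\xi}\leq\frac{1}{b}$, and multiplying by the nonnegative number $a-b$ gives exactly the claim. (Equivalently, one could set $t:=a/b\geq 1$ and reduce the inequality to the standard fact $\log t\leq t-1$, which follows by differentiating $f(t):=t-1-\log t$ and noting $f(1)=0$ with $f'(t)=1-1/t\geq 0$ on $[1,\infty)$.)

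There is essentially no obstacle here: the argument is a textbook application of the mean value theorem, and the only point worth being careful about is handling the degenerate case $a=b$, which makes both sides vanish and so is trivial. Once the scalar inequality is in place, plugging in $a:=\max\{|x|,|y|\}$ and $b:=\min\{|x|,|y|\}$ yields~\eqref{disuglog} verbatim, completing the proof.
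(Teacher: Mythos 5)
Your proof is correct. The paper reduces to the same scalar inequality and proves it via $\log(1+t)\le t$ (equivalently $\log t\le t-1$), which is exactly the alternative you mention parenthetically; your primary mean-value-theorem argument is a minor variant of the same elementary idea.
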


\begin{proof}
Without loss of generality, we assume that~$|y|\leq |x|$.
To check~\eqref{disuglog}, we take~$t:=\frac{|x|}{|y|}-1$, and
we see that
\begin{equation*}
\big|\log|x|-\log|y|\big|=\log|x|-\log|y|=\log\frac{|x|}{|y|}=\log(1+t)\le t=
\frac{|x|}{|y|}-1=\frac{|x|-|y|}{|y|},
\end{equation*}
as desired.
\end{proof}

With this, we now list some properties of the auxiliary function~$\varphi$
in~\eqref{VARPHI44}.

\begin{lem}\label{gradlogto0}
Let $n=1$, $s\in (0,1/2]$ and~$\varphi$ be as in~\eqref{VARPHI44}. Then,
\[ \lim_{\rho\searrow0}
\iint_\Q \frac{|\varphi(x)-\varphi(y)|^2}{|x-y|^{1+2s}}\,dx\,dy =0.
\]
\end{lem}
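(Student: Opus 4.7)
The plan is to follow the decomposition strategy of Lemma~\ref{gradsphito0}: split $\Q$ into subregions according to the position of $x$ and $y$ relative to $B_\rho$, $B_1\setminus B_\rho$, and $\R\setminus B_1$, and estimate each piece separately. The contributions on $B_\rho\times B_\rho$ and $(\R\setminus B_1)\times(\R\setminus B_1)$ vanish outright since $\varphi$ is constant on each factor. On $B_\rho\times(\R\setminus B_1)$ the integrand reduces to $|x-y|^{-1-2s}$, which after integration yields $O(\rho)$. On $(B_1\setminus B_\rho)\times(\R\setminus B_1)$, I intend to integrate in $y$ first using $|x-y|\geq 1-|x|$, reducing the whole piece to $(\log\rho)^{-2}\int_\rho^1(\log r)^2(1-r)^{-2s}\,dr$, an integral that converges uniformly in $\rho$ for every $s\in(0,1/2]$ because the zero of $(\log r)^2$ at $r=1$ tames the boundary singularity.

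The two delicate pieces are $B_\rho\times(B_1\setminus B_\rho)$ and the main term $(B_1\setminus B_\rho)^2$. For the former I will write $\varphi(x)-\varphi(y)=-\log(|y|/\rho)/\log\rho$ and decompose $y$ dyadically through annuli $B_{2^j\rho}\setminus B_{2^{j-1}\rho}$: on the innermost annulus I use $\log(|y|/\rho)\leq(|y|-\rho)/\rho\leq|x-y|/\rho$ to absorb the singularity of $|x-y|^{-1-2s}$, and on outer annuli I use $|x-y|\geq 2^{j-2}\rho$ together with $\log(|y|/\rho)\leq Cj$; summing geometrically in $j$ produces a total contribution of order $\rho^{1-2s}/(\log\rho)^2$, which vanishes as $\rho\searrow 0$ both when $s<1/2$ and when $s=1/2$. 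For the main term I will use annuli $A_i:=B_{2^{-i}}\setminus B_{2^{-i-1}}$ with $0\leq i\leq k$ and $2^{-k-1}<\rho\leq 2^{-k}$, and switch between two bounds on $|\log|x|-\log|y||$ according to the proximity of the scales: when $|i-j|\leq 1$, apply the Lipschitz-type estimate $|\log|x|-\log|y||\leq|x-y|/\min(|x|,|y|)\leq C\,2^i|x-y|$, turning the integrand into $4^i|x-y|^{1-2s}$; when $j\geq i+2$, use the dyadic bound $|\log|x|-\log|y||\leq C(j-i+1)$ together with $|x-y|\geq 2^{-i-2}$. Both choices yield per-scale contributions of order $2^{(2s-1)i}$.

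Summing over $i=0,\dots,k$ yields a total of $O(1)$ for $s<1/2$ but only $O(k)=O(|\log\rho|)$ for $s=1/2$; division by $(\log\rho)^2$ sends both to zero. The hardest point will be the borderline case $s=1/2$: neither the Lipschitz bound nor the dyadic bound alone keeps the inner integral bounded, and only their combination, together with the fact that the quadratic logarithmic prefactor $(\log\rho)^{-2}$ narrowly beats the linear growth $|\log\rho|$ of the integral, produces the claimed infinitesimal limit.
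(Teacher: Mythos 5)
Your proposal is correct and follows essentially the same strategy as the paper: the same decomposition of $\Q$ into pieces according to the positions of $x,y$ relative to $B_\rho$, $B_1\setminus B_\rho$, $\R\setminus B_1$; the same dyadic decomposition into annuli $B_{2^{-i}}\setminus B_{2^{-i-1}}$ for the main term; and the same twofold use of the log-Lipschitz bound \eqref{disuglog} for adjacent scales and the elementary bound $|\log|x|-\log|y||\le C(j-i+1)$ for separated scales, with the final observation that $\sum_{i\le k}2^{(2s-1)i}$ is $O(1)$ for $s<1/2$ and $O(k)=O(|\log\rho|)$ for $s=1/2$, both absorbed by the prefactor $(\log\rho)^{-2}$. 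The one small streamlining is in the piece $(B_1\setminus B_\rho)\times(\R\setminus B_1)$: the paper separates $\Omega\setminus B_1$ from $\R\setminus\Omega$ and further splits at $|x|=1/2$, whereas you integrate in $y$ first over all of $\R\setminus B_1$ and exploit directly that $(\log r)^2(1-r)^{-2s}$ is integrable on $(0,1)$ uniformly in $\rho$ since $(\log r)^2\sim(1-r)^2$ at $r=1$ and $(\log r)^2$ is integrable at $r=0$ — a cleaner accounting of the same estimate, not a different method.
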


\begin{proof}
Without loss of generality, we suppose that~$\rho<1/4$.
We observe that
\begin{equation}\label{EESEC1}
\iint_{B_\rho \times B_\rho}
\frac{|\varphi(x)-\varphi(y)|^2}{|x-y|^{1+2s}}\,dx\,dy=0
\end{equation}
and 
\begin{equation}\label{SEC2}
\iint_{(\R^n\setminus B_1) \times(\R^n\setminus B_1)}
\frac{|\varphi(x)-\varphi(y)|^2}{|x-y|^{1+2s}}\,dx\,dy=0.
\end{equation}

Moreover, 
\begin{equation}\label{SEC3}\begin{split}
&\lim_{\rho\searrow0}
\iint_{B_\rho \times (\R^n\setminus B_1)}
\frac{|\varphi(x)-\varphi(y)|^2}{|x-y|^{1+2s}}\,dx\,dy=\lim_{\rho\searrow0}
\iint_{B_\rho \times (\R^n\setminus B_1)}
\frac{dx\,dy}{|x-y|^{1+2s}} \\&\qquad
\leq \lim_{\rho\searrow0}
\int_{B_\rho} dx \int_{\R^n\setminus B_{\frac{1}{2}}}
\frac{1}{|z|^{1+2s}}\,dz
\leq \lim_{\rho\searrow0} C \rho=0.
\end{split}\end{equation}

Now we observe that if~$x\in B_1 \setminus B_\rho$
and~$y\in B_\rho$, then
\[
|\varphi(x)-\varphi(y)|^2=\frac{1}{(\log\rho)^2}|\log|x|-\log\rho|^2.
\]
As a consequence,
changing variables $X:=x/\rho$ and $Y:=y/\rho$, and
taking~$k\in\N$ such that~$2^{k-1}\leq 1/\rho \leq 2^k$,
we see that
\begin{equation}\begin{split}\label{drifgetacvZN000}
\iint_{(B_1\setminus B_{2\rho})\times B_\rho}
\frac{|\varphi(x)-\varphi(y)|^2}{|x-y|^{1+2s}}\,dx\,dy
=\;&\frac{1}{(\log\rho)^2}\iint_{(B_1\setminus B_{2\rho})\times B_\rho}
\frac{|\log |x|-\log\rho|^2}{|x-y|^{1+2s}}\,dx\,dy\\
= \;&\frac{\rho^{1-2s}}{(\log\rho)^2}
\iint_{(B_{1/\rho}\setminus B_{2})\times B_1}
\frac{|\log |X||^2}{ |X-Y|^{1+2s}}\,dX\,dY  \\ \le\;& 
\frac{\rho^{1-2s}}{(\log\rho)^2} \sum_{j=2}^k
\iint_{(B_{2^j}\setminus B_{2^{j-1}})\times B_1}
\frac{|\log (2^j)|^2}{ (2^{j-1}-1)^{1+2s}}\,dX\,dY \\
\leq\;& \frac{C\,\rho^{1-2s}}{(\log\rho)^2}
\sum_{j=2}^k \frac{2^j j^2}{2^{j(1+2s)}}\\ \le\;&
\frac{C\,\rho^{1-2s}}{(\log\rho)^2}
\sum_{j=1}^k\frac{j^2}{2^{2sj}}\\ \le\;&
\frac{C\,\rho^{1-2s}}{(\log\rho)^2},
\end{split}\end{equation}
up to renaming~$C>0$.

In addition, using \eqref{disuglog} (with~$|y|:=\rho$)
and changing variable~$z:=x-y$,
\begin{eqnarray*}
\iint_{ (B_{2\rho}\setminus B_\rho)\times B_\rho}
\frac{|\varphi(x)-\varphi(y)|^2}{|x-y|^{1+2s}}\,dx\,dy
&=&\frac{1}{(\log\rho)^2}
\iint_{(B_{2\rho}\setminus B_\rho)\times B_\rho}
\frac{|\log |x|-\log\rho|^2}{|x-y|^{1+2s}}\,dx\,dy\\
&\le& \frac{1}{(\log\rho)^2}
\iint_{(B_{2\rho}\setminus B_\rho)\times B_\rho}
\frac{(|x|-\rho)^2}{\rho^2\,|x-y|^{1+2s}}\,dx\,dy
\\
&\le& \frac{1}{(\log\rho)^2}
\iint_{(B_{2\rho}\setminus B_\rho)\times B_\rho}
\frac{|x-y|^{1-2s}}{\rho^2}\,dx\,dy
\\&\le& \frac{1}{(\log\rho)^2}\iint_{B_{3\rho}\times B_\rho}
\frac{|z|^{1-2s}}{\rho^2}\,dz\,dy
\\
&=&\frac{C\,\rho^{1-2s}}{(\log\rho)^2},
\end{eqnarray*} 
for some~$C>0$.

{F}rom this and~\eqref{drifgetacvZN000}, we deduce that
\begin{equation}\label{SEC4} \lim_{\rho\searrow0}
\iint_{(B_1\setminus B_{\rho})\times B_\rho}
\frac{|\varphi(x)-\varphi(y)|^2}{|x-y|^{1+2s}}\,dx\,dy=0.\end{equation}

Moreover, recalling~\eqref{B22},
\begin{align*}
&\iint_{(B_1 \setminus B_\rho)\times (\R^n\setminus \Omega)}
\frac{|\varphi(x)-\varphi(y)|^2}{|x-y|^{1+2s}}\,dx\,dy=\frac{1}{
(\log\rho)^2}
\iint_{(B_1 \setminus B_\rho)\times (\R^n\setminus \Omega)}
\frac{\big|\log |x|\big|^2}{|x-y|^{1+2s}}\,dx\,dy
\\&\qquad \leq \frac{1}{(\log\rho)^2}
\int_{B_1 \setminus B_\rho}\big|\log |x|\big|^2\,dx
\int_{\R^n\setminus B_1}\frac{dz}{|z|^{1+2s}} \le
\frac{C}{(\log\rho)^2},
\end{align*}
for some~$C>0$. This implies
\begin{equation}\label{SEC5} \lim_{\rho\searrow0}
\iint_{(B_1 \setminus B_\rho)\times (\R^n\setminus \Omega)}
\frac{|\varphi(x)-\varphi(y)|^2}{|x-y|^{1+2s}}\,dx\,dy=0.\end{equation}

Now, we observe that
\begin{equation}\begin{split}\label{0r450yhjhrondfk}
&\iint_{(B_{1/2}\setminus B_\rho)\times (\Omega\setminus B_1)}
\frac{|\varphi(x)-\varphi(y)|^2}{|x-y|^{1+2s}}\,dx\,dy=
\frac{1}{(\log\rho)^2}
\iint_{(B_{1/2}\setminus B_\rho)\times (\Omega\setminus B_1)}
\frac{\big|\log |x|\big|^2}{|x-y|^{1+2s}}\,dx\,dy \\&\qquad
\leq \frac{1}{(\log\rho)^2}
\int_{B_{1/2}\setminus B_\rho}\big|\log |x|\big|^2\,dx
\int_{\Omega\setminus B_1}\frac{1}{|z|^{1+2s}}\,dz \le
\frac{C}{(\log\rho)^2},
\end{split}\end{equation}
for a suitable~$C>0$.

Furthermore, taking $R>0$ such that $\Omega\subset B_R$,
\begin{align*}&
\iint_{(B_1 \setminus B_{1/2})\times (\Omega\setminus B_1)}
\frac{|\varphi(x)-\varphi(y)|^2}{|x-y|^{1+2s}}\,dx\,dy
=
\frac{1}{(\log\rho)^2}
\iint_{(B_1 \setminus B_{1/2})\times (\Omega\setminus B_1)}
\frac{\big|\log |x|\big|^2}{|x-y|^{1+2s}}\,dx\,dy \\&\qquad
\leq \frac{4(\log2)^2}{(\log\rho)^2}
\int_\frac{1}{2}^1 \int_1^R (y-x)^{-1-2s}\,dx\,dy 
=\frac{2(\log2)^2}{s(\log\rho)^2}
\int_\frac{1}{2}^1 [(1-x)^{-2s}-(R-x)^{-2s}]\,dx \\
&\qquad=\frac{2(\log2)^2}{s(1-2s)(\log\rho)^2}
\left[(R-1)^{1-2s}+\left(\frac{1}{2}\right)^{1-2s}
-\left(R-\frac{1}{2}\right)^{1-2s}\right].
\end{align*}
This and~\eqref{0r450yhjhrondfk} give that
\begin{equation}\label{SEC6} \lim_{\rho\searrow0}
\iint_{(B_1 \setminus B_\rho)\times ( \Omega\setminus B_1)}
\frac{|\varphi(x)-\varphi(y)|^2}{|x-y|^{1+2s}}\,dx\,dy=0.\end{equation}

Now, we take~$k\in\N$ such that
\begin{equation}\label{LOGrh}
\frac1{2^{k+1}}<\rho\le\frac1{2^k},
\end{equation}
and we observe that
\begin{equation}\label{EQ:3}
\begin{split}&
\iint_{(B_1\setminus B_\rho)\times(B_1\setminus B_\rho)}
\frac{|\varphi(x)-\varphi(y)|^2}{|x-y|^{1+2s}}\,dx\,dy\\
=\;&\frac{1}{(\log\rho)^2}\iint_{(B_1\setminus B_\rho)\times(B_1\setminus B_\rho)}
\frac{\big|\log|x|-\log|y|\big|^2}{|x-y|^{1+2s}}\,dx\,dy\\
=\;&\frac{2}{(\log\rho)^2}
\iint_{{(B_1\setminus B_\rho)\times(B_1\setminus B_\rho)}\atop{\{|x|\ge|y|\}}}
\frac{\big|\log|x|-\log|y|\big|^2}{|x-y|^{1+2s}}\,dx\,dy\\
\le\;&\frac{2}{(\log\rho)^2}
\iint_{ {\left(B_1\setminus B_{1/2^{k+1}}\right)\times\left(
B_1\setminus B_{1/2^{k+1}}\right)}\atop{\{|x|\ge|y|\}} }
\frac{\big|\log|x|-\log|y|\big|^2}{|x-y|^{1+2s}}\,dx\,dy\\
=\;&\frac{2}{(\log\rho)^2}
\sum_{i,j=0}^{k}
\iint_{
{\left(B_{1/2^i}\setminus B_{1/2^{i+1}}\right)
\times\left(B_{1/2^j}\setminus B_{1/2^{j+1}}\right) }\atop{\{|x|\ge|y|\}} 
}
\frac{\big|\log|x|-\log|y|\big|^2}{|x-y|^{1+2s}}\,dx\,dy.
\end{split}
\end{equation}
Moreover, we remark that if~$x\in B_{1/2^i}$, $y\not\in B_{1/2^{j+1}}$
and~$|x|\ge|y|$,
we have that
$$ \frac1{2^i}\ge |x|\ge|y|\ge \frac1{2^{j+1}},$$
and accordingly~$i\le j+1$.

This observation and~\eqref{EQ:3} yield that
\begin{equation}\label{EQ:4}
\begin{split}&
\frac{(\log\rho)^2}2\iint_{(B_1\setminus B_\rho)\times(B_1\setminus B_\rho)}
\frac{|\varphi(x)-\varphi(y)|^2}{|x-y|^{1+2s}}\,dx\,dy\\
\le\;&
\sum_{j=0}^{k}\sum_{i=0}^{j+1}
\iint_{{\left(B_{1/2^i}\setminus B_{1/2^{i+1}}\right)
\times\left(B_{1/2^j}\setminus B_{1/2^{j+1}}\right)}\atop{\{|x|\ge|y|\}}}
\frac{\big|\log|x|-\log|y|\big|^2}{|x-y|^{1+2s}}\,dx\,dy\\
\le\;&
I_1+I_2,
\end{split}
\end{equation}
where
\begin{eqnarray*}
I_1&:=&\sum_{j=0}^{k}\sum_{j-4\le i\le j+1}
\iint_{{\left(B_{1/2^i}\setminus B_{1/2^{i+1}}\right)
\times\left(B_{1/2^j}\setminus B_{1/2^{j+1}}\right)}
\atop{\{|x|\ge|y|\}}}
\frac{\big|\log|x|-\log|y|\big|^2}{|x-y|^{1+2s}}\,dx\,dy\\
{\mbox{and }}\qquad I_2&:=&
\sum_{j=0}^{k}\sum_{0\le i\le j-4}
\iint_{{\left(B_{1/2^i}\setminus B_{1/2^{i+1}}\right)
\times\left(B_{1/2^j}\setminus B_{1/2^{j+1}}\right)}
\atop{\{|x|\ge|y|\}}}
\frac{\big|\log|x|-\log|y|\big|^2}{|x-y|^{1+2s}}\,dx\,dy.
\end{eqnarray*}
We point out that, if~$x\in B_{1/2^i}$ and~$|y|\le|x|$,
then
$$ |x-y|\le|x|+|y|\le 2|x|\le\frac{1}{2^{i-1}}.$$
In light of this fact and~\eqref{disuglog}, we have that
\begin{equation}\label{23USJ}
\begin{split}
I_1\,&\le
\sum_{j=0}^{k}\sum_{j-4\le i\le j+1}
\iint_{{{\left(B_{1/2^i}\setminus B_{1/2^{i+1}}\right)
\times\left(B_{1/2^j}\setminus B_{1/2^{j+1}}\right)}}\atop{\{|x|\ge|y|\}}}
\frac{|x-y|^{1-2s}}{|y|^{2}}\,dx\,dy\\&\le
\sum_{j=0}^{k}\sum_{j-4\le i\le j+1}
\iint_{{{\left(B_{1/2^i}\setminus B_{1/2^{i+1}}\right)
\times\left(B_{1/2^j}\setminus B_{1/2^{j+1}}\right)}}}
\frac{2^{(1-i)(1-2s)}}{2^{-2(j+1)}}\,dx\,dy\\&=
\sum_{j=0}^{k}\sum_{j-4\le i\le j+1}
\frac{2^{-i}\,2^{-j}\,2^{(1-i)(1-2s)}}{2^{-2(j+1)}}
\\&=2^{3-2s}\sum_{j=0}^{k}\sum_{j-4\le i\le j+1}2^{-2i(1-s)}\,2^{j}
\\&\le 3\cdot 2^{4-2s}\sum_{j=0}^{k}2^{-2(j-4)(1-s)}\,2^{j}
\\&= 3\cdot 2^{12-10s}\sum_{j=0}^{k}2^{(2s-1)j}.
\end{split}\end{equation}
In addition, if~$x\in
B_{1/2^i}$ and~$y\not\in B_{1/2^{j+1}}$
and~$|x|\ge|y|$,
\begin{eqnarray*}
\big|\log|x|-\log|y|\big|=
\log|x|-\log|y|\le \log\frac1{2^i}-\log\frac1{2^{j+1}}=
\big( j-i+1\big)\,\log 2.
\end{eqnarray*}
As a result,
\begin{equation}\label{E3214}
\begin{split}
I_2\,&\le
\log^2 2
\sum_{j=0}^{k}\sum_{0\le i\le j-4}
\iint_{{\left(
B_{1/2^i}\setminus B_{1/2^{i+1}}\right)\times\left(
B_{1/2^j}\setminus B_{1/2^{j+1}}\right)}}
\frac{(j-i+1)^2}{|x-y|^{1+2s}}\,dx\,dy.
\end{split}\end{equation}
Furthermore, if~$x\in B_{1/2^i}\setminus B_{1/2^{i+1}}$ and~$y\in B_{1/2^j}$, with~$i\le j-4$,
we see that
$$ |x-y|\ge|x|-|y|\ge \frac{1}{2^{i+1}}-\frac1{2^j}=
\frac{1}{2^{i+1}}\left(1-\frac1{2^{j-i-1}}\right)\ge\frac{1}{2^{i+2}}\ge\frac{|x|}4.
$$
Then, we insert this information into~\eqref{E3214} and we conclude that
\begin{eqnarray*}
I_2&\le& 4^{1+2s}\,
\log^2 2
\sum_{j=0}^{k}\sum_{0\le i\le j-4}
\iint_{{\left(B_{1/2^i}\setminus B_{1/2^{i+1}}\right)
\times\left(B_{1/2^j}\setminus B_{1/2^{j+1}}\right)}}
\frac{(j-i+1)^2}{|x|^{1+2s}}\,dx\,dy\\
&\le& 4^{1+2s}\,
\log^2 2
\sum_{j=0}^{k}\sum_{0\le i\le j-4}
\iint_{{\left(B_{1/2^i}\setminus B_{1/2^{i+1}}\right)
\times\left(B_{1/2^j}\setminus B_{1/2^{j+1}}\right)}}
\frac{(j-i+1)^2}{2^{-(i+1)(1+2s)}}\,dx\,dy
\\&=&
4^{1+2s}\,
\log^2 2
\sum_{j=0}^{k}\sum_{0\le i\le j-4}
\frac{2^{-i}\,2^{-j}\,(j-i+1)^2}{2^{-(i+1)(1+2s)}}\\
\\&=&
2^{3(1+2s)}\,
\log^2 2
\sum_{j=0}^{k}\sum_{0\le i\le j-4}
2^{2si}\,2^{-j}\,(j-i+1)^2\\&=&
2^{3(1+2s)+1}\,
\log^2 2
\sum_{j=0}^{k}\sum_{0\le i\le j-4}
2^{(2s-1)i}\,2^{i-j-1}\,(j-i+1)^2.
\end{eqnarray*}
Hence, changing index of summation by posing~$\ell:=j-i+1$,
\begin{eqnarray*}
I_2&\le&
2^{3(1+2s)+1}\,
\log^2 2
\sum_{i=0}^{k}\sum_{\ell\ge5}
2^{(2s-1)i}\,2^{-\ell}\,\ell^2\\&\le&
\bar C\,
\sum_{i=0}^{k}
2^{(2s-1)i},
\end{eqnarray*}
where
$$ \bar{C}:=2^{3(1+2s)+1}\,
\log^2 2\;\sum_{\ell=0}^{+\infty}2^{-\ell}\,\ell^2.$$
We plug this information and~\eqref{23USJ} into~\eqref{EQ:4} and we find
that
\begin{equation}\label{A40}
(\log\rho)^2\iint_{(B_1\setminus B_\rho)\times(B_1\setminus B_\rho)}
\frac{|\varphi(x)-\varphi(y)|^2}{|x-y|^{1+2s}}\,dx\,dy\le C_\star
\,\sum_{m=0}^{k}
2^{(2s-1)m},\end{equation}
where~$C_\star:=2(\bar{C}+3\cdot 2^{12-10s}).$

We observe that
$$ \sum_{m=0}^{k}
2^{(2s-1)m}=k  \qquad{\mbox{ if }}s=\frac12,$$
while
$$ \sum_{m=0}^{k}
2^{(2s-1)m}\le \sum_{m=0}^{+\infty}
2^{(2s-1)m}=:C_\sharp\qquad
{\mbox{ if }}s\in\left(0,\frac12\right),
$$
and consequently, by~\eqref{A40},
$$ (\log\rho)^2\iint_{(B_1\setminus B_\rho)\times(B_1\setminus B_\rho)}
\frac{|\varphi(x)-\varphi(y)|^2}{|x-y|^{1+2s}}\,dx\,dy\le\begin{dcases}
C_\star \,k & {\mbox{ if }}s=\displaystyle\frac12,\\
C_\star \,C_\sharp & {\mbox{ if }}s\in\displaystyle\left(0,\frac12\right).
\end{dcases}$$
{F}rom this and~\eqref{LOGrh}, it follows that
$$ (\log\rho)^2\iint_{(B_1\setminus B_\rho)\times(B_1\setminus B_\rho)}
\frac{|\varphi(x)-\varphi(y)|^2}{|x-y|^{1+2s}}\,dx\,dy\le\begin{dcases}
C_\star \,\displaystyle\frac{|\log\rho|}{\log2} & {\mbox{ if }}s=\displaystyle\frac12,\\
C_\star \,C_\sharp & {\mbox{ if }}s\in\displaystyle\left(0,\frac12\right).
\end{dcases}$$
This implies that
$$ \lim_{\rho\searrow0}\iint_{(B_1\setminus B_\rho)\times(B_1\setminus B_\rho)}
\frac{|\varphi(x)-\varphi(y)|^2}{|x-y|^{1+2s}}\,dx\,dy=0.$$

{F}rom this, \eqref{EESEC1},
\eqref{SEC2}, \eqref{SEC3}, \eqref{SEC4}, \eqref{SEC5} and~\eqref{SEC6}
we obtain the desired result.
\end{proof}

An additional useful property of the function~$\varphi$
defined in~\eqref{VARPHI44} is the following:

\begin{lem}\label{denomlog}
Let $n=1$, $s\in (0,1/2]$ and~$\varphi$ be as in~\eqref{VARPHI44}. Then,
\[ \lim_{\rho\searrow0}
\overline{m}\int_D \varphi^2\,dx-\underline{m}
\int_{\Omega\setminus D}\varphi^2\,dx
=-\frac{\underline{m}(\underline{m}+m_0)}{m_0}
|\Omega|>0 
.\]
\end{lem}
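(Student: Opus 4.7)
The proof will closely parallel that of Lemma~\ref{denomphi}, exploiting the fact that~$\varphi$ takes three different values on three regions whose measures are either fixed or controlled by the constraint~\eqref{misuraD}.

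First, I would compute the contribution over~$D=B_\rho$ explicitly. Since $\varphi\equiv c_\star+1$ on~$B_\rho$ and~$|D|=\frac{\underline{m}+m_0}{\underline{m}+\overline{m}}|\Omega|$, recalling the value of~$c_\star$ from~\eqref{VECC7} gives
\[
\overline{m}\int_D\varphi^2\,dx=\overline{m}(c_\star+1)^2|D|=\frac{\overline{m}\,\underline{m}^2\,(\underline{m}+m_0)}{m_0^2(\underline{m}+\overline{m})}|\Omega|.
\]
As in~\eqref{EQUIVARHO}, sending~$\overline{m}\nearrow+\infty$ is the same as sending~$\rho\searrow0$ (since $|B_\rho|=2\rho$ in dimension one and the other parameters are fixed), so this quantity tends to $\frac{\underline{m}^2(\underline{m}+m_0)}{m_0^2}|\Omega|$.

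Next, I would split $\int_{\Omega\setminus D}\varphi^2\,dx$ as an integral over $\Omega\setminus B_1$, where $\varphi\equiv c_\star$ and gives simply~$c_\star^2|\Omega\setminus B_1|$, plus an integral over $B_1\setminus B_\rho$, where $\varphi=c_\star+\frac{\log|x|}{\log\rho}$. The key observation (analogue of~\eqref{quella}) is that on $B_1\setminus B_\rho$ we have
\[
0\leq \frac{\log|x|}{\log\rho}\leq 1,
\]
because $\log|x|$ and $\log\rho$ are both negative and $|x|\geq\rho$. This uniform bound plus the pointwise limit $\frac{\log|x|}{\log\rho}\to 0$ (valid for every fixed $x\in B_1\setminus\{0\}$, since $|\log\rho|\to+\infty$) allows the Dominated Convergence Theorem to give
\[
\lim_{\rho\searrow0}\underline{m}\int_{B_1\setminus B_\rho}\!\!\left(c_\star+\tfrac{\log|x|}{\log\rho}\right)^{\!2}dx=\underline{m}\,c_\star^2|B_1|,
\]
whence
\[
\lim_{\rho\searrow0}\underline{m}\int_{\Omega\setminus D}\varphi^2\,dx=\underline{m}\,c_\star^2|\Omega|=\frac{\underline{m}(\underline{m}+m_0)^2}{m_0^2}|\Omega|.
\]

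Combining the two limits and simplifying algebraically,
\[
\lim_{\rho\searrow0}\!\left[\overline{m}\int_D\varphi^2\,dx-\underline{m}\int_{\Omega\setminus D}\varphi^2\,dx\right]=\frac{\underline{m}(\underline{m}+m_0)}{m_0^2}\bigl[\underline{m}-(\underline{m}+m_0)\bigr]|\Omega|=-\frac{\underline{m}(\underline{m}+m_0)}{m_0}|\Omega|,
\]
which is strictly positive since $m_0\in(-\underline{m},0)$. There is no real obstacle in this argument; the only point deserving care is the justification of dominated convergence on $B_1\setminus B_\rho$ via the bound $0\leq\log|x|/\log\rho\leq 1$, and the switch from the limit $\overline{m}\nearrow+\infty$ to $\rho\searrow 0$, both of which are direct one-dimensional analogues of the reasoning in Lemma~\ref{denomphi}.
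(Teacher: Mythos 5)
Your proposal is correct and takes essentially the same route as the paper: explicit evaluation on $D=B_\rho$ using \eqref{misuraD}, the uniform bound $0\leq\log|x|/\log\rho\leq 1$ on $B_1\setminus B_\rho$ (the paper's \eqref{n4rfgbgbx8ougc93tyevg}), dominated convergence, and the same algebraic simplification. The only cosmetic difference is that you spell out the pointwise limit $\log|x|/\log\rho\to 0$, which the paper leaves implicit.
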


\begin{proof}
{F}rom~\eqref{VARPHI44} and~\eqref{VARPHI45},
and exploiting~\eqref{misuraD} and~\eqref{VECC7},
we see that
\[
\overline{m}\int_D \varphi^2\,dx
=\overline{m}(c_\star +1)^2|D|=\overline{m}(c_\star +1)^2\,
\frac{\underline{m}+m_0}{\underline{m}+
\overline{m}}|\Omega|=
\frac{\overline{m}\underline{m}^2(
\underline{m}+m_0)}{m_0^2(\underline{m}+
\overline{m})}|\Omega|
\]
and that
\begin{eqnarray*}
&&\underline{m}\int_{\Omega \setminus D}\varphi^2\,dx
=\underline{m}\int_{B_1\setminus B_\rho}\varphi^2\,dx+
\underline{m}\int_{\Omega \setminus B_1}\varphi^2\,dx\\&&\qquad=
\underline{m}\int_{B_1\setminus B_\rho}\left(c_\star+\frac{\log |x|}{\log\rho}\right)^2\,dx+
\underline{m}\int_{\Omega \setminus B_1} c_\star^2\,dx.
\end{eqnarray*}
We also remark that
\begin{equation}\label{n4rfgbgbx8ougc93tyevg}
\left(\frac{\log |x|}{\log\rho}\right)^2\,
\chi_{B_1\setminus B_\rho}\le 1.\end{equation}
Therefore, by the Dominated Convergence Theorem, and recalling~\eqref{EQUIVARHO},
\begin{eqnarray*}&&
\lim_{\rho\searrow0}\overline{m}\int_D \varphi^2\,dx
-
\underline{m}\int_{\Omega \setminus D}\varphi^2\,dx\\&&
\qquad=
\lim_{\overline{m}\nearrow+\infty}\frac{\overline{m}\underline{m}^2(
\underline{m}+m_0)}{m_0^2(\underline{m}+
\overline{m})}|\Omega|- \lim_{\rho\searrow0}\left(
\underline{m}\int_{B_1\setminus B_\rho}\left(c_\star+\frac{\log |x|}{\log\rho}\right)^2\,dx+
\underline{m}\int_{\Omega \setminus B_1} c_\star^2\,dx\right)\\
&&\qquad=
\frac{\underline{m}^2(
\underline{m}+m_0)}{m_0^2}|\Omega|
-\underline{m}\,c_\star^2\,|\Omega|\\
&&\qquad =
\frac{\underline{m}^2(
\underline{m}+m_0)}{m_0^2}|\Omega|
-\underline{m}\,
\left(\frac{\underline{m}+m_0}{m_0}
\right)^2\,|\Omega|\\&&\qquad=
\frac{\underline{m}(\underline{m}+m_0)}{m_0^2}\left(\underline{m}-
(\underline{m}+m_0)
\right)
|\Omega|\\&&\qquad
=-\frac{\underline{m}(\underline{m}+m_0)}{m_0}
|\Omega|,
\end{eqnarray*}
which is positive, since~$m_0\in(-\underline{m},0)$,
as desired.
\end{proof}

In the case~$s\in\left(0,\frac12\right]$, it is also convenient
to introduce the function
\begin{equation}\label{PSI44}
\psi(x):=
\begin{dcases}
c_\sharp-1  & {\mbox{ if }}x\in B_\rho,
\\
c_\sharp-\displaystyle\frac{\log|x|}{\log\rho}  & {\mbox{ if }}x\in B_1\setminus B_\rho,
\\
c_\sharp  & {\mbox{ if }}x\in\R^2\setminus B_1,
\end{dcases}
\end{equation}
where~$c_\sharp$ is defined in~\eqref{sharp}.
We also set
\begin{equation}\label{PSI45}
D:=\Omega\setminus B_\rho,\end{equation} and we study the main properties
of the auxiliary function~$\psi$.

Comparing~\eqref{VARPHI44} with~\eqref{PSI44}, we observe that~$
|\psi(x)-\psi(y)|=|\varphi(x)-\varphi(y)|$, and therefore,
from Lemma~\ref{gradlogto0}, we obtain that:

\begin{lem}\label{gradlogto02}
Let $n=1$, $s\in (0,1/2]$ and~$\psi$ be as in~\eqref{PSI44}. Then,
\[ \lim_{\rho\searrow0}
\iint_\Q \frac{|\psi(x)-\psi(y)|^2}{|x-y|^{1+2s}}\,dx\,dy =0.
\]
\end{lem}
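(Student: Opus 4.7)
The plan is to bypass any direct computation on $\psi$ and reduce the statement immediately to the already established Lemma \ref{gradlogto0}, by exploiting the fact that $\psi$ and $\varphi$ differ only by a sign change and by an additive constant, so that their Gagliardo seminorms coincide.

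First, I would compare the definitions in \eqref{VARPHI44} and \eqref{PSI44} piece by piece on the three regions $B_\rho$, $B_1\setminus B_\rho$, and $\R\setminus B_1$; a direct check shows that the identity
\[
\psi(x)=(c_\sharp+c_\star)-\varphi(x)
\]
holds pointwise on $\R$, with $c_\star$ as in \eqref{VECC7} and $c_\sharp$ as in \eqref{sharp}. This in turn gives, for every $x,y\in\R$,
\[
\psi(x)-\psi(y)=-\bigl(\varphi(x)-\varphi(y)\bigr),
\]
so that $|\psi(x)-\psi(y)|^2=|\varphi(x)-\varphi(y)|^2$ everywhere.

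Integrating the latter identity over $\Q$ against the weight $|x-y|^{-1-2s}$ would then yield
\[
\iint_\Q\frac{|\psi(x)-\psi(y)|^2}{|x-y|^{1+2s}}\,dx\,dy=\iint_\Q\frac{|\varphi(x)-\varphi(y)|^2}{|x-y|^{1+2s}}\,dx\,dy,
\]
and the conclusion would follow at once by sending $\rho\searrow 0$ and invoking Lemma \ref{gradlogto0}.

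There is essentially no obstacle to be overcome: the argument is just a cosmetic reduction to the previous lemma. The structural role of the two different constants $c_\star$ and $c_\sharp$ is only relevant for the companion weighted $L^2$-estimate (the analogue, for $\psi$, of Lemma \ref{denomlog}), where the sets $D=B_\rho$ in \eqref{VARPHI45} and $D=\Omega\setminus B_\rho$ in \eqref{PSI45} play opposite roles. For the Gagliardo seminorm, by contrast, additive constants cancel in the difference quotient, so no new fractional estimate on $\psi$ is needed.
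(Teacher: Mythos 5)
Your proposal is correct and follows exactly the paper's approach: the paper observes, by comparing \eqref{VARPHI44} and \eqref{PSI44}, that $|\psi(x)-\psi(y)|=|\varphi(x)-\varphi(y)|$ and then invokes Lemma~\ref{gradlogto0} directly. Your version just makes the underlying identity $\psi=(c_\sharp+c_\star)-\varphi$ explicit, which is a harmless elaboration of the same argument.
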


We also have that:

\begin{lem}\label{denomlog2}
Let $n=1$, $s\in (0,1/2]$ and~$\psi$ be as in~\eqref{PSI44}. Then,
\[ \lim_{\rho\searrow0}
\int_D \psi^2\,dx-\underline{m}\int_{\Omega\setminus D} \psi^2\,dx
=
-\frac{\overline{m}(\overline{m}-m_0)}{m_0}|\Omega|>0.
\]
\end{lem}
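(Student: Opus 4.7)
The plan is to imitate the proof of Lemma~\ref{denompsi} (its higher-dimensional analogue, carried out for $\psi$ defined by \eqref{AUXPSI}) and of Lemma~\ref{denomlog} (the $\varphi$ counterpart in the one-dimensional, strongly nonlocal regime), replacing the auxiliary function and the set $D$ by those prescribed in \eqref{PSI44} and \eqref{PSI45}.

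First, I will split the integral on $D=\Omega\setminus B_\rho$ according to the pieces of the definition of $\psi$, writing
\[
\overline{m}\int_D\psi^2\,dx
=\overline{m}\,c_\sharp^2\,|\Omega\setminus B_1|
+\overline{m}\int_{B_1\setminus B_\rho}\left(c_\sharp-\frac{\log|x|}{\log\rho}\right)^2 dx.
\]
The second summand will be handled via the Dominated Convergence Theorem: the uniform bound \eqref{n4rfgbgbx8ougc93tyevg} furnishes a $\rho$-independent majorant, and the integrand converges pointwise (for $x\neq 0$) to $c_\sharp^2$ as $\rho\searrow0$, so the full expression tends to $\overline{m}\,c_\sharp^2\,|\Omega|$, which can then be rewritten in terms of $\overline{m}$ and $m_0$ using \eqref{sharp}.

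Next, since $\Omega\setminus D=B_\rho$ and $\psi\equiv c_\sharp-1$ on $B_\rho$, the second integral reduces to $\underline{m}(c_\sharp-1)^2|B_\rho|$. Substituting the explicit value of $c_\sharp-1$ from \eqref{sharp} together with the measure identity $|B_\rho|=\frac{\overline{m}-m_0}{\underline{m}+\overline{m}}|\Omega|$ (a direct consequence of \eqref{misuraD} for this choice of $D$), I will obtain an algebraic expression whose limit is immediate once one passes to the regime \eqref{CHIRO}, in which $\rho\searrow0$ is equivalent to $\underline{m}\nearrow+\infty$ with $\overline{m}$, $m_0$ and $|\Omega|$ held fixed. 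Subtracting the two limits and factoring $\overline{m}(\overline{m}-m_0)/m_0^2$ will then produce the announced value, whose positivity follows at once from $m_0<0<\overline{m}$.

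I do not foresee a genuine obstacle: the computation is essentially bookkeeping, perfectly parallel to Lemma~\ref{denompsi}. The only points to keep track of are that (i) the relevant asymptotic parameter is $\underline{m}\nearrow+\infty$ rather than $\overline{m}\nearrow+\infty$, via the equivalence \eqref{CHIRO}, so the factor $\underline{m}/(\underline{m}+\overline{m})$ produced by \eqref{misuraD} must be passed to its limit $1$ at the right moment; and (ii) the pointwise convergence of the integrand in the first step holds only away from the origin, which is precisely why the uniform bound \eqref{n4rfgbgbx8ougc93tyevg} is needed in order to invoke dominated convergence.
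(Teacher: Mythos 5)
Your proposal is correct and follows essentially the same route as the paper's own proof: split $\overline{m}\int_D\psi^2$ over $B_1\setminus B_\rho$ and $\Omega\setminus B_1$, invoke dominated convergence with the majorant from~\eqref{n4rfgbgbx8ougc93tyevg}, compute $\underline{m}\int_{\Omega\setminus D}\psi^2=\underline{m}(c_\sharp-1)^2|B_\rho|$ explicitly via~\eqref{misuraD}, and pass to the limit using the equivalence $\rho\searrow0\Leftrightarrow\underline{m}\nearrow+\infty$ from~\eqref{CHIRO}. Nothing is missing; the algebraic simplification you describe yields exactly $-\overline{m}(\overline{m}-m_0)|\Omega|/m_0$.
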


\begin{proof}
In view of~\eqref{PSI44}, \eqref{PSI45}, \eqref{sharp} and~\eqref{misuraD},
\[
\underline{m}\int_{\Omega\setminus D} \psi^2\,dx=\underline{m}
(c_\sharp-1)^2 |\Omega\setminus D|=
\underline{m}\left( \frac{m_0-\overline{m}}{m_0}-1\right)^2
\frac{\overline{m}-m_0}{\underline{m}+\overline{m}}
|\Omega|=
\frac{\underline{m}\overline{m}^2(\overline{m}-m_0)}{m_0^2(\underline{m}+\overline{m})}
|\Omega|,
\]
and
\[\overline{m}
\int_D \psi^2\,dx =\overline{m}
\int_{B_1\setminus B_\rho}\left(c_\sharp-\frac{\log|x|}{\log\rho}\right)^2\,dx+\overline{m}
\int_{\Omega\setminus B_1}c_\sharp^2\,dx
.
\]
Hence, recalling~\eqref{CHIRO}
and~\eqref{n4rfgbgbx8ougc93tyevg}, and using the Dominated
Convergence Theorem,
\begin{eqnarray*}
&&\lim_{\rho\searrow0}\overline{m}
\int_D \psi^2\,dx-\underline{m}\int_{\Omega\setminus D} \psi^2\,dx\\
&=&\lim_{\rho\searrow0}
\overline{m}
\int_{B_1\setminus B_\rho}\left(c_\sharp-\frac{\log|x|}{\log\rho}\right)^2\,dx+\overline{m}
\int_{\Omega\setminus B_1}c_\sharp^2\,dx-\lim_{\underline{m}\nearrow+\infty}
\frac{\underline{m}\overline{m}^2(\overline{m}-m_0)}{m_0^2(\underline{m}+\overline{m})}
|\Omega|\\&=&
\overline{m}c_\sharp^2|\Omega|-
\frac{\overline{m}^2(\overline{m}-m_0)}{m_0^2}
|\Omega|\\&=&\frac{
\overline{m}(m_0-\overline{m})^2}{m_0^2}|\Omega|
-\frac{\overline{m}^2(\overline{m}-m_0)}{m_0^2}|\Omega|
\\&=&\frac{\overline{m}(\overline{m}-m_0)}{m_0^2}
\big((\overline{m}-m_0)-\overline{m}\big)
|\Omega|\\&=&-\frac{\overline{m}(\overline{m}-m_0)}{m_0}|\Omega|,
\end{eqnarray*}
which is positive since $m_0<0<\overline{m}$.
\end{proof}

We are now ready to complete the proof of Theorem~\ref{alpha=0} in the case~$s\in(0,1/2]$.

\begin{proof}[Proof of Theorem \ref{alpha=0} when~${s\in(0,1/2]}$]
To prove \eqref{LK:LK:01}, we exploit the auxiliary function~$\varphi$
introduced in~\eqref{VARPHI44} and the choice of
the resource~$m\in\tilde{\mathscr{M}}$, as defined in~\eqref{tildem},
with~$D$ as in~\eqref{VARPHI45}.

In this way, in light of~\eqref{LAMSPOTTP},
$$ \underline{\lambda}(\overline{m},\underline{m},m_0)\le
\frac{\displaystyle\frac\beta4\,\iint_\Q \frac{|\varphi(x)-\varphi(y)|^2}{|x-y|^{1+2s}}\,dx\,dy}
{\overline{m}\displaystyle\int_D \varphi^2\,dx-\underline{m}\int_{\Omega\setminus D} \varphi^2\,dx}.
$$
Hence, taking~$\emme:=\overline{m}$ and utilizing
Lemmata~\ref{gradlogto0} and~\ref{denomlog}, we find that
\[ \lim_{ \emme\nearrow+\infty}
\underline{\lambda}(\emme,\underline{m},m_0) \leq \lim_{ 
\rho\searrow0}
\frac{\displaystyle\frac\beta4\,\iint_\Q \frac{|\varphi(x)-\varphi(y)|^2}{|x-y|^{1+2s}}\,dx\,dy}{
\emme\displaystyle\int_D \varphi^2\,dx-\underline{m}\int_{\Omega\setminus D} \varphi^2\,dx}=0.
\]
This proves~\eqref{LK:LK:01}, and we now focus on the
proof of~\eqref{LK:LK:02}.
To this end, we exploit
the auxiliary function~$\psi$
introduced in~\eqref{PSI44} and the choice of
the resource~$m\in\tilde{\mathscr{M}}$, as defined in~\eqref{tildem},
with~$D$ as in~\eqref{PSI45}.

In this framework, in light of~\eqref{LAMSPOTTP},
$$ \underline{\lambda}(\overline{m},\underline{m},m_0)\le
\frac{\displaystyle\frac\beta4\,\iint_\Q \frac{|\psi(x)-\psi(y)|^2}{|x-y|^{1+2s}}\,dx\,dy}
{\overline{m}\displaystyle\int_D \psi^2\,dx
-\underline{m}\int_{\Omega\setminus D} \psi^2\,dx}.
$$
As a result, taking~$\emme:=\underline{m}$ and utilizing
Lemmata~\ref{gradlogto02} and~\ref{denomlog2}, we conclude that
\[ \lim_{ \emme\nearrow+\infty}
\underline{\lambda}(\overline{m},\emme,m_0) \leq \lim_{ 
\rho\searrow0}
\frac{\displaystyle\frac\beta4\,\iint_\Q \frac{|\psi(x)-\psi(y)|^2}{|x-y|^{1+2s}}\,dx\,dy}{
\overline{m}\displaystyle\int_D \psi^2\,dx-\emme
\int_{\Omega\setminus D} \psi^2\,dx}=0,
\]
thus establishing~\eqref{LK:LK:02}.
\end{proof}

\section{Badly displayed resources,
hectic oscillations and proof
of Theorem~\ref{oink8495-0243905i8uhjssjjcjnj7}}\label{oink8495-0243905i8uhjssjjcjnj7SS}

This section contains
the proof of
Theorem~\ref{oink8495-0243905i8uhjssjjcjnj7},
relying on an explicit
example of sequence of highly
oscillating resources which make the first
eigenvalue diverge. The technical
details go as follows.

\begin{proof}[Proof of Theorem~\ref{oink8495-0243905i8uhjssjjcjnj7}]
We suppose that~$B_4\subset\Omega$
and we consider~$\eta\in C^\infty_0(B_{3/2},[0,1])$ with~$\eta=1$
in~$B_1$ and~$\|\eta\|_{C^1(\R^n)}\le 8$.
We let
$$ m_\omega:=m_0-\frac{\Lambda}{|\Omega|}\int_\Omega\eta(x)\sin(\omega x_1)\,dx$$
and
$$ m(x):=m_\omega+\Lambda\eta(x)\,\sin(\omega x_1),$$
with~$\omega>0$ to be taken arbitrarily large in what follows.

We remark that
\begin{equation}\label{TB-x1} \frac{1}{|\Omega|}\int_\Omega m(x)\,dx=
m_\omega+\frac{\Lambda}{|\Omega|}\int_\Omega
\eta(x)\,\sin(\omega x_1)\,dx=m_0.\end{equation}
Moreover, integrating by parts,
\begin{equation*}
\begin{split}&
|m_\omega-m_0|=
\frac{\Lambda}{|\Omega|}\left|\int_\Omega\eta(x)\sin(\omega x_1)\,dx\right|=
\frac{\Lambda}{|\Omega|\,\omega}\left|\int_{\Omega}\eta(x)\frac{d}{dx_1}\cos(\omega x_1)\,dx\right|\\
&\qquad=
\frac{\Lambda}{|\Omega|\,\omega}\left|\int_{\Omega}\partial_1\eta(x)\cos(\omega x_1)\,dx\right|
\le\frac{8\Lambda}{\omega}
\end{split}\end{equation*}
which is arbitrarily small provided that~$\omega$ is large enough:
in particular, we can suppose that
\begin{equation}\label{RYCbb956}
2m_0\le m_\omega\le \frac{m_0}{2}.
\end{equation}
Also, for every~$x\in\Omega$,
\begin{equation}\label{TB-x2} |m(x)|\le |m_\omega|+\Lambda\le
2|m_0|+\Lambda\le2\Lambda.
\end{equation}
Furthermore, for large~$\omega$ we have that
$$ p^\pm:=\left(\pm\frac\pi{2\omega},0,\dots,0\right)\in B_1\subset\Omega\cap\{\eta=1\}.$$
Therefore
\begin{equation}\label{TB-x3}
\begin{split}
& \sup_\Omega m\ge m(p^+)=
m_\omega+\Lambda\ge\Lambda-2|m_0|\ge\frac{\Lambda}2\\
{\mbox{and }}\qquad&
\inf_\Omega m\le m(p^-)=
m_\omega-\Lambda\le -\Lambda.
\end{split}
\end{equation}
In view of~\eqref{TB-x0}, \eqref{TB-x1}, \eqref{TB-x2} and~\eqref{TB-x3},
we obtain that
\begin{equation}\label{andacxoapo056}
m\in {\mathscr{M}}^\sharp_{\Lambda,m_0}.
\end{equation}

Now, we take into account a function~$\varphi\in X_{\alpha,\beta}$ such that
$$ \int_\Omega m(x)\varphi^2(x)\,dx=1.$$
Then, integrating by parts, we see that
\begin{eqnarray*}
1&=& \int_\Omega \Big(m_\omega+\Lambda\eta(x)\,\sin(\omega x_1)\Big)\varphi^2(x)\,dx\\
&=& m_\omega\int_\Omega \varphi^2(x)\,dx
+\Lambda\int_{\R^n} \eta(x)\,\sin(\omega x_1)\varphi^2(x)\,dx\\
&=& m_\omega\int_\Omega \varphi^2(x)\,dx
-\frac\Lambda\omega\int_{\R^n} \eta(x)\,
\frac{d}{dx_1}
\cos(\omega x_1)\varphi^2(x)\,dx\\
&=& m_\omega\int_\Omega \varphi^2(x)\,dx
+\frac\Lambda\omega\int_{\R^n} \partial_1\eta(x)\,
\cos(\omega x_1)\varphi^2(x)\,dx
+\frac{2\Lambda}\omega\int_{\R^n} \eta(x)\,
\cos(\omega x_1)\varphi(x)\,\partial_1\varphi(x)\,dx\\
&\le& m_\omega\int_\Omega \varphi^2(x)\,dx
+\frac{8\Lambda}\omega\int_{\R^n}\varphi^2(x)\,dx
+\frac{2\Lambda}\omega\int_{\R^n}\varphi(x)\,
|\nabla \varphi(x)|\,dx
\\
&\le& m_\omega\int_\Omega \varphi^2(x)\,dx
+\frac{9\Lambda}\omega\int_{\R^n}\varphi^2(x)\,dx
+\frac{\Lambda}\omega\int_{\R^n}|\nabla \varphi(x)|^2\,dx.
\end{eqnarray*}
As a consequence, if~$\frac{9\Lambda}\omega\le-m_\omega$
(which is the case for~$\omega$ large, in view of~\eqref{RYCbb956}),
$$ 1\le \frac{\Lambda}\omega\int_{\R^n}|\nabla \varphi(x)|^2\,dx,$$
and accordingly
\begin{equation}\label{Alp78-923840492} \frac{\displaystyle\frac\alpha2\,
\int_\Omega|\nabla\varphi(x)|^2\,dx
}{\displaystyle\int_\Omega m(x)\varphi^2(x)\,dx}\ge
\frac{\alpha\omega}{2\Lambda}.\end{equation}
Let now~$\zeta\in(-1,1)$
and~$E'\in\R^{n-1}$ with~$|E'|\le|\zeta|$ and~$E:=(\zeta,E')\in\R^n$.
We use the trigonometric identity
$$ \frac{\cos(y) \cos\zeta-\cos(y+\zeta)}{\sin \zeta}=\sin y,\qquad{\mbox{ for all $
\zeta\in\R\setminus(\pi\Z)$
and~$y\in\R$,}}$$
together with the notation~$\Phi:=\eta\varphi^2$
and the change of variable
$$X:=x+\frac1\omega
\left(\zeta,E' \right)=x+\frac{E}{\omega},$$ to write that
\begin{eqnarray*}
1&=& \int_\Omega \Big(m_\omega+\Lambda\eta(x)\,\sin(\omega x_1)\Big)\varphi^2(x)\,dx\\
&=& m_\omega\int_\Omega \varphi^2(x)\,dx
+\Lambda\int_{\R^n} \sin(\omega x_1)\Phi(x)\,dx\\
&=& m_\omega\int_\Omega \varphi^2(x)\,dx
+\frac\Lambda{\sin \zeta}\;\int_{\R^n} 
\Big(\cos(\omega x_1) \cos\zeta-\cos(\omega x_1+\zeta)\Big)
\Phi(x)\,dx\\&=& m_\omega\int_\Omega \varphi^2(x)\,dx\\&&\qquad
+\frac\Lambda{\sin \zeta}\left[
\int_{\R^n} 
\cos(\omega x_1) \cos\zeta\;\Phi(x)\,dx
-\int_{\R^n} 
\cos(\omega X_1)\;\Phi\left(
X-\frac{E}\omega\right)\,dX
\right]\\
&=& m_\omega\int_\Omega \varphi^2(x)\,dx\\&&\qquad
+\frac\Lambda{\sin \zeta}
\int_{\R^n} 
\cos(\omega x_1)\left[ \cos\zeta\;\Phi(x)-
\Phi\left(x-\frac{E}\omega\right) \right]\,dx\\
&=& m_\omega\int_\Omega \varphi^2(x)\,dx
+\frac\Lambda{\sin \zeta}
\int_{\R^n} 
\cos(\omega x_1)\left( \cos\zeta-1\right)\,\Phi(x)\,dx
\\&&\qquad+\frac\Lambda{\sin \zeta}
\int_{\R^n} 
\cos(\omega x_1)\left(\Phi(x)-
\Phi\left(x-\frac{E}\omega\right) \right)\,dx.
\end{eqnarray*}
Since
\begin{eqnarray*}
&&\left|\Phi(x)-
\Phi\left(x-\frac{E}\omega\right) \right|
\\
&\le& \varphi^2(x)\,
\left|\eta(x)-
\eta\left(x-\frac{E}\omega\right) \right|
+\eta\left(x-\frac{E}\omega\right)\,\left| \varphi^2(x)-
\varphi^2\left(x-\frac{E}\omega\right)\right|
\\&\le&
\frac{8\varphi^2(x)\,|E|}{\omega}
+\left| \varphi^2(x)-
\varphi^2\left(x-\frac{E}\omega\right)\right|,
\end{eqnarray*}
we thereby discover that, if~$E\in B_1$ and~$\omega\ge2$,
\begin{eqnarray*}
1&\le& \left(m_\omega+\frac{8\Lambda\,|E|}{\omega\,|\sin\zeta|}\right)\int_\Omega \varphi^2(x)\,dx
+\frac\Lambda{|\sin \zeta|}
\int_{\Omega} \left(1- \cos\zeta\right)\varphi^2(x)\,dx
\\&&\qquad+\frac\Lambda{|\sin \zeta|}
\int_{B_2} \left| \varphi^2(x)-
\varphi^2\left(x-\frac{E}\omega\right)\right|\,dx
\\
&\le& \left(m_\omega+\frac{C\Lambda}{\omega}\right)\int_\Omega \varphi^2(x)\,dx
+C\Lambda\,|\zeta|
\int_{\Omega} \varphi^2(x)\,dx
\\&&\qquad+\frac{C\Lambda}{|\zeta|}
\int_{B_2} \left| \varphi^2(x)-
\varphi^2\left(x-\frac{E}\omega\right)\right|\,dx
,\end{eqnarray*}
for some~$C>0$.

In particular, recalling also~\eqref{RYCbb956},
it follows that
there exists~$r_0\in(0,1)$, possibly depending
on~$m_0$, $\Lambda$ and~$n$, such that, if~$\zeta\in (-r_0,r_0)$
and~$\omega$ is sufficiently large,
\begin{equation}\label{osommSObbB89ir}
1\le \frac{m_\omega}2\,\int_\Omega \varphi^2(x)\,dx+\frac{C\Lambda}{|\zeta|}
\int_{B_2} \left| \varphi^2(x)-
\varphi^2\left(x-\frac{E}\omega\right)\right|\,dx
.\end{equation}
We also observe that, given an additional parameter~$\kappa>0$,
to be taken conveniently small in what follows,
\begin{eqnarray*}&&
2\,\left| \varphi^2(x)-
\varphi^2\left(x-\frac{E}\omega\right)\right|=2\,
\left| \varphi(x)+
\varphi\left(x-\frac{E}\omega\right)\right|\,
\left| \varphi(x)-
\varphi\left(x-\frac{E}\omega\right)\right|\\&&\qquad\le
\kappa|\zeta|^{n+2s-1}\,
\left| \varphi(x)+
\varphi\left(x-\frac{E}\omega\right)\right|^2+\kappa^{-1}
|\zeta|^{1-n-2s}\,\left| \varphi(x)-
\varphi\left(x-\frac{E}\omega\right)\right|^2\\&&\qquad\le
4\kappa|\zeta|^{n+2s-1}\varphi^2(x)+
4\kappa|\zeta|^{n+2s-1}\varphi^2\left(x-\frac{E}\omega\right)+\kappa^{-1}
|\zeta|^{1-n-2s}\left| \varphi(x)-
\varphi\left(x-\frac{E}\omega\right)\right|^2.
\end{eqnarray*}
Then, we plug this information into~\eqref{osommSObbB89ir}
and we conclude that, if~$r_0$ is small enough and~$\omega$ is large enough,
\begin{equation}\label{osommSObbB89ir2}\begin{split}
1\,&\le \frac{m_\omega}2\,\int_\Omega \varphi^2(x)\,dx\\
&\qquad+\frac{C\Lambda}{|\zeta|}
\Bigg[\kappa\int_{B_2}|\zeta|^{n+2s-1}
\varphi^2(x)\,dx+\kappa\int_{B_2}|\zeta|^{n+2s-1}
\varphi^2\left(x-\frac{E}\omega\right)\,dx\\&\qquad\qquad+\frac1\kappa\int_{B_2}|\zeta|^{1-n-2s}\,
\left| \varphi(x)-
\varphi\left(x-\frac{E}\omega\right)\right|^2
\,dx\Bigg]\\
&\le \frac{m_\omega}2\,\int_\Omega \varphi^2(x)\,dx\\
&\qquad+\frac{C\Lambda}{|\zeta|}
\left[2\kappa\,|\zeta|^{n+2s-1}\int_{B_4}
\varphi^2(x)\,dx+\frac1\kappa\,\int_{B_2}
|\zeta|^{1-n-2s}\,\left| \varphi(x)-
\varphi\left(x-\frac{E}\omega\right)\right|^2
\,dx\right].
\end{split}\end{equation}
We also remark that, in our notation, $E_1=\zeta$, and accordingly
\begin{eqnarray*}&& \iint_{B_4\times (B_{r_0}\cap\{|E'|\le|E_1|\})}|\zeta|^{n+2s-2}
\varphi^2(x)\,dx\,dE\le
\iint_{B_4\times B_{r_0}}|E_1|^{n+2s-2}
\varphi^2(x)\,dx\,dE\\&&\qquad\qquad\le \tilde C r_0^{2n+2s-2}
\int_{B_4}\varphi^2(x)\,dx,
\end{eqnarray*}
for some~$\tilde C>0$.

For this reason, letting~$\iota$ be the measure of the set~$\{x=(x_1,x')\in
B_1{\mbox{ s.t. }}|x'|\le|x_1|\}$, we obtain that
\begin{equation*}
\begin{split}&
\frac{m_\omega}2\,\iint_{\Omega\times (B_{r_0}\cap\{|E'|\le|E_1|\})} \varphi^2(x)\,dx\,dE
+2C\kappa \Lambda
\iint_{B_4\times (B_{r_0}\cap\{|E'|\le|E_1|\})}|\zeta|^{n+2s-2}
\varphi^2(x)\,dx\,dE\\&\qquad
\le
\frac{m_\omega\,\iota\,r_0^n}2\,\int_{\Omega} \varphi^2(x)\,dx+
2C\tilde{C}\kappa \Lambda r_0^{2n+2s-2}
\int_{B_4}\varphi^2(x)\,dx
\le 0,
\end{split}
\end{equation*}
by choosing
$$ \kappa:=\min\left\{
1,\;-\frac{m_\omega\,\iota}{C\tilde{C}\Lambda r^{n+2s-2}_0}
\right\}.$$
Therefore, we can integrate~\eqref{osommSObbB89ir2}
over~$E\in B_{r_0}\cap\{|E'|\le|E_1|\}$
and up to renaming constants we find that
\begin{equation*}\begin{split}
r_0^n\,&\le C\Lambda\iint_{B_2\times(B_{r_0}\cap\{|E'|\le|E_1|\})}
|E_1|^{-n-2s}\,\left| \varphi(x)-
\varphi\left(x-\frac{E}\omega\right)\right|^2
\,dx\,dE\\&
\le C\Lambda\iint_{B_2\times(B_{r_0}\cap\{|E'|\le|E_1|\})}
|E|^{-n-2s}\,\left| \varphi(x)-
\varphi\left(x-\frac{E}\omega\right)\right|^2
\,dx\,dE\\&=
C\Lambda\omega^{-2s} \iint_{B_2\times(B_{r_0/\omega}\cap\{|z'|\le|z_1|\})}
|z|^{-n-2s}\,\left| \varphi(x)-
\varphi(x-z)\right|^2
\,dx\,dz\\&
\le C\Lambda\omega^{-2s} \iint_{B_2\times\R^n}
\frac{|\varphi(x)-
\varphi(x-z)|^2}{|z|^{n+2s}}
\,dx\,dz\\&
\le C\Lambda\omega^{-2s} \iint_{\Q}
\frac{|\varphi(x)-
\varphi(y)|^2}{|x-y|^{n+2s}}
\,dx\,dy,
\end{split}\end{equation*}
and consequently
$$ \frac{\displaystyle\frac\beta4\iint_{\Q}
\frac{|\varphi(x)-
\varphi(y)|^2}{|x-y|^{n+2s}}
\,dx\,dy}{\displaystyle\int_\Omega m(x)\varphi^2(x)\,dx}\ge
\frac{\beta r_0^n\omega^{2s}}{C\Lambda},$$
up to renaming~$C>0$.

This and~\eqref{Alp78-923840492}, recalling~\eqref{andacxoapo056}, give that
$$ \lambda_1(m)\ge
\frac{\alpha\omega}{2\Lambda}+
\frac{\beta r_0^n\omega^{2s}}{C\Lambda},$$
which, taking~$\omega$ as large as we wish, yields the desired result.
\end{proof}

\appendix

\section{Proofs of Theorems \ref{msopratoinfty} and
\ref{msottotoinfty} when $n=2$}\label{KASN:ALSL}

The main strategy followed in this part is similar
to the case~$n\ge3$, but when~$n=2$ we have to define different
auxiliary
functions. We start with the proof of Theorem~\ref{msopratoinfty}.
For this, we recall the setting in~\eqref{B2cont}
and~\eqref{VECC7}, and we define
\begin{equation}\label{vARPHI2}
\varphi(x):=
\begin{dcases}
c_\star+1  & {\mbox{if }}\;x\in B_\rho,
\\
c_\star+\displaystyle\frac{\log|x|}{\log\rho}  &{\mbox{if }}\; x\in
B_1\setminus B_\rho,
\\
c_\star  & {\mbox{if }}\;x\in\R^2\setminus B_1.
\end{dcases}
\end{equation}
We set 
\begin{equation}\label{DDEFVB}
D:=B_\rho\end{equation}
and we list below some interesting properties
of~$\varphi$:

\begin{lem}\label{n2gradphito0}
Let $n=2$ and~$\varphi$ be as in~\eqref{vARPHI2}. Then 
\[ \lim_{\rho\searrow0}
\int_\Omega |\nabla \varphi|^2 = 0
.\]
\end{lem}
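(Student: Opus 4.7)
The plan is to exploit the fact that $\varphi$ is piecewise constant outside the annulus $B_1\setminus B_\rho$, so the Dirichlet integral reduces to a single explicit computation in polar coordinates, and the logarithmic profile is calibrated precisely to make this integral vanish in the limit $\rho\searrow0$ in dimension $n=2$.

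First I would observe from~\eqref{vARPHI2} that $\nabla\varphi\equiv0$ on $B_\rho$ and on $\mathbb{R}^2\setminus B_1$, so that
\[
\int_\Omega|\nabla\varphi|^2\,dx=\int_{B_1\setminus B_\rho}|\nabla\varphi|^2\,dx.
\]
Then on $B_1\setminus B_\rho$ we have $\varphi(x)=c_\star+\frac{\log|x|}{\log\rho}$, and differentiating gives
\[
\nabla\varphi(x)=\frac{1}{\log\rho}\cdot\frac{x}{|x|^2},\qquad |\nabla\varphi(x)|^2=\frac{1}{(\log\rho)^2\,|x|^2}.
\]
Passing to polar coordinates in $\mathbb{R}^2$ (this is the one place where $n=2$ enters crucially, since it cancels the factor $|x|^{-2}$ against the Jacobian), we obtain
\[
\int_\Omega|\nabla\varphi|^2\,dx=\frac{1}{(\log\rho)^2}\int_\rho^1\frac{2\pi\,r}{r^2}\,dr=\frac{2\pi}{(\log\rho)^2}\,\big|\log\rho\big|=\frac{2\pi}{|\log\rho|}.
\]
Letting $\rho\searrow0$ yields the claimed vanishing.

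There is no real obstacle here: the computation is explicit, and the only conceptual point worth underlining is that the logarithmic profile used in~\eqref{vARPHI2} (in place of the power-type profile~\eqref{VARPHI} of higher dimensions) is exactly the two-dimensional capacity potential of a point, which is why the $H^1$-seminorm of $\varphi$ degenerates like $1/|\log\rho|$ as $\rho\searrow0$. In the subsequent lemmata (for the Gagliardo seminorm and the weighted $L^2$-norm) one expects more work, but for this lemma the proof is immediate.
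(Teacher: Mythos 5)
Your proof is correct and follows exactly the same route as the paper: reduce to the annulus $B_1\setminus B_\rho$, compute $|\nabla\varphi|^2=(\log\rho)^{-2}|x|^{-2}$, and integrate in polar coordinates to get $2\pi/|\log\rho|\to0$. The only difference is that you spell out the intermediate steps and the motivation, while the paper states the one-line computation directly.
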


\begin{proof}
We compute
\[
\frac{1}{(\log\rho)^2}
\int_{B_1\setminus B_\rho} \frac{1}{|x|^2}\,dx
=\frac{2\pi}{(\log\rho)^2}
\int_\rho^1 \frac{1}{r}\,dr
=-\frac{2\pi}{\log\rho} \longrightarrow 0
\]
as $\rho \searrow 0$.
\end{proof}

\begin{lem}\label{n2gradsphito0}
Let $n=2$ and~$\varphi$ be as in~\eqref{vARPHI2}. Then
\[ \lim_{\rho\searrow0}
\iint_\Q \frac{|\varphi(x)-\varphi(y)|^2}{|x-y|^{2+2s}}\,dx\,dy = 0
.\]
\end{lem}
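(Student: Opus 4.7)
The plan is to mirror the argument used in the proof of Lemma~\ref{gradlogto0} (the one dimensional analogue with the logarithmic profile), adapting the dyadic bookkeeping to two dimensions. First, I would split the double integral over~$\Q$ according to the piecewise structure of~$\varphi$, writing
\[
\Q \subset (B_\rho\times B_\rho)\,\cup\,\big((B_1\setminus B_\rho)\times B_\rho\big)\,\cup\,\big((B_1\setminus B_\rho)^2\big)\,\cup\,\big(B_\rho\times(\R^2\setminus B_1)\big)\,\cup\,\big((B_1\setminus B_\rho)\times(\R^2\setminus\Omega)\big)\,\cup\,\big((B_1\setminus B_\rho)\times(\Omega\setminus B_1)\big)
\]
up to symmetry and sets on which~$\varphi$ is constant on both factors. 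The contributions from~$B_\rho\times B_\rho$ and from~$(\R^2\setminus B_1)^2$ vanish identically. The contribution from~$B_\rho\times(\R^2\setminus B_1)$ is bounded by~$|B_\rho|\cdot\int_{\R^2\setminus B_{1/2}}|z|^{-2-2s}\,dz\leq C\rho^2\to 0$.

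For the three pieces that pair $B_1\setminus B_\rho$ with a region where~$\varphi$ is constant (i.e.\ $B_\rho$, $\R^2\setminus\Omega$, or $\Omega\setminus B_1$), the integrand carries the prefactor $(\log\rho)^{-2}$ and reduces to estimating
\[
\frac{1}{(\log\rho)^2}\int_{B_1\setminus B_\rho}\big(\log|x|\big)^2\,\omega(x)\,dx,
\]
where $\omega(x)$ is a bounded weight (coming from integrating the kernel over the far region or from controlling $|\log|x|-\log\rho|$ via~\eqref{disuglog}). In all cases the spatial integral is~$O(1)$ uniformly in~$\rho$, so division by $(\log\rho)^2$ forces the limit to vanish. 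For the piece near the inner boundary $(B_{2\rho}\setminus B_\rho)\times B_\rho$ I would mimic the one dimensional computation: apply~\eqref{disuglog} with $|y|:=\rho$ to transform $|\log|x|-\log\rho|^2$ into $(|x|-\rho)^2/\rho^2$, bound~$|x|-\rho\leq|x-y|$, and change variable, yielding a factor $\rho^{2-2s}(\log\rho)^{-2}\to 0$.

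The most delicate term is
\[
\frac{1}{(\log\rho)^2}\iint_{(B_1\setminus B_\rho)\times(B_1\setminus B_\rho)}\frac{\big|\log|x|-\log|y|\big|^2}{|x-y|^{2+2s}}\,dx\,dy,
\]
and this is where I would spend the bulk of the work, following closely the dyadic scheme in~\eqref{EQ:3}--\eqref{A40}. Decomposing by symmetry~$|x|\geq|y|$ into annuli $A_i:=B_{1/2^i}\setminus B_{1/2^{i+1}}$ with $0\leq i,j\leq k$ and $2^{-k-1}<\rho\leq 2^{-k}$, one splits into a ``diagonal'' regime $j-4\leq i\leq j$ (where~\eqref{disuglog} gives $|\log|x|-\log|y||^2\leq|x-y|^2 2^{2(j+1)}$) and a ``non-diagonal'' regime $i\leq j-4$ (where $|x-y|\sim 2^{-i}$ and $|\log|x|-\log|y||\leq(j-i+1)\log 2$). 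In two dimensions, each annulus has measure $\sim 2^{-2i}$, and integrating $|z|^{-2s}$ over a ball of radius $\sim 2^{-i}$ yields $\sim 2^{-(2-2s)i}$; the resulting double sum has the form
\[
\sum_{i=0}^k 2^{(2s-2)i}+\sum_{i=0}^{k}\sum_{\ell\geq 4}\ell^2\,2^{-2\ell}\,2^{(2s-2)i},
\]
both of which are uniformly bounded in~$k$ since $2s-2<0$. Dividing by $(\log\rho)^2$ then gives an $O((\log\rho)^{-2})$ bound, which tends to zero.

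The main obstacle, and the only place where dimension two differs substantively from dimensions $n\geq3$, is precisely this last dyadic estimate: in higher dimensions the polynomial profile yields factors of $\rho^\gamma$ that decay to zero on their own, whereas here the only decaying mechanism is the prefactor $(\log\rho)^{-2}$, so one must check that the dyadic sums remain \emph{uniformly bounded} in~$\rho$ (unlike the $n=1$, $s=1/2$ case where a logarithmically divergent sum was tolerated thanks to~$(\log\rho)^2$ in the denominator). The exponent $2-2s>0$ in two dimensions is exactly what makes the geometric series converge, so the argument works uniformly for every $s\in(0,1)$.
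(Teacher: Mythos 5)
Your decomposition of $\Q$ and your treatment of the trivial and boundary pieces match the paper's, but for the critical interior-interior block $(B_1\setminus B_\rho)^2$ you propose a full dyadic analysis modelled on the $n=1$ proof of Lemma~\ref{gradlogto0}, whereas the paper dispenses with dyadic annuli entirely. The paper simply applies~\eqref{disuglog} on the set $\{|x|\leq|y|\}$ to get the pointwise bound $\frac{(\log|x|-\log|y|)^2}{|x-y|^{2+2s}}\leq\frac{|x-y|^{-2s}}{|x|^2}$, separates variables, and observes that $\int_{B_1\setminus B_\rho}|x|^{-2}\,dx=2\pi|\log\rho|$ while $\int_{B_2}|z|^{-2s}\,dz=O(1)$ (convergent because $n=2>2s$), so the whole term is $O(|\log\rho|)/(\log\rho)^2=O(1/|\log\rho|)\to 0$. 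The point you are missing is that in two dimensions $\int_{B_1\setminus B_\rho}|x|^{-2}\,dx$ grows only logarithmically, so the crude separated estimate already wins against the prefactor $(\log\rho)^{-2}$; it is only in $n=1$ (where that integral would grow like $1/\rho$) that the dyadic machinery becomes necessary. Your dyadic bookkeeping does in fact yield a uniformly bounded sum (the exponent $2s-2<0$ makes both geometric series converge, as you correctly note), so your approach is valid and gives the sharper rate $O((\log\rho)^{-2})$, but it is substantially more work than the paper requires.

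Two further remarks on precision. First, for the block $(B_1\setminus B_\rho)\times B_\rho$ you write that the term "reduces to estimating $\frac{1}{(\log\rho)^2}\int(\log|x|)^2\omega(x)\,dx$ with a bounded weight," but on this block $\varphi(x)-\varphi(y)=\frac{\log|x|-\log\rho}{\log\rho}$, not $\frac{\log|x|}{\log\rho}$, so that reduction is not literal; the paper handles the entire block in one stroke via~\eqref{disuglog} with $|y|:=\rho$ (obtaining $\frac{C}{(\log\rho)^2}$), without the near-/far-boundary split you propose. Second, for $(B_1\setminus B_\rho)\times(\Omega\setminus B_1)$ the weight $\omega(x)=\int_{\Omega\setminus B_1}|x-y|^{-2-2s}\,dy$ is not bounded (it blows up like $(1-|x|)^{-2s}$ as $|x|\to1$); the product $(\log|x|)^2\omega(x)$ does remain integrable because $(\log|x|)^2\sim(1-|x|)^2$ there, so your conclusion survives, but the stated reason does not. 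The paper sidesteps both issues by running $(\log|x|)^2\leq\frac{(1-|x|)^2}{|x|^2}\leq\frac{|x-y|^2}{|x|^2}$ through the kernel, which again produces a harmless $O(|\log\rho|)$ that vanishes after division by $(\log\rho)^2$.
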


\begin{proof}
As for the proof of Lemma \ref{gradsphito0}, we have to consider several integral
contributions (given the different expressions of the competitors
the technical computations here are different from those
in Lemma \ref{gradsphito0}). First of all, we have that 
\[
\iint_{B_\rho\times B_\rho}
\frac{|\varphi(x)-\varphi(y)|^2}{|x-y|^{2+2s}}\,dx\,dy=0	
\]
and
\[
\iint_{(\R^2\setminus B_1)\times (\R^2\setminus B_1)}
\frac{|\varphi(x)-\varphi(y)|^2}{|x-y|^{2+2s}}\,dx\,dy=0.	
\]
Moreover, assuming~$\rho<1/4$,
\begin{eqnarray*}
&& \iint_{B_\rho\times (\R^2\setminus B_1)}
\frac{|\varphi(x)-\varphi(y)|^2}{|x-y|^{2+2s}}\,dx\,dy
=\iint_{B_\rho\times (\R^2\setminus B_1)}
\frac{1}{|x-y|^{2+2s}}\,dx\,dy\\
&&\qquad\leq \int_{B_\rho}dx\int_{\R^2\setminus B_{\frac{1}{2}}}\frac{1}{|z|^{2+2s}}\,dz
\leq C|B_1|\rho^2 \longrightarrow 0
\end{eqnarray*}
as $\rho \searrow 0$.

Furthermore, if $(x,y)\in(B_1\setminus B_\rho)\times B_\rho$ we have
\[
|\varphi(x)-\varphi(y)|^2=\frac{1}{(\log\rho)^2}|\log|x|-\log\rho|^2
.
\]
Consequently, from \eqref{disuglog} (used here with~$|y|=\rho$),
\begin{align*}
\iint_{(B_1\setminus B_\rho)\times B_\rho}
\frac{|\varphi(x)-\varphi(y)|^2}{|x-y|^{2+2s}}\,dx\,dy
\,&=
\frac{1}{(\log\rho)^2}\iint_{(B_1\setminus B_\rho)\times B_\rho}
\frac{|\log|x|-\log\rho|^2}{|x-y|^{2+2s}}\,dx\,dy \\
&\leq \frac{1}{\rho^2(\log\rho)^2}
\iint_{(B_1\setminus B_\rho)\times B_\rho}
\frac{(|x|-\rho)^2}{|x-y|^{2+2s}}\,dx\,dy \\
&\leq \frac{1}{\rho^2(\log\rho)^2}
\iint_{(B_1\setminus B_\rho)\times B_\rho}|x-y|^{-2s}\,dx\,dy \\
&\leq \frac{1}{\rho^2(\log\rho)^2}
\int_{B_\rho}dy\int_{B_2}|z|^{-2s}\,dz \\
&\leq \frac{C }{(\log\rho)^2}\longrightarrow 0
\end{align*}
as $\rho \searrow 0$.

Also, exploiting~\eqref{B2cont}, we have
\begin{align*}
\iint_{(B_1\setminus B_\rho)\times (\R^2\setminus \Omega)}
\frac{|\varphi(x)-\varphi(y)|^2}{|x-y|^{2+2s}}\,dx\,dy\,&=
\frac{1}{(\log\rho)^2}\iint_{(B_1\setminus B_\rho)\times (\R^2\setminus
\Omega)}
\frac{(\log|x|)^2}{|x-y|^{2+2s}}\,dx\,dy \\
&\leq \frac{1}{(\log\rho)^2}
\int_{B_1\setminus B_\rho}(\log|x|)^2\,dx
\int_{\R^2\setminus B_1}\frac{1}{|z|^{2+2s}}\,dz \\
&\leq \frac{C}{(\log\rho)^2}
\int_{B_1\setminus B_\rho}(\log|x|)^2\,dx \\
&\le\frac{C}{(\log\rho)^2}\longrightarrow 0
\end{align*}
as $\rho \searrow 0$.

Now from \eqref{disuglog}, used here with~$|y|=1$, we have
\begin{align*}
\iint_{(B_1\setminus B_\rho)\times (\Omega\setminus B_1)}
\frac{|\varphi(x)-\varphi(y)|^2}{|x-y|^{2+2s}}\,dx\,dy
\,&=
\frac{1}{(\log\rho)^2}
\iint_{(B_1\setminus B_\rho)\times (\Omega\setminus B_1)}
\frac{(\log|x|)^2}{|x-y|^{2+2s}}\,dx\,dy \\
&\leq \frac{1}{(\log\rho)^2}
\iint_{(B_1\setminus B_\rho)\times (\Omega\setminus B_1)}
\frac{(1-|x|)^2}{|x|^2|x-y|^{2+2s}}\,dx\,dy \\
&\leq \frac{1}{(\log\rho)^2}
\int_{B_1\setminus B_\rho}\frac{1}{|x|^2}\,dx
\int_{B_{R+1}}|z|^{-2s}\,dz \\
&\leq \frac{C}{(\log\rho)^2}
\int_{B_1\setminus B_\rho}\frac{1}{|x|^2}\,dx
=-\frac{C}{\log\rho} \longrightarrow 0
\end{align*}
as $\rho \searrow 0$, where we took~$R>2$ sufficiently
large such that~$\Omega \subset B_R$.

In addition, utilizing again~\eqref{disuglog}, we first notice that
\begin{align*}
\iint_{(B_1\setminus B_\rho)\times (B_1\setminus B_\rho)}
\frac{|\varphi(x)-\varphi(y)|^2}{|x-y|^{2+2s}}\,dx\,dy
\,&=\frac1{(\log\rho)^2}
\iint_{(B_1\setminus B_\rho)\times (B_1\setminus B_\rho)}
\frac{(\log|x|-\log|y|)^2}{|x-y|^{2+2s}}\,dx\,dy\\&
=\frac2{(\log\rho)^2}\iint_{(B_1\setminus B_\rho)\times (B_1\setminus B_\rho)
\atop |x|\leq |y|}
\frac{(\log|x|-\log|y|)^2}{|x-y|^{2+2s}}\,dx\,dy.
\\&\leq \frac{2}{(\log\rho)^2}
\iint_{(B_1\setminus B_\rho)\times (B_1\setminus B_\rho)
\atop |x|\leq |y|}
\frac{|x-y|^2}{|x|^2|x-y|^{2+2s}}\,dx\,dy \\
&\leq \frac{2}{(\log\rho)^2}
\int_{B_1\setminus B_\rho}\frac{1}{|x|^2}\,dx
\int_{B_2}|z|^{-2s}\,dz \\
&\leq \frac{C}{(\log\rho)^2}
\int_{B_1\setminus B_\rho}\frac{1}{|x|^2}\,dx
=-\frac{C}{\log\rho}\longrightarrow 0
\end{align*}
as $\rho \searrow 0$, which concludes the proof.
\end{proof}

\begin{lem}\label{n2denomphi}
Let $n=2$ and~$\varphi$ be as in~\eqref{vARPHI2}. Then
\[ \lim_{\rho\searrow0}\;
\overline{m}\int_D \varphi^2\,dx-\underline{m}\int_{\Omega\setminus D} \varphi^2\,dx
=
-
\frac{\underline{m}(\underline{m}+m_0)|\Omega|}{m_0}>0
.\]
\end{lem}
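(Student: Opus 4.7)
The proof will closely parallel that of Lemma~\ref{denomphi}, adapting the computations to the logarithmic profile of~$\varphi$ used in dimension~$n=2$. My plan is to compute the two terms in the statement separately and then pass to the limit as~$\rho\searrow0$, invoking the equivalence (stated in~\eqref{EQUIVARHO} for the case~$n\ge3$, and valid in the same way here since $D=B_\rho$ satisfies~\eqref{misuraD} via~$\pi\rho^2=\frac{\underline{m}+m_0}{\underline{m}+\overline{m}}|\Omega|$) between~$\overline{m}\nearrow+\infty$ and~$\rho\searrow0$.

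For the first term, since~$\varphi\equiv c_\star+1$ on~$D=B_\rho$, I would compute explicitly
\[
\overline{m}\int_D\varphi^2\,dx=\overline{m}(c_\star+1)^2|B_\rho|=\overline{m}\,(c_\star+1)^2\,\frac{\underline{m}+m_0}{\underline{m}+\overline{m}}\,|\Omega|,
\]
using~\eqref{misuraD}. Plugging in~$c_\star+1=-\underline{m}/m_0$ (from~\eqref{VECC7}) and letting~$\overline{m}\nearrow+\infty$ yields the limiting value~$\frac{\underline{m}^2(\underline{m}+m_0)}{m_0^2}|\Omega|$.

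For the second term, I would split the integral over~$\Omega\setminus D=(\Omega\setminus B_1)\cup(B_1\setminus B_\rho)$. On~$\Omega\setminus B_1$ the integrand equals~$c_\star^2$, so this contribution is constant in~$\rho$. On~$B_1\setminus B_\rho$, the key observation is the uniform bound
\[
\left|\frac{\log|x|}{\log\rho}\right|\chi_{B_1\setminus B_\rho}(x)\le1,
\]
together with the pointwise limit~$\log|x|/\log\rho\to0$ as~$\rho\searrow0$ for each fixed~$x\in B_1\setminus\{0\}$. By the Dominated Convergence Theorem, $\int_{B_1\setminus B_\rho}\varphi^2\,dx\to c_\star^2|B_1|$, and therefore~$\underline{m}\int_{\Omega\setminus D}\varphi^2\,dx\to\underline{m}\,c_\star^2|\Omega|=\frac{\underline{m}(\underline{m}+m_0)^2}{m_0^2}|\Omega|$.

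Combining the two limits and factoring gives
\[
\frac{\underline{m}(\underline{m}+m_0)|\Omega|}{m_0^2}\bigl[\underline{m}-(\underline{m}+m_0)\bigr]=-\frac{\underline{m}(\underline{m}+m_0)|\Omega|}{m_0},
\]
which is strictly positive since~$m_0\in(-\underline{m},0)$. There is no real obstacle here; the only point that differs from Lemma~\ref{denomphi} is the justification of the dominated convergence step, which relies on the boundedness of~$\log|x|/\log\rho$ rather than on the bound~\eqref{quella} used in the power case.
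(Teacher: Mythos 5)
Your proof is correct and follows essentially the same route as the paper's: compute the contribution over $D=B_\rho$ exactly, split $\Omega\setminus D$ into $\Omega\setminus B_1$ and $B_1\setminus B_\rho$, pass to the limit via the Dominated Convergence Theorem, and simplify. The only cosmetic difference is that you make explicit the domination bound $|\log|x|/\log\rho|\le1$ on $B_1\setminus B_\rho$, which the paper uses implicitly here (and displays as~\eqref{n4rfgbgbx8ougc93tyevg} in the analogous one-dimensional lemma).
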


\begin{proof}
By~\eqref{vARPHI2} and~\eqref{DDEFVB},
and recalling~\eqref{misuraD},
we have that 
\[
\overline{m}\int_D \varphi^2\,dx=
\overline{m}(c_\star+1)^2 |B_\rho|=
\overline{m}(c_\star+1)^2 \;\frac{\underline{m}+m_0}{\underline{m}+\overline{m}}\;|\Omega|.
\]
Hence, in light of~\eqref{VECC7} and~\eqref{EQUIVARHO},
$$ \lim_{\rho\searrow0}
\overline{m}\int_D \varphi^2\,dx=
\lim_{\overline{m}\nearrow+\infty}
\overline{m}\left(-\frac{\underline{m}+m_0}{m_0}+1\right)^2 \;\frac{\underline{m}+m_0}{\underline{m}+\overline{m}}\;|\Omega|
=
\left(-\frac{\underline{m}+m_0}{m_0}+1\right)^2 (\underline{m}+m_0)\,|\Omega|
.$$
Moreover, by~\eqref{VECC7} and the Dominated Convergence Theorem,
\begin{eqnarray*}&&
\underline{m}\int_{\Omega\setminus D} \varphi^2\,dx=
\underline{m}\int_{B_1\setminus B_\rho} \left(c_\star+\frac{\log|x|}{\log\rho}\right)^2\,dx
+
\underline{m}\, c_\star^2\,|\Omega\setminus B_1|
\\ &&\qquad\longrightarrow
\underline{m}\int_{B_1} \left(
-\frac{\underline{m}+m_0}{m_0}
\right)^2\,dx
+
\underline{m}\,\left(-\frac{\underline{m}+m_0}{m_0}\right)^2\,|\Omega\setminus B_1|
\\&&=
\underline{m}\,\left(-\frac{\underline{m}+m_0}{m_0}\right)^2\,|\Omega|,\end{eqnarray*}
as $\rho \searrow 0$.

As a result,
\begin{eqnarray*}
&&
\lim_{\rho\searrow0}\;
\overline{m}\int_D \varphi^2\,dx-\underline{m}\int_{\Omega\setminus D} \varphi^2\,dx
=\left[
\left(-\frac{\underline{m}+m_0}{m_0}+1\right)^2 (\underline{m}+m_0)
-\underline{m}\,\left(-\frac{\underline{m}+m_0}{m_0}\right)^2\right]
\,|\Omega|\\&&\qquad=
\left[
\underline{m}^2(\underline{m}+m_0)-(\underline{m}+m_0)^2\underline{m}
\right]\,\frac{|\Omega|}{m_0^2}=
\left[
\underline{m}-(\underline{m}+m_0)
\right]\,\frac{\underline{m}(\underline{m}+m_0)|\Omega|}{m_0^2}\\
&&\qquad=-
\frac{\underline{m}(\underline{m}+m_0)|\Omega|}{m_0}
,\end{eqnarray*}
which is positive, since~$m_0\in(-\underline{m},0)$.
\end{proof}

With this preliminary work, we can complete the
proof of Theorem~\ref{msopratoinfty} in dimension~$n=2$,
by arguing as follows:

\begin{proof}[Proof of Theorem \ref{msopratoinfty} when $n=2$]
We use the function~$\varphi$ in~\eqref{vARPHI2} and
the resource~$m:=\overline{m}\chi_D-\underline{m}\chi_{\Omega\setminus D}$,
with~$D$ as in~\eqref{DDEFVB}, as competitors 
in the minimization problem in~\eqref{LAMSPOTTP}.
In this way, we find that
\begin{equation}\label{n2infmsopra}
\underline{\lambda}(\overline{m},\underline{m},m_0)\leq 
\frac{\displaystyle\alpha\int_\Omega |\nabla \varphi|^2\,dx+
\beta\iint_\Q \frac{|\varphi(x)-\varphi(y)|^2}{|x-y|^{2+2s}}\,dx\,dy}
{\displaystyle\overline{m}\int_D \varphi^2\,dx-
\underline{m}\int_{\Omega\setminus D} \varphi^2\,dx}.
\end{equation}
{F}rom Lemmata~\ref{n2gradphito0} and~\ref{n2gradsphito0},
we have that
$$ \lim_{\rho\searrow0}
\alpha\int_\Omega |\nabla \varphi|^2\,dx+
\beta\iint_\Q \frac{|\varphi(x)-\varphi(y)|^2}{|x-y|^{2+2s}}\,dx\,dy=0.$$
Combining this with 
Lemma~\ref{n2denomphi} and~\eqref{EQUIVARHO}, we obtain
the desired result in Theorem~\ref{msopratoinfty}.
\end{proof} 

We now focus on the proof of Theorem \ref{msottotoinfty}
when~$n=2$. For this, we introduce the
function
\begin{equation}\label{OASDHCV77576887JGJ}
\psi(x):=
\begin{dcases}
c_\sharp-1  & {\mbox{if }}\;x\in B_\rho,
\\
c_\sharp-\displaystyle\frac{\log|x|}{\log\rho}  & {\mbox{if }}\;
x\in B_1\setminus B_\rho,
\\
c_\sharp  &{\mbox{if }}\;x\in \R^2\setminus B_1,
\end{dcases}
\end{equation}
where~$c_\sharp$ is the constant introduced in~\eqref{sharp}.
Moreover, we set
\begin{equation}\label{o407586ouhjdfkj}
D:=\Omega\setminus B_\rho.\end{equation} 

The proofs of the next two results follow directly from 
Lemmata~\ref{n2gradphito0} and~\ref{n2gradsphito0}, since,
comparing~\eqref{vARPHI2}
and~\eqref{OASDHCV77576887JGJ},
$$|\nabla \varphi|^2= |\nabla \psi|^2\qquad {\mbox{ and }}\qquad 
|\varphi(x)-\varphi(y)|^2=|\psi(x)-\psi(y)|^2.$$

\begin{lem}\label{n2gradpsito0}
Let $n=2$ and~$\psi$ be as in~\eqref{OASDHCV77576887JGJ}. Then 
\[
\lim_{\rho\searrow0}\int_\Omega |\nabla \psi|^2= 0.
\]
\end{lem}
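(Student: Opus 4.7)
The plan is to observe that $\psi$ and $\varphi$ differ only by a sign and an additive constant, which reduces the statement immediately to Lemma~\ref{n2gradphito0}. First I would compare the piecewise definitions in~\eqref{vARPHI2} and~\eqref{OASDHCV77576887JGJ}: in $B_\rho$ one has $\varphi-c_\star=1$ and $\psi-c_\sharp=-1$; in the annulus $B_1\setminus B_\rho$ one has $\varphi-c_\star=\log|x|/\log\rho$ and $\psi-c_\sharp=-\log|x|/\log\rho$; and in $\R^2\setminus B_1$ both quantities vanish. These three identities combine into the single pointwise relation
\[
\psi(x)=(c_\star+c_\sharp)-\varphi(x)\qquad\mbox{for all }x\in\R^2.
\]

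Both $\varphi$ and $\psi$ are continuous across $\partial B_\rho$ and $\partial B_1$ (a routine matching check on the one-sided limits), so they both belong to $W^{1,\infty}_{\mathrm{loc}}(\R^2)$ and the displayed identity holds in the strong sense. Differentiating gives $\nabla\psi=-\nabla\varphi$ almost everywhere, hence $|\nabla\psi(x)|^2=|\nabla\varphi(x)|^2$ pointwise in $\Omega$. Integrating this equality over $\Omega$ yields
\[
\int_\Omega|\nabla\psi(x)|^2\,dx=\int_\Omega|\nabla\varphi(x)|^2\,dx,
\]
and the conclusion follows by passing to the limit $\rho\searrow0$ and invoking Lemma~\ref{n2gradphito0}.

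There is essentially no technical obstacle: the content is purely algebraic, and all analytic work has already been done in Lemma~\ref{n2gradphito0}, whose decay is driven by the elementary computation $\int_{B_1\setminus B_\rho}|x|^{-2}\,dx=-2\pi\log\rho$, combining with the $(\log\rho)^{-2}$ prefactor to give the $O(1/|\log\rho|)$ bound. This mirrors the same reduction that the paper exploited to deduce Lemma~\ref{gradpsito0} from Lemma~\ref{gradphito0} in higher dimensions, so for consistency I would present the proof in one or two short sentences rather than repeating the pointwise gradient computation.
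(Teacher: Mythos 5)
Your proposal is correct and matches the paper's argument exactly: the paper likewise observes that $|\nabla\psi|^2=|\nabla\varphi|^2$ (since $\psi$ and $\varphi$ differ by a sign and the additive constant $c_\star+c_\sharp$) and deduces the lemma directly from Lemma~\ref{n2gradphito0}. The explicit identity $\psi=(c_\star+c_\sharp)-\varphi$ you write down is a clean way to justify the pointwise gradient equality.
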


\begin{lem}\label{n2gradspsito0}
Let $n=2$ and~$\psi$ be as in~\eqref{OASDHCV77576887JGJ}. Then
\[ \lim_{\rho\searrow0}
\iint_\Q \frac{|\psi(x)-\psi(y)|^2}{|x-y|^{2+2s}}\,dx\,dy = 0.
\]
\end{lem}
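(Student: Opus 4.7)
The plan is to observe that the auxiliary function $\psi$ in~\eqref{OASDHCV77576887JGJ} differs from $-\varphi$, with $\varphi$ as in~\eqref{vARPHI2}, only by an additive constant. Indeed, comparing the piecewise definitions, for every $x \in \R^2$ one has
\[
\psi(x) = -\varphi(x) + (c_\star + c_\sharp),
\]
where $c_\star$ is as in~\eqref{VECC7} and $c_\sharp$ is as in~\eqref{sharp}.

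The key consequence of this identity is that pointwise differences are preserved in absolute value: for every $x,y \in \R^2$,
\[
|\psi(x) - \psi(y)|^2 = |\varphi(x) - \varphi(y)|^2,
\]
since the constant $c_\star + c_\sharp$ cancels in the subtraction and the overall sign disappears under squaring. Substituting this equality into the Gagliardo double integral on $\Q$, one obtains
\[
\iint_\Q \frac{|\psi(x)-\psi(y)|^2}{|x-y|^{2+2s}}\,dx\,dy
= \iint_\Q \frac{|\varphi(x)-\varphi(y)|^2}{|x-y|^{2+2s}}\,dx\,dy.
\]

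The conclusion then follows at once by passing to the limit $\rho \searrow 0$ and invoking Lemma~\ref{n2gradsphito0}, which supplies precisely the vanishing of the right-hand side. There is no real obstacle here: all the delicate dyadic/logarithmic estimates required for the planar case were already carried out for $\varphi$ in the proof of Lemma~\ref{n2gradsphito0}, and the translation and reflection symmetries of the Gagliardo seminorm make the transfer from $\varphi$ to $\psi$ automatic. The only thing worth double-checking is that the identification $\psi = -\varphi + (c_\star + c_\sharp)$ holds on all three pieces $B_\rho$, $B_1 \setminus B_\rho$, and $\R^2 \setminus B_1$, which is immediate from inspection of the defining formulas.
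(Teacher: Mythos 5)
Your proof is correct and matches the paper's argument exactly: both rely on the identity $\psi = -\varphi + (c_\star + c_\sharp)$, which yields $|\psi(x)-\psi(y)| = |\varphi(x)-\varphi(y)|$ for all $x,y$, so the Gagliardo seminorms coincide and the conclusion follows from Lemma~\ref{n2gradsphito0}.
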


\begin{lem}\label{n2denompsi}
Let $n=2$ and~$\psi$ be as in~\eqref{OASDHCV77576887JGJ}. Then
\[
\lim_{\rho\searrow0}\overline{m}
\int_D \psi^2\,dx-\underline{m}\int_{\Omega\setminus D} \psi^2\,dx 
= \frac{\overline{m}\;(m_0-\overline{m})}{m_0}|\Omega|>0. 
\]
\end{lem}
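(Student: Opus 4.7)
The plan is to mimic directly the proof of Lemma~\ref{denompsi} (the corresponding statement for $n\ge3$), replacing the polynomial profile used there by the logarithmic profile of $\psi$ defined in~\eqref{OASDHCV77576887JGJ}. The key observation, which plays the role of the bound~\eqref{quella} in the previous argument, is that
\[
\left|\frac{\log|x|}{\log\rho}\right|\chi_{B_1\setminus B_\rho}(x)\le 1
\]
for every $\rho\in(0,1)$, which will furnish the domination needed to apply the Dominated Convergence Theorem on the annular region.

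Concretely, I would first split $D=\Omega\setminus B_\rho$ as $(\Omega\setminus B_1)\cup(B_1\setminus B_\rho)$. On $\Omega\setminus B_1$ the function $\psi$ is constantly equal to $c_\sharp$, so that contribution is immediately $\overline m\,c_\sharp^2\,|\Omega\setminus B_1|$. On $B_1\setminus B_\rho$, using the uniform bound displayed above and the pointwise limit $\log|x|/\log\rho\to 0$ for each fixed $x\ne0$, the Dominated Convergence Theorem yields
\[
\lim_{\rho\searrow0}\overline{m}\int_{B_1\setminus B_\rho}\left(c_\sharp-\frac{\log|x|}{\log\rho}\right)^{\!2}dx=\overline{m}\,c_\sharp^2\,|B_1|,
\]
so that altogether $\overline m\int_D\psi^2\,dx\to\overline m\,c_\sharp^2\,|\Omega|$.

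For the second piece, on $\Omega\setminus D=B_\rho$ the function $\psi$ equals the constant $c_\sharp-1$, and recalling~\eqref{misuraD} in the form $|\Omega\setminus D|=\frac{\overline{m}-m_0}{\underline{m}+\overline{m}}|\Omega|$ gives
\[
\underline{m}\int_{\Omega\setminus D}\psi^2\,dx=\underline{m}(c_\sharp-1)^2\,\frac{\overline{m}-m_0}{\underline{m}+\overline{m}}\,|\Omega|.
\]
Invoking~\eqref{CHIRO} to convert $\rho\searrow0$ into $\underline{m}\nearrow+\infty$, this quantity tends to $(c_\sharp-1)^2(\overline m-m_0)\,|\Omega|$. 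Substituting $c_\sharp=\frac{m_0-\overline m}{m_0}$ from~\eqref{sharp} and simplifying (exactly as in the final algebraic step of Lemma~\ref{denompsi}) produces
\[
\lim_{\rho\searrow0}\overline{m}\int_D\psi^2\,dx-\underline{m}\int_{\Omega\setminus D}\psi^2\,dx=\frac{\overline{m}(m_0-\overline{m})}{m_0}|\Omega|,
\]
and the positivity follows from $m_0<0<\overline{m}$.

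There is no real obstacle: the only subtle point is the passage to the limit on the annulus $B_1\setminus B_\rho$, which is harmless thanks to the uniform bound stated above. Everything else is the bookkeeping already performed in Lemma~\ref{denompsi}.
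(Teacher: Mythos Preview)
Your proposal is correct and follows essentially the same approach as the paper's own proof: the same splitting of $D$ into $(\Omega\setminus B_1)\cup(B_1\setminus B_\rho)$, the same uniform bound $\bigl|\log|x|/\log\rho\bigr|\le 1$ on the annulus to justify Dominated Convergence, the same use of~\eqref{misuraD} and~\eqref{CHIRO} for the $\Omega\setminus D$ piece, and the same final algebra via~\eqref{sharp}. Only the order of presentation differs slightly.
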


\begin{proof}
By~\eqref{OASDHCV77576887JGJ}, \eqref{o407586ouhjdfkj}
and~\eqref{sharp}, we see that
\begin{equation}\label{dertuigri}
\underline{m}\int_{\Omega\setminus D} \psi^2\,dx
=\underline{m}\,(c_\sharp-1)^2|\Omega\setminus D|=
\underline{m}\,\left(\frac{m_0-\overline{m}}{m_0}-1\right)^2
|\Omega\setminus D|.
\end{equation}
Also, recalling~\eqref{misuraD},
we see that
$$ |\Omega\setminus D|= \frac{\overline{m}-m_0}{\underline{m}
+\overline{m}}|\Omega|.$$
Plugging this information into~\eqref{dertuigri}, we conclude that
\begin{equation}\label{irtu5yfjbnfdjm}\begin{split}
\underline{m}\int_{\Omega\setminus D} \psi^2\,dx=
\underline{m}\,\left(\frac{m_0-\overline{m}}{m_0}-1\right)^2
\frac{\overline{m}-m_0}{\underline{m}
+\overline{m}}|\Omega|
=
\frac{\underline{m}\;\overline{m}^2\;(\overline{m}-m_0)}{m_0^2\;
(\underline{m}
+\overline{m})}|\Omega|.
\end{split}\end{equation}
In addition, by the Dominated Convergence Theorem and~\eqref{CHIRO},
\begin{eqnarray*}
\lim_{\rho\searrow0}
\int_D \psi^2\,dx&=&\lim_{\rho\searrow0}
\int_{B_1\setminus B_\rho}\left(c_\sharp-\frac{
\log|x|}{\log\rho}\right)^2\,dx +
\int_{\Omega\setminus B_1} c_\sharp^2\,dx\\&=&
c_\sharp^2\;|B_1| +c_\sharp^2\;|\Omega\setminus B_1|\\&=&
c_\sharp^2\;|\Omega|\\
&=&\frac{(m_0-\overline{m})^2}{m_0^2}\;|\Omega|.
\end{eqnarray*}
This and~\eqref{irtu5yfjbnfdjm} give that
\begin{eqnarray*}&&
\lim_{\rho\searrow 0}
\overline{m}\int_D \psi^2\,dx-\underline{m}\int_{\Omega\setminus D} \psi^2\,dx 
= \frac{\overline{m}\;(m_0-\overline{m})^2}{m_0^2}|\Omega|
-\lim_{\underline{m}\nearrow+\infty}
\frac{\underline{m}\;\overline{m}^2\;(\overline{m}-m_0)}{m_0^2\;
(\underline{m}
+\overline{m})}|\Omega|\\&&\qquad
=\frac{\overline{m}\;(m_0-\overline{m})^2}{m_0^2}|\Omega|
-\frac{\overline{m}^2\;(\overline{m}-m_0)}{m_0^2}|\Omega|
=\frac{\overline{m}\;(\overline{m}-m_0)}{m_0^2}
\big( (\overline{m}-m_0)-\overline{m}
\big)|\Omega|\\&&\qquad=-
\frac{\overline{m}\;(\overline{m}-m_0)}{m_0}|\Omega|,
\end{eqnarray*}
which is positive since $m_0<0<\overline{m}$.
\end{proof}

With this preliminary work, we are in the position of completing
the proof of Theorem~\ref{msottotoinfty}
in the case~$n=2$.

\begin{proof}[Proof of Theorem~\ref{msottotoinfty} when $n=2$]
We use the function~$\psi$ in~\eqref{OASDHCV77576887JGJ}
and the resource~$m:=\overline{m}\chi_{D}-\underline{m}\chi_{\Omega
\setminus D}$, with~$D$ as in~\eqref{o407586ouhjdfkj}, as a competitor
in the minimization problem in~\eqref{LAMSPOTTP}, thus obtaining that
\begin{equation}\label{n2infmsotto}
\underline{\lambda}(\overline{m},\underline{m},m_0)\leq 
\frac{\displaystyle\alpha\int_\Omega |\nabla \psi|^2\,dx+
\beta\iint_\Q \frac{|\psi(x)-\psi(y)|^2}{|x-y|^{2+2s}}\,dx\,dy}
{\displaystyle\overline{m}
\int_D \psi^2\,dx-\underline{m}\int_{\Omega\setminus D} \psi^2\,dx}.
\end{equation}
{F}rom
Lemmata~\ref{n2gradpsito0} and~\ref{n2gradspsito0},
and recalling~\eqref{CHIRO}, we have that
$$ \lim_{\underline{m}\nearrow+\infty}
\alpha\int_\Omega |\nabla \psi|^2\,dx+
\beta\iint_\Q \frac{|\psi(x)-\psi(y)|^2}{|x-y|^{2+2s}}\,dx\,dy=0.$$
This, together with Lemma~\ref{n2denompsi}, gives the desired result.
\end{proof} 

\section{Probabilistic motivations
for the superposition of elliptic operators with different orders}\label{APPEB}

The goal of this appendix is to provide
a natural framework in which sums of local/nonlocal
operators naturally arise. Though the argument provided
can be extended to more general superpositions of operators,
for the sake of concreteness we limit ourselves to the operator
in~\eqref{problogis}.

For this, extending a presentation in~\cite{MR2584076},
we consider a discrete stochastic process
on the lattice~$h\Z^n$, with time increment~$\tau$.
The space scale~$h>0$ and the time step~$\tau>0$ will be conveniently chosen to
be infinitesimal in what follows.

We denote by~${\mathcal{B}}_0:=\{e_1,\dots,e_n\}$ the standard
Euclidean basis of~$\R^n$, we let~$
{\mathcal{B}}:={\mathcal{B}}_0\cup(-
{\mathcal{B}}_0)=
\{e_1,\dots,e_n,-e_1,\dots,-e_n\}$,
and
we suppose that a particle moves on~$h\Z^n$, and, given~$p\in[0,1]$,
$\lambda\in\N$, 
and~$s\in(0,1)$, its probability
of jumping from a point~$hk$ to~$h\tilde k$ (with~$k$, $\tilde k\in\Z^n$)
is given by
\begin{equation}\label{PROBA} {\mathcal{P}}(k,\tilde k):=
\frac{p}{c\,|k-\tilde k|^{n+2s}}+\frac{(1-p)\,{\mathcal{U}}(k-\tilde k)}{2n},\end{equation}
where
$$ c:=\sum_{j\in\Z^n\setminus\{0\}}\frac{1}{|j|^{n+2s}}$$
and
$$ {\mathcal{U}}(j):=\begin{dcases}
1 & {\mbox{ if }}j\in \lambda{\mathcal{B}}=\{\lambda e_1,\dots,\lambda e_n,
-\lambda e_1,\dots,-\lambda e_n
\},\\
0 &{\mbox{ otherwise}}.
\end{dcases}
$$
We point out that~${\mathcal{P}}(\tilde k,k)={\mathcal{P}}(k,\tilde k)=
{\mathcal{P}}(k-\tilde k,0)={\mathcal{P}}(\tilde k- k,0)$, and that
\begin{equation}\label{SOM1}
\begin{split}&
\sum_{j\in\Z^n\setminus\{0\}}
{\mathcal{P}}(j,0)\,
=\sum_{j\in\Z^n\setminus\{0\}}\left(
\frac{p}{c\,|j|^{n+2s}}+\frac{(1-p)\,{\mathcal{U}}(j)}{2n}\right)
\\&\qquad=p+\sum_{j\in\lambda{\mathcal{B}}}
\frac{(1-p)}{2n}
=p+(1-p)=1.
\end{split}
\end{equation}
The heuristic interpretation of the probability described in~\eqref{PROBA}
is that, at any time step, the particle has a probability~$p$
of following a jump process,
and a probability~$1-p$ of following a classical random walk.
Indeed, with probability~$p$, the particle experiences
a jump governed by the power law~$\frac{1}{c|j|^{n+2s}}$,
while with probability~$1-p$ it walks to one of the closest
neighbors scaled by the additional parameter~$\lambda$ (being all 
closest
neighbors equally probable, and being the probability
for the particle of not moving at all equal to zero).

Therefore, given~$x\in h\Z^n$ and~$t\in\tau\N$,
we define~$u(x,t)$ to be the probability density
of the particle to be at the point~$x$ at the time~$t$,
and we write that the probability of being somewhere,
say at~$x$,
at the subsequent time step is equal to
the superposition of the probabilities of being at another 
point of the lattice,
say~$x+hj$,
at the previous time step times the probability of going
from~$x+hj$ to~$x$, namely, letting~$k:=\frac{x}h\in\Z^n$,
\begin{eqnarray*}
u(x,t+\tau)=\sum_{j\in\Z^n\setminus\{0\}}
u(x+hj,t)\,{\mathcal{P}}(k,k+j)
=\sum_{j\in\Z^n\setminus\{0\}}
u(x+hj,t)\,{\mathcal{P}}(j,0).
\end{eqnarray*}
As a result, in view of~\eqref{SOM1},
\begin{equation}\label{CA-0}
\begin{split}&
u(x,t+\tau)-u(x,t)\\=\;&\sum_{j\in\Z^n\setminus\{0\}}
\Big(u(x+hj,t)-u(x,t)\Big)\,{\mathcal{P}}(j,0)\\=\;&
\sum_{j\in\Z^n\setminus\{0\}}
\Big(u(x+hj,t)-u(x,t)\Big)\,\left(
\frac{p}{c\,|j|^{n+2s}}+\frac{(1-p)\,{\mathcal{U}}(j)}{2n}\right)\\
=\;&\frac{p}c\,
\sum_{j\in\Z^n\setminus\{0\}}
\frac{u(x+hj,t)-u(x,t)}{|j|^{n+2s}}+\frac{1-p}{2n}\,
\sum_{j\in\lambda{\mathcal{B}}}
\big(u(x+hj,t)-u(x,t)\big)\\=\;&\frac{p}{2c}\,
\sum_{j\in\Z^n\setminus\{0\}}
\frac{u(x+hj,t)+u(x-hj,t)-2u(x,t)}{|j|^{n+2s}}\\&\qquad+\frac{1-p}{4n}\,
\sum_{j\in\lambda{\mathcal{B}}}
\big(u(x+hj,t)+u(x-hj,t)-2u(x,t)\big).
\end{split}\end{equation}
Now we consider two specific situations, namely the one in
which
\begin{equation}\label{CA-01}
\tau:=h^{2s},\qquad\lambda:=h^{s-1}\in\N
\end{equation}
and~$p$ is independent on the time step, and the one in which
\begin{equation}\label{CA-02}
\tau=h^{2},\qquad p:=\alpha h^{2-2s},
\qquad{\mbox{and}}\qquad\lambda:=1,
\end{equation}
for a given~$\alpha>0$, independent on the time step.

We observe that the case in~\eqref{CA-01} corresponds to
having the closest neighborhood walk scaled by a suitably large factor
(for small~$h$), while the case in~\eqref{CA-02} corresponds to
having the usual notion of closest neighborhood random walk,
with the probability~$1-p$ that the particle follows it being large (for small~$h$).

In case~\eqref{CA-01}, 
we consider~$N\in\N$ and define~$h:=N^{\frac1{s-1}}$.
In this way, taking~$N\nearrow+\infty$, one has that~$h\searrow0$,
and thus
we deduce from~\eqref{CA-0}
that
\begin{equation}\label{CA-3}
\begin{split}\frac{
u(x,t+\tau)-u(x,t)}\tau\,&=\,\frac{ph^n}{2c}\,
\sum_{j\in\Z^n\setminus\{0\}}
\frac{u(x+hj,t)+u(x-hj,t)-2u(x,t)}{|hj|^{n+2s}}\\&\qquad+\frac{1-p}{4n}\,
\sum_{j\in h^{s-1}{\mathcal{B}}}
\frac{u(x+hj,t)+u(x-hj,t)-2u(x,t)}{h^{2s}}
.
\end{split}\end{equation}
With a formal Taylor expansion, we observe that
$$ u(x+hj,t)+u(x-hj,t)-2u(x,t)=
h^2 D^2_x u(x,t) j\cdot j+O(h^3),$$
therefore the latter sum in~\eqref{CA-3} can be written as
\begin{eqnarray*}&&
\sum_{j\in h^{s-1}{\mathcal{B}}}
h^{2(1-s)} D^2_x u(x,t) j\cdot j+O(h^{3-2s})=
\sum_{j\in N{\mathcal{B}}}
D^2_x u(x,t) \frac{j}{N}\cdot \frac{j}{N}+O\left(\frac1{N^{\frac{3-2s}{1-s}}}\right)\\&&\qquad=
\sum_{i\in {\mathcal{B}}}
D^2_x u(x,t) i\cdot i+o(1)=2\Delta u(x,t)+o(1),
\end{eqnarray*}
as $N\nearrow+\infty$ (i.e., as~$h\searrow0$).

Hence, recognizing a Riemann sum in the first term of the right hand side
of~\eqref{CA-3}, taking the limit as~$h\searrow0$ (that is~$\tau\searrow0$),
we formally conclude that
\begin{eqnarray*}&&
\partial_tu(x,t)=\,\frac{p}{2c}\,\int_{\R^n}
\frac{u(x+y,t)+u(x-y,t)-2u(x,t)}{|y|^{n+2s}}\,dy
+\frac{1-p}{2n}\,\Delta u(x,t)
,\end{eqnarray*}
which is precisely the heat equation associated to the operator in~\eqref{problogis}
(up to defining correctly the structural constants).

A similar argument can be carried out in case~\eqref{CA-02}.
Indeed, in this situation one deduces from~\eqref{CA-0}
that
\begin{eqnarray*}\frac{
u(x,t+\tau)-u(x,t)}\tau&=&\frac{\alpha h^n}{2c }\,
\sum_{j\in\Z^n\setminus\{0\}}
\frac{u(x+hj,t)+u(x-hj,t)-2u(x,t)}{|hj|^{n+2s}}\\&&\qquad+\frac{1-\alpha h^{2-2s}}{4n}\,
\sum_{j\in{\mathcal{B}}}
\frac{u(x+hj,t)+u(x-hj,t)-2u(x,t)}{h^2}.
\end{eqnarray*}
Hence, since
\begin{eqnarray*} \sum_{j\in{\mathcal{B}}}
\frac{u(x+hj,t)+u(x-hj,t)-2u(x,t)}{h^2}&=&
\sum_{j\in{\mathcal{B}}}
D^2_x u(x+hj,t)j\cdot j+O(h)\\&=&
2\Delta u(x,t)+o(1)\end{eqnarray*}
as~$h\searrow0$, we conclude that in this case
$$ \partial_tu(x,t)=
\frac{\alpha }{2c}\,\int_{\R^n}
\frac{u(x+y,t)+u(x-y,t)-2u(x,t)}{|y|^{n+2s}}\,dy+\frac{1}{2n}\,\Delta u(x,t),$$
which, once again, constitutes the
parabolic equation associated to the operator in~\eqref{problogis}
(up to renaming the structural constants).

\begin{bibdiv}
\begin{biblist}

\bib{ABATANGELO}{article}{
author={Abatangelo, Nicola},
author={Cozzi, Matteo},
        title = {An elliptic boundary value problem with fractional 
nonlinearity},
      journal = {arXiv e-prints},
        date = {2020},
          eid = {arXiv:2005.09515},
        pages = {arXiv:2005.09515},
archivePrefix = {arXiv},
       eprint = {2005.09515},
       adsurl = {https://ui.adsabs.harvard.edu/abs/2020arXiv200509515A},
      adsnote = {Provided by the SAO/NASA Astrophysics Data System}
}

\bib{MR3967804}{article}{
   author={Abatangelo, Nicola},
   author={Valdinoci, Enrico},
   title={Getting acquainted with the fractional Laplacian},
   conference={
      title={Contemporary research in elliptic PDEs and related topics},
   },
   book={
      series={Springer INdAM Ser.},
      volume={33},
      publisher={Springer, Cham},
   },
   date={2019},
   pages={1--105},
   review={\MR{3967804}},
}

\bib{MR3912710}{article}{
   author={Affili, Elisa},
   author={Valdinoci, Enrico},
   title={Decay estimates for evolution equations with classical and
   fractional time-derivatives},
   journal={J. Differential Equations},
   volume={266},
   date={2019},
   number={7},
   pages={4027--4060},
   issn={0022-0396},
   review={\MR{3912710}},
   doi={10.1016/j.jde.2018.09.031},
}

\bib{MR3169773}{article}{
   author={Alfaro, Matthieu},
   author={Coville, J\'{e}r\^{o}me},
   author={Raoul, Ga\"{e}l},
   title={Travelling waves in a nonlocal reaction-diffusion equation as a
   model for a population structured by a space variable and a phenotypic
   trait},
   journal={Comm. Partial Differential Equations},
   volume={38},
   date={2013},
   number={12},
   pages={2126--2154},
   issn={0360-5302},
   review={\MR{3169773}},
   doi={10.1080/03605302.2013.828069},
}

\bib{2019arXiv190702495A}{article}{
       author = {Alibaud, Natha{\"e}l},
author={del Teso, F{\'e}lix},
author={Endal, J{\o}rgen},
author={Jakobsen, Espen R.},
        title = {The Liouville theorem and linear operators satisfying the maximum principle},
      journal = {arXiv e-prints},
date = {2019},
          eid = {arXiv:1907.02495},
        pages = {arXiv:1907.02495},
archivePrefix = {arXiv},
       eprint = {1907.02495},
 primaryClass = {math.AP},
       adsurl = {https://ui.adsabs.harvard.edu/abs/2019arXiv190702495A},
      adsnote = {Provided by the SAO/NASA Astrophysics Data System}
}

\bib{MR2601079}{article}{
   author={Apreutesei, Narcisa},
   author={Bessonov, Nikolai},
   author={Volpert, Vitaly},
   author={Vougalter, Vitali},
   title={Spatial structures and generalized travelling waves for an
   integro-differential equation},
   journal={Discrete Contin. Dyn. Syst. Ser. B},
   volume={13},
   date={2010},
   number={3},
   pages={537--557},
   issn={1531-3492},
   review={\MR{2601079}},
   doi={10.3934/dcdsb.2010.13.537},
}

\bib{MR2911421}{article}{
   author={Barles, Guy},
   author={Chasseigne, Emmanuel},
   author={Ciomaga, Adina},
   author={Imbert, Cyril},
   title={Lipschitz regularity of solutions for mixed integro-differential
   equations},
   journal={J. Differential Equations},
   volume={252},
   date={2012},
   number={11},
   pages={6012--6060},
   issn={0022-0396},
   review={\MR{2911421}},
   doi={10.1016/j.jde.2012.02.013},
}

\bib{MR3194684}{article}{
   author={Barles, Guy},
   author={Chasseigne, Emmanuel},
   author={Ciomaga, Adina},
   author={Imbert, Cyril},
   title={Large time behavior of periodic viscosity solutions for uniformly
   parabolic integro-differential equations},
   journal={Calc. Var. Partial Differential Equations},
   volume={50},
   date={2014},
   number={1-2},
   pages={283--304},
   issn={0944-2669},
   review={\MR{3194684}},
   doi={10.1007/s00526-013-0636-2},
}

\bib{MR2422079}{article}{
   author={Barles, Guy},
   author={Imbert, Cyril},
   title={Second-order elliptic integro-differential equations: viscosity
   solutions' theory revisited},
   journal={Ann. Inst. H. Poincar\'{e} Anal. Non Lin\'{e}aire},
   volume={25},
   date={2008},
   number={3},
   pages={567--585},
   issn={0294-1449},
   review={\MR{2422079}},
   doi={10.1016/j.anihpc.2007.02.007},
}

\bib{MR2095633}{article}{
   author={Bass, Richard F.},
   author={Kassmann, Moritz},
   title={Harnack inequalities for non-local operators of variable order},
   journal={Trans. Amer. Math. Soc.},
   volume={357},
   date={2005},
   number={2},
   pages={837--850},
   issn={0002-9947},
   review={\MR{2095633}},
   doi={10.1090/S0002-9947-04-03549-4},
}

\bib{MR2180302}{article}{
   author={Bass, Richard F.},
   author={Kassmann, Moritz},
   title={H\"{o}lder continuity of harmonic functions with respect to operators
   of variable order},
   journal={Comm. Partial Differential Equations},
   volume={30},
   date={2005},
   number={7-9},
   pages={1249--1259},
   issn={0360-5302},
   review={\MR{2180302}},
   doi={10.1080/03605300500257677},
}

\bib{MR1478921}{book}{
   author={Belgacem, Fethi},
   title={Elliptic boundary value problems with indefinite weights:
   variational formulations of the principal eigenvalue and applications},
   series={Pitman Research Notes in Mathematics Series},
   volume={368},
   publisher={Longman, Harlow},
   date={1997},
   pages={xii+237},
   isbn={0-582-31597-2},
   review={\MR{1478921}},
}

\bib{MR3498523}{article}{
   author={Berestycki, Henri},
   author={Coville, J\'{e}r\^{o}me},
   author={Vo, Hoang-Hung},
   title={Persistence criteria for populations with non-local dispersion},
   journal={J. Math. Biol.},
   volume={72},
   date={2016},
   number={7},
   pages={1693--1745},
   issn={0303-6812},
   review={\MR{3498523}},
   doi={10.1007/s00285-015-0911-2},
}

\bib{Berestycki-Hamel-Roques-1}{article}{
   author={Berestycki, Henri},
   author={Hamel, Fran\c{c}ois},
   author={Roques, Lionel},
   title={Analysis of the periodically fragmented environment model. I.
   Species persistence},
   journal={J. Math. Biol.},
   volume={51},
   date={2005},
   number={1},
   pages={75--113},
   issn={0303-6812},
   review={\MR{2214420}},
   doi={10.1007/s00285-004-0313-3},
}
		
\bib{Berestycki-Hamel-Roques-2}{article}{
   author={Berestycki, Henri},
   author={Hamel, Fran\c{c}ois},
   author={Roques, Lionel},
   title={Analysis of the periodically fragmented environment model. II.
   Biological invasions and pulsating travelling fronts},
   language={English, with English and French summaries},
   journal={J. Math. Pures Appl. (9)},
   volume={84},
   date={2005},
   number={8},
   pages={1101--1146},
   issn={0021-7824},
   review={\MR{2155900}},
   doi={10.1016/j.matpur.2004.10.006},
}

\bib{biagvecc}{article}{
       author = {Biagi, Stefano},
author={Dipierro, Serena},
author={Valdinoci, Enrico},
author={Vecchi, Eugenio},
        title = {Mixed local and nonlocal elliptic operators:
regularity and maximum principles},
      journal = {arXiv e-prints},
date = {2020},
          eid = {arXiv:2005.06907},
        pages = {arXiv:2005.06907},
archivePrefix = {arXiv},
       eprint = {2005.06907},
       adsurl = {https://ui.adsabs.harvard.edu/abs/2020arXiv200506907B},
      adsnote = {Provided by the SAO/NASA Astrophysics Data System}
}

\bib{MR2653895}{article}{
   author={Biswas, Imran H.},
   author={Jakobsen, Espen R.},
   author={Karlsen, Kenneth H.},
   title={Viscosity solutions for a system of integro-PDEs and connections
   to optimal switching and control of jump-diffusion processes},
   journal={Appl. Math. Optim.},
   volume={62},
   date={2010},
   number={1},
   pages={47--80},
   issn={0095-4616},
   review={\MR{2653895}},
   doi={10.1007/s00245-009-9095-8},
}

\bib{PhysRevE.87.063106}{article}{
  author = {Blazevski, Daniel},
author={del-Castillo-Negrete, Diego},
  title = {Local and nonlocal anisotropic transport
in reversed shear magnetic fields: Shearless Cantori and nondiffusive transport},
  journal = {Phys. Rev. E},
  volume = {87},
  issue = {6},
  pages = {063106},
  numpages = {15},
  year = {2013},
  doi = {10.1103/PhysRevE.87.063106},
  url = {https://link.aps.org/doi/10.1103/PhysRevE.87.063106}
}

\bib{MR3639140}{article}{
   author={Bonnefon, Olivier},
   author={Coville, J\'{e}r\^{o}me},
   author={Legendre, Guillaume},
   title={Concentration phenomenon in some non-local equation},
   journal={Discrete Contin. Dyn. Syst. Ser. B},
   volume={22},
   date={2017},
   number={3},
   pages={763--781},
   issn={1531-3492},
   review={\MR{3639140}},
   doi={10.3934/dcdsb.2017037},
}

\bib{MR697382}{book}{
   author={Brezis, Ha\"{\i}m},
   title={Analyse fonctionnelle},
   language={French},
   series={Collection Math\'{e}matiques Appliqu\'{e}es pour la Ma\^{\i}trise. [Collection
   of Applied Mathematics for the Master's Degree]},
   note={Th\'{e}orie et applications. [Theory and applications]},
   publisher={Masson, Paris},
   date={1983},
   pages={xiv+234},
   isbn={2-225-77198-7},
   review={\MR{697382}},
}

\bib{MR3485125}{article}{
   author={Cabr\'{e}, Xavier},
   author={Serra, Joaquim},
   title={An extension problem for sums of fractional Laplacians and 1-D
   symmetry of phase transitions},
   journal={Nonlinear Anal.},
   volume={137},
   date={2016},
   pages={246--265},
   issn={0362-546X},
   review={\MR{3485125}},
   doi={10.1016/j.na.2015.12.014},
}

\bib{CABRE}{article}{
   author={Cabr\'{e}, Xavier},
   author={Dipierro, Serena},
author={Valdinoci, Enrico},
title={The Bernstein technique for integro-differential
equations},
      journal = {preprint},
}

\bib{cadiva}{article}{
   author={Caffarelli, Luis},
   author={Dipierro, Serena},
   author={Valdinoci, Enrico},
   title={A logistic equation with nonlocal interactions},
   journal={Kinet. Relat. Models},
   volume={10},
   date={2017},
   number={1},
   pages={141--170},
   issn={1937-5093},
   review={\MR{3579567}},
   doi={10.3934/krm.2017006},
}

\bib{MR2667633}{article}{
   author={Caffarelli, Luis},
   author={Silvestre, Luis},
   title={Smooth approximations of solutions to nonconvex fully nonlinear
   elliptic equations},
   conference={
      title={Nonlinear partial differential equations and related topics},
   },
   book={
      series={Amer. Math. Soc. Transl. Ser. 2},
      volume={229},
      publisher={Amer. Math. Soc., Providence, RI},
   },
   date={2010},
   pages={67--85},
   review={\MR{2667633}},
   doi={10.1090/trans2/229/05},
}

\bib{MR3051400}{article}{
   author={Caffarelli, Luis},
   author={Valdinoci, Enrico},
   title={A priori bounds for solutions of a nonlocal evolution PDE},
   conference={
      title={Analysis and numerics of partial differential equations},
   },
   book={
      series={Springer INdAM Ser.},
      volume={4},
      publisher={Springer, Milan},
   },
   date={2013},
   pages={141--163},
   review={\MR{3051400}},
   doi={10.1007/978-88-470-2592-9\_10},
}

\bib{Cantrell-Cosner-1}{article}{
   author={Cantrell, Robert Stephen},
   author={Cosner, Chris},
   title={On the effects of spatial heterogeneity on the persistence of
   interacting species},
   journal={J. Math. Biol.},
   volume={37},
   date={1998},
   number={2},
   pages={103--145},
   issn={0303-6812},
   review={\MR{1649516}},
   doi={10.1007/s002850050122},
}

\bib{Cantrell-Cosner-2}{book}{
   author={Cantrell, Robert Stephen},
   author={Cosner, Chris},
   title={Spatial ecology via reaction-diffusion equations},
   series={Wiley Series in Mathematical and Computational Biology},
   publisher={John Wiley \& Sons, Ltd., Chichester},
   date={2003},
   pages={xvi+411},
   isbn={0-471-49301-5},
   review={\MR{2191264}},
   doi={10.1002/0470871296},
}

\bib{MR2332679}{article}{
   author={Cantrell, Robert Stephen},
   author={Cosner, Chris},
   author={Lou, Yuan},
   title={Advection-mediated coexistence of competing species},
   journal={Proc. Roy. Soc. Edinburgh Sect. A},
   volume={137},
   date={2007},
   number={3},
   pages={497--518},
   issn={0308-2105},
   review={\MR{2332679}},
   doi={10.1017/S0308210506000047},
}

\bib{MR3026598}{article}{
   author={Cantrell, Robert Stephen},
   author={Cosner, Chris},
   author={Lou, Yuan},
   author={Ryan, Daniel},
   title={Evolutionary stability of ideal free dispersal strategies: a
   nonlocal dispersal model},
   journal={Can. Appl. Math. Q.},
   volume={20},
   date={2012},
   number={1},
   pages={15--38},
   issn={1073-1849},
   review={\MR{3026598}},
}

\bib{MR2411225}{article}{
   author={Chen, Xinfu},
   author={Hambrock, Richard},
   author={Lou, Yuan},
   title={Evolution of conditional dispersal: a reaction-diffusion-advection
   model},
   journal={J. Math. Biol.},
   volume={57},
   date={2008},
   number={3},
   pages={361--386},
   issn={0303-6812},
   review={\MR{2411225}},
   doi={10.1007/s00285-008-0166-2},
}

\bib{MR2928344}{article}{
   author={Chen, Zhen-Qing},
   author={Kim, Panki},
   author={Song, Renming},
   author={Vondra\v{c}ek, Zoran},
   title={Sharp Green function estimates for $\Delta+\Delta^{\alpha/2}$ in
   $C^{1,1}$ open sets and their applications},
   journal={Illinois J. Math.},
   volume={54},
   date={2010},
   number={3},
   pages={981--1024 (2012)},
   issn={0019-2082},
   review={\MR{2928344}},
}

\bib{MR2912450}{article}{
   author={Chen, Zhen-Qing},
   author={Kim, Panki},
   author={Song, Renming},
   author={Vondra\v{c}ek, Zoran},
   title={Boundary Harnack principle for $\Delta+\Delta^{\alpha/2}$},
   journal={Trans. Amer. Math. Soc.},
   volume={364},
   date={2012},
   number={8},
   pages={4169--4205},
   issn={0002-9947},
   review={\MR{2912450}},
   doi={10.1090/S0002-9947-2012-05542-5},
}

\bib{MR2963799}{article}{
   author={Ciomaga, Adina},
   title={On the strong maximum principle for second-order nonlinear
   parabolic integro-differential equations},
   journal={Adv. Differential Equations},
   volume={17},
   date={2012},
   number={7-8},
   pages={635--671},
   issn={1079-9389},
   review={\MR{2963799}},
}

\bib{MR2897881}{article}{
   author={Cosner, Chris},
   author={D\'{a}vila, Juan},
   author={Mart\'{\i}nez, Salome},
   title={Evolutionary stability of ideal free nonlocal dispersal},
   journal={J. Biol. Dyn.},
   volume={6},
   date={2012},
   number={2},
   pages={395--405},
   issn={1751-3758},
   review={\MR{2897881}},
   doi={10.1080/17513758.2011.588341},
}

\bib{MR3285831}{article}{
   author={Coville, J\'{e}r\^{o}me},
   title={Nonlocal refuge model with a partial control},
   journal={Discrete Contin. Dyn. Syst.},
   volume={35},
   date={2015},
   number={4},
   pages={1421--1446},
   issn={1078-0947},
   review={\MR{3285831}},
   doi={10.3934/dcds.2015.35.1421},
}

\bib{MR3035974}{article}{
   author={Coville, J\'{e}r\^{o}me},
   author={D\'{a}vila, Juan},
   author={Mart\'{\i}nez, Salom\'{e}},
   title={Pulsating fronts for nonlocal dispersion and KPP nonlinearity},
   journal={Ann. Inst. H. Poincar\'{e} Anal. Non Lin\'{e}aire},
   volume={30},
   date={2013},
   number={2},
   pages={179--223},
   issn={0294-1449},
   review={\MR{3035974}},
   doi={10.1016/j.anihpc.2012.07.005},
}

\bib{MR2542727}{article}{
   author={de la Llave, Rafael},
   author={Valdinoci, Enrico},
   title={A generalization of Aubry-Mather theory to partial differential
   equations and pseudo-differential equations},
   journal={Ann. Inst. H. Poincar\'{e} Anal. Non Lin\'{e}aire},
   volume={26},
   date={2009},
   number={4},
   pages={1309--1344},
   issn={0294-1449},
   review={\MR{2542727}},
   doi={10.1016/j.anihpc.2008.11.002},
}

\bib{2018arXiv181107667D}{article}{
       author = {Dell'Oro, Filippo},
author={Pata, Vittorino},
        title = {Second order linear evolution equations with general dissipation},
      journal = {arXiv e-prints},
         date = {2018},
          eid = {arXiv:1811.07667},
        pages = {arXiv:1811.07667},
archivePrefix = {arXiv},
       eprint = {1811.07667},
 primaryClass = {math.AP},
       adsurl = {https://ui.adsabs.harvard.edu/abs/2018arXiv181107667D},
      adsnote = {Provided by the SAO/NASA Astrophysics Data System}
}

\bib{2017arXiv170605306D}{article}{
       author = {del Teso, F{\'e}lix},
author = {Endal, J{\o}rgen},
author = {Jakobsen, Espen R.},
        title = {On distributional solutions of local and nonlocal problems of porous medium type},
      journal = {arXiv e-prints},
date = {2017},
          eid = {arXiv:1706.05306},
        pages = {arXiv:1706.05306},
archivePrefix = {arXiv},
       eprint = {1706.05306},
 primaryClass = {math.AP},
       adsurl = {https://ui.adsabs.harvard.edu/abs/2017arXiv170605306D},
      adsnote = {Provided by the SAO/NASA Astrophysics Data System}
}

\bib{derl}{article}{
   author={Derlet, A.},
   author={Gossez, J.-P.},
   author={Tak\'{a}\v{c}, P.},
   title={Minimization of eigenvalues for a quasilinear elliptic Neumann
   problem with indefinite weight},
   journal={J. Math. Anal. Appl.},
   volume={371},
   date={2010},
   number={1},
   pages={69--79},
   issn={0022-247X},
   review={\MR{2660987}},
   doi={10.1016/j.jmaa.2010.03.068},
}

\bib{MR2944369}{article}{
   author={Di Nezza, Eleonora},
   author={Palatucci, Giampiero},
   author={Valdinoci, Enrico},
   title={Hitchhiker's guide to the fractional Sobolev spaces},
   journal={Bull. Sci. Math.},
   volume={136},
   date={2012},
   number={5},
   pages={521--573},
   issn={0007-4497},
   review={\MR{2944369}},
   doi={10.1016/j.bulsci.2011.12.004},
}

\bib{ANCI}{article}{
   author={Dipierro, Serena},
   author={Proietti Lippi, Edoardo},
   author={Valdinoci, Enrico},
title={Linear theory for
a mixed operator with Neumann conditions},
      journal = {preprint},
}

\bib{dirova}{article}{
   author={Dipierro, Serena},
   author={Ros-Oton, Xavier},
   author={Valdinoci, Enrico},
   title={Nonlocal problems with Neumann boundary conditions},
   journal={Rev. Mat. Iberoam.},
   volume={33},
   date={2017},
   number={2},
   pages={377--416},
   issn={0213-2230},
   review={\MR{3651008}},
   doi={10.4171/RMI/942},
}

\bib{NICE}{article}{
   author={Dipierro, Serena},
   author={Valdinoci, Enrico},
title={Description of an ecological niche
for a mixed local/nonlocal dispersal},
      journal = {preprint},
}

\bib{MR3950697}{article}{
   author={Dipierro, Serena},
   author={Valdinoci, Enrico},
   author={Vespri, Vincenzo},
   title={Decay estimates for evolutionary equations with fractional
   time-diffusion},
   journal={J. Evol. Equ.},
   volume={19},
   date={2019},
   number={2},
   pages={435--462},
   issn={1424-3199},
   review={\MR{3950697}},
   doi={10.1007/s00028-019-00482-z},
}

\bib{MR1636644}{article}{
   author={Dockery, Jack},
   author={Hutson, Vivian},
   author={Mischaikow, Konstantin},
   author={Pernarowski, Mark},
   title={The evolution of slow dispersal rates: a reaction diffusion model},
   journal={J. Math. Biol.},
   volume={37},
   date={1998},
   number={1},
   pages={61--83},
   issn={0303-6812},
   review={\MR{1636644}},
   doi={10.1007/s002850050120},
}

\bib{MR1911531}{book}{
   author={Garroni, Maria Giovanna},
   author={Menaldi, Jose Luis},
   title={Second order elliptic integro-differential problems},
   series={Chapman \& Hall/CRC Research Notes in Mathematics},
   volume={430},
   publisher={Chapman \& Hall/CRC, Boca Raton, FL},
   date={2002},
   pages={xvi+221},
   isbn={1-58488-200-X},
   review={\MR{1911531}},
   doi={10.1201/9781420035797},
}

\bib{MR2129093}{article}{
   author={Jakobsen, Espen R.},
   author={Karlsen, Kenneth H.},
   title={Continuous dependence estimates for viscosity solutions of
   integro-PDEs},
   journal={J. Differential Equations},
   volume={212},
   date={2005},
   number={2},
   pages={278--318},
   issn={0022-0396},
   review={\MR{2129093}},
   doi={10.1016/j.jde.2004.06.021},
}

\bib{MR2243708}{article}{
   author={Jakobsen, Espen R.},
   author={Karlsen, Kenneth H.},
   title={A ``maximum principle for semicontinuous functions'' applicable to
   integro-partial differential equations},
   journal={NoDEA Nonlinear Differential Equations Appl.},
   volume={13},
   date={2006},
   number={2},
   pages={137--165},
   issn={1021-9722},
   review={\MR{2243708}},
   doi={10.1007/s00030-005-0031-6},
}

\bib{MR2924452}{article}{
   author={Kao, Chiu-Yen},
   author={Lou, Yuan},
   author={Shen, Wenxian},
   title={Evolution of mixed dispersal in periodic environments},
   journal={Discrete Contin. Dyn. Syst. Ser. B},
   volume={17},
   date={2012},
   number={6},
   pages={2047--2072},
   issn={1531-3492},
   review={\MR{2924452}},
   doi={10.3934/dcdsb.2012.17.2047},
}

\bib{Kawasaki-Shigesada-1}{article}{
   author={Kawasaki, Kohkichi},
   author={Asano, Keisuke},
   author={Shigesada, Nanako},
   title={Impact of directed movement on invasive spread in periodic patchy
   environments},
   journal={Bull. Math. Biol.},
   volume={74},
   date={2012},
   number={6},
   pages={1448--1467},
   issn={0092-8240},
   review={\MR{2930099}},
   doi={10.1007/s11538-011-9714-9},
}
		
\bib{Kawasaki-Shigesada-2}{article}{
   author={Kawasaki, Kohkichi},
   author={Shigesada, Nakako},
   title={An integrodifference model for biological invasions in a
   periodically fragmented environment},
   journal={Japan J. Indust. Appl. Math.},
   volume={24},
   date={2007},
   number={1},
   pages={3--15},
   issn={0916-7005},
   review={\MR{2312291}},
}

\bib{Lam-Liu-Lou}{article}{
       author = {Lam, King-Yeung},
author={Liu, Shuang},
author={Lou, Yuan},
        title = {Selected topics on reaction-diffusion-advection models from
spatial ecology},
      journal = {arXiv e-prints},
date = {2020},
          eid = {arXiv:2004.07978},
        pages = {arXiv:2004.07978},
archivePrefix = {arXiv},
       eprint = {2004.07978},
       adsurl = {https://ui.adsabs.harvard.edu/abs/2020arXiv200407978L},
      adsnote = {Provided by the SAO/NASA Astrophysics Data System}
}

\bib{Lamboley-Laurain-Nadin-Privat}{article}{
   author={Lamboley, Jimmy},
   author={Laurain, Antoine},
   author={Nadin, Gr\'{e}goire},
   author={Privat, Yannick},
   title={Properties of optimizers of the principal eigenvalue with
   indefinite weight and Robin conditions},
   journal={Calc. Var. Partial Differential Equations},
   volume={55},
   date={2016},
   number={6},
   pages={Art. 144, 37},
   issn={0944-2669},
   review={\MR{3568049}},
   doi={10.1007/s00526-016-1084-6},
}

\bib{lilo}{book}{
   author={Lieb, Elliott H.},
   author={Loss, Michael},
   title={Analysis},
   series={Graduate Studies in Mathematics},
   volume={14},
   publisher={American Mathematical Society, Providence, RI},
   date={1997},
   pages={xviii+278},
   isbn={0-8218-0632-7},
   review={\MR{1415616}},
}

\bib{Lou-Yanagida}{article}{
   author={Lou, Yuan},
   author={Yanagida, Eiji},
   title={Minimization of the principal eigenvalue for an elliptic boundary
   value problem with indefinite weight, and applications to population
   dynamics},
   journal={Japan J. Indust. Appl. Math.},
   volume={23},
   date={2006},
   number={3},
   pages={275--292},
   issn={0916-7005},
   review={\MR{2281509}},
}

\bib{Lou-Nagahara-Yanagida}{article}{
   author={Lou, Yuan},
   author={Nagahara, Kentaro},
   author={Yanagida, Eiji},
journal={Work in progress},
date={2020},
}

\bib{MR3590678}{article}{
   author={Massaccesi, Annalisa},
   author={Valdinoci, Enrico},
   title={Is a nonlocal diffusion strategy convenient for biological
   populations in competition?},
   journal={J. Math. Biol.},
   volume={74},
   date={2017},
   number={1-2},
   pages={113--147},
   issn={0303-6812},
   review={\MR{3590678}},
   doi={10.1007/s00285-016-1019-z},
}

\bib{Mazari}{article}{
author={Mazari, Idriss},
        title = {Quantitative inequality for the eigenvalue of a Schr{\"o}dinger operator
in the ball},
      journal = {arXiv e-prints},
         date = {2020},
          eid = {arXiv:2005.07417},
        pages = {arXiv:2005.07417},
archivePrefix = {arXiv},
       eprint = {2005.07417},
 primaryClass = {math.AP},
       adsurl = {https://ui.adsabs.harvard.edu/abs/2020arXiv200507417M},
      adsnote = {Provided by the SAO/NASA Astrophysics Data System}
}

\bib{MazariTH}{article}{
author={Mazari, Idriss},
titile={Shape optimization and spatial heterogeneity in
reaction-diffusion equations},
journal={PhD Thesis, Sorbonne Universit\'e},
date={2020},
}

\bib{Mazari-Nadin-Privat}{article}{
author={Mazari, Idriss},
author={Nadine, Gr\'egoire},
author={Privat, Yannick},
    title={Shape optimization of a weighted two-phase Dirichlet eigenvalue},
    date={2020},
    eprint={2001.02958},
    archivePrefix={arXiv},
       adsurl = {https://ui.adsabs.harvard.edu/abs/2020arXiv200102958M},
      adsnote = {Provided by the SAO/NASA Astrophysics Data System},
}

\bib{Mazari-Ruiz-Balet}{article}{
       author = {Mazari, Idriss},
author={Ruiz-Balet, Domenec},
        title = {A fragmentation phenomenon for a non-energetic optimal
control problem: optimisation of the total population size in logistic diffusive models},
      journal = {arXiv e-prints},
date = {2020},
          eid = {arXiv:2005.08515},
        pages = {arXiv:2005.08515},
archivePrefix = {arXiv},
       eprint = {2005.08515},
       adsurl = {https://ui.adsabs.harvard.edu/abs/2020arXiv200508515M},
      adsnote = {Provided by the SAO/NASA Astrophysics Data System}
}

\bib{MR3082317}{article}{
   author={Montefusco, Eugenio},
   author={Pellacci, Benedetta},
   author={Verzini, Gianmaria},
   title={Fractional diffusion with Neumann boundary conditions: the
   logistic equation},
   journal={Discrete Contin. Dyn. Syst. Ser. B},
   volume={18},
   date={2013},
   number={8},
   pages={2175--2202},
   issn={1531-3492},
   review={\MR{3082317}},
   doi={10.3934/dcdsb.2013.18.2175},
}

\bib{Nagahara-Yanagida}{article}{
   author={Nagahara, Kentaro},
   author={Yanagida, Eiji},
   title={Maximization of the total population in a reaction-diffusion model
   with logistic growth},
   journal={Calc. Var. Partial Differential Equations},
   volume={57},
   date={2018},
   number={3},
   pages={Paper No. 80, 14},
   issn={0944-2669},
   review={\MR{3795212}},
   doi={10.1007/s00526-018-1353-7},
}

\bib{MR3771424}{article}{
   author={Pellacci, Benedetta},
   author={Verzini, Gianmaria},
   title={Best dispersal strategies in spatially heterogeneous environments:
   optimization of the principal eigenvalue for indefinite fractional
   Neumann problems},
   journal={J. Math. Biol.},
   volume={76},
   date={2018},
   number={6},
   pages={1357--1386},
   issn={0303-6812},
   review={\MR{3771424}},
   doi={10.1007/s00285-017-1180-z},
}

\bib{Skellam}{article}{
   author={Skellam, J. G.},
   title={Random dispersal in theoretical populations},
   journal={Biometrika},
   volume={38},
   date={1951},
   pages={196--218},
   issn={0006-3444},
   review={\MR{43440}},
   doi={10.1093/biomet/38.1-2.196},
}

\bib{MR3590646}{article}{
   author={Sprekels, J\"{u}rgen},
   author={Valdinoci, Enrico},
   title={A new type of identification problems: optimizing the fractional
   order in a nonlocal evolution equation},
   journal={SIAM J. Control Optim.},
   volume={55},
   date={2017},
   number={1},
   pages={70--93},
   issn={0363-0129},
   review={\MR{3590646}},
   doi={10.1137/16M105575X},
}

\bib{MR2584076}{article}{
   author={Valdinoci, Enrico},
   title={{F}rom the long jump random walk to the fractional Laplacian},
   journal={Bol. Soc. Esp. Mat. Apl. SeMA},
   number={49},
   date={2009},
   pages={33--44},
   issn={1575-9822},
   review={\MR{2584076}},
}

\bib{VOND}{article}{
       author = {Vondra{\v{c}}ek, Zoran},
        title = {A probabilistic approach to a non-local quadratic
        form and its connection to the Neumann boundary condition problem},
      journal = {arXiv e-prints},
     date = {2019},
          eid = {arXiv:1909.10687},
        pages = {arXiv:1909.10687},
archivePrefix = {arXiv},
       eprint = {1909.10687},
       adsurl = {https://ui.adsabs.harvard.edu/abs/2019arXiv190910687V},
      adsnote = {Provided by the SAO/NASA Astrophysics Data System}
}

\bib{NATU}{article}{
   author={Viswanathan, G. M.},
   author={Afanasyev, V.},
   author={Buldyrev, S. V.},
   author={Murphy, E. J.},
   author={Prince, P. A.},
   author={Stanley, H. E.},
   title={L\'evy flight search patterns of wandering albatrosses},
   journal={Nature},
   volume={381},
   date={1996},
   pages={413--415},
   issn={1476-4687},
   doi={10.1038/381413a0},
}

\bib{MR3381284}{book}{
   author={Wheeden, Richard L.},
   author={Zygmund, Antoni},
   title={Measure and integral},
   series={Pure and Applied Mathematics (Boca Raton)},
   edition={2},
   note={An introduction to real analysis},
   publisher={CRC Press, Boca Raton, FL},
   date={2015},
   pages={xvii+514},
   isbn={978-1-4987-0289-8},
   review={\MR{3381284}},
}

\end{biblist}\end{bibdiv}

\end{document}